\theoremstyle{plain}
\newtheorem{thm}{Theorem}
\newtheorem{cor}[thm]{Corollary} 
\newenvironment{cor'}
  {\addtocounter{thm}{-1}%
   \begin{cor}}
  {\end{cor}}
\newtheorem{lem}[thm]{Lemma}
\newenvironment{lem'}
  {\addtocounter{thm}{-1}%
   \begin{lem}}
  {\end{lem}}
\newtheorem{prop}[thm]{Proposition}
\newenvironment{prop'}
  {\addtocounter{thm}{-1}%
   \begin{prop}}
  {\end{prop}}
\newtheorem{remark}[thm]{Remark}
\def\bbz{\mathbb{Z}}
\def\bbq{\mathbb{Q}}
\def\bbr{\mathbb{R}}
\def\bba{\mathbb{A}}
\def\bbc{\mathbb{C}}
\def\bbe{\mathbb{E}}
\def\bbh{\mathbb{H}}
\def\bbg{\mathbb{G}}
\def\bbv{\mathbb{V}}
\def\bbu{\mathbb{U}}
\def\bbl{\mathbb{L}}
\def\bbw{\mathbb{W}}
\def\gcal{\mathcal{G}}
\def\ucal{\mathcal{U}}
\def\ocal{\mathcal{O}}
\def\ocal{\mathcal{O}}
\def\acal{\mathcal{A}}
\def\ccal{\mathcal{C}}
\def\cal{\mathcal{H}}
\def\lcal{\mathcal{L}}
\def\pcal{\mathcal{P}}
\def\hfr{\mathfrak{h}}
\def\afr{\mathfrak{a}}
\def\Pfr{\mathfrak{P}}
\def\pfr{\mathfrak{p}}
\def\Wfr{\mathfrak{W}}
\def\Vfr{\mathfrak{V}}
\def\ufr{\mathfrak{u}}
\def\gl{\mathfrak{gl}}
\def\gfr{\mathfrak{g}}
\def\zfr{\mathfrak{z}}
\def\f{\mathfrak{f}}
\def\ybf{\mathbf{y}}
\def\xbf{\mathbf{x}}
\def\wbf{\mathbf{w}}
\def\vbf{\mathbf{v}}
\def\ebf{\mathbf{e}}
\def\ibf{\mathbf{i}}
\def\fbf{\mathbf{f}}
\def\lbf{\mathbf{l}}
\DeclareMathOperator\Spec{Spec}
\DeclareMathOperator\SL{SL}
\DeclareMathOperator\GL{GL}
\DeclareMathOperator\Ad{Ad}
\DeclareMathOperator\ad{ad}
\DeclareMathOperator\Lie{Lie}
\DeclareMathOperator\rk{rank}
\DeclareMathOperator\diag{diag}
\DeclareMathOperator\Cay{Cay}
\DeclareMathOperator\supp{supp}
\DeclareMathOperator{\Hom}{Hom}
\newcommand{\lin}[2]{\Psi_{#1}^{#2}}
\newcommand{\linValue}[3]{\lin{#1}{#2}(\pi_{#2}(#3))}
\newcommand{\wt}[1]{\widetilde{#1}}
\newcommand{\wh}[1]{\widehat{#1}}
\def\h{\hspace{1mm}}
\def\vare{\varepsilon}
\def\be{\begin{equation}}
\def\ee{\end{equation}}
\def\one{\mathds{1}}
\def\ugfr{\underline{\gfr}}
\def\oPhi{\overline{\Phi}}
\def\acts{\curvearrowright}
\begin{document}
\title{Super-approximation, II:\\ the $p$-adic and bounded power of square-free integers cases.}

\author{Alireza Salehi Golsefidy}
\address{Mathematics Dept, University of California, San Diego, CA 92093-0112}
\email{golsefidy@ucsd.edu}
\thanks{A. S-G. was partially supported by the NSF grants DMS-1160472, DMS-1303121, DMS-1602137 and A. P. Sloan Research Fellowship.}
\subjclass{22E40}
\date{\today}
\begin{abstract}
Let $\Omega$ be a finite symmetric subset of $\GL_n(\bbz[1/q_0])$, $\Gamma:=\langle \Omega \rangle$, and let $\pi_m$ be the group homomorphism induced by the quotient map $\bbz[1/q_0]\rightarrow \bbz[1/q_0]/m\bbz[1/q_0]$. Then the family of Cayley graphs $\{{\rm Cay}(\pi_m(\Gamma),\pi_m(\Omega))\}_m$ is a family of expanders as $m$ ranges over fixed powers of square-free integers and powers of primes that are coprime to $q_0$ if and only if the connected component of the Zariski-closure of $\Gamma$ is perfect. Some of the immediate applications, e.g. orbit equivalence rigidity, {\em largeness} of certain $\ell$-adic Galois representations, are also discussed.  
\end{abstract}
\maketitle
\tableofcontents
\section{Introduction}
\subsection{Statement of the main results.}\label{ss:Statement}
Let $\Omega$ be a finite symmetric subset of $\GL_n(\bbq)$, and $\Gamma=\langle \Omega \rangle$. Since $\Omega$ is finite, for some $q_0\in \bbz^+$ we have $\Gamma\subseteq \GL_n(\bbz[1/q_0])$. {\em Strong approximation} implies that (under certain algebraic conditions) the closure of $\Gamma$ in $\prod_{p\nmid q_0}\GL_n(\bbz_p)$ is open in $\prod_{p\nmid q_0} (\bbg(\bbq_p)\cap \GL_n(\bbz_p))$, where $\bbg$ is the Zariski-closure of $\Gamma$ in $\mathbb{GL}_n$. In combinatorial language, this means that the Cayley graphs 
\[
{\rm Cay}\left(\pi_m(\bbg(\bbq)\cap \GL_n(\bbz[1/q_0])),\pi_m(\Omega)\right),
\]
 where $m$ and $q_0$ are coprime and $\pi_m$ is the group homomorphism induced by the quotient map $\pi_m:\bbz[1/q_0]\rightarrow \bbz[1/q_0]/m\bbz[1/q_0]$, have at most $C:=C(\Omega)$ many connected components. The point being that $C$ does not depend on $m$. {\em Super-approximation}\footnote{Following A. Kontorovich's suggestion, I call this phenomenon super-approximation. It is worth pointing out that this phenomenon has been called superstrong approximation~\cite{MSRI} by some authors.} tells us that these sparse graphs are {\em highly connected}, i.e. they form a family of {\em expanders}.  A family $\{X_i\}_{i\in I}$ of $d_0$-regular graphs is called {\em expander} if there is $c>0$ such that for any $i\in I$ and any subset $A$ of the set of vertices $V(X_i)$ of $X_i$ we have 
 \[
 c<\frac{|e(A,V(X_i)\setminus A)|}{\min(|A|,|V(X_i)\setminus A|)},
 \]
 where $e(A,V(X_i)\setminus A)$ consists of the edges that connect a vertex in $A$ to a vertex in $V(X_i)\setminus A$.
 
 Expanders are extremely useful in communication and theoretical computer science (e.g. see~\cite{HLW}). In the past decade they have been found useful in a wide range of pure math problems, e.g. affine sieve~\cite{BGS, SGS}, sieve in groups~\cite{LM}, variation of Galois representations~\cite{EHK}, etc. (see~\cite{MSRI} for a collection of surveys of related works and applications). 
 
 In this article we prove the {\em best possible} super-approximation results for two families of residue maps. Let $V_f(\bbq)$ be the set of primes in $\bbq$, and $\nu_p(q)$ be the $p$-valuation of $q$, i.e. the power of $p$ in $q$. 
 \begin{thm}\label{t:main}
 Let $\Omega$ be a finite symmetric subset of $\GL_{N_0}(\bbz[1/q_0])$, $\Gamma:=\langle \Omega \rangle$, and $M_0$ be a fixed positive integer. Suppose $\Gamma$ is an infinite group. Then the family of Cayley graphs $\{{\rm Cay}(\pi_m(\Gamma),\pi_m(\Omega))\}_m$ is a family of expanders as $m$ runs through either 
 \[
\{p^n|\h n\in \bbz^+, p\in V_f(\bbq), p\nmid q_0\}\h\h {\rm or}\h\h\{q|\h \forall p\in V_f(\bbq), \nu_p(q)\le M_0\}
\]
if and only if the connected component $\bbg^{\circ}$ of the Zariski-closure $\bbg$ of $\Gamma$ in $\mathbb{GL}_{N_0}$ is perfect, i.e. $[\bbg^{\circ},\bbg^{\circ}]=\bbg^{\circ}$. 
 \end{thm}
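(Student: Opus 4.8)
The plan is to prove the two directions separately, with the "only if" direction being the short one. If $\bbg^\circ$ is not perfect, then $\bbg^\circ$ surjects onto a nontrivial torus (or more precisely a group with a nontrivial abelianization of positive dimension), and after base change one finds abelian quotients of the congruence images that grow; these carry no spectral gap because abelian Cayley graphs of unbounded size are never expanders (the trivial representation is approximated by characters). One must be slightly careful that the exceptional prime set $q_0$ and passing from $\bbg$ to $\bbg^\circ$ (a finite-index issue) do not interfere, but this is routine, and it reduces to the classical fact that a sequence of Cayley graphs on abelian groups of size $\to\infty$ is not an expander family.

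For the "if" direction, the plan is to reduce to the already-developed machinery of Super-approximation I and to the two cases of the modulus separately. First I would reduce to the case where $\Gamma$ is Zariski-dense in a perfect group, replace $\bbg$ by its derived/simply-connected cover, and use strong approximation to control the congruence images $\pi_m(\Gamma)$: for $m$ a prime power $p^n$ coprime to $q_0$ these are essentially the $\bbz/p^n$-points of a fixed group scheme, and for $m$ square-free-bounded they factor as products over primes $p\mid m$ of such local pieces with bounded exponent. The heart is a bootstrapping/multiscale argument: one proves a uniform spectral gap for the prime case $m=p^n$ first (this is the genuinely new local statement — getting the gap to be independent of $n$ as $p^n\to\infty$ along a single prime $p$), and then assembles the square-free-power case from the prime-power and square-free cases by a tensor/product-of-graphs argument together with the observation that Cayley graphs of a direct product with product generating sets inherit a spectral gap from the factors (this is where coprimality of the distinct prime factors and the Goursat-type structure of the image inside the product become essential).

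The technical core, and what I expect to be the main obstacle, is the uniform expansion for $m=p^n$ with $n\to\infty$: one cannot appeal directly to the $\SL_2(\mathbb{F}_p)$-type or semisimple-quotient arguments because here the group $\pi_{p^n}(\Gamma)$ has a large pro-$p$ (nilpotent) part, namely the congruence kernel filtration $\pi_{p^n}(\Gamma)\to\pi_p(\Gamma)$, and expansion must be proved \emph{across} this solvable tower. The strategy here is (i) get the spectral gap at the base level $\pi_p(\Gamma)$ from the existing theory (Bourgain--Gamburd, Salehi Golsefidy--Varjú, i.e. Super-approximation I for square-free or prime moduli), and (ii) propagate it up the $p$-power congruence filtration using an $\ell^2$-flattening / additive-combinatorics argument in the group $\pi_{p^n}(\Gamma)$, exploiting that the successive quotients of the filtration are vector groups on which $\pi_p(\Gamma)$ acts, and that perfectness of $\bbg^\circ$ forces these representations to have no invariant vectors — this non-concentration on subgroups/cosets is what drives the Bourgain--Gamburd-type product growth. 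Making the growth estimates uniform in both $p$ and $n$ simultaneously, and handling the finitely many small primes and the non-semisimple part of $\bbg$ (the radical), is the delicate bookkeeping that the bulk of the paper will presumably carry out.
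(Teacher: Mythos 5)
The \emph{only if} direction and your initial reductions (connected simply-connected perfect Zariski closure, strong approximation, description of $\pi_m(\Gamma)$) are in line with the paper. The genuine gap is your assembly step for the moduli $\{q:\nu_p(q)\le M_0\}$: you propose to deduce expansion mod $q=\prod_p p^{n_p}$ from expansion mod each prime power (plus the square-free case) via a ``product-of-graphs'' argument, invoking the fact that a direct product with a \emph{product} generating set inherits a spectral gap from its factors. But the generating set of $\Cay(\pi_q(\Gamma),\pi_q(\Omega))$ is the \emph{diagonal} image of $\Omega$ inside $\prod_{p\mid q}\pi_{p^{n_p}}(\Gamma)$, not a product generating set. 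For the diagonal walk the relevant operators are $\tfrac{1}{|\Omega|}\sum_{\omega\in\Omega}\bigotimes_{p\mid q}\rho_p(\pi_{p^{n_p}}(\omega))$ over irreducible representations $\rho_p$ of the local factors, and as soon as two or more $\rho_p$ are nontrivial the norm of this operator is not controlled by the spectral gaps of the individual factors; the Goursat/strong-approximation structure only gives surjectivity onto the product, not expansion. This is exactly the known obstruction that makes composite moduli hard: if such a tensor argument worked, the square-free case of \cite{SGV} would already be a formal consequence of the prime case, which it is not. The paper never combines the two families this way; instead it proves the bounded generation statement (Theorem~\ref{t:BoundedGeneration}) for bounded powers of square-free $Q$ directly, by an argument integrated over all prime divisors simultaneously in the style of \cite{BV}: equidistribution mod the square-free radical $Q_s$ (the \cite{SGV} input), regularization over the tree of prime factors plus Gowers quasirandomness at level one, then the climb from $Q_s$ to $Q_s^N$ via the statistical non-splitting lemma (Lemma~\ref{l:SectionFarFromHomo}, summed over $p\mid Q_s$ with weights $\log p$ as in Lemma~\ref{l:Enlarging}), bounded generation by commutators, and finite logarithmic maps. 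As stated, your combination step --- the only place where your two cases meet --- would fail.

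Your sketch of the $p^n$ case is in the right spirit (Bourgain--Gamburd machinery, non-concentration forced by perfectness), but note that the paper does not propagate a gap up the congruence filtration level by level; it routes everything through the approximate-subgroup and bounded-generation reductions of Section 3, and the non-semisimple situation requires new inputs beyond what you mention: escape from proper subgroups of $\pi_{p^n}(\Gamma)$ proved via an effective Nullstellensatz (Lemma~\ref{l:SmallLifts}, Proposition~\ref{p:EscapeModQ}), and a quantitative open-image theorem for $p$-adic analytic maps (the appendix, via Proposition~\ref{p:BoundedGenerationOfModules}) to manufacture $p$-adically large pieces of the unipotent part from the semisimple action, with the semisimple case itself quoted from Super-approximation I. So the $p$-adic half of your plan is plausible but substantially underestimates what must be proved, while the composite-modulus half contains a step that is actually false.
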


In the appendix, a quantitative open image for $p$-adic analytic maps is proved, which should be of independent interest. In a joint work with Zhang~\cite{SGZ}, this result is used to generalize a theorem of Burger and Sarnak~\cite{BuSa} to the setting of {\em super-approximation} (see Theorem~\ref{t:InducingSA} for the statement of this result.).   
 
%%%%%%%%%%%%%%%%%%%%%%%%%%%%%%%%%%%%%%%%%%%%%%%%%%%%%%%%%%%%%%%%%%%%%%%%%%%%%%%%%%% 
 
\subsection{Comparing with the previous related results.} The importance of Theorem~\ref{t:main} lies on the fact that it is an {\em if-and-only-if } statement. In fact, by Proposition~\ref{p:Necessary}, we get that, if $\{\pi_{m_i}(\Gamma),\pi_{m_i}(\Omega)\}$ is a family of expanders for some increasing sequence $\{m_i\}$ of integers that are relatively prime to $q_0$, then $\{\pi_m(\Gamma),\pi_m(\Omega)\}_m$ is a family of expanders as $m$ runs through integers specified in Theorem~\ref{t:main}. 

Super-approximation for square-free numbers is the main theorem of~\cite{SGV}. Of course it is a special case of Theorem~\ref{t:main} for $m\in \{q|\h \forall p\in V_f(\bbq), \nu_p(q)\le M_0\}$ and $M_0=1$. It should, however, be noted that our proof relies on \cite{SGV}. Proof of this part of Theorem~\ref{t:main} is heavily influenced by the relevant part of \cite{BV}. But here, instead of working with the concrete structure of the congruence quotients of $\SL_n(\bbz)$ as it was done in \cite{BV}, one has to work with arbitrary perfect groups. So it is inevitable to make use of basic theorems from Bruhat-Tits theory to describe structure of a maximal compact subgroup of $\bbh(\bbq_p)$ where $\bbh$ is a semisimple $\bbq_p$-group, and to use {\em truncated} or {\em finite logarithmic maps}~\cite[Section 6]{Pink} or \cite[Section 2.9]{SGsemisimple}. In addition, one has to understand how such a maximal compact group acts on an open compact subgroup of $\bbu(\bbq_p)$ where $\bbu$ is a unipotent $\bbq_p$-group, which makes the use of the language of schemes necessary.  

Proof of Theorem~\ref{t:main} for powers of primes relies on \cite{SGsemisimple} where the semisimple case is proved. 

Prior to \cite{SGsemisimple}, the case of {\em Zariski-dense} subgroups of $\SL_n(\bbz)$ was studied by Bourgain-Gamburd~\cite{BG2,BG3} and Bourgain-Varj\'{u}~\cite{BV}. As it is pointed out in \cite[Section 1.3]{SGsemisimple}, the proofs by Bourgain and Gamburd~\cite{BG2,BG3} and Bourgain and Varj\'{u}~\cite{BV} rely heavily on Archimedean dynamics. In particular, those ideas cannot be implemented for a finitely generated subgroup $\Gamma$ of $\GL_n(\bbq)$ if $\Gamma$ is a bounded subgroup of $\GL_n(\bbr)$.
But it is worth pointing out that, if $\Gamma$ is a Zariski-dense subgroup of $\SL_n(\bbz)$, Bourgain and Varj\'{u}~\cite{BV} prove that $\{{\rm Cay}(\pi_m(\Gamma),\pi_m(\Omega))\}_m$ is a family of expanders with {\em no restriction} on $m$. 

Prior to the current work, {\em no} super-approximation result for powers of primes {\em beyond semisimple case} was known. To get such a result, one had to use new ideas and combine techniques from $p$-adic analytic analysis, $p$-adic analytic pro-$p$ groups, and non-commutative algebra.

%%%%%%%%%%%%%%%%%%%%%%%%%%%%%%%%%%%%%%%%%%%%%%%%%%%%%%%%%%%%%%%%%%%%%%%%%%%%%%%%%%% 

\subsection{Applications}
\subsubsection{Random-walk and spectral gap.}
Let $\Omega$ be a finite symmetric subset of a compact group $G$. Let $\mu$ be the counting probability measure on $\Omega$ and $\Gamma:=\langle \Omega \rangle$. Let  
\[
T_{\mu}:L^2(\overline{\Gamma})\rightarrow L^2(\overline{\Gamma}),\h\h T_{\mu}(f):=\mu \ast f,
\]
where $(\mu \ast f)(g):=\sum_{g'\in \supp \mu}\mu(g')f(g'^{-1}g)$ and $\overline{\Gamma}$ is the closure of $\Gamma$ in $G$. It is easy to see that $T_{\mu}$ is a self-adjoint operator, the constant function $\one_{\overline{\Gamma}}$ is an eigenfunction with eigenvalue one, i.e. $T_{\mu}(\one_{\overline{\Gamma}})=\one_{\overline{\Gamma}}$, and $\|T_{\mu}\|=1$. Let
\[
\lambda(\mu;G):=\sup \{|\lambda| |\h\h \lambda \in {\rm spec}(T_{\mu}),\h \lambda<1\}.
\]
We get a fairly good understanding of the random-walk in $G$ with respect to $\mu$ if $\lambda(\mu;G)<1$, in which case it is said that either $\Gamma\acts \overline{\Gamma}$ or the random-walk with respect to $\mu$ has {\em spectral gap}. 

Let us recall a couple of well-known results which give us a connection between having spectral gap and explicit construction of expanders (see \cite[Chapter 4.3]{Lub}, \cite[Chapter 1.4]{LZ}, \cite[Remark 15]{SGsemisimple}).
\begin{remark}\label{r:ExpanderSpectralGap}
\begin{enumerate}
\item Let $\Gamma$ be the group generated by a finite symmetric set $\Omega$. Suppose $\{N_i\}_{i\in I}$ is a family of normal, finite-index subgroups of $\Gamma$. Suppose, for any $i_1,i_2\in I$, there is $i_3\in I$ such that $N_{i_3} \subseteq N_{i_1}\cap N_{i_2}$. Then, by Peter-Weyl theorem, one has 
		\[
		\lambda(\pcal_{\Omega}; \varprojlim \Gamma/N_i)=\sup_i \lambda(\pcal_{\iota_{N_i}(\Omega)};\Gamma/N_i),
		\]     	
	where $\iota_{N_i}:\Gamma\rightarrow \Gamma/N_i$ is the natural quotient map. 
\item Suppose $\{N_i\}_{i\in I}$ is a family of normal, finite-index subgroups of $\Gamma$. Then the family of Cayley graphs $\{\Cay(\Gamma/N_i,\iota_{N_i}(\Omega))\}_{i\in I}$ is a family of expanders if and only if \[\sup_i \lambda(\pcal_{\iota_{N_i}(\Omega)};\Gamma/N_i)<1.\]  
\end{enumerate}
	
\end{remark}
Based on these results, we get that Theorem~\ref{t:main} implies the following. 
\begin{cor}\label{c:SpectralGap}
Let $\Omega$ be a finite symmetric subset of $\GL_{N_0}(\bbz[1/q_0])$, and $\pcal_{\Omega}$ be the counting probability measure on $\Omega$. Suppose the connected component $\bbg^{\circ}$ of the Zariski-closure $\bbg$ of $\Gamma:=\langle \Omega \rangle$ in $\mathbb{GL}_{N_0}$ is perfect. Then for any positive integer $M_0$ we have
\[
\lambda(\pcal_{\Omega}; \prod_{p\nmid q_0}\GL_{N_0}(\bbz/p^{M_0}\bbz))<1,
\]
and
\[
\sup_{p\nmid q_0} \lambda(\pcal_{\Omega}; \GL_{N_0}(\bbz_p))<1.
\]
\end{cor}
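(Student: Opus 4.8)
The plan is to derive Corollary~\ref{c:SpectralGap} from Theorem~\ref{t:main} by translating the expander property of the Cayley graphs $\mathrm{Cay}(\pi_m(\Gamma),\pi_m(\Omega))$ into a spectral gap for the averaging operator $T_{\pcal_\Omega}$ on the relevant compact groups, using that (a) for each finite quotient the relevant $L^2$-space decomposes as constants plus mean-zero functions, and the restriction of $T_{\pcal_\Omega}$ to the mean-zero part has operator norm equal to the second-largest (in absolute value) eigenvalue of the normalized adjacency operator of the Cayley graph, and (b) for the $p$-adic statement one passes to an inverse limit over $m=p^n$. First I would fix $q_0$ with $\Gamma\subseteq\GL_{N_0}(\bbz[1/q_0])$ and note that $\overline{\Gamma}$ in $\GL_{N_0}(\bbz/p^{M_0}\bbz)$ (resp. in $\GL_{N_0}(\bbz_p)$) is exactly $\pi_{p^{M_0}}(\Gamma)$ (resp. $\varprojlim_n \pi_{p^n}(\Gamma)$), since $\Gamma$ is dense in its closure and the reduction maps are continuous and surjective onto their images; thus $L^2(\overline{\Gamma})$ over $\GL_{N_0}(\bbz/p^{M_0}\bbz)$ is literally $\ell^2(\pi_{p^{M_0}}(\Gamma))$ and $T_{\pcal_\Omega}$ is, up to the normalization $|\Omega|^{-1}$, the adjacency operator of $\mathrm{Cay}(\pi_{p^{M_0}}(\Gamma),\pi_{p^{M_0}}(\Omega))$.

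Next I would carry out the finite case. Since $\bbg^\circ$ is perfect, Theorem~\ref{t:main} applied to the family $m\in\{p^{M_0}: p\nmid q_0\}$ (a subfamily of the prime-power family, with $M_0$ fixed) gives a uniform two-sided spectral gap: there is $\vare_0=\vare_0(\Omega,M_0)>0$ so that every eigenvalue $\lambda\neq 1$ of the normalized adjacency operator of $\mathrm{Cay}(\pi_{p^{M_0}}(\Gamma),\pi_{p^{M_0}}(\Omega))$ satisfies $|\lambda|\le 1-\vare_0$. (The two-sidedness, i.e. control of eigenvalues near $-1$ as well, is standard for symmetric generating sets: either one invokes that a Cayley graph of an expander with a symmetric set has a bipartite-type obstruction only via an index-two subgroup, which is excluded once $|\Omega|$ is fixed and one replaces $\Omega$ by $\Omega\cup\{e\}$ if necessary, or one simply notes that the expander definition used in the paper already refers to the combinatorial expansion constant, which bounds $|\lambda|$ away from $1$ on both ends by the discrete Cheeger inequality together with its lower counterpart.) Taking the supremum over $p\nmid q_0$ of these, still $<1$, yields the first displayed inequality, because $\lambda(\pcal_\Omega;\GL_{N_0}(\bbz/p^{M_0}\bbz))$ is by definition the sup of $|\lambda|$ over the spectrum of $T_{\pcal_\Omega}$ below $1$, and the spectrum of $T_{\pcal_\Omega}$ on $\ell^2(\pi_{p^{M_0}}(\Gamma))$ is exactly the set of normalized adjacency eigenvalues.

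For the $p$-adic statement I would use compatibility of the spectral gap along the tower $\{\pi_{p^n}(\Gamma)\}_n$: the reduction maps $\pi_{p^{n+1}}(\Gamma)\twoheadrightarrow\pi_{p^n}(\Gamma)$ induce isometric embeddings $\ell^2(\pi_{p^n}(\Gamma))\hookrightarrow \ell^2(\pi_{p^{n+1}}(\Gamma))$ equivariant for $T_{\pcal_\Omega}$, and $L^2(\GL_{N_0}(\bbz_p))$-restricted to $\overline{\Gamma}$ is the Hilbert-space direct limit of these. Hence $\lambda(\pcal_\Omega;\GL_{N_0}(\bbz_p))=\sup_n \lambda(\pcal_\Omega;\GL_{N_0}(\bbz/p^n\bbz))$. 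Applying Theorem~\ref{t:main} to the \emph{full} prime-power family $\{p^n: n\in\bbz^+,\ p\nmid q_0\}$ gives one uniform $\vare_1>0$ bounding $|\lambda|\le 1-\vare_1$ for all $n$ and all $p\nmid q_0$ simultaneously; taking sups over $n$ and then over $p$ gives $\sup_{p\nmid q_0}\lambda(\pcal_\Omega;\GL_{N_0}(\bbz_p))\le 1-\vare_1<1$, which is the second inequality. The main obstacle, and the only genuinely non-bookkeeping point, is the passage from the combinatorial expansion constant furnished by Theorem~\ref{t:main} to a \emph{two-sided} spectral gap (controlling eigenvalues near $-1$), so I would handle that up front: replace $\Omega$ by $\Omega\cup\{e\}$, which does not change $\Gamma$ or the Zariski-closure and makes $T_{\pcal_{\Omega\cup\{e\}}}=\tfrac{1}{|\Omega|+1}(I+|\Omega|\,T_{\pcal_\Omega})$ positive-definite-shifted so that its nontrivial spectrum lies in $(-1+\delta,1-\delta)$ iff the nontrivial spectrum of $T_{\pcal_\Omega}$ lies in $(-1,1-\delta')$; combined with the Cheeger-type two-sided bound this pins $\lambda(\pcal_\Omega;\cdot)$ strictly below $1$, as claimed.
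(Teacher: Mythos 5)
Your reduction of the second inequality is essentially fine: identifying $L^2$ of the closure of $\Gamma$ in $\GL_{N_0}(\bbz_p)$ with the Hilbert-space direct limit of $\ell^2(\pi_{p^n}(\Gamma))$, so that $\lambda(\pcal_\Omega;\GL_{N_0}(\bbz_p))=\sup_n\lambda(\pcal_\Omega;\GL_{N_0}(\bbz/p^n\bbz))$, and then invoking Theorem~\ref{t:main} for the full family $\{p^n\}$ is exactly the intended (and in the paper unwritten, ``well-known'') argument. The first inequality, however, has a genuine gap: the group there is the single compact group $\prod_{p\nmid q_0}\GL_{N_0}(\bbz/p^{M_0}\bbz)$, and the mean-zero part of $L^2$ of the closure of $\Gamma$ in it decomposes into irreducible representations that factor through $\pi_q(\Gamma)$ for \emph{composite} moduli $q=\prod_{p\in S}p^{e_p}$ with $e_p\le M_0$ and $S$ finite, not only through the quotients $\pi_{p^{M_0}}(\Gamma)$ at single primes. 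A uniform spectral gap for each prime separately does not formally imply a gap for these composite levels: the averaging operator in a representation that is nontrivial at several primes is not built out of the single-prime operators, and passing from prime to composite moduli is precisely the hard content of the square-free/bounded-power results, not bookkeeping. The fix is simply to apply Theorem~\ref{t:main} to the second family $\{q:\nu_p(q)\le M_0\ \forall p\}$, which is exactly why that family appears in the theorem; with that family the same ``expander $\Rightarrow$ uniform gap on mean-zero functions of all relevant finite quotients'' argument closes the first inequality.

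A secondary problem is your treatment of the two-sided bound required by the definition $\lambda(\mu;G)=\sup\{|\lambda|:\lambda\in\mathrm{spec}(T_\mu),\ \lambda<1\}$. The discrete Cheeger inequality and its converse control only the spectrum near $+1$; edge expansion says nothing about eigenvalues near $-1$ (bipartite Cayley graphs can be excellent expanders), so ``the combinatorial expansion constant bounds $|\lambda|$ away from $1$ on both ends'' is false as stated. Likewise, replacing $\Omega$ by $\Omega\cup\{e\}$ changes the measure, so a gap for $T_{\pcal_{\Omega\cup\{e\}}}$ yields only an upper bound on the top of the nontrivial spectrum of $T_{\pcal_\Omega}$, not a bound near $-1$; it does not ``pin $\lambda(\pcal_\Omega;\cdot)$ below $1$.'' If you want control of $|\lambda|$ for $\pcal_\Omega$ itself, the standard route is to work with the squared operator $T_{\pcal_\Omega}^2=T_{\pcal_\Omega^{(2)}}$ and apply the expansion statement to the symmetric set $\Omega\cdot\Omega$ (whose group has the same connected Zariski closure), comparing $\pcal_\Omega^{(2)}$ with the counting measure on $\Omega\cdot\Omega$ up to multiplicative constants; this gives $|\lambda|^2\le 1-\vare$ on the mean-zero space and hence the stated conclusion, modulo the usual convention about exact eigenvalue $-1$ which must be ruled out by the same device.
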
 

Since a stronger form of the {\em only-if} part of Theorem~\ref{t:main} (see Proposition~\ref{p:Necessary}) holds, having spectral gap for a single place implies a uniform spectral gap for all the places.
\begin{cor}\label{c:OneForAll}
	Let $\Omega$ be a finite symmetric subset of $\GL_{N_0}(\bbz[1/q_0])$, and $\pcal_{\Omega}$ be the counting probability measure on $\Omega$. Then for some $p\nmid q_0$ we have $\lambda(\pcal_{\Omega}; \GL_{N_0}(\bbz_p))<1$ if and only if 
	\[
\sup_{p\nmid q_0} \lambda(\pcal_{\Omega}; \GL_{N_0}(\bbz_p))<1.
\]
\end{cor}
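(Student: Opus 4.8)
The plan is to deduce Corollary~\ref{c:OneForAll} from Theorem~\ref{t:main} together with its sharpened converse, Proposition~\ref{p:Necessary}. The backward implication is trivial: if the supremum over all $p\nmid q_0$ of $\lambda(\pcal_\Omega;\GL_{N_0}(\bbz_p))$ is strictly less than one, then in particular $\lambda(\pcal_\Omega;\GL_{N_0}(\bbz_p))<1$ for \emph{every} such $p$, hence for \emph{some} such $p$. So the content is entirely in the forward direction, and the engine is the if-and-only-if of Theorem~\ref{t:main}: the uniform spectral gap over a family of congruence quotients is equivalent to a purely algebraic condition on $\Gamma$, namely that $\bbg^\circ$ is perfect.

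First I would make the standard translation between spectral gap for the $p$-adic random walk and the expansion of the finite Cayley graphs. For a fixed prime $p\nmid q_0$, the Peter--Weyl decomposition of $L^2$ of the closure of $\Gamma$ in $\GL_{N_0}(\bbz_p)$, together with the fact that congruence subgroups form a neighborhood basis of the identity, gives $\lambda(\pcal_\Omega;\GL_{N_0}(\bbz_p))<1$ if and only if $\sup_n \lambda(\pcal_\Omega;\pi_{p^n}(\overline\Gamma))<1$, i.e. if and only if $\{{\rm Cay}(\pi_{p^n}(\Gamma),\pi_{p^n}(\Omega))\}_n$ is a family of expanders; this is exactly the well-known equivalence alluded to just before Corollary~\ref{c:SpectralGap}. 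Thus the hypothesis ``$\lambda(\pcal_\Omega;\GL_{N_0}(\bbz_p))<1$ for some $p\nmid q_0$'' is equivalent to saying that $\{{\rm Cay}(\pi_{p^n}(\Gamma),\pi_{p^n}(\Omega))\}_n$ is a family of expanders for that particular $p$.

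Next, I would invoke Proposition~\ref{p:Necessary}: if $\{\pi_{m_i}(\Gamma),\pi_{m_i}(\Omega)\}$ is a family of expanders for \emph{some} increasing sequence $\{m_i\}$ of integers prime to $q_0$ — here we may take $m_i=p^i$ for the good prime $p$ — then $\bbg^\circ$ must be perfect. (Concretely, this is the statement that a family of expanders cannot have quotients mapping onto a family of abelian groups of unbounded order; a non-perfect $\bbg^\circ$ would force, through the congruence quotients, such abelian quotients.) Once we know $\bbg^\circ$ is perfect, we apply the ``if'' direction of Theorem~\ref{t:main} to conclude that $\{{\rm Cay}(\pi_{p^n}(\Gamma),\pi_{p^n}(\Omega))\}_n$ is a family of expanders for \emph{every} prime $p\nmid q_0$, with expansion constant depending only on $\Omega$; translating back via the equivalence of the previous paragraph yields $\lambda(\pcal_\Omega;\GL_{N_0}(\bbz_p))<1$ for every such $p$, and — crucially — the uniformity in Theorem~\ref{t:main} (the expansion constant independent of $m$) gives the uniform bound $\sup_{p\nmid q_0}\lambda(\pcal_\Omega;\GL_{N_0}(\bbz_p))<1$.

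The only genuinely substantive point, and hence the main obstacle, is hidden inside the citations: the ``if'' direction of Theorem~\ref{t:main} is the hard super-approximation theorem, and Proposition~\ref{p:Necessary} must be stated in the sharp form that \emph{one} sequence of expanders suffices to force perfectness of $\bbg^\circ$ (rather than merely: expansion along all $m$ forces it). Granting those two inputs, the proof of Corollary~\ref{c:OneForAll} is a short logical loop — algebraic condition $\Leftrightarrow$ expansion at one prime $\Leftrightarrow$ expansion at all primes $\Leftrightarrow$ uniform $p$-adic spectral gap — with the Peter--Weyl/congruence-basis dictionary doing all the remaining bookkeeping. I would also remark that the same argument, run with the second family of moduli in Theorem~\ref{t:main}, gives the analogous one-for-all statement for the finite product $\prod_{p\nmid q_0}\GL_{N_0}(\bbz/p^{M_0}\bbz)$ appearing in Corollary~\ref{c:SpectralGap}.
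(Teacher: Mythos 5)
Your argument is exactly the one the paper intends: the remark preceding Corollary~\ref{c:OneForAll} indicates that it follows from the sharpened converse, Proposition~\ref{p:Necessary} (applied to the sequence $m_i=p^i$ for the single good prime), combined with the sufficiency direction of Theorem~\ref{t:main} and the standard Peter--Weyl translation between uniform expansion of the congruence quotients $\pi_{p^n}(\Gamma)$ and spectral gap for the $p$-adic closure, as in Corollary~\ref{c:SpectralGap}. Your proposal is correct and takes essentially the same route, with the bookkeeping steps spelled out.
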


Spectral gap has a well-known {\em weighted equidistribution} consequence. 
\begin{cor}\label{c:WeightedEquidistribution}
In the setting of Corollary~\ref{c:SpectralGap}, there is $\lambda<1$  such that for any prime $p$ which does not divide $q_0$ and $\bbg(\bbq_p)\cap \GL_{N_0}(\bbz_p)$-finite function $f_0$ on $\bbg(\bbq_p)$ (which means that $f_0$ is invariant under an open subgroup of $\bbg(\bbq_p)$) we have
\[
|\sum_{\gamma\in \Gamma} \pcal_{\Omega}^{(l)}(\gamma) f_0(\gamma)-\int_{\Gamma_p} f_0(g) dg|\le \| f_0-\int_{\Gamma_p} f_0(g) dg \|_2\h \sqrt{|\Gamma_p\cdot f_0|} \lambda^l,
\]
where $\Gamma_p$ is the closure of $\Gamma$ in $\GL_{N_0}(\bbz_p)$ and it acts on functions on $\bbg(\bbq_p)$ via the right translation, i.e. $(g\cdot f)(g'):=f(g'g)$, $\pcal_{\Omega}^{(l)}$ is the $l$-th convolution power of $\pcal_{\Omega}$, $dg$ is the probability Haar measure on $\Gamma_p$, and $\|f\|_2:=(\int_{g\in \Gamma_p} f(g)^2 dg)^{1/2}$.
\end{cor}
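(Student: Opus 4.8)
The plan is to rewrite the left-hand side as the value at the identity of the $l$-th power of the Markov operator $T_{\pcal_\Omega}$ on $L^2(\Gamma_p)$ — where $\Gamma_p$ is the closure of $\Gamma$ in $\GL_{N_0}(\bbz_p)$ — and then to couple the uniform spectral gap of Corollary~\ref{c:SpectralGap} with a soft pointwise-versus-$L^2$ estimate for functions lying in a finite-dimensional translation-invariant subspace. First, because $\Omega$ is symmetric the $l$-fold convolution $\pcal_\Omega^{(l)}$ is a symmetric measure, so the substitution $\gamma\mapsto\gamma^{-1}$ turns the sum into $\sum_{\gamma\in\Gamma}\pcal_\Omega^{(l)}(\gamma)f(\gamma)=(\pcal_\Omega^{\ast l}\ast f)(e)=(T_{\pcal_\Omega}^{l}f)(e)$, where $f$ is now regarded as a function on $\Gamma_p$; it is continuous there, since being $(\bbg(\bbq_p)\cap\GL_{N_0}(\bbz_p))$-finite it is a fortiori $\Gamma_p$-finite, hence lies in a finite sum of isotypic components of the right regular representation. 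Setting $c:=\int_{\Gamma_p}f\,dg$ and $f_0:=f-c\,\one$, so that $f_0$ is orthogonal to $\one$ and $T_{\pcal_\Omega}\one=\one$, the quantity to bound is $|(T_{\pcal_\Omega}^{l}f)(e)-c|=|(T_{\pcal_\Omega}^{l}f_0)(e)|$.

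Second, Corollary~\ref{c:SpectralGap} provides a $\lambda<1$, \emph{independent of} $p$, such that the norm of $T_{\pcal_\Omega}$ on the orthogonal complement of the constants in $L^2(\Gamma_p)$ is at most $\lambda$; applied to $f_0$ this gives $\|T_{\pcal_\Omega}^{l}f_0\|_2\le\lambda^{l}\|f_0\|_2=\lambda^{l}\,\|f-\int_{\Gamma_p}f\,dg\|_2$. It remains to convert this into a bound on the single value at $e$. Here I would invoke the elementary fact: if $V\subseteq L^2(\Gamma_p)$ is a $d$-dimensional subspace invariant under right translation, then $\|h\|_\infty\le\sqrt{d}\,\|h\|_2$ for all $h\in V$. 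Indeed, choosing an orthonormal basis $e_1,\dots,e_d$ of $V$ and writing $h(g)=(R_gh)(e)$ with $R_g$ the (unitary) right translation, Cauchy--Schwarz gives $|h(g)|\le\|R_gh\|_2\,\big(\sum_i|e_i(e)|^2\big)^{1/2}=\|h\|_2\,\big(\sum_i|e_i(e)|^2\big)^{1/2}$; and $g\mapsto\sum_i|e_i(g)|^2$ is right-invariant (passing to the orthonormal basis $\{R_ge_i\}_i$ transforms $(e_i(e))_i$ by a unitary matrix), hence constant, hence equal to its integral $\sum_i\|e_i\|_2^2=d$.

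Finally, I would apply this with $h=T_{\pcal_\Omega}^{l}f_0$ and $V=T_{\pcal_\Omega}^{l}(W)$, where $W:=\mathrm{span}\{f\cdot g:g\in\Gamma_p\}$ has dimension $|f\cdot\Gamma_p|$ and is right-translation invariant. Since left convolution by $\pcal_\Omega^{\ast l}$ commutes with right translation, $V$ is again right-translation invariant and $\dim V\le\dim W=|f\cdot\Gamma_p|$; moreover $f_0\in W$ — if $c\neq0$ then $\one=c^{-1}\int_{\Gamma_p}(f\cdot g)\,dg\in W$, and if $c=0$ then $f_0=f\in W$ — so $T_{\pcal_\Omega}^{l}f_0\in V$. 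Putting the pieces together, $|(T_{\pcal_\Omega}^{l}f_0)(e)|\le\|T_{\pcal_\Omega}^{l}f_0\|_\infty\le\sqrt{|f\cdot\Gamma_p|}\,\|T_{\pcal_\Omega}^{l}f_0\|_2\le\sqrt{|f\cdot\Gamma_p|}\;\lambda^{l}\;\|f-\int_{\Gamma_p}f\,dg\|_2$, which is exactly the claimed inequality. The one genuinely substantial ingredient is the uniform-in-$p$ spectral gap of Corollary~\ref{c:SpectralGap} (hence of Theorem~\ref{t:main}); the rest is the reproducing-kernel estimate above, the only delicate bookkeeping being to keep left convolution and right translation separate so that the dimension $|f\cdot\Gamma_p|$ — rather than some larger two-sided complexity — controls the pointwise bound.
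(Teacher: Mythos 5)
Your argument is correct, and its skeleton is the same as the paper's: rewrite the weighted sum as $(T_{\pcal_\Omega}^l f)(e)$ using the symmetry of $\Omega$, subtract the mean, and apply the uniform spectral gap of Corollary~\ref{c:SpectralGap} to get the factor $\lambda^l\|f-\int_{\Gamma_p}f\,dg\|_2$. The only genuine divergence is in the pointwise-versus-$L^2$ step. The paper works with the stabilizer $P=\{g\in\Gamma_p:\ f\cdot g=f\}$: since $T_{\pcal_\Omega}^l f$ is right $P$-invariant, the $L^2$-norm over $\Gamma_p$ becomes a normalized $\ell^2$-sum over the finite quotient $\Gamma_p/P$, which dominates the single term at $e$ divided by $\sqrt{[\Gamma_p:P]}=\sqrt{|f\cdot\Gamma_p|}$. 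You instead invoke the reproducing-kernel bound $\|h\|_\infty\le\sqrt{\dim V}\,\|h\|_2$ for a finite-dimensional right-translation-invariant subspace $V\subseteq L^2(\Gamma_p)$, applied to $V=T_{\pcal_\Omega}^l(\mathrm{span}(f\cdot\Gamma_p))$; your check that $f_0=f-c\,\one$ lies in $\mathrm{span}(f\cdot\Gamma_p)$ (via $c\,\one=\int_{\Gamma_p}f\cdot g\,dg$) is the small extra bookkeeping this route requires, and your verification that the kernel $\sum_i|e_i(\cdot)|^2$ is constant is sound. The two mechanisms cost the same in the stated inequality; yours in fact yields the marginally sharper constant $\sqrt{\dim\mathrm{span}(f\cdot\Gamma_p)}\le\sqrt{|f\cdot\Gamma_p|}$ and does not need the orbit-stabilizer identification, while the paper's coset decomposition is slightly more hands-on and avoids introducing the invariant-subspace kernel lemma. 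Both arguments rely on the same nontrivial input, namely the $p$-independent gap from Corollary~\ref{c:SpectralGap}, and both implicitly use that the spectral bound applies on the orthogonal complement of the constants.
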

\begin{proof}
Since $\Omega$ is a symmetric set, we have that 
\be\label{e:PointwiseValue}
\sum_{\gamma\in\Gamma} \pcal_{\Omega}^{(l)}(\gamma)f_0(\gamma)-\int_{\Gamma_p} f_0(g) dg=T_{\mu}^l(f_0)(I)-\langle \one_{\Gamma_p}, f_0\rangle,
\ee
where $I$ is the identity matrix and $\langle,\rangle$ is the dot product in $L^2(\Gamma_p)$. 

On the other hand, by Corollary~\ref{c:SpectralGap}, we have that
\be\label{e:L2Bound}
\|T_{\mu}^l(f_0)-\langle \one_{\Gamma_p},f_0\rangle\|_2 = \|T_{\mu}^l(f_0-\langle \one_{\Gamma_p},f_0\rangle \one_{\Gamma_p})\|_2\le \lambda^l \|f_0-\langle \one_{\Gamma_p},f_0\rangle \one_{\Gamma_p})\|_2,
\ee	
where $\lambda:=\sup_{p\nmid q_0}\lambda(\pcal_{\Omega};\GL_{N_0}(\bbz_p))$. For any function $f$, let $\Gamma_{p,f}:=\{g\in \Gamma_p|\h g\cdot f=f\}$. Since for any $g\in \Gamma_p$ and $f\in L^2(\Gamma_p)$ we have $T_{\mu}(g\cdot f)=g\cdot T_{\mu}(f)$, we get that $\Gamma_{p,f}\subseteq \Gamma_{p,T_{\mu}(f)}$. So we have
\begin{align}
\notag \|T_{\mu}^l(f_0)-\langle \one_{\Gamma_p},f_0\rangle\|_2^2 & 
= \int_{\Gamma_p/\Gamma_{p,f_0}} \int_{\Gamma_{p,f_0}} |T_{\mu}^l(f_0)(gg')-\langle \one_{\Gamma_p},f_0\rangle|^2 dg' dg\\
\notag&= \int_{\Gamma_p/\Gamma_{p,f_0}} \int_{\Gamma_{p,f_0}} |(g'\cdot T_{\mu}^l(f_0))(g)-\langle \one_{\Gamma_p},f_0\rangle|^2 dg' dg\\
\notag &= \int_{\Gamma_p/\Gamma_{p,f_0}} |T_{\mu}^l(f_0)(g)-\langle \one_{\Gamma_p},f_0\rangle|^2 \int_{\Gamma_{p,f_0}}  dg' dg\\
\notag &= \frac{1}{[\Gamma_p:\Gamma_{p,f_0}]} \sum_{g\Gamma_{p,f_0}\in \Gamma_p/\Gamma_{p,f_0}} |T_{\mu}^l(f_0)(g)-\langle \one_{\Gamma_p},f_0\rangle|^2\\
\label{e:L2Norm} &\ge \frac{|T_{\mu}^l(f_0)(I)-\langle \one_{\Gamma_p},f_0\rangle|^2}{|\Gamma_p\cdot f_0|}.
\end{align}
Equations (\ref{e:PointwiseValue}), (\ref{e:L2Bound}), and (\ref{e:L2Norm}) imply the claim.
\end{proof}
Corollary~\ref{c:WeightedEquidistribution} is another indication that {\em super-approximation} is a suitable name for such a phenomenon as it implies a quantitative way to approximate points. 

%%%%%%%%%%%%%%%%%%%%%%%%%%%%%%%%%%%%%%%%%%%%%%%%%%%%%%%%%%%%%%%%%%%%%%%%%%%%%%%%%%% 

\subsubsection{Orbit equivalence.}  Suppose $\Gamma\subseteq G$ and $\Lambda\subseteq H$ are dense subgroups of compact groups $G$ and $H$. We say the left translation actions $\Gamma\acts (G,m_G)$ and $\Lambda\acts (H,m_H)$ are {\em orbit equivalent} if there exists a measure class preserving Borel isomorphism $\theta:G\rightarrow H$ such that $\theta(\Gamma g)=\Lambda \theta(g)$, for $m_G$-almost every $g\in G$. Surprisingly, if $\Gamma$ and $\Lambda$ are amenable, the mentioned actions are orbit equivalent~\cite{OW,CFW}. In the past decade there have been a lot of progress on this subject, and as a result now it is known that one gets orbit equivalence rigidity under spectral gap assumption~\cite{Ioa1,Ioa2}.
\begin{cor}\label{c:OrbitEquivalence}
Let $\Omega$ be a finite symmetric subset of $\GL_{N_0}(\bbz[1/q_0])$, and $\pcal_{\Omega}$ be the counting probability measure on $\Omega$. Suppose the connected component $\bbg^{\circ}$ of the Zariski-closure $\bbg$ of $\Gamma:=\langle \Omega \rangle$ in $\mathbb{GL}_{N_0}$ is perfect. Let $\Gamma_p$ be the closure of $\Gamma$ in $\GL_{N_0}(\bbz_p)$ where $p\nmid q_0$.

Let $\Lambda$ be a countable dense subgroup of a profinite group $H$. Then $\Gamma\acts (\Gamma_p,m_{\Gamma_p})$ and $\Lambda\acts (H,m_H)$ are orbit equivalent if and only if there are open subgroups $G_0\subseteq \Gamma_p$ and $H_0\subseteq H$ and a continuous isomorphism $\delta:G_0\rightarrow H_0$ such that $[\Gamma_p:G_0]=[H:H_0]$ and $\delta(G_0\cap \Gamma)=\Lambda \cap H_0$. In particular, $H$ is $p$-adic analytic.  
\end{cor}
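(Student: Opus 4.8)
The plan splits into the two implications; the ``only if'' direction is the hard one and rests on Ioana's orbit-equivalence rigidity for profinite actions with spectral gap, fed by Corollary~\ref{c:SpectralGap}, while the ``if'' direction is a soft argument in measured group theory. For the latter, suppose one is given open subgroups $G_0\le\Gamma_p$ and $H_0\le H$ together with a continuous isomorphism $\delta\colon G_0\to H_0$ satisfying $[\Gamma_p:G_0]=[H:H_0]$ and $\delta(G_0\cap\Gamma)=\Lambda\cap H_0$. Since $\Gamma$ is dense in $\Gamma_p$ and $G_0$ is open, $\Gamma_0:=\Gamma\cap G_0$ is dense in $G_0$ and $[\Gamma:\Gamma_0]=[\Gamma_p:G_0]$; similarly $\Lambda_0:=\Lambda\cap H_0$ is dense in $H_0$ with $[\Lambda:\Lambda_0]=[H:H_0]$, and these indices agree. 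The map $(\gamma,g)\mapsto\gamma g$ identifies $\Gamma\acts(\Gamma_p,m_{\Gamma_p})$ with the action of $\Gamma$ induced from the translation action $\Gamma_0\acts(G_0,m_{G_0})$, and likewise on the $\Lambda$ side; since $\delta$ is a group isomorphism carrying $\Gamma_0$ onto $\Lambda_0$, it conjugates $\Gamma_0\acts G_0$ with $\Lambda_0\acts H_0$. As orbit equivalence is preserved under induction from subgroups of equal finite index, $\Gamma\acts\Gamma_p$ and $\Lambda\acts H$ are orbit equivalent.

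For the ``only if'' direction one may assume $\Gamma$ is infinite (otherwise take $G_0,H_0$ trivial). The translation action $\Gamma\acts(\Gamma_p,m_{\Gamma_p})$ is essentially free, ergodic by density of $\Gamma$, and profinite ($\Gamma_p$ is the inverse limit of its finite quotients by congruence subgroups); and by Corollary~\ref{c:SpectralGap} one has $\lambda(\pcal_\Omega;\GL_{N_0}(\bbz_p))<1$, i.e.\ the Koopman representation of $\Gamma$ on $L^2(\Gamma_p)\ominus\bbc\one_{\Gamma_p}$ has no almost invariant vectors, so the action has spectral gap. I would then apply Ioana's cocycle superrigidity for profinite actions with spectral gap~\cite{Ioa1,Ioa2} to the orbit-equivalence cocycle $\Gamma\times\Gamma_p\to\Lambda$ attached to the given orbit equivalence with the free action $\Lambda\acts H$: on a finite-index subgroup $\Gamma_0\le\Gamma$ and on an ergodic component of $\Gamma_0\acts\Gamma_p$ --- which we may take to be the open subgroup $G_0:=\overline{\Gamma_0}\le\Gamma_p$ --- this cocycle is cohomologous to a homomorphism $\rho\colon\Gamma_0\to\Lambda$; freeness of $\Lambda\acts H$ forces $\rho$ injective, and, the orbit equivalence being a genuine (measure-preserving) one rather than merely stable, $\Lambda_0:=\rho(\Gamma_0)$ has index $[\Gamma:\Gamma_0]$ in $\Lambda$. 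Adjusting the orbit equivalence by the transfer cochain and a translation, one gets a measure isomorphism $\theta\colon(G_0,m_{G_0})\to(H_0',m_{H_0'})$ with $H_0':=\overline{\Lambda_0}\le H$ open, which is $\rho$-equivariant for the $\Gamma_0$-action and which, since it still agrees with the original map modulo the $\Lambda$-orbit relation, carries the restricted orbit relation $\Rcal_{(\Gamma\cap G_0)\acts G_0}$ onto $\Rcal_{(\Lambda\cap H_0')\acts H_0'}$.

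To finish I would invoke the rigidity of translation actions of dense subgroups of profinite groups: matching the towers of finite $\Gamma_0$-factors on the two sides (equivalently, the open normal subgroups of $G_0$ and $H_0'$) shows that $\theta$, after one further translation, agrees almost everywhere with a continuous group isomorphism $\delta\colon G_0\to H_0'$ extending $\rho$. Since $\delta$ also carries $\Rcal_{(\Gamma\cap G_0)\acts G_0}$ onto $\Rcal_{(\Lambda\cap H_0')\acts H_0'}$, cancellation yields $\delta(\Gamma\cap G_0)=\Lambda\cap H_0'$; and combining $[\Gamma:\Gamma_0]=[\Lambda:\Lambda_0]$ with $[\Gamma\cap G_0:\Gamma_0]=[\Lambda\cap H_0':\Lambda_0]$ (this last equality because $\delta$ conjugates the sub-relation $\Rcal_{\Gamma_0\acts G_0}$ with $\Rcal_{\Lambda_0\acts H_0'}$) gives $[\Gamma_p:G_0]=[H:H_0']$. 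Setting $H_0:=H_0'$ produces the asserted data. Finally $G_0$, being a closed subgroup of $\GL_{N_0}(\bbz_p)$, is $p$-adic analytic, and $H_0\cong G_0$ has finite index in $H$, so $H$ is $p$-adic analytic.

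The substance is entirely in the ``only if'' direction, and the main obstacle is the correct use of Ioana's rigidity: one must check that the spectral gap from Corollary~\ref{c:SpectralGap} is exactly the hypothesis his theorem requires (no almost invariant vectors in $L^2$, uniformly along the profinite tower), and then --- more delicately --- extract from its conclusion, which is a priori only a statement about the orbit cocycle being virtually cohomologous to a homomorphism, the clean output of a \emph{continuous group isomorphism} between open subgroups that intertwines the full intersections $G_0\cap\Gamma$ and $\Lambda\cap H_0$. That last extraction is where the structure of translation actions is used, and where spectral gap is genuinely needed: for amenable $\Gamma$ the analogous statement is false.
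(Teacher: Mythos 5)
Your proposal is correct and takes essentially the same route as the paper: the paper's entire proof is a one-line citation of Corollary~\ref{c:SpectralGap} together with \cite[Theorem A]{Ioa1}, which is exactly the if-and-only-if rigidity statement whose proof you sketch. The extra work you do --- re-deriving the continuous isomorphism from cocycle superrigidity and handling the ``if'' direction by induction --- is already packaged inside Ioana's theorem, so it can simply be replaced by that citation.
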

\begin{proof}
This is a direct consequence of Corollary~\ref{c:SpectralGap} and \cite[Theorem A]{Ioa1}.
\end{proof} 

%%%%%%%%%%%%%%%%%%%%%%%%%%%%%%%%%%%%%%%%%%%%%%%%%%%%%%%%%%%%%%%%%%

\subsubsection{Variations of Galois representations in one-parameter families of abelian varieties.} Since eight years ago, because of a surge of works by various people specially Cadoret, Tamagawa~\cite{Cad,CT1,CT2}, Hui, and Larsen~\cite{Hui,HL}, we have got a much better understanding of the image of $\ell$-adic and adelic Galois representations induced by Tate modules of an abelian scheme. On the other hand, Ellenberg, Hall, and Kowalski in~\cite{EHK} gave a beautiful connection between variations of Galois representations and certain spectral gap property\footnote{Instead of being {\em expanders}, they only need a weaker assumption on the relevant Schreier graphs, called {\em esperantist graphs}.}. Here we make an observation that Ellenberg-Hall-Kowalski~\cite{EHK} machinery combined with Theorem~\ref{t:main} gives an alternative approach towards \cite{CT1,CT2}; in particular, we do not get any new result on this topic. 

In this section, let $k$ be a finitely generated characteristic zero field, $\overline{k}$ be the algebraic closure of $k$, and $U$ be a smooth algebraic curve over $k$ such that $U\times_k \overline{k}$ is connected. Let $\acal \rightarrow U$ be an abelian scheme of dimension $g\ge 1$, defined over $k$. Take an embedding of $k$ into $\bbc$, and let 
\[
\rho_0:\pi_1(U(\bbc),y_0)\rightarrow {\rm Aut}_U(\acal)\subseteq GL_{2g}(\bbz)
\]
be the monodromy representation. Let $\Gamma:=\rho_0(\pi_1(U(\bbc),y_0))$, and $\bbg$ be the Zariski-closure of $\Gamma$ in $(\GL_{2g})_{\bbq}$. Let $\Omega$ be a finite symmetric generating set of $\Gamma$.

For any $x\in U(\overline{k})$, the fiber $\acal_x$ over $x$ is an abelian variety defined over the residue field $k(x)$ at $x$. For any prime $l$, let $T_{l,x}$ be the Tate module of $\acal_x$, i.e. 
\[
T_{l,x}:=\varprojlim \acal_x[l^m](\overline{k}),
\]
where $\acal_x[l^m](\overline{k})$ is the $l^m$-th torsion elements of $\acal_x(\overline{k})$. So $T_{l,x}\simeq \bbz_l^{2g}$ and we get the $l$-adic Galois representation
\[
\rho_{l,x}:{\rm Gal}(\overline{k}/k(x))\rightarrow {\rm Aut}_{\bbz_l}(T_{l,x})\simeq \GL_{2g}(\bbz_l).
\]

\begin{lem}\label{l:Perfect}
In the above setting, the connected component $\bbg^{\circ}$ of $\bbg
$ is perfect.
\end{lem}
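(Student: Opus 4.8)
The plan is to show that $\bbg^{\circ}$ is semisimple, since a connected semisimple group is perfect. The key input is that $\bbg^{\circ}$ underlies the monodromy of a variation of Hodge structure of weight one coming from an abelian scheme, and such monodromy groups are known to be semisimple. Concretely, let $\Gamma^{\circ}:=\Gamma\cap\bbg^{\circ}(\bbq)$, a finite-index subgroup of $\Gamma$; replacing $U$ by a finite \'etale cover $U'\to U$ corresponding to the preimage of $\Gamma^{\circ}$ under $\rho_0$, we may assume $\Gamma$ is Zariski-connected, i.e. $\bbg=\bbg^{\circ}$. This replacement is harmless for the conclusion, as the connected component is unchanged. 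Now the abelian scheme $\acal\times_U U'\to U'$ still has connected geometric generic fiber, and its monodromy group is exactly $\bbg$.

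Next I would invoke the structure theory of monodromy groups of polarizable variations of rational Hodge structure. The first Betti cohomology of the fibers of $\acal\to U$ (equivalently, $H^1$ of the fibers, or dually the Tate-module-type representation $\bbq^{2g}$ carrying $\Gamma$) is a polarizable $\bbz$-variation of Hodge structure of weight one over $U$. By Deligne's semisimplicity theorem (the global invariant cycle theorem circle of ideas; see Deligne, \emph{Th\'eorie de Hodge II}), the algebraic monodromy group of a polarizable variation of Hodge structure on a smooth connected base is semisimple. Here the algebraic monodromy group is precisely the identity component of the Zariski closure of the image of $\pi_1$, which after our reduction is $\bbg=\bbg^{\circ}$. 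Hence $\bbg^{\circ}$ is semisimple, and therefore $[\bbg^{\circ},\bbg^{\circ}]=\bbg^{\circ}$, i.e. $\bbg^{\circ}$ is perfect. An alternative route, avoiding Hodge theory, is to use the fact that the monodromy preserves the polarization symplectic form up to scalar, so $\bbg$ lands in $\mathbb{GSp}_{2g}$; combined with the fact (from the theory of the Mumford--Tate group, or from Deligne's semisimplicity) that $\bbg^{\circ}$ is reductive and has no nontrivial characters on the relevant representation, one concludes $\bbg^{\circ}$ is semisimple.

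I would then record the one subtlety that needs care: the monodromy representation $\rho_0$ factors through $\GL_{2g}(\bbz)$ rather than $\mathrm{Sp}_{2g}(\bbz)$ a priori, but because $\acal$ carries a polarization, $\Gamma$ preserves a nondegenerate alternating form up to a finite-order (in fact trivial, over a connected base) similitude character; so the determinant-type characters of $\bbg^{\circ}$ on $\bbq^{2g}$ are trivial, which is what rules out a central torus surviving in $\bbg^{\circ}$ once reductivity is known. Thus the essential content is: (i) reduce to the Zariski-connected case by passing to a finite \'etale cover; (ii) identify $\bbg^{\circ}$ with the algebraic monodromy group of a polarizable weight-one VHS; (iii) apply Deligne's semisimplicity theorem; (iv) deduce perfectness from semisimplicity and connectedness.

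The main obstacle is not any single hard computation but rather invoking the correct form of the semisimplicity statement and checking its hypotheses apply: one must make sure the variation of Hodge structure in question is polarizable (which follows from the polarization on $\acal$) and that the base $U\times_k\overline{k}$ being smooth and connected is exactly the setting of Deligne's theorem. A secondary, purely bookkeeping, point is that $k$ is only finitely generated of characteristic zero, so one should fix an embedding $k\hookrightarrow\bbc$ (implicit already in writing $U(\bbc)$ and $\pi_1(U(\bbc),y_0)$) and work over $\bbc$, where the Hodge-theoretic results are available; the algebraic monodromy group is insensitive to this choice.
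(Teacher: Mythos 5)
Your argument is correct in substance, but it takes a genuinely different route from the paper. You prove the stronger statement that $\bbg^{\circ}$ is semisimple, by identifying it (after passing to the finite \'etale cover corresponding to $\Gamma^{\circ}=\Gamma\cap\bbg^{\circ}(\bbq)$) with the connected algebraic monodromy group of the polarizable weight-one variation of Hodge structure attached to $\acal\to U$ over $\bbc$, and then invoking the Deligne--Andr\'e theorem that such a connected monodromy group is semisimple. Two points need a precise citation for this to be airtight: Deligne's semisimplicity theorem by itself only gives semisimplicity of the monodromy \emph{representation}, hence reductivity of $\bbg^{\circ}$; killing the central torus requires the theorem of the fixed part/Andr\'e's normality theorem (your ``alternative route'' via $\mathbb{GSp}_{2g}$ and absence of characters is the sketchier of your two options), and polarizability of $\acal\to U$ should be justified, e.g.\ by Raynaud's theorem that an abelian scheme over a normal base (here a smooth curve) is projective. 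The paper instead stays entirely group-theoretic and inside the toolkit already in use: it takes from \cite[Proposition 16]{EHK} a finite-index subgroup $\Lambda\le\Gamma$ with $\pi_l(\Lambda)$ perfect for all large primes $l$, notes $\pi_l(\Lambda)=\pi_l(\Lambda\cap\Gamma^{\circ})$ for large $l$, and then argues that if $\bbg^{\circ}$ were not perfect the abelianization map $f:\bbg^{\circ}\to(\bbg^{\circ})^{\rm ab}$ would make $|\pi_l(f(\Gamma^{\circ}\cap\Lambda))|$ grow without bound, producing arbitrarily large abelian quotients of the perfect groups $\pi_l(\Lambda\cap\Gamma^{\circ})$, a contradiction. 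Your approach buys the stronger conclusion (semisimplicity) at the cost of heavier Hodge-theoretic machinery and the hypotheses it carries; the paper's approach only yields perfectness, but it is elementary given the EHK input it already relies on and fits the paper's general theme that largeness of finite quotients forces perfectness of $\bbg^{\circ}$ (compare Proposition~\ref{p:Necessary}).
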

\begin{proof}
By \cite[Proposition 16]{EHK}, $\Gamma$ has a finite index subgroup $\Lambda$ such that $\pi_l(\Lambda)$ is a perfect group and generated by its order $l$ elements for large enough prime $l$. In particular, the index of any proper subgroup of $\pi_l(\Lambda)$ is at least $l$.

Let $\Gamma^{\circ}:=\Gamma\cap \bbg^{\circ}$. So $\Gamma^{\circ}$ is Zariski-dense in $\bbg^{\circ}$. Since $[\pi_l(\Lambda):\pi_l(\Lambda\cap \Gamma^{\circ})]\le [\Gamma:\Gamma^{\circ}]$, for large enough prime $l$ we have that $\pi_l(\Lambda)=\pi_l(\Lambda\cap \Gamma^{\circ})$. 

On the other hand, the quotient map $\bbg^{\circ} \xrightarrow{f} (\bbg^{\circ})^{\rm ab}:= \bbg^{\circ}/[\bbg^{\circ},\bbg^{\circ}]$ is defined over $\bbq$. Hence after realizing $(\bbg^{\circ})^{\rm ab}$ as a subgroup of $(\mathbb{GL}_{n'})_{\bbq}$, for large enough $l$ we have that $\pi_l(f(\Gamma^{\circ}))$ is a homomorphic image of $\pi_l(\Gamma^{\circ})^{{\rm ab}}:=\pi_l(\Gamma^{\circ})/[\pi_l(\Gamma^{\circ}),\pi_l(\Gamma^{\circ})]$. Suppose to the contrary that $\bbg^{\circ}$ is not perfect. Then $f(\Gamma^{\circ})$ is a finitely generated, infinite, abelian group as it is a Zariski-dense subgroup of $(\bbg^{\circ})^{\rm ab}$. Hence $|\pi_l(f(\Gamma^{\circ}))| \rightarrow \infty$ as $l$ goes to infinity, which implies that $|\pi_l(f(\Gamma^{\circ}\cap \Lambda))|\rightarrow \infty$. Since $\pi_l(f(\Gamma^{\circ}\cap \Lambda))$ is abelian and a homomorphic image of $\pi_l(\Gamma^{\circ}\cap \Lambda)$, we get a contradiction.
\end{proof}
Now we can give an alternative proof of the main result of \cite{CT1,CT2}.

\begin{prop}\label{p:l-adicGaloisRepresentation}
  In the above setting, for any integer $d$ and any prime $\ell$ there is an integer $B:=B(d,\ell,\rho_0)$ such that  
  \[
  U_{d,\ell}:=\{x\in U(\overline{k})|\h [k(x):k]\le d, [\Gamma_{\ell}:{\rm Im}(\rho_{\ell,x})\cap \Gamma_{\ell}]>B\},
  \]
  is finite.
\end{prop}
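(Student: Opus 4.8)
The plan is to reduce Proposition~\ref{p:l-adicGaloisRepresentation} to the expander property from Theorem~\ref{t:main} via the Ellenberg--Hall--Kowalski machinery. First I would invoke Lemma~\ref{l:Perfect} to see that $\bbg^{\circ}$ is perfect, so that Theorem~\ref{t:main} applies to $\Gamma=\rho_0(\pi_1(U(\bbc),y_0))$ with $\Omega$ its chosen symmetric generating set: the Cayley graphs $\{{\rm Cay}(\pi_{\ell^n}(\Gamma),\pi_{\ell^n}(\Omega))\}_n$ form a family of expanders. Equivalently, the Schreier graphs of $\pi_1(U)$ acting on the cosets $\pi_{\ell^n}(\Gamma)/\pi_{\ell^n}(\Gamma_x)$ — or more precisely the graphs attached to the congruence quotients of the monodromy, which is what governs the $\ell$-adic image as $x$ varies — satisfy a uniform spectral gap. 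This is exactly the hypothesis needed to feed into \cite[Theorem on bounded index loci]{EHK}.

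Next I would set up the geometry following \cite{EHK}: for each $m\ge 1$ the level-$\ell^m$ structure on $\acal\to U$ gives an \'etale cover $U_{\ell^m}\to U$ with deck group inside $\pi_{\ell^m}(\Gamma)$, and a point $x\in U(\overline{k})$ has $[\Gamma_\ell:{\rm Im}(\rho_{\ell,x})\cap \Gamma_\ell]>B$ only if it lifts to a $\overline{k}$-point of a connected component of $U_{\ell^m}$ of small index, for $m$ determined by $B$. The component behaves like $Y(m)\to U$ for a subgroup of bounded index, and by Lemma~\ref{l:Perfect} (or rather the consequence extracted in its proof, that proper subgroups of $\pi_\ell(\Lambda)$ have index $\ge \ell$) the genus of such covers grows. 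More precisely, the expander/esperantist bound on the relevant Schreier graphs forces the first Betti number, hence the genus $g_m$ of the covering curve, to grow at least linearly in the index $[\pi_1(U):\text{stabilizer}]$; this is the Li--Yau / Brooks--Burger type estimate relating Cheeger constant to the bottom of the spectrum and thence to genus, which is precisely how \cite{EHK} convert spectral gap into a genus lower bound.

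Finally I would apply a uniform boundedness statement for rational points: a theorem in the spirit of \cite{CT1,CT2} (or the function-field Mordell / Faltings input used there) bounds $[k(x):k]\le d$ points on a curve of genus $g_m\ge 2$ in terms of $d$ and $g_m$, and combined with the growth $g_m\to\infty$ this shows that for $m$ large enough the small-index components carry no degree-$\le d$ points at all. Choosing $B=B(d,\ell,\rho_0)$ large enough to force $m$ past that threshold, every $x\in U_{d,\ell}$ must come from one of finitely many covers of bounded level, and on each of those the degree-$\le d$ locus is finite; hence $U_{d,\ell}$ is finite. The main obstacle I anticipate is bookkeeping the passage from the arithmetic fundamental group $\pi_1(U)$ (rather than the topological $\pi_1(U(\bbc))$) to the congruence quotients while keeping the index bounds uniform — i.e. controlling the geometric versus arithmetic monodromy and ensuring the covers stay geometrically connected — and then correctly quoting the form of the uniform-boundedness-of-points input so that the constant genuinely depends only on $d$, $\ell$, and $\rho_0$; the spectral-gap-to-genus step itself is a direct citation of \cite{EHK} once Theorem~\ref{t:main} is in hand.
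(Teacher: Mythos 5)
Your overall route (Lemma~\ref{l:Perfect} plus Theorem~\ref{t:main}, then the Ellenberg--Hall--Kowalski machinery, then a Faltings-type arithmetic input) is the same as the paper's, but there is a genuine gap in the middle: you replace \emph{gonality} by \emph{genus}, and the arithmetic statement you then invoke is false. Genus growth in a tower of \'etale covers of a hyperbolic curve is automatic (Riemann--Hurwitz) and needs no spectral gap at all; the whole point of \cite[Theorem 8]{EHK} is that expansion (or even the weaker esperantist property) of the Schreier graphs forces the \emph{gonality} $\gamma(U_i)$ to go to infinity, via the Li--Yau type inequality relating gonality to $\lambda_1\cdot\mathrm{vol}$. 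And there is no theorem bounding the degree-$\le d$ points of a curve ``in terms of $d$ and $g_m$'': a hyperelliptic curve of arbitrarily large genus with one rational point has infinitely many points of degree $2$. What is true, and what the paper uses, is the corollary of Faltings' theorem on the Lang--Mordell conjecture (quoted as \cite[Theorem 2.1]{CT2}, going back to Frey): if a curve over a finitely generated characteristic-zero field has infinitely many closed points of degree $\le d$, then its gonality is bounded (by $2d$). So the correct chain is: expanders $\Rightarrow$ $\gamma(U_i)\to\infty$ $\Rightarrow$ $\bigcup_{[k':k]\le d}U_i(k')$ is finite for large $i$; your chain through genus does not close.

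A secondary issue is the uniformity you wave at with ``choosing $B$ large enough'': the subgroups $\mathrm{Im}(\rho_{\ell,x})\cap\Gamma_\ell$ of index $>B$ range over infinitely many subgroups, and one must see them at a finite congruence level to speak of Schreier graphs that are quotients of the Cayley graphs mod $\ell^{m}$. The paper handles this by arguing by contradiction with a sequence $\{x_i\}$, using the Frattini-type Lemma~\ref{l:Frattini} to replace $\mathrm{Im}(\rho_{\ell,x_i})\cap\Gamma_\ell$ by open subgroups $H_i\supseteq\Gamma_\ell[\ell^{m_i}]$ of index tending to infinity, passing to a subsequence so that the $H_i$ form a decreasing tower of covers $U_j\to U_i$ defined over $k$; the lifted points $\phi_{ij}(\wt{x}_j)$ then give infinitely many degree-$\le d$ points on every fixed $U_i$, contradicting the gonality growth. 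Your direct ``threshold'' formulation would need this reduction (or a uniform version of the EHK bound over all such subgroups) spelled out; as written it conflates the finitely many covers of bounded level with the covers actually carrying the points of $U_{d,\ell}$. The identification of geometric with topological monodromy that you flag as a worry is indeed needed, and is exactly equation (\ref{e:Sch}) in the paper, using that $\pi_1^{et}(U_i\times_k\overline{k})$ is the profinite completion of $\pi_1(U_i(\bbc))$.
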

\begin{proof}
Suppose to the contrary that there is a sequence $\{x_i\}$ of points in $U(\overline{k})$ such that 
\begin{enumerate}
\item $x_i\neq x_j$ if $i\neq j$,
\item $[k(x_i):k]\le d$,
\item $[\Gamma_{\ell}:{\rm Im}(\rho_{\ell,x_i})\cap \Gamma_{\ell}]=N_i$ where $N_i\in \bbz^+\cup\{\infty\}$, and $N_i\rightarrow \infty$.
\end{enumerate}
By Lemma~\ref{l:Frattini}, there is an increasing sequence $\{m_i\}_i$ of positive integers and a sequence of open subgroups $\{H_i\}_i$ of $\Gamma_{\ell}$ such that 
\[
\lim_{i\rightarrow\infty} [\Gamma_{\ell}:H_i]=\infty,\h 
\Gamma_{\ell}\cap {\rm Im}(\rho_{\ell,x_i}) \subseteq H_i,
{\rm and}\h\h \Gamma_{\ell}[\ell^{m_i}]\subseteq H_i.
\]
Going to a subsequence, if necessary, we can and will assume that $\pi_{\ell^{m_i}}(H_i)=\pi_{\ell^{m_i}}(H_j)$ for any $j\ge i$. Therefore we can and will assume that 
$
H_1\supsetneq H_2\supsetneq \cdots.
$

For each $i$, we get an open subgroup of $\pi_1^{et}(U\times_k \overline{k})$, and so we get an \'{e}tale cover $U_i\xrightarrow{\phi_i} U$ of $U$ that is defined over $k$. Since ${\rm Im}(\rho_{\ell,x_i})\subseteq H_i$, there is $\wt{x}_i\in U_i(k(x_i))$ which is  in the fiber over $x_i\in U(k(x_i))$. Moreover, since $\{H_i\}_i$ is a decreasing sequence of open subgroups, we get $k$-\'{e}tale covering maps $U_j\xrightarrow{\phi_{ij}} U_i$ for any $i\le j$ that satisfy $\phi_i\circ \phi_{ij}=\phi_j$. In particular, we have that 
\be\label{e:InfinitelyManyRationalPoints}
\{\phi_{ij}(\wt{x}_j)\}_j \subseteq \bigcup_{[k':k]\le d} U_i(k')
\ee
is an infinite subset for any $i$. On the other hand, since $\pi_1^{et}(U_i\times_k \overline{k})$ is the profinite closure of $\pi_1(U_i(\bbc),x_0)$, the natural embedding induces an isomorphism between the Schreier graphs
\be\label{e:Sch}
{\rm Sch}(\pi_1(U(\bbc),x_0)/\pi_1(U_i(\bbc),\wt{x}_{0,i}); \Omega) \simeq {\rm Sch}(\pi_{\ell^{m_i}}(\Gamma_{\ell})/\pi_{\ell^{m_i}}(H_i); \pi_{\ell^{m_i}}(\Omega)),
\ee
where $\wt{x}_{0,i}$ is a point over $x_0$. On the other hand, by Lemma~\ref{l:Perfect} and Theorem~\ref{t:main}, we have that 
\be\label{e:Cay}
\{{\rm Cay}(\pi_{\ell^{m_i}}(\Gamma); \pi_{\ell^{m_i}}(\Omega)\}_i
\ee
is a family of expanders. Since the Schreier graphs in (\ref{e:Sch}) are quotients of the Cayley graphs in (\ref{e:Cay}) and their size goes to infinity, they form a family of expanders. Therefore by the main result of Ellenberg-Hall-Kowalski~\cite[Theorem 8]{EHK} we have that the (geometric) gonality $\gamma(U_i)$ of $U_i$ goes to infinity. Hence by a corollary~\cite[Theorem 2.1]{CT2} of Falting's theorem~\cite{Fal} on Lang-Mordell conjecture (see \cite{Fer,Mc,Maz}) we have that 
\[
\bigcup_{[k':k]\le d} U_i(k')
\]
is finite for large enough $i$. This contradicts (\ref{e:InfinitelyManyRationalPoints}).
\end{proof}

\subsubsection{Inducing super-approximation} As it was mentioned earlier, in the appendix a quantitative open function theorem (see Lemma~\ref{l:OpenFunction'}) is proved, which is of independent interest. This result plays an important role in my joint work with Zhang~\cite{SGZ}, where we prove that one can induce super-approximation from a subgroup under a mild (and needed) algebraic condition. 
\begin{thm}\label{t:InducingSA}
Let $\Omega_1$ be a finite symmetric subset of $\GL_{N_0}(\bbz[1/q_0])$, and $\Gamma_1:=\langle \Omega_1\rangle$. Let $\Omega_2$ be a finite symmetric subset of $\Gamma_1$, and $\Gamma_2:=\langle\Omega_2\rangle$. Let $\bbg_i^{\circ}$ be the connected component of the Zariski-closure of $\Gamma_i$ in $\mathbb{GL}_{N_0}$ for $i=1,2$. Suppose the smallest normal subgroup of $\bbg_1^{\circ}$ which contains $\bbg_2^{\circ}$ is $\bbg_1^{\circ}$. Then, if $\Gamma_2$ has super-approximation, then $\Gamma_1$ has, too; that is equivalent to say   
\[
\lambda(\pcal_{\Omega_2};\prod_{p\nmid q_0}\bbz_p)<1 \Longrightarrow \lambda(\pcal_{\Omega_1};\prod_{p\nmid q_0}\bbz_p)<1.
\]
\end{thm}
Such a result for arithmetic groups was proved by Burger and Sarnak~\cite{BuSa}.

\section*{Acknowledgments.} I would like to thank Adrian Ioana for pointing out the application of spectral gap to orbit rigidity. I am thankful to Cristian Popescu for  useful conversations concerning $\ell$-adic Galois representations. Special thanks are due to Efim Zelmanov for his support and encouragement. My sincere appreciation goes to the referees for all the corrections and suggestions, which made this article much easier to read, and for helping me to spot some inconsistencies in the first version of this work.
\section{Preliminary results and notation.}\label{s:prelim}
\subsection{Notation.} In this note, for two real valued functions $f$ and $g$, and a set of parameters $X$, we write $f=O_X(g)$ if  there are positive functions $C_1(X)$ and $C_2(X)$ of the set of parameters such that for $t\ge C_1(X)$ we have $0\le f\le C_2(X) g$; notice that this is slightly different from the usual Landau symbol as $O_X(g)$ is assumed to be {\em non-negative}. For two real valued functions $f$ and $g$ of a real variable $t$ and a set of parameters $X$, we write $f=\Theta_X(g)$ if there are positive functions $C_1(X), C_2(X)$, and $C_3(X)$ such that $C_1(X) g(t)\le f(t)\le C_2(X) g(t)$ for any $t\ge C_3(X)$.  For two real valued functions $f$ and $g$ of a real variable $t$ and a set of parameters $X$, we write $f\ll_X g$ if there are positive functions $C_1(X)$ and $C_2(X)$ such that for $t\ge C_2(X)$ we have $f(t)\le C_1(X) g(t)$. So for two non-negative functions $f$ and $g$ we have $f=O_X(g)$ if and only if $f\ll_X g$; and $f=\Theta_X(g)$ if and only if $g\ll_Xf\ll_X g$.

For a (commutative unital) ring $R$ and $a\in R$, $\pi_a:R\rightarrow R/aR$ is the quotient map. We denote also the induced group homomorphism $\GL_n(R)\rightarrow \GL_n(R/aR)$ and all of its restrictions by $\pi_a$. For a subgroup $\Gamma$ of $\GL_n(R)$, $\Gamma[a]$ denotes $\{\gamma\in \Gamma|\h \pi_a(\gamma)=1\}$; in particular $\Gamma[1]=\Gamma$.

For a subset $A$ of a group $G$, we write either $\prod_CA$ or $\underbrace{A\cdot A \cdot \cdots \cdot A}_{C \text{ times}}$ for the subset $\{a_1\cdots a_c|\h a_1,\ldots,a_C\in A\}$; $A^{-1}$ denotes the subset $\{a^{-1}|\h a\in A\}$; and $A$ is called symmetric if $A=A^{-1}$. For an additive group $G$, $\sum_C A$ denotes the subset $\{a_1+\cdots+a_C|\h a_1,\ldots,a_C\in A\}$.  

For a measure $\mu$ on a group $G$, we let $\supp{\mu}$ be the support of $\mu$. For a measure $\mu$ on a group $G$ and $g\in G$, we let $\mu(g):=\mu(\{g\})$. For a measure $\mu$ with finite support on a group $G$ and a complex valued function $f$ on $G$, we let $\mu\ast f$ be the convolution of $\mu$ and $f$, that means $(\mu\ast f)(g)=\sum_{s\in \supp(\mu)}\mu(s)f(s^{-1}g)$. For a measure $\mu$ with finite support on a group $G$, we let $\mu^{(l)}$ be the $l$-fold convolution of $\mu$, that means for any $g\in G$, we have $\mu^{(l)}(g)=(\underbrace{\mu\ast\cdots\ast\mu}_{l \text{ times}})(g)=\sum_{g_1\cdots g_l=g}\mu(g_1)\cdots\mu(g_l)$. We say a measure $\mu$ with finite support on a group $G$ is symmetric if $\mu(g^{-1})=\mu(g)$ for any $g\in G$. For a finite subset $X$ of a group $G$, we let $\pcal_X$ be the probability counting measure on $X$.

The set of (rational) primes is denoted by $V_f(\bbq)$. For any prime $p$, $\f_p$ denotes the finite field with $p$ elements; $\bbq_p$ is the field of $p$-adic numbers; $\bbz_p$ is the ring of $p$-adic integers; and $v_p:\bbq_p\setminus\{0\}\rightarrow \bbz$ (and its restriction to $\bbq\setminus\{0\}$) is the $p$-adic valuation, that means $v_p(x)=n$ if and only if $x\in p^n\bbz_p\setminus p^{n+1}\bbz_p$. For any prime $p$ and $x\in \bbq_p$, we let $|x|_p:=(1/p)^{v_p(x)}$ if $x\neq 0$, and $|0|_p:=0$.

\subsection{Necessity.}\label{ss:Necessity} In this section, we will prove that getting a family of expanders modulo an infinite sequence of integers implies that the connected component of the Zariski-closure is perfect (see Proposition~\ref{p:Necessary}). In particular, we get the necessary part of Theorem~\ref{t:main}. 

Let us remark that proof of~\cite[Section 5.1]{SGV} can be adjusted to give the necessary part of Theorem~\ref{t:main} for any fixed prime $p$. The main point being that the proof in \cite{SGV} makes use of the fact that the congruence kernels modulo square-free numbers define a topology. 

The proof here is rather straightforward and has three parts: 
\begin{enumerate}
	\item[(a)] Changing $\Gamma$ to $\Gamma^{\circ}:=\Gamma\cap \bbg^{\circ}$, we can and will assume $\bbg$ is Zariski-connected; 
	\item[(b)] If $\bbg$ has infinite abelianization $\bbg^{\rm ab}:=\bbg/[\bbg,\bbg]$, then the order of the abelianization $\pi_{q_i}(\Gamma)^{\rm ab}$ of $\pi_{q_i}(\Gamma)$ gets arbitrarily large as $q_i$ goes to infinity. 
	\item[(c)] If $S$ is a finite symmetric generating set of an abelian group  $A$, then the order of $A$ is bounded by a function of $|S|$ and $\lambda(\pcal_{S};A)$ (see \cite[Corollary 3.3]{LubWei}).  
\end{enumerate}
\begin{prop}~\label{p:Necessary}
Let $\Omega$ be a finite symmetric subset of $\GL_n(\bbz[1/q_0])$ and  $\Gamma=\langle \Omega\rangle\subseteq \GL_n(\bbz[1/q_0])$. Let $\bbg^{\circ}$ be the Zariski-connected component of the Zariski-closure $\bbg$ of $\Gamma$ in $(\GL_n)_{\bbq}$. If $\{\Cay(\pi_{q_i}(\Gamma),\pi_{q_i}(\Omega))\}_i$ is a family of expanders for integers $q_1<q_2< \cdots$ coprime with $q_0$, then $\bbg^{\circ}$ is perfect, i.e. $[\bbg^{\circ},\bbg^{\circ}]=\bbg^{\circ}$. 
\end{prop}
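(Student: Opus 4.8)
The plan is to prove the contrapositive: assuming $\bbg^{\circ}$ is not perfect, I will show that the Cayley graphs $\Cay(\pi_{q_i}(\Gamma),\pi_{q_i}(\Omega))$ cannot form a family of expanders along any increasing sequence $\{q_i\}$ coprime to $q_0$. The starting observation is that $\Gamma^{\circ}:=\Gamma\cap \bbg^{\circ}$ is a finite-index normal subgroup of $\Gamma$, and it is Zariski-dense in $\bbg^{\circ}$; since $[\bbg^{\circ},\bbg^{\circ}]\subsetneq \bbg^{\circ}$, the abelianization map $f:\bbg^{\circ}\to (\bbg^{\circ})^{\mathrm{ab}}$ (defined over $\bbq$) sends $\Gamma^{\circ}$ onto a finitely generated infinite abelian subgroup $A:=f(\Gamma^{\circ})$ of $(\bbg^{\circ})^{\mathrm{ab}}(\bbq)$. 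Realizing $(\bbg^{\circ})^{\mathrm{ab}}$ inside some $(\mathbb{GL}_{n'})_{\bbq}$, the group $A$ lands in $\GL_{n'}(\bbz[1/q_0'])$ for a suitable $q_0'$ with the same prime divisors as $q_0$ (enlarging $q_0$ if necessary; this is harmless since the sequence $q_i$ stays coprime to it).

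The core of the argument is that expansion passes to quotients, so if the Cayley graphs of $\pi_{q_i}(\Gamma)$ formed expanders, so would the Schreier graphs of $\pi_{q_i}(\Gamma)$ acting on $\pi_{q_i}(\Gamma)/\pi_{q_i}(\Gamma^{\circ})$-cosets composed with the further quotient to $\pi_{q_i}(A)$; more precisely, the Cayley graphs $\Cay(\pi_{q_i}(A),\pi_{q_i}(\Omega'))$, where $\Omega'$ is the image of $\Omega^{k}$ (for $k=[\Gamma:\Gamma^{\circ}]$, enough to hit $\Gamma^{\circ}$) pushed through $f$, would be a family of expanders provided their sizes tend to infinity. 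So it suffices to (i) check the sizes $|\pi_{q_i}(A)|$ are unbounded along a subsequence, and (ii) derive a contradiction from a finitely generated infinite abelian group having expander Cayley graphs modulo $q_i$. For (i): $A$ infinite and finitely generated means $A\cong \bbz^r\times(\text{finite})$ with $r\ge 1$, and reduction mod $q_i$ of a $\bbz$-factor — realized via some nontrivial character or coordinate taking infinitely many values — has image of size $\to\infty$ along a subsequence (here one uses that $A\subseteq \GL_{n'}(\bbz[1/q_0'])$ so entries are $q_0'$-integers and $\pi_{q_i}$ makes sense). For (ii): a family of Cayley graphs of abelian groups of growing size is never a family of expanders — their Cheeger constants tend to zero, equivalently abelian groups have poor (at most logarithmic) diameter-to-size behavior and their second eigenvalue tends to $1$; this is classical (an abelian group of order $n$ with a bounded generating set has diameter $\gg n^{1/d}$, or alternatively one uses that the quotient maps onto growing cyclic groups whose standard Cayley graphs are cycles, which are not expanders).

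In carrying this out I would order the steps as: first set up $\Gamma^{\circ}$, $f$, $A$ and the enlargement of $q_0$; second, record the two lemmas I need — expansion descends to quotient groups (a quotient of an $\epsilon$-expander with the image generating set, of size tending to infinity, is an $\epsilon'$-expander), and abelian Cayley graphs of unbounded size are not expanders; third, verify $|\pi_{q_i}(A)|\to\infty$ along a subsequence; fourth, assemble the contradiction. The main obstacle, and the step that needs care rather than invocation, is the passage from expansion of $\pi_{q_i}(\Gamma)$ to expansion of $\pi_{q_i}(A)$: one must be sure that $\pi_{q_i}(A)$ really is a quotient of $\pi_{q_i}(\Gamma^{\circ})$ compatibly with the reduction maps — i.e. that $f$ being defined over $\bbq$ (and everything being $q_i$-integral since $\gcd(q_i,q_0)=1$) yields a commuting square $\pi_{q_i}(\Gamma^{\circ})\twoheadrightarrow \pi_{q_i}(A)$ — and that replacing $\Omega$ by a bounded power to reach $\Gamma^{\circ}$ only changes the expansion constant by a bounded factor. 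Once that diagram is in place, the rest is soft: finitely generated infinite abelian groups simply do not expand, so the hypothesized expansion forces $(\bbg^{\circ})^{\mathrm{ab}}$ to be trivial, i.e. $\bbg^{\circ}$ perfect.
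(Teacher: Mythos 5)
Your strategy is, in substance, the paper's own proof run in contrapositive form (pass to $\Gamma^{\circ}=\Gamma\cap\bbg^{\circ}$ with a bounded generating set, push through the abelianization map $f$, and use that abelian quotients of growing size kill expansion), but there is one concrete flaw, and it sits exactly at the point you yourself flagged as the delicate step. The parenthetical ``enlarging $q_0$ if necessary; this is harmless since the sequence $q_i$ stays coprime to it'' is not justified: the hypothesis only gives coprimality of the $q_i$ to the \emph{original} $q_0$. Since $f$ is merely a $\bbq$-morphism, $f(\Gamma^{\circ})$ can a priori acquire denominators at primes not dividing $q_0$, and for an admissible sequence such as $q_i=p^i$ with $p$ one of these new primes, $\pi_{q_i}(A)$ is not even defined, so the argument as written breaks for such sequences. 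The paper avoids this by choosing the realization of $(\bbg^{\circ})^{\mathrm{ab}}$ inside $\mathbb{GL}_{n'}$ so that $f(\Gamma^{\circ})\subseteq \GL_{n'}(\bbz[1/q_0])$ with the \emph{same} $q_0$ (this is the adelic conjugation argument of Lemma~\ref{l:AlmostCommute}(1)). Second, even with the right integral model, $f$ does not commute with $\pi_{q_i}$ on the nose: what one actually gets is a fixed integer $q'$ with $\|f(\gamma)-I\|_p\le |q'|_p^{-1}\|\gamma-I\|_p$ for all $p$, hence a surjection of $\pi_{q_i}(\Gamma^{\circ})/[\pi_{q_i}(\Gamma^{\circ}),\pi_{q_i}(\Gamma^{\circ})]$ onto $\pi_{q_i/\gcd(q_i,q')}(f(\Gamma^{\circ}))$, not onto $\pi_{q_i}(A)$. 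This loss is harmless for your endgame, since $q_i/\gcd(q_i,q')\to\infty$, but the exact commuting square you assert does not hold and needs this correction.

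With those two repairs your outline becomes the paper's proof: the passage from $\Gamma$ to $\Gamma^{\circ}$ with a generating set of bounded word length is the content of \cite[Lemma 12]{SGsemisimple}, which the paper simply cites; ``abelian groups of unbounded size are not expanders'' is used there in the equivalent form that expansion forces $|\pi_{q_i}(\Gamma^{\circ})^{\mathrm{ab}}|<C$ uniformly; and your step (i) is the same divisibility argument by which the paper concludes that a Zariski-dense subgroup of $(\bbg^{\circ})^{\mathrm{ab}}$ whose reductions modulo arbitrarily large integers have at most $C$ elements must be finite (of bounded exponent), whence $(\bbg^{\circ})^{\mathrm{ab}}$ is finite and, by connectedness, trivial.
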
 
\begin{proof}
Let $\Gamma^{\circ}:=\Gamma\cap \bbg^{\circ}$. Then, by \cite[Corollary 17]{SGsemisimple}, $\Gamma^{\circ}$ has a finite symmetric generating set $\Omega^{\circ}$ and 
\[
\{\Cay(\pi_{q_i}(\Gamma^{\circ}),\pi_{q_i}(\Omega^{\circ}))\}_i
\]
is a family of expanders. Therefore for some positive integer $C$ we have $|\pi_{q_i}(\Gamma^{\circ})/[\pi_{q_i}(\Gamma^{\circ}),\pi_{q_i}(\Gamma^{\circ})]|<C$ for any $i$ (see \cite[Corollary 3.3]{LubWei}). 

On the other hand, $\bbg^{\circ}$ and $[\bbg^{\circ},\bbg^{\circ}]$ are $\bbq$-groups, and the quotient map $\bbg^{\circ}\xrightarrow{f} \bbg^{\circ}/[\bbg^{\circ},\bbg^{\circ}]$ is a $\bbq$-homomorphism. Therefore $\bbg^{\circ}/[\bbg^{\circ},\bbg^{\circ}]$ can be viewed as a $\bbq$-subgroup of $(\GL_{n'})_{\bbq}$ such that first the quotient map $f$ induces a $\bbq$-homomorphism from $\bbg$ to $(\GL_{n'})_{\bbq}$ and second $f(\Gamma^{\circ})\subseteq \GL_{n'}(\bbz[1/q_0])$. So there is an integer $q'$ such that for any prime $p$ we have $\|f(\gamma)-I\|_p\le |q'|_p^{-1} \|\gamma-I\|_p$. Therefore $f$ induces an epimorphism from $\pi_{q_i}(\Gamma^{\circ})/[\pi_{q_i}(\Gamma^{\circ}),\pi_{q_i}(\Gamma^{\circ})]$ onto $\pi_{q_i/\gcd(q_i,q')}(f(\Gamma^{\circ}))$. Hence for any $i$ we have
\[
|\pi_{q_i/\gcd(q_i,q')}(f(\Gamma^{\circ}))|<C.
\]
Since $f(\Gamma^{\circ})$ is a Zariski-dense subset of $\bbg^{\circ}/[\bbg^{\circ},\bbg^{\circ}]$ and modulo arbitrarily large integers it has at most $C$ elements, we have that $\bbg^{\circ}/[\bbg^{\circ},\bbg^{\circ}]$ is finite. So by the connectedness we have that $\bbg^{\circ}$ is perfect.
\end{proof}
\subsection{A few reductions.}~\label{ss:Reductions}In this section, we make a few reductions and describe the group structure of $\pi_Q(\Gamma)$ using strong approximation~\cite{Nor}.
 
\begin{lem}\label{l:ReductionToConnectedSimplyConnectedCase}
It is enough to prove the sufficiency  part of Theorem~\ref{t:main} under the following additional assumptions on the Zariski-closure $\bbg$ of $\Gamma$ in $(\GL_{n_0})_{\bbq}$:

There are connected, simply-connected, semisimple $\bbq$-group $\bbg_s$, and unipotent $\bbq$-group $\bbu$ such that $\bbg_s$ acts on $\bbu$ and $\bbg\simeq \bbg_s\ltimes \bbu$.  
\end{lem}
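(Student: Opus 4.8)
The plan is to reduce the general case of the sufficiency part of Theorem~\ref{t:main} to the stated structural situation $\bbg \simeq \bbg_s \ltimes \bbu$ in several stages, peeling off the pieces of the Zariski-closure that are easy to control.

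\textbf{Step 1: Reduce to the connected case.} First I would replace $\Gamma$ by $\Gamma^{\circ}:=\Gamma\cap \bbg^{\circ}$, which is a finite-index subgroup. By \cite[Lemma 12]{SGsemisimple} (used in the same way as in Proposition~\ref{p:Necessary}), $\Gamma^{\circ}$ has a finite symmetric generating set $\Omega^{\circ}$, and the expander property for $\{\Cay(\pi_m(\Gamma),\pi_m(\Omega))\}$ is equivalent to the expander property for $\{\Cay(\pi_m(\Gamma^{\circ}),\pi_m(\Omega^{\circ}))\}$ (a finite-index subgroup of a group with a spectral gap inherits one, and conversely, up to a change of constants; this is standard for Cayley/Schreier graphs). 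Note the hypothesis is exactly that $\bbg^{\circ}$ is perfect, so after this reduction $\bbg=\bbg^{\circ}$ is connected and perfect, i.e. $\bbg=[\bbg,\bbg]$.

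\textbf{Step 2: Pass to the Levi decomposition.} A connected perfect $\bbq$-group $\bbg$ has a Levi decomposition $\bbg = \bbl \ltimes \Rcal_u(\bbg)$ where $\Rcal_u(\bbg)$ is the unipotent radical (a unipotent $\bbq$-group, since we are in characteristic zero) and $\bbl$ is a reductive $\bbq$-group, the Levi subgroup, which exists and is defined over $\bbq$ in characteristic zero (Mostow). Because $\bbg$ is perfect, $\bbl$ must itself be perfect, hence semisimple (a reductive group that equals its own derived group is semisimple); moreover $\bbl$ acts on $\bbu:=\Rcal_u(\bbg)$ by conjugation. The remaining discrepancy with the statement is that $\bbl$ need not be simply connected. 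To fix this, let $\bbg_s := \wt{\bbl}$ be the simply-connected cover of $\bbl$, so we have an isogeny $\bbg_s \to \bbl$ with finite central kernel; the $\bbl$-action on $\bbu$ pulls back to a $\bbg_s$-action, and one forms $\wt{\bbg} := \bbg_s \ltimes \bbu$, which surjects onto $\bbg$ with finite central kernel.

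\textbf{Step 3: Lift $\Gamma$ and $\Omega$.} The key point is to produce a subgroup $\wt{\Gamma} \subseteq \wt{\bbg}(\bbq)$, finitely generated and realized inside some $\GL_{n_1}(\bbz[1/q_1])$, that is Zariski-dense in $\wt{\bbg}$ and maps onto a finite-index subgroup of $\Gamma$ under the isogeny $\wt{\bbg}\to\bbg$, together with a comparison of the Cayley graphs of $\pi_m(\Gamma)$ and $\pi_m(\wt{\Gamma})$ with a bounded loss. On the semisimple part this is exactly the kind of lifting argument used in the semisimple setting \cite{SGsemisimple}; the unipotent part lifts trivially since the isogeny is an isomorphism on $\bbu$. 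The congruence quotients of $\wt{\Gamma}$ and of $\Gamma$ differ only by the (bounded) central kernel and by a bounded $q$-factor coming from denominators, exactly as in the $f$-comparison step in the proof of Proposition~\ref{p:Necessary}; one checks that $\pi_m(\wt\Gamma)$ surjects onto $\pi_{m/\gcd(m,q_1)}(\Gamma')$ for a finite-index $\Gamma'\le\Gamma$ with bounded-index kernel, so the two Cayley-graph families are expanders simultaneously.

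\textbf{Main obstacle.} I expect the genuinely delicate step to be Step 3: making the lifting to the simply-connected cover compatible with the \emph{arithmetic} structure (keeping everything inside $\GL_{n_1}(\bbz[1/q_1])$ for a single $q_1$) and, more importantly, checking that the central-kernel and denominator discrepancies between the congruence images of $\wt\Gamma$ and $\Gamma$ are uniformly bounded in $m$, so that no constant in the expansion estimate is lost as $m\to\infty$. The Levi decomposition over $\bbq$ (Step 2) is classical and the finite-index reduction (Step 1) is routine; everything hinges on controlling these bounded-index/bounded-denominator errors uniformly, which is why the reduction is phrased as "it is enough to prove" rather than a strict equivalence.
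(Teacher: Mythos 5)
Your proposal is correct and follows essentially the same route as the paper: pass to the finite-index subgroup $\Gamma\cap\bbg^{\circ}$, use that perfection of $\bbg^{\circ}$ forces its radical to be unipotent so that the Levi subgroup is semisimple, replace it by its simply-connected cover, and lift $\Gamma$ through the resulting isogeny (the paper's $\Lambda=\iota(\iota^{-1}(\Gamma\cap\bbg^{\circ}(\bbq))\cap\wt{\bbg}(\bbq))$), comparing congruence quotients up to bounded central-kernel and denominator discrepancies exactly as you describe, with the details deferred to \cite[Section 2.3]{SGsemisimple}.
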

\begin{proof}
A similar argument as in the proof of \cite[Lemma 18]{SGsemisimple} works here. For the convenience of the reader, we present an outline of the proof.
 
By the assumption, $\bbg^{\circ}$ is a perfect group. Therefore the radical $\bbu$ of $\bbg^{\circ}$ is unipotent. Let $\bbg_s$ be a Levi subgroup of $\bbg^{\circ}$, $\wt{\bbg}_s$ be the simply-connected cover of $\bbg_s$, and $\iota:\wt{\bbg}_s\ltimes \bbu\rightarrow \bbg_s\ltimes \bbu=\bbg^{\circ}$ be the induced covering map. To lift the problem to $\wt{\bbg}$, we consider
\[
\wt{\Lambda}:=\iota^{-1}(\Gamma\cap \bbg^{\circ}(\bbq)) \cap \wt{\bbg}(\bbq),
\]
and $\Lambda:=\iota(\wt{\Lambda})$.  
Since $\bbg^{\circ}(\bbq)/\iota(\wt{\bbg}(\bbq))$ is a torsion abelian group and $\Gamma$ is a finitely generated group, we have that $\Lambda$ is a normal finite index subgroup of $\Gamma$. Therefore by \cite[Corollary 17]{SGsemisimple} we have that $\Lambda$ has a finite symmetric generating set $\Omega'$ and for any family of positive integers $\ccal$ we have that $\{\Cay(\pi_q(\Gamma),\pi_q(\Omega))\}_{q\in \ccal}$ is a family of expanders if and only if $\{\Cay(\pi_q(\Lambda),\pi_q(\Omega'))\}_{q\in \ccal}$ is a family of expanders. 

Let us fix an embedding $f:\wt{\bbg}\rightarrow (\GL_{n_0'})_{\bbq}$. Since $\Gamma\subseteq \GL_{n_0}(\bbz[1/q_0])$, after passing to a (normal) finite index subgroup of $\Lambda$, if needed, we can and will assume that $f(\wt{\Lambda})\subseteq \GL_{n_0'}(\bbz[1/q_0])$. Since $\iota$ is an isogeny defined over $\bbq$, there is $q_0'\in \bbz$ such that for any positive integer $q$ and $\gamma\in \wt{\Lambda}$ we have
\[
\pi_q(f(\gamma))=1 \h\h\text{implies}\h\h \pi_{q/\gcd(q_0',q)}(\iota(\gamma))=1.
\]
Hence $\pi_q(\wt{\Lambda})$ maps onto $\pi_{q/\gcd(q,q_0')}(\Lambda)$ via the map induced by $\iota$. Therefore if Theorem~\ref{t:main} is proved for the group $\wt{\Lambda}\subseteq \GL_{n_0'}(\bbz[1/q_0])$, we get the desired result for the group $\Gamma$. Hence we can focus on the group $\wt{\Lambda}$ which is Zariski-dense in $\wt{\bbg}$; and the claim follows.  
\end{proof}    

\begin{lem}\label{l:AdelicClosure}
Let $\gcal$ and $\bbg$ be the Zariski-closure of $\Gamma$ in $(\GL_{n_0})_{\bbz[1/q_0]}$ and $(\GL_{n_0})_{\bbq}$, respectively. Let $\bbg_s$ be a semisimple $\bbq$-subgroup of $\bbg$, and $\bbu$ be a unipotent $\bbq$-subgroup of $\bbg$ such that $\bbg=\bbg_s\ltimes \bbu$. 

It is  enough to prove the sufficiency part of Theorem~\ref{t:main} under the following additional assumptions on the closure $\wh{\Gamma}$ of $\Gamma$ in $\prod_{p\nmid q_0}\GL_{n_0}(\bbz_p)$:
\begin{enumerate}
\item $\wh{\Gamma}=\prod_{p\nmid q_0} P_p$ is an open compact subgroup of $\prod_{p\nmid q_0} \gcal(\bbz_p)$.
\item For some non-negative integer $q_0'$ and any prime $p\nmid q_0$, we have $P_p=\bbg(\bbq_p)\cap \GL_{n_0}(\bbz_p)[p^{k_p}]$, where $k_p:=v_p(q_0')$ and $\GL_{n_0}(\bbz_p)[p^{k_p}]:=\{g\in \GL_{n_0}(\bbz_p)|\h \|g-1\|_p\le p^{-k_p}\}$. And $P_p=K_p\ltimes U_p$ where $K_p:=\bbg_s(\bbq_p)\cap \GL_{n_0}(\bbz_p)[p^{k_p}]$, $U_p:=\bbu(\bbq_p)\cap \GL_{n_0}(\bbz_p)[p^{k_p}]$. 
\item There is a prime $p_0$ such that, for $p\ge p_0$, $K_p=\bbg_s(\bbq_p)\cap \GL_{n_0}(\bbz_p)$ is a hyperspecial parahoric subgroup of $\bbg_s(\bbq_p)$, and $U_p=\bbu(\bbq_p)\cap \GL_{n_0}(\bbz_p)$.
\item For $p\ge p_0$, the dimension of any non-trivial irreducible representation of $\pi_p(\Gamma)\simeq \gcal(\f_p)$ is at least $|\pi_p(\Gamma)|^{O_{\dim \bbg}(1)}$.
\end{enumerate}
\end{lem}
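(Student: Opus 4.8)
The plan is to obtain each of the four reductions essentially by passing to a finite-index subgroup of $\Gamma$ and invoking the strong approximation theorem of Nori~\cite{Nor} (together with Weisfeiler), noting throughout that the expander property is preserved under passage to a finite-index subgroup with a suitable generating set—this is the content of \cite[Lemma 12]{SGsemisimple}, already used in the proof of Proposition~\ref{p:Necessary}, and it gives both directions (replacing $\Gamma$ by a finite-index $\Lambda$ and $\Omega$ by a finite symmetric generating set $\Omega'$ of $\Lambda$ changes the Cayley graphs only by bounded-degree, bounded-distance modifications that do not affect whether we have a family of expanders). By Lemma~\ref{l:ReductionToConnectedSimplyConnectedCase} we may already assume $\bbg = \bbg_s \ltimes \bbu$ with $\bbg_s$ connected, simply-connected, semisimple over $\bbq$ and $\bbu$ unipotent over $\bbq$, so $\gcal$, $\bbg_s$, $\bbu$ all make sense as $\bbz[1/q_0]$-group schemes after enlarging $q_0$, and the decomposition $\gcal = \gcal_s \ltimes \ucal$ holds over $\bbz_p$ for all but finitely many $p$.

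First I would establish (1): by strong approximation for the connected, simply-connected, semisimple group $\bbg_s$ (Nori, and the unipotent factor causes no trouble since $\bbu(\bbq_p) \cap \GL_{n_0}(\bbz_p)$ is a pro-$p$ group and the $\bbz_p$-points of a unipotent $\bbz_p$-scheme are well understood), the closure $\wh{\Gamma}$ of $\Gamma$ in $\prod_{p\nmid q_0}\GL_{n_0}(\bbz_p)$ is an open subgroup of $\prod_{p\nmid q_0}\gcal(\bbz_p)$; being open and closed in a product of profinite groups, it contains $\prod_{p \ge p_1} \gcal(\bbz_p)$ for some $p_1$ and projects onto a finite-index subgroup in the remaining finitely many factors, so after replacing $\Gamma$ by the finite-index subgroup $\Gamma \cap \wh{\Gamma_0}$ for an appropriate product subgroup $\wh{\Gamma_0} = \prod_{p\nmid q_0} P_p$ we may assume $\wh{\Gamma}$ itself splits as such a product; the point is that the projection of $\wh{\Gamma}$ to each individual factor is already a direct factor up to finite index, and one uses Goursat-type / Frattini arguments (as in \cite{SGV}, \cite{SGsemisimple}) plus the fact that for $p$ large the groups $\gcal(\f_p)$ are quasi-simple-by-abelian with no common composition factors to split off the congruence-type product structure. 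For (2), the semidirect product structure $P_p = K_p \ltimes U_p$ follows because the projection $\gcal \to \gcal_s$ (killing $\bbu$) and the inclusion $\gcal_s \hookrightarrow \gcal$ are defined over $\bbz[1/q_0]$, so the induced maps on $\bbz_p$-points exhibit $P_p$ as an extension of an open compact $K_p \le \bbg_s(\bbq_p)$ by the normal pro-$p$ (for $p \nmid q_0$) subgroup $U_p := P_p \cap \bbu(\bbq_p)$, and this extension is split by the section coming from $\gcal_s$. For (3), standard reduction theory for Chevalley and more generally reductive group schemes (Bruhat–Tits) says that for all but finitely many $p$ the group $\bbg_s(\bbq_p) \cap \GL_{n_0}(\bbz_p)$ is hyperspecial parahoric and equals $\gcal_s(\bbz_p)$, and likewise $\bbu(\bbq_p)\cap\GL_{n_0}(\bbz_p) = \ucal(\bbz_p)$; since $\wh{\Gamma}$ is open, after one more passage to a finite-index subgroup (shrinking only the finitely many small primes) we may assume $P_p$ is the full $\gcal(\bbz_p)$ for $p \ge p_0$.

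Finally, for (4): for $p \ge p_0$ we have $\pi_p(\Gamma) \simeq \gcal(\f_p) = \gcal_s(\f_p) \ltimes \ucal(\f_p)$, and $\gcal_s(\f_p)$ is (a bounded central extension of) a product of finite simple groups of Lie type of bounded rank. The desired quasirandomness bound—every nontrivial irreducible representation has dimension $\ge |\pi_p(\Gamma)|^{c}$ for a constant $c = c(\dim\bbg) > 0$—is the theorem of Landazuri–Seitz / Seitz–Zalesskii for the minimal degree of a nontrivial representation of a group of Lie type, extended across the semidirect product by a Clifford-theory argument (a nontrivial irreducible of $\gcal(\f_p)$ is either pulled back from a nontrivial irreducible of $\gcal_s(\f_p)$, giving the bound directly, or is induced from a stabilizer of a nontrivial character of the abelianization-type quotient of $\ucal(\f_p)$, and in that case one checks the index of the stabilizer, equivalently the size of the relevant $\gcal_s(\f_p)$-orbit of characters, is polynomially large because $\gcal_s$ acts on $\bbu$ with no nonzero fixed vectors on the relevant graded pieces—using perfectness of $\bbg^\circ$, so that $[\bbg_s,\bbu]$ fills out $\bbu$). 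I expect this last point—controlling the character-orbit sizes in the semidirect product uniformly in $p$ to get the polynomial lower bound on representation dimension—to be the main technical obstacle; the other three items are more or less bookkeeping with strong approximation and reduction theory, already essentially carried out in \cite{SGsemisimple} and \cite{SGV}.
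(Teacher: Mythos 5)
Your overall route is the same as the paper's: pass to finite-index subgroups (justified as in \cite[Lemma 2.3]{SGsemisimple}), invoke Lemma~\ref{l:ReductionToConnectedSimplyConnectedCase}, use strong approximation to get openness of $\wh{\Gamma}$ in $\prod_{p\nmid q_0}\gcal(\bbz_p)$, pass to a basic (product-form) open subgroup to get (1), use the fact that the decomposition $\bbg=\bbg_s\ltimes\bbu$ is defined over $\bbz[1/q']$ to get (2) and (3) for all large $p$ (shrinking at the finitely many remaining primes), and get (4) from Landazuri--Seitz together with the quasirandomness of the perfect semidirect product; the paper compresses all of this into a few lines with citations, in particular citing \cite[Section 3, Section 4.2]{SGV} and \cite{LS} for (4).

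Two points in your write-up are stated too strongly or have a gap. First, in (2) you assert that an arbitrary open compact $P_p\subseteq\gcal(\bbz_p)$ is an extension of $K_p$ by $U_p:=P_p\cap\bbu(\bbq_p)$ that \emph{is split by the section coming from} $\gcal_s$: this is not automatic, since $\rho_1(P_p)$ (equivalently the image of the section) need not be contained in $P_p$; what is true, and all that is needed, is that $\gcal(\bbz_p)=\gcal_s(\bbz_p)\ltimes\ucal(\bbz_p)$ for all large $p$ and that at the finitely many remaining primes one may shrink to a congruence-type subgroup of split form --- a finite-index change, which you in effect perform later, so this is harmless but should be phrased as a reduction rather than a claim about every $P_p$. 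Second, and more substantively, your Clifford-theory sketch for (4) is incomplete when $\bbu$ is nonabelian: an irreducible representation of $\gcal_s(\f_p)\ltimes\ucal(\f_p)$ that is nontrivial on $\ucal(\f_p)$ need not contain, upon restriction to $\ucal(\f_p)$, a nontrivial \emph{character of the abelianization}; its constituents can be higher-dimensional irreducibles of this $p$-group (e.g.\ faithful on the center but trivial on $\ucal(\f_p)^{\rm ab}$), so the dichotomy ``pulled back from $\gcal_s(\f_p)$ or induced from the stabilizer of a character of $\ucal(\f_p)^{\rm ab}$'' fails as stated, and the orbit-size argument must instead be run on a central (e.g.\ last lower-central-series) piece, or one simply cites the quasirandomness statement already proved in \cite[Sections 3 and 4.2]{SGV} with \cite{LS}, which is exactly what the paper does.
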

\begin{proof}
By \cite[Corollary 17]{SGsemisimple}, we are allowed to pass to a finite-index subgroup of $\Gamma$ if needed. By Lemma~\ref{l:ReductionToConnectedSimplyConnectedCase} we can and will assume that $\bbg$ is connected, simply-connected, and perfect. Hence by Strong approximation we have that $\wh{\Gamma}$ is an open subgroup of $\prod_{p\nmid q_0} \gcal(\bbz_p)$ (see \cite[Theorem 5.4]{Nor}). So passing to a finite-index subgroup we get part 1. 
By \cite[Section 3.8]{TitsSurvey}, for large enough $p$, $K_p$ is a hyperspecial parahoric. (See the same reference for the definition of hyperspecial; here we use the property that $\pi_p(K_p)$ is a product of quasi-simple groups where the number of factors and their ranks are bounded by $\dim \bbg_s$.)  
Since the action of $\bbg_s$ on $\bbu$ is defined over $\bbq$, one gets the other claims of part 2 and part 3. Moreover, for large enough $p$, $\pi_p(\Gamma)=\pi_p(K_p)\ltimes \pi_p(U_p)$ is a perfect group. Hence by \cite[Corollary 14]{SGV} the restriction of any non-trivial representation $\rho$ of $\pi_p(\Gamma)$ to $\pi_p(K_p)$ is non-trivial. Since $K_p$ is hyperspecial, $\pi_p(K_p)$ is a product of quasi-simple groups where the number of factors and their ranks are bounded by $\dim \bbg_s$. Therefore by \cite{LS} we have $\dim \rho\ge |\pi_p(K_p)|^{O_{\dim \bbg_s}(1)}\ge |\pi_p(\Gamma)|^{O_{\dim \bbg}(1)}$.
\end{proof}

We notice that the restriction of $\pi_q$ to $\Gamma$ factors through $\wh{\Gamma}$, and so 
\[
\pi_{\prod_i p_i^{n_i}}(\Gamma)\simeq \prod_i \pi_{p_i^{n_i}}(P_{p_i})\simeq  \prod_i \pi_{p_i^{n_i}}(K_{p_i})\ltimes \pi_{p_i^{n_i}}(U_{p_i}).
\]

\subsection{Algebraic homomorphisms and the congruence maps.}\label{ss:AlgHomoAlmostCommMod} In this section, an auxiliary result on the relation between the congruence maps and $\bbq$-group homomorphisms is proved (see Lemma~\ref{l:AlmostCommute}). This relation is crucial for reducing the proof of the main theorem to the case where the unipotent radical is abelian. 

Here is a general formulation of the issue (without a reference to affine group schemes): suppose $\bbh_1\subseteq (\GL_{n_1})_{\bbq}$ and $\bbh_2\subseteq (\GL_{n_2})_{\bbq}$ are given, and $\rho:\bbh_1\rightarrow \bbh_2$ is a $\bbq$-group homomorphism. For any positive integer $q$ and $i=1,2$, let $\pi_q$ be the induced group homomorphism on $\prod_p\GL_{n_i}(\bbz_p)$. For any prime $p$, let $H_{i,p}:=\bbh_i(\bbq_p)\cap \GL_{n_i}(\bbz_p)$. One would like to have that $\rho$ and $\pi_q$ {\em commute}; that means the following (wrong) isomorphism
\be\label{e:commuting}
\pi_q(\rho(\prod_p H_{1,p}))\simeq \rho(\pi_q(\prod_p H_{1,p})).
\ee

Equation (\ref{e:commuting}) has two issues: (a) $\rho(\prod_p H_{1,p})$ is not necessarily a subgroup of $\prod_p H_{2,p}$, and so it does not make sense to talk about its congruence quotient, (b) $\rho$ is defined by polynomials with {\em rational} coefficients that are not necessarily integer.
So it does not make sense to talk about $\rho$ of the finite group $\pi_q(\prod_p H_{1,p})$.

In Lemma~\ref{l:AlmostCommute} under the assumption that $\rho$ is surjective with a $\bbq$-section, an {\em almost commuting} of $\rho$ and $\pi_q$ is proved; in particular, it is proved that, if $q$ does not have small prime factors, then the isomorphism in Equation (\ref{e:commuting}) holds.

The main ingredient in the proof of Lemma~\ref{l:AlmostCommute} is the fact that a $\bbq$-homomorphism $\rho:\bbh_1\rightarrow \bbh_2$ between two algebraic $\bbq$-groups $\bbh_1$ and $\bbh_2$ induces a continuous homomorphism between the adelic points $\bbh_1(\bba_{\bbq})$ and $\bbh_2(\bba_{\bbq})$. 

After proving Lemma~\ref{l:AlmostCommute}, it will be used for the quotient maps $\rho_1:\bbg\rightarrow\bbg_s$ and  $\rho_2:\bbg\rightarrow\bbg_s\ltimes\bbu/[\bbu,\bbu]$ where $\bbg_s$ is a Levi $\bbq$-subgroup of $\bbg$, and $\bbu$ is the unipotent radical of $\bbg$. 

\begin{lem}\label{l:AlmostCommute}
Let $\bbh\subseteq (\GL_{n_1})_{\bbq}$ be a $\bbq$-subgroup (with a given embedding). Let $\rho:\bbh\rightarrow (\GL_{n_2})_{\bbq}$ be a $\bbq$-homomorphism. Then the following hold.
\begin{enumerate}
	\item There is $g\in\GL_{n_2}(\bbq)$ such that $\rho(\bbh(\bbq)\cap\GL_{n_1}(\bbz[1/q_0]))\subseteq g\GL_{n_2}(\bbz[1/q_0])g^{-1}$. 
	\item There is a positive integer $q'$ such that for any prime $p$ and any $h\in \bbh(\bbq_p)$ we have
	\[
	\|\rho(h)-1\|_p\le |q'|_p^{-1}\|h-1\|_p.
	\]
	\item Let $1\rightarrow \bbl \rightarrow \bbh \xrightarrow{\rho}\bar\bbh \rightarrow 1$ be a short exact sequence of $\bbq$-groups. Suppose there is a $\bbq$-section $s:\bar\bbh\rightarrow \bbh$ such that $s(1)=1$. Let $q'_0$ be a positive integer and $k_p:=v_p(q_0')$. For any prime $p\nmid q_0$, let $Q_p=\bbh(\bbq_p)\cap (1+q'_0\gl_{n_1}(\bbz_p))=\{h_p\in \bbh(\bbq_p)|\h \|h_p-1\|\le p^{-k_p}\}$. Assume that $\rho(Q_p)\subseteq \GL_{n_2}(\bbz_p)$, and $s(\rho(Q_p))\subseteq \GL_{n_1}(\bbz_p)$ for any prime $p\nmid q_0$. Then there is a positive integer $q':=\prod_p p^{k'_p}$ such that for non-negative integers $n_p\ge k'_p+k_p$ we have
	\be\label{e:AlmostCommute}
	\prod_{p\nmid q_0} \pi_{p^{n_p+k'_p}}(Q_p)/\pi_{p^{n_p+k'_p}}(\bbl(\bbq_p)\cap Q_p) \twoheadrightarrow
	\prod_{p\nmid q_0} \pi_{p^{n_p}}(\rho(Q_p)) \twoheadrightarrow
	\prod_{p\nmid q_0} \pi_{p^{n_p-k'_p}}(Q_p)/\pi_{p^{n_p-k'_p}}(\bbl(\bbq_p)\cap Q_p).
	\ee
\end{enumerate}
\end{lem}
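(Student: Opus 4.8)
The plan is to use the fact that a $\bbq$-homomorphism of algebraic groups extends to a morphism of the associated group schemes over $\bbz[1/q_0]$ after suitable base change, and hence is ``eventually integral'' at every prime, with an error that can be controlled uniformly.

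\textbf{Part (1).} First I would pick a $\bbq$-basis of the coordinate ring of $\bbh$ (as a closed subscheme of $\mathbb{GL}_{n_1}$) and express the matrix entries of $\rho$ as polynomials with $\bbq$-coefficients in the entries of $h\in\bbh$ and in $(\det h)^{-1}$. Clearing denominators of these finitely many coefficients shows that $\rho$ maps $\bbh\cap\GL_{n_1}(\bbz[1/q_1])$ into $\mathrm{M}_{n_2}(\bbz[1/q_1])$ for a suitable $q_1$ (a multiple of $q_0$); applying the same reasoning to $\rho^{-1}$ on the image, or simply to the formula for $\det\rho(h)$, bounds the denominator of $(\det\rho(h))^{-1}$ as well. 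After conjugating the image by an element $g\in\GL_{n_2}(\bbq)$ chosen to absorb the primes dividing $q_1/q_0$ (i.e.\ $g$ diagonal with appropriate $p$-power entries), we land inside $g\GL_{n_2}(\bbz[1/q_0])g^{-1}$. This is elementary commutative algebra and I expect no difficulty.

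\textbf{Part (2).} For the local estimate, the key point is that $\rho(1)=1$, so writing $h=1+X$ with $\|X\|_p$ small, each matrix entry of $\rho(1+X)-1$ is a polynomial in the entries of $X$ with no constant term and with coefficients whose denominators are bounded by a fixed integer $q'$ (independent of $p$), by the same computation as in Part (1) applied near the identity; since we may also need $(\det(1+X))^{-1}=1+(\text{higher order})$, its denominators are likewise controlled. For $\|X\|_p<1$ the higher-order terms satisfy $\|X^k\|_p\le\|X\|_p^k\le\|X\|_p$, so the whole thing is bounded by $|q'|_p^{-1}\|X\|_p=|q'|_p^{-1}\|h-1\|_p$; for the finitely many $p\mid q'$ one enlarges $q'$ if necessary, and for $p\mid q_0$ there is nothing to prove. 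The only mild subtlety is handling the inverse-determinant factor, but a geometric series argument in $\bbz_p$ dispatches it.

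\textbf{Part (3).} This is where the real work is. Applying Part (2) to $\rho$ gives an integer $q'_1$ with $\|\rho(h)-1\|_p\le|q'_1|_p^{-1}\|h-1\|_p$, hence $\rho(\pi_{p^{n_p+k'_p}}(Q_p))$-level congruence conditions on $Q_p$ map into $\pi_{p^{n_p}}$-level conditions on $\rho(Q_p)$, giving the right-hand surjection once $k'_p\ge\nu_p(q'_1)$ and $n_p\ge k'_p+k_p$ (the constraint $n_p\ge k'_p+k_p$ ensures we are working inside the ``uniform'' congruence filtration where $\rho(Q_p)\subseteq\GL_{n_2}(\bbz_p)$ and the filtration is multiplicative). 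For the left-hand surjection I would apply Part (2) to the section $s$: there is an integer $q'_2$ with $\|s(\bar h)-1\|_p\le|q'_2|_p^{-1}\|\bar h-1\|_p$, so given $\bar h\in\rho(Q_p)$ congruent to $1$ mod $p^{n_p}$ we can lift it to $s(\bar h)\in Q_p$ congruent to $1$ mod $p^{n_p-\nu_p(q'_2)}$; since $\rho\circ s=\mathrm{id}$, any $h\in Q_p$ with $\rho(h)=\bar h$ differs from $s(\bar h)$ by an element of $\bbl\cap Q_p$, so the coset $h\,\pi_{p^{n_p-k'_p}}(\bbl\cap Q_p)$ is determined by $\bar h$ mod $p^{n_p}$, i.e.\ the map $\pi_{p^{n_p}}(\rho(Q_p))\to\pi_{p^{n_p-k'_p}}(Q_p)/\pi_{p^{n_p-k'_p}}(\bbl\cap Q_p)$ is well-defined and surjective, once $k'_p\ge\max(\nu_p(q'_1),\nu_p(q'_2))$. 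Taking $q'=\prod_p p^{k'_p}$ finishes the proof; the main obstacle is bookkeeping the several congruence levels and verifying that the surjections are group homomorphisms (which follows from multiplicativity of the congruence filtrations once $n_p$ is above the ``stable range'' $k'_p+k_p$), plus confirming that the hypotheses $\rho(Q_p)\subseteq\GL_{n_2}(\bbz_p)$ and $s(\rho(Q_p))\subseteq\GL_{n_1}(\bbz_p)$ are exactly what is needed to run the argument simultaneously at every prime.
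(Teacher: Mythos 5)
Parts (2) and (3) of your proposal are essentially the paper's own argument: the polynomial-coefficient estimate for $\rho$ (and, with $s(1)=1$, for the section $s$) gives $\|\rho(h)-1\|_p\le |q'|_p^{-1}\|h-1\|_p$ and $\|s(\bar h)-1\|_p\le |q'|_p^{-1}\|\bar h-1\|_p$, and then the two surjections in (\ref{e:AlmostCommute}) are obtained exactly as you describe, by checking kernel containments for the maps $h\mapsto \pi_{p^{n_p}}(\rho(h))$ and $h\mapsto \pi_{p^{n_p-k'_p}}(h)\bmod \pi_{p^{n_p-k'_p}}(\bbl\cap Q_p)$ and writing $h=\bigl(h\,s(\rho(h))^{-1}\bigr)s(\rho(h))$ with $h\,s(\rho(h))^{-1}\in\bbl\cap Q_p$. (You have the words ``left-hand''/``right-hand'' swapped relative to the display, but the estimate you assign to each arrow is the right one, so this is only a labeling slip.)

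Part (1), however, has a genuine gap. Your denominator computation is fine: it shows that every element of $\rho\bigl(\bbh\cap\GL_{n_1}(\bbz[1/q_0])\bigr)$ has $p$-norm at most $p^{v_p(D)}$ for every $p\nmid q_0$, where $D$ clears the coefficients of $\rho$. But the proposed mechanism for removing these bounded denominators --- conjugating by a \emph{diagonal} matrix with $p$-power entries --- does not work: conjugation by a diagonal matrix leaves the diagonal entries of every element unchanged, so it can never repair an element whose diagonal entries are non-integral (e.g.\ a bounded element such as $\left(\begin{smallmatrix}1/p & 1\\ -(1+p^2)/p^2 & -1/p\end{smallmatrix}\right)$, of order $4$, is integral after conjugation by a suitable unipotent but by no diagonal matrix). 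The actual content of part (1) is that the image group, being bounded at every $p\nmid q_0$, can be conjugated into $\GL_{n_2}(\bbz_p)$ at each such place \emph{and} that one rational $g$ can be chosen to do this at all these places simultaneously. The paper gets this by choosing local conjugators $g_p$ (with $g_p=1$ for almost all $p$, since $\rho$ is defined over $\bbq$) and then invoking the decomposition $\GL_{n_2}(\bba_{\bbq})=\Delta(\GL_{n_2}(\bbq))\cdot\bigl(\GL_{n_2}(\bbr)\prod_{p\mid q_0}\GL_{n_2}(\bbq_p)\prod_{p\nmid q_0}\GL_{n_2}(\bbz_p)\bigr)$. If you want to stay elementary, your bounded-denominator observation can be completed instead by a lattice argument: the $\bbz[1/q_0]$-module $L:=\sum_{\gamma\in\rho(\Lambda)}\gamma\,\bbz[1/q_0]^{n_2}$ (where $\Lambda=\bbh\cap\GL_{n_1}(\bbz[1/q_0])$) is contained in $D^{-1}\bbz[1/q_0]^{n_2}$, hence is a finitely generated $\rho(\Lambda)$-invariant lattice containing $\bbz[1/q_0]^{n_2}$; since $\bbz[1/q_0]$ is a principal ideal domain, $L=g\,\bbz[1/q_0]^{n_2}$ for some $g\in\GL_{n_2}(\bbq)$, and then $g^{-1}\rho(\Lambda)g\subseteq\GL_{n_2}(\bbz[1/q_0])$. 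Either repair is needed; as written, your part (1) does not prove the claim.
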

\begin{proof}
For any $p$ there is $g_p\in \GL_{n_2}(\bbq_p)$ such that $g_p\rho(\bbh(\bbq_p)\cap \GL_{n_1}(\bbz_p))g_p^{-1}\subseteq \GL_{n_2}(\bbz_p)$. Since $\rho$ is defined over $\bbq$, for large enough $p$, we can assume that $g_p=1$. Hence there is $\wh{g}=(g_p)\in \GL_{n_2}(\bba_{\bbq})$ such that 
\[
\wh{g}\Delta(\rho(\bbh(\bbq)\cap \GL_{n_1}(\bbz[1/q_0]))) \wh{g}^{-1} \subseteq \GL_{n_2}(\bbr)\prod_{p|q_0}\GL_{n_2}(\bbq_p)\prod_{p\nmid q_0}\GL_{n_2}(\bbz_p),
\]
where $\Delta$ is the diagonal embedding.
On the other hand, we have  
\[
\GL_{n_2}(\bba_{\bbq})=\Delta(\GL_{n_2}(\bbq))\cdot \left(\GL_{n_2}(\bbr)\prod_{p|q_0}\GL_{n_2}(\bbq_p)\prod_{p\nmid q_0}\GL_{n_2}(\bbz_p)\right).
\]
 So there is $g\in \GL_{n_2}(\bbq)$ such that 
\begin{align*}
\Delta(g \rho(\bbh(\bbq)\cap \GL_{n_1}(\bbz[1/q_0])) g^{-1}) &\subseteq \left(\GL_{n_2}(\bbr)\prod_{p|q_0}\GL_{n_2}(\bbq_p)\prod_{p\nmid q_0}\GL_{n_2}(\bbz_p)\right)\cap \Delta(\GL_{n_2}(\bbq))\\
&=\Delta(\GL_{n_2}(\bbz[1/q_0])),
\end{align*}
which gives us the first part.

The second part is an easy consequence of the fact that $\rho$ can be represented by a polynomial with rational coefficients. Similarly there is a positive integer $q'$ such that 
\be\label{e:AdelicLipschitz}
|q'|_p\|s(\rho(h_p))-1\|_p\le \|\rho(h_p)-1\|_p \le |q'|_p^{-1} \|h_p-1\|_p,
\ee
for any prime $p$ and $h_p\in \bbh(\bbq_p)$. On the other hand, 
\be\label{e:NormKernel}
h_p\in \ker(\pi_{p^n}\circ \rho) \text{ if and only if } \|\rho(h_p)-1\|_p\le p^{-n}.
\ee
 These imply that for $h_p\in  \ker(\pi_{p^n}\circ \rho)$ we have $\pi_{p^{n-k'_p}}(s(\rho(h_p)))=1$. And so if $h_p\in Q_p$ and $\pi_{p^n}(\rho(h_p))=1$, then we have 
 \be\label{e:FactoringThrough}
  h_p=(h_ps(\rho(h_p))^{-1})\cdot s(\rho(h_p))\in (\bbl(\bbq_p)\cap Q_p) \ker \pi_{p^{n-k'_p}}.
 \ee
 Let $\wt{\phi}_1:Q_p\rightarrow \pi_{p^{n_p-k'_p}}(Q_p)/\pi_{p^{n_p-k'_p}}(\bbl(\bbq_p)\cap Q_p), \wt{\phi}_1(h_p):=\pi_{p^{n_p-k'_p}}(h_p)\pi_{p^{n_p-k'_p}}(\bbl(\bbq_p)\cap Q_p)$. Then clearly $\wt{\phi}_1$ is surjective. By (\ref{e:FactoringThrough}), $\wt{\phi}_1$ factors through $\pi_{p^{n_p}}(\rho(Q_p))$; that means
\[
\pi_{p^{n_p}}(\rho(Q_p)) \twoheadrightarrow
 \pi_{p^{n_p-k'_p}}(Q_p)/\pi_{p^{n_p-k'_p}}(\bbl(\bbq_p)\cap Q_p).
\]
Let $\wt{\phi}_2:Q_p\rightarrow \pi_{p^{n_p}}(\rho(Q_p)), \wt{\phi}_2(h_p):=\pi_{p^{n_p}}(\rho(h_p))$. So clearly $\wt{\phi}_2$ is surjective. By (\ref{e:AdelicLipschitz}) and (\ref{e:NormKernel}), we have, if $\pi_{p^{n_p+k'_p}}(h_p)=1$, then $\|\rho(h_p)-1\|_p\le |q'|_p^{-1}\|h_p-1\|_p\le p^{k'_p}p^{-n_p-k'_p}=p^{-n_p}$; and so $h_p\in \ker \wt{\phi}_2$. Therefore $\wt{\phi}_2$ factors through $\pi_{p^{n_p+k'_p}}(Q_p)$. By the definition of $\wt{\phi}_2$ we have $\bbl(\bbq_p)\cap Q_p\subseteq \ker \wt{\phi}_2$. Hence we get a surjection $ \pi_{p^{n_p+k'_p}}(Q_p)/\pi_{p^{n_p+k'_p}}(\bbl(\bbq_p)\cap Q_p) \twoheadrightarrow
\pi_{p^{n_p}}(\rho(Q_p))$. The claim follows.
\end{proof}

Let $\bbg_s$ be a simply connected semisimple $\bbq$-group which acts on a unipotent $\bbq$-group $\bbu$. Then by the virtue of the proof of~\cite[Corollary 14]{SGV} $\bbg=\bbg_s\ltimes \bbu$ is a perfect group if and only if $\bbg_s\ltimes \bbu/[\bbu,\bbu]$ is perfect. On the other hand, $\bbv:=\bbu/[\bbu,\bbu]$ is a vector $\bbq$-group, and the action of $\bbg_s$ on $\bbu$ induces a representation $\phi_0:\bbg_s\rightarrow \mathbb{GL}(\bbv)$ defined over $\bbq$. And $\bbg_s\ltimes \bbv$ is perfect if and only if the trivial representation is not a subrepresentation of $\phi_0$. Consider the following short exact sequences of $\bbq$-algebraic groups
\be\label{e:Exact}
1\rightarrow \bbu \rightarrow \bbg \xrightarrow{\rho_1} \bbg_s \rightarrow 1
\hspace{1cm}\text{and}\hspace{1cm} 
1\rightarrow [\bbu,\bbu] \rightarrow \bbg \xrightarrow{\rho_2} \bbg_s\ltimes \bbv \rightarrow 1.
\ee
We notice that, since the $\bbu$ is a $\bbq$-unipotent group, there is a $\bbq$-section from $\bbv$ to $\bbu$, which can be extended to a $\bbq$-section for the second exact sequence of (\ref{e:Exact}). The first exact sequence clearly splits. We fix certain $\bbq$-embeddings $\bbg_s\subseteq \bbg\subseteq (\GL_{n_1})_{\bbq}$ and $\bbg_s\ltimes \bbv\subseteq (\GL_{n_2})_{\bbq}$. Let $\Gamma\subseteq \bbg\cap \GL_{n_1}(\bbz[1/q_0])$ be a Zariski-dense subgroup. Then by Lemma~\ref{l:AlmostCommute} after passing to a finite index subgroup, if necessary, we can assume that $\rho_2(\Gamma)\subseteq \GL_{n_2}(\bbz[1/q_0])$. By the above comment, we are also allowed to use (\ref{e:AlmostCommute}) in Lemma~\ref{l:AlmostCommute} for the $p$-adic closure of $\Gamma$ in $\bbg(\bbq_p)$ for any $p\nmid q_0$.

\subsection{Ping-pong players.}  As in~\cite{SGV, SGsemisimple}, we work with a Zariski-dense free subgroup of $\Gamma$.  
Following \cite[Proposition 22]{SGsemisimple}, using \cite[Proposition 7, Proposition 17, Proposition 20]{SGV} we get the following.
\begin{prop}\label{p:EscapeSubgroups}
Let $\Omega$ be a finite symmetric subset of $\GL_n(\bbq)$ which generates a Zariski-dense subgroup $\Gamma$ of a Zariski-connected perfect group $\bbg$. Then there are a finite subset $\overline{\Omega'}$ of $\Gamma$ and $\delta_0>0$ and $l_0$ (which depend on $\Omega$) such that $\overline{\Omega'}$ freely generates a Zariski-dense subgroup of $\bbg(\bbq)$ and
\be\label{e:EscapeSubgroups}
\pcal_{\Omega'}^{(l)}(\bbh(F))\le e^{-\delta_0 l}
\ee
for any field extension $F/\bbq$, and any proper subgroup $\bbh\subsetneq \bbg\times_{\bbq} F$ and $l\ge l_0$, where $\Omega'=\overline{\Omega'}\cup\overline{\Omega'}^{-1}$ and $\pcal_{\Omega'}$ is the probability counting measure on $\Omega'$. Moreover, $\Ad(\rho_1(\overline{\Omega'}))$ freely generates a subgroup of $\Ad(\bbg_s(\bbq))$, where $\rho_1:\bbg\rightarrow \bbg_s$ is the quotient map; in particular, $\ker(\Ad\circ \rho_1)\cap \langle \Omega'\rangle=\{1\}$.
\end{prop}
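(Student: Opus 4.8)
The plan is to repeat the argument of \cite[Proposition 18]{SGsemisimple}, which proves the statement when $\bbg$ is semisimple, and to add the bookkeeping needed when $\bbg=\bbg_s\ltimes\bbu$ has a nontrivial unipotent radical $\bbu$. I may assume $\bbg$ is nontrivial; since it is connected and perfect, $\bbg_s$ is then nontrivial, every Zariski-dense subgroup of $\bbg$ is nonabelian, and every free Zariski-dense subgroup has rank at least $2$. Throughout, ``bounded'' means bounded in terms of $\bbg$ and the given embedding of $\bbg$ alone.

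First I would apply the Tits-alternative construction \cite[Proposition 6]{SGV} to choose a finite $\overline{\Omega'}\subseteq\Gamma$, consisting of words in $\Omega$, that freely generates a Zariski-dense subgroup of $\bbg(\bbq)$, and then the escape-from-subvarieties estimates \cite[Proposition 16, Proposition 19]{SGV}; after a harmless refinement of $\overline{\Omega'}$ these yield $\delta_0>0$ and $l_0$ with $\pcal_{\Omega'}^{(l)}(V(F))\le e^{-\delta_0 l}$ for every extension $F/\bbq$, every proper closed $F$-subvariety $V\subsetneq\bbg\times_\bbq F$ of bounded degree, and every $l\ge l_0$ (only the base change of $V$ to $\overline{\bbq}$ enters, since the walk lives in $\bbg(\bbq)$). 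So it suffices to show that every proper algebraic subgroup $\bbh\subsetneq\bbg\times_\bbq F$ sits inside a proper $F$-subvariety of $\bbg\times_\bbq F$ of bounded degree, after which monotonicity of $\pcal_{\Omega'}^{(l)}$ gives the proposition's estimate.

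For that uniform degree bound I would argue via the image $\rho_1(\bbh)\subseteq\bbg_s$, which is closed. If $\rho_1(\bbh)\subsetneq\bbg_s$, then $\bbh\subseteq\rho_1^{-1}(\rho_1(\bbh))$, and we are done because every proper algebraic subgroup of the fixed semisimple group $\bbg_s$ lies in a proper closed subvariety of bounded degree — the classical input used in \cite{SGsemisimple} — and $\rho_1^{-1}$ of it keeps the degree bounded. If $\rho_1(\bbh)=\bbg_s$, I would pass to $\bbg/[\bbu,\bbu]=\bbg_s\ltimes\bbv$: since a closed subgroup of the unipotent group $\bbu$ that surjects onto $\bbu/[\bbu,\bbu]$ must be all of $\bbu$, properness of $\bbh$ forces its image $\bar\bbh$ in $\bbg_s\ltimes\bbv$ to be proper, so $\bar\bbh\cap\bbv$ is a proper $\phi_0(\bbg_s)$-invariant subspace $W\subsetneq\bbv$; since $H^1(\bbg_s,\bbv/W)=0$ in characteristic zero, $\bar\bbh$ is conjugate inside $\bbg_s\ltimes\bbv$ to $\bbg_s\ltimes W$, and as conjugation is linear on matrix entries and so preserves degrees, $\bar\bbh$, hence $\bbh$, lies in a proper subvariety of bounded degree. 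This is the one place where the unipotent radical genuinely intervenes beyond \cite{SGsemisimple}: the $\phi_0(\bbg_s)$-invariant subspaces of $\bbv$ may form an infinite family, but they all have degree $1$, which is all the escape estimate requires.

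It remains to treat the ``moreover'' clause. Put $\Lambda:=\langle\overline{\Omega'}\rangle=\langle\Omega'\rangle$, free of rank at least $2$. Then $\Lambda\cap\bbu(\bbq)$ is free (a subgroup of a free group) and solvable (a subgroup of a unipotent group), hence cyclic, and it is normal in $\Lambda$; a free group of rank at least $2$ has no nontrivial normal cyclic subgroup, so $\Lambda\cap\bbu(\bbq)=\{1\}$. Thus $\rho_1$ is injective on $\Lambda$, $\rho_1(\Lambda)\cong\Lambda$ is torsion-free and meets the finite group $Z(\bbg_s)$ trivially, and since $\ker(\Ad\circ\rho_1)=Z(\bbg_s)\ltimes\bbu$ we conclude $\ker(\Ad\circ\rho_1)\cap\langle\Omega'\rangle=\{1\}$, equivalently $\Ad(\rho_1(\overline{\Omega'}))$ freely generates a subgroup of $\Ad(\bbg_s)$. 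I expect the main obstacle to be the uniform degree bound of the third paragraph: in \cite{SGsemisimple} the structure theory of subgroups of a fixed semisimple group supplies it outright, whereas here one must additionally organize the subgroups surjecting onto $\bbg_s$ through the representation $\phi_0\colon\bbg_s\to\mathbb{GL}(\bbv)$ and keep all the degrees bounded uniformly over the (possibly transcendental) extensions $F$.
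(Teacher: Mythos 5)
Your argument is correct, but it is organized differently from the paper's. The paper likewise starts from the free ping-pong set and the escape estimates of \cite[Propositions 16 and 19]{SGV} (upgraded to the random-walk bound by Kesten plus Cauchy--Schwarz as in \cite[Proposition 6]{SGV}); those already cover every \emph{connected} proper $\bbh$, and for disconnected $\bbh$ the paper observes that subgroups of $\bbu$ are connected in characteristic zero, so $\rho_1(\bbh)\subsetneq\bbg_s$, and then quotes the semisimple case \cite[Proposition 18]{SGsemisimple} for the projected walk --- which is exactly why it needs the ``moreover'' clause and why it arranges that clause inside the ping-pong construction of \cite[Proposition 21]{SGV}. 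You instead put \emph{every} proper subgroup inside a proper subvariety of bounded degree (the semisimple envelope when $\rho_1(\bbh)\subsetneq\bbg_s$; conjugacy of complements via $H^1(\bbg_s,\bbv/W)=0$ when $\rho_1(\bbh)=\bbg_s$), so your escape estimate never uses the ``moreover'' clause, and you then prove that clause intrinsically: $\Lambda\cap\bbu(\bbq)$ is a normal cyclic subgroup of a nonabelian free group, hence trivial, and $\rho_1(\Lambda)$ is torsion-free so misses the finite center. That last point is a genuine improvement in transparency: it shows the final assertion is automatic for any free Zariski-dense $\overline{\Omega'}$, rather than something to be arranged in the construction. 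The costs of your route: the case $\rho_1(\bbh)=\bbg_s$ re-derives ground already covered by \cite[Propositions 16 and 19]{SGV}, since such $\bbh$ are automatically connected (the very fact the paper exploits); and the case $\rho_1(\bbh)\subsetneq\bbg_s$ needs the uniform bounded-degree envelope for \emph{arbitrary} (possibly finite or disconnected) proper subgroups of $\bbg_s$ --- this is true, but it is an EMO/BGT-type statement (normalizers of Lie algebras, Jordan's theorem for finite subgroups; see \cite{EMO}, \cite{BGT}) rather than something ``supplied outright'' by \cite[Proposition 18]{SGsemisimple}, which is itself the escape proposition. One small inaccuracy: the parenthetical that only the base change of $V$ to $\overline{\bbq}$ enters is not right, since $\bbh$ may only be defined over a transcendental extension $F$; this is harmless, because the estimates of \cite{SGV} are uniform over all extensions of $\bbq$, so you simply apply them over $F$ (or $\overline{F}$) directly.
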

\begin{proof} This is essentially proved in \cite[Proposition 7, Proposition 17, Proposition 20]{SGV}. Here is an outline of the argument. 

By \cite[Proposition 17]{SGV} there are irreducible representations $\eta_i$ of $\bbg:\rightarrow \mathbb{GL}(\bbw_i)$ defined over local fields $F_i$ and algebraic deformations $\phi_{i,v}:\bbg\rightarrow {\rm Aff}(\bbw_i)$ of $\eta_i$ where the parameter $v$ varies in an affine space $\bbv_i$ with the following properties:
\begin{enumerate}
	\item for any $i$, $\eta_i$ factors through $\bbg_s$, and at least one of them factors through $\Ad(\bbg_s)$;
	\item for any $i$, $\eta_i(\Gamma)\subseteq \GL(\bbw_i(F_i))$ is unbounded;
	\item the linear part of $\phi_{i,v}$ is $\eta_i$ for any $v\in \bbv_i(F_i)$;
	\item for any non-zero vector $v$, the affine action of $\bbg(F_i)$ on $\bbw_i(F_i)$ via $\phi_{i,v}$ has no fixed point;
	\item for any proper algebraic subgroup $\bbh$ of $\bbg$ for some $i$ either 
	\begin{enumerate}
	\item there is a non-zero vector $w\in \bbw_i(F_i)$ such that $\eta_i(\bbh(F_i))[w]=[w]$ where $[w]$ is the line $F_iw$, or
	\item there is a parameter $v\in \bbv_i(F_i)$ of norm 1 and a point $w\in \bbw_i(F_i)$ such that $w$ is fixed by $\bbh$ under the affine action of $\bbg$ on $\bbw_i$ given by $\phi_{i,v}$.
	\end{enumerate}	
\end{enumerate} 
By \cite[Proposition 20 and Proposition 21]{SGV}, there is a finite subset $\overline{\Omega'}\subseteq \Gamma$ such that 
\begin{enumerate}
	\item for any $i$, $\eta_i(\overline{\Omega'})$ freely generates a subgroup of $\eta_i(\Gamma)$; in particular $\Ad(\rho_1(\overline{\Omega'}))$ freely generates a subgroup of $\Ad(\bbg_s(\bbq))$ where $\rho_1:\bbg\rightarrow \bbg_s$ is the projection to the semisimple part.
	\item there is $c_0>0$ such that, for any index $i$, and non-zero vector $w_0\in \bbw_i(F_i)$, we have
	\[
	|\{\gamma\in B_l(\overline{\Omega'})|\h \eta_i(\gamma)[w_0]=[w_0]\}|\le |B_l(\overline{\Omega'})|^{1-c_0},
	\]
	and, for any point $w_1\in \bbw_i(F_i)$, and parameter $v_i\in \bbv_i(F_i)$ with norm one,
	\[
	|\{\gamma\in B_l(\overline{\Omega'})|\h \phi_{i,v}(\gamma)(w_1)=(w_1)\}|\le |B_l(\overline{\Omega'})|^{1-c_0},	
	\]
	where $B_l(\overline{\Omega'})$ is the the set of reduced words over $\overline{\Omega}'$ of length $l$.
	\end{enumerate}
Hence, by the above geometric description of proper algebraic subgroups, for any proper algebraic subgroup $\bbh$ of $\bbg$ we have
\[
|B_l(\overline{\Omega'})\cap \bbh(\bbq)|<|B_l(\overline{\Omega'})|^{1-c_0}.
\]
Let $\Omega':=\overline{\Omega'}\cup\overline{\Omega'}^{-1}$. By a result of Kesten~\cite[Theorem 3]{Kes} we have
$
\pcal_{\Omega'}^{(l)}(I)\le \left(\frac{2|\overline{\Omega'}|-1}{|\overline{\Omega'}|^2}\right)^{l},
$
where $I$ is the identity matrix. As $\Omega'$ is symmetric, we have $\pcal_{\Omega'}^{(2l)}(I)=\sum_{\gamma} \pcal_{\Omega'}^{(l)}(\gamma)^2$. Hence together with the Kesten bound we have  
\be\label{e:KestenBound}
\pcal_{\Omega'}^{(l)}(\gamma)\le \pcal_{\Omega'}^{(2l)}(I)\le \left(\frac{2|\overline{\Omega'}|-1}{|\overline{\Omega'}|^2}\right)^{l}.
\ee
 We also notice that, since the random-walk in the group generated by $\Omega'$ with respect to the probability counting measure $\pcal_{\Omega'}$ on $\Omega'$ can be identified with the random-walk on a regular tree, $\pcal_{\Omega'}^{(l)}$ is constant on the set $B_k(\overline{\Omega'})$ of reduced words in $\overline{\Omega'}$ of length $k$ for any positive integer $k$. Let $p_{l,k}:=\pcal_{\Omega'}^{(l)}(\gamma)$ for some $\gamma\in B_k(\overline{\Omega'})$.

 For a proper subgroup $\bbh$ of $\bbg\times_{\bbq}F$, let $\overline{\bbh}$ be the Zariski-closure of $\Gamma\cap \bbh(F)$ in $\bbg$. Then $\overline{\bbh}$ is a subgroup of $\bbg$ and $\pcal_{{\Omega'}}^{(l)}(\bbh(F))=\pcal_{{\Omega'}}^{(l)}(\overline{\bbh}(\bbq))$. Now we can finish the argument as in the proof of \cite[Proposition 7]{SGV}:
\begin{align*}
\pcal_{{\Omega'}}^{(l)}(\bbh(F))&=\pcal_{{\Omega'}}^{(l)}(\overline{\bbh}(\bbq))= \sum_{k\ge 0} |B_{\overline{\Omega'}}(k)\cap \overline{\bbh}(\bbq)|\h p_{l,k}\\
&\le \left(\sum_{0\le k< l/10} (2|\overline{\Omega'}|)^{k}\right) \left(\frac{2|\overline{\Omega'}|-1}{|\overline{\Omega'}|^2}\right)^{l}+ (2|\overline{\Omega'}|-1)^{-c_0l/10}\sum_{k\ge l/10} |B_k(\overline{\Omega'})| p_{l,k}\\
&\le \frac{(2|\overline{\Omega'}|)^{l/10}}{(|\overline{\Omega'}|/2)^l}+ (2|\overline{\Omega'}|-1)^{-c_0l/10} \le e^{-\delta_0 l},
\end{align*}
for some $\delta_0$ and any $l\ge l_0$ where $l_0$ is a fixed positive integer. 
\end{proof}
\begin{lem}
Let $\Omega$ be a finite symmetric subset of $\GL_{n_0}(\bbz[1/q_0])$ and $\Gamma=\langle \Omega\rangle$. Assume the Zariski-closure $\bbg$ of $\Gamma$ is a connected, simply connected, perfect $\bbq$-group. Suppose $\Omega'\subseteq \Gamma$ is a finite symmetric set which generates a Zariski-dense subgroup $\Lambda$ of $\bbg$. Then for any set of positive integers $\ccal$ consisting of integers coprime to $q_0$
if $\{\Cay(\pi_q(\Lambda),\pi_q(\Omega'))\}_{q\in\ccal}$ is a family of expanders, then $\{\Cay(\pi_q(\Gamma),\pi_q(\Omega))\}_{q\in\ccal}$ is a family of expanders.
\end{lem}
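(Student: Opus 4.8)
The plan is to deduce expansion for $(\Gamma,\Omega)$ from expansion for its finite-index subgroup $(\Lambda,\Omega')$ by a standard comparison-of-Cayley-graphs argument, exploiting that $\Lambda$ has finite index in $\Gamma$. First I would record that since $\bbg$ is connected and $\Lambda$ is Zariski-dense in $\bbg=\bbg(\bbq)^{+}$ (using simple connectedness), $\Lambda$ is actually of finite index in $\Gamma$: indeed, by strong approximation the closures of $\Lambda$ and $\Gamma$ in $\prod_{p\nmid q_0}\GL_{n_0}(\bbz_p)$ are both open, and $\overline{\Lambda}$ is open in $\overline{\Gamma}$, so $[\Gamma:\Lambda]\le[\overline{\Gamma}:\overline{\Lambda}]<\infty$. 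Write $[\Gamma:\Lambda]=d$ and fix a set $T=\{t_1=e,t_2,\dots,t_d\}$ of coset representatives of $\Lambda$ in $\Gamma$, which we may take inside $\GL_{n_0}(\bbz[1/q_0])$.

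Next, the key point: for $q\in\ccal$ coprime to $q_0$, the subgroup $\pi_q(\Lambda)$ has index at most $d$ in $\pi_q(\Gamma)$ (the congruence map sends $T$ onto a transversal, possibly with collisions, so the index only drops). I would then invoke the general fact (e.g. the Cheeger/Rayleigh-quotient comparison, see \cite{HLW} or the routine lemma used in \cite{SGV, SGsemisimple}) that if $H\le G$ with $[G:H]=:d'\le d$, and $\Cay(H,S)$ has a spectral gap $\varepsilon$, then $\Cay(G,S')$ has a spectral gap bounded below in terms of $\varepsilon$ and $d$ alone, provided $S'$ contains, up to bounded-length words, a generating set obtained from $S$ together with the transversal elements. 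Concretely, since $\Omega'\subseteq\Gamma=\langle\Omega\rangle$, each element of $\Omega'$ is a word of bounded length $L$ in $\Omega$; likewise each $t_i$ and its inverse is such a word. Passing to $\Omega\cup\Omega^{-1}$ and a bounded power, one compares the Dirichlet forms: a function on $\pi_q(\Gamma)$ orthogonal to constants restricts to functions on the (at most $d$) cosets of $\pi_q(\Lambda)$, the gap for $(\Lambda,\Omega')$ controls the ``within-coset'' oscillation, and the transversal edges coming from the $t_i$'s control the ``between-coset'' oscillation; combining gives a uniform-in-$q$ lower bound on the gap for $(\Gamma,\Omega)$.

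I would organize the write-up as: (i) finite index of $\Lambda$ in $\Gamma$ via strong approximation; (ii) the elementary graph lemma comparing spectral gaps of $\Cay(G,S)$ and $\Cay(H,S)$ for $[G:H]\le d$, with constants depending only on $d$ and the word-length $L$ needed to express $\Omega'$ and the transversal in terms of $\Omega$; (iii) applying (ii) with $G=\pi_q(\Gamma)$, $H=\pi_q(\Lambda)$ uniformly over $q\in\ccal$, using that the expansion constant for $(\Lambda,\Omega')$ is uniform by hypothesis. The main obstacle — really the only non-bookkeeping point — is step (ii): making the spectral-gap comparison genuinely uniform, i.e. ensuring the constants do not degrade with $q$. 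This is handled because $d=[\Gamma:\Lambda]$ and the word-lengths $L$ are fixed once and for all (independent of $q$), so the comparison inequality between Rayleigh quotients has $q$-independent constants; the finitely many ``collision'' patterns of the transversal mod $q$ are also absorbed since there are at most $d$ cosets regardless of $q$. I do not expect this to require anything deep beyond what is already used in \cite{SGV, SGsemisimple}; it is essentially \cite[Lemma 12]{SGsemisimple} run in the reverse direction together with a standard quasi-isometry-of-graphs estimate.
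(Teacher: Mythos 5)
Your opening step is false, and it is the step on which the whole construction rests. From the openness of $\overline{\Lambda}$ in $\overline{\Gamma}$ you conclude $[\Gamma:\Lambda]\le[\overline{\Gamma}:\overline{\Lambda}]<\infty$, but the inequality goes the other way: since $\Gamma$ is dense in $\overline{\Gamma}$ and $\overline{\Lambda}$ is open, the natural map $\Gamma/\Lambda\rightarrow\overline{\Gamma}/\overline{\Lambda}$ is \emph{surjective}, so strong approximation only gives $[\Gamma:\Lambda]\ge[\overline{\Gamma}:\overline{\Lambda}]$; two distinct cosets $\gamma_1\Lambda\neq\gamma_2\Lambda$ may well lie in the same coset of $\overline{\Lambda}$, because $\overline{\Lambda}\cap\Gamma$ is in general much larger than $\Lambda$. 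In fact $[\Gamma:\Lambda]=\infty$ is the typical --- and, for this lemma, the intended --- situation: in Corollary~\ref{c:InitialReductions} the lemma is applied with $\Omega'$ the ping-pong set produced by Proposition~\ref{p:EscapeSubgroups}, so $\Lambda$ is a free Zariski-dense subgroup of $\Gamma$, usually of infinite index (think of a Zariski-dense free subgroup of infinite index in $\SL_2(\bbz)$: its congruence closure is open by strong approximation, yet it is ``thin''). Consequently there is no finite transversal $T\subseteq\Gamma$ of $\Lambda$ in $\Gamma$, and your Dirichlet-form comparison cannot even be set up as written; likewise your justification that $[\pi_q(\Gamma):\pi_q(\Lambda)]$ is bounded (``the congruence map sends $T$ onto a transversal'') rests on the nonexistent $T$.

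The plan is salvageable once the false statement is replaced by the one that is true and is all you actually use downstream: since $\pi_q$ restricted to $\Gamma$ and to $\Lambda$ factors through $\overline{\Gamma}$ and $\overline{\Lambda}$, one has $[\pi_q(\Gamma):\pi_q(\Lambda)]\le[\overline{\Gamma}:\overline{\Lambda}]=:d$ uniformly in $q$, and because the Schreier graph of $\pi_q(\Gamma)/\pi_q(\Lambda)$ with respect to $\pi_q(\Omega)$ is connected with at most $d$ vertices, every coset has a representative of $\Omega$-word-length at most $d$, again uniformly in $q$. With these substitutes your step (ii) comparison goes through with constants depending only on $d$, the word length $L$ of $\Omega'$ in $\Omega$, and $|\Omega|,|\Omega'|$. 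Note also that the paper's own proof avoids cosets and transversals altogether: it argues with almost-invariant vectors, observing that an $\vare$-almost $\Omega$-invariant unit vector in a representation of $\pi_q(\Gamma)$ is $r_0\vare$-almost $\Omega'$-invariant (each element of $\Omega'$ is a word of length at most $r_0$ in $\Omega$), which contradicts the expansion of $\Cay(\pi_q(\Lambda),\pi_q(\Omega'))$ unless the restriction of the representation to $\Lambda$ is trivial, and this last possibility occurs for only finitely many congruence representations precisely because $\wh{\Lambda}$ has finite index in $\wh{\Gamma}$ by Nori's strong approximation --- the finite-index statement lives at the level of the closures, never at the level of $\Gamma$ itself.
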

\begin{proof}
By Nori's strong approximation (see \cite[Theorem 5.4]{Nor}), we have that the closure $\wh{\Lambda}$ of $\Lambda$ in $\prod_{p\nmid q_0}\GL_{n_0}(\bbz_p)$ is of finite index in the closure $\wh{\Gamma}$ of $\Gamma$ in $\prod_{p\nmid q_0}\GL_{n_0}(\bbz_p)$. For any $q$, any representation $\rho$ of $\pi_q(\Gamma)$ can be viewed as a representation of $\wh{\Gamma}$. The extension of $\rho$ to $\wh{\Gamma}$ is denoted by $\rho$ too. Only finitely many of them have $\wh{\Lambda}$ in their kernel. Hence the restriction to $\Lambda$ of only finitely many such representation is trivial. Let $\rho$ be an irreducible representation of $\pi_q(\Gamma)$ whose restriction to $\Lambda$ is not trivial. Hence the restriction of $\rho$ to $\Lambda$ is a subrepresentation of $L^2(\pi_q(\Lambda))^{\circ}:=\{f\in L^2(\pi_q(\Lambda))|\h \sum_{g\in \pi_q(\Lambda)}f(g)=0\}$. 

Let $u\in V_{\rho}$ be a unit vector which is an $\vare$-almost invariant vector with respect to $\Omega$; that means 
\[
\|\rho(\gamma)(u)-u\|<\vare, 
\]
for any $\gamma\in \Omega$. Since $\Omega$ generates $\Gamma$ and $\Omega'\subseteq \Gamma$, for some $r_0$ we have $\Omega'\subseteq \prod_{r_0} \Omega$. Thus for any $\lambda\in \Omega'$ we have
\[
\|\rho(\lambda)(u)-(u)\|<r_0\vare.
\]
Assuming $\{\Cay(\pi_q(\Lambda),\pi_q(\Omega'))\}_{q\in\ccal}$ is a family of expanders, there is $\vare_0>0$ such that 
\[
\max_{\lambda\in \Omega'} \|\rho(\lambda)(u)-u\|>\vare_0.
\]
Therefore $\vare>\vare_0/r_0$, which implies that no non-trivial irreducible representation of $\pi_q(\Gamma)$ (for $q\in\ccal$) has an $\vare'$-almost invariant vector for some $\vare'>0$.  
\end{proof}
\begin{cor}\label{c:InitialReductions} 
It is  enough to prove the sufficiency part of Theorem~\ref{t:main} under the additional assumptions:
  \begin{enumerate}
  	\item $\Omega=\overline{\Omega}\cup \overline{\Omega}^{-1}$ where $\Ad(\rho_1(\overline{\Omega}))$ freely generates  a Zariski-dense subgroup of $\Ad(\bbg_s)$. 
  	\item For any field extension $F/\bbq$ and proper subgroup $\bbh$ of $\bbg\times_{\bbq} F$, (\ref{e:EscapeSubgroups}) holds.
  \end{enumerate}
\end{cor}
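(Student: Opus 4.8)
The plan is to derive Corollary~\ref{c:InitialReductions} by combining the two reduction lemmas that immediately precede it with Proposition~\ref{p:EscapeSubgroups}. First I would recall, via Lemma~\ref{l:ReductionToConnectedSimplyConnectedCase} and Lemma~\ref{l:AdelicClosure}, that it suffices to prove the sufficiency part of Theorem~\ref{t:main} under the hypothesis that $\bbg$ is connected, simply-connected and perfect, and that the adelic closure $\wh{\Gamma}$ has the product structure recorded in Lemma~\ref{l:AdelicClosure}. Since Proposition~\ref{p:EscapeSubgroups} applies precisely to finite symmetric sets generating a Zariski-dense subgroup of a connected perfect $\bbq$-group, it produces a new finite symmetric set $\overline{\Omega'}$ (with $\Omega'=\overline{\Omega'}\cup\overline{\Omega'}^{-1}$) inside $\Gamma$ that freely generates a Zariski-dense subgroup $\Lambda$ of $\bbg(\bbq)$, satisfies the escape-from-subgroups estimate~(\ref{e:EscapeSubgroups}), and moreover has the property that $\overline{\Ad(\rho_1(\Omega'))}$ freely generates a Zariski-dense subgroup of $\Ad(\bbg_s)$.

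Next I would invoke the Lemma sitting just above the corollary (the one asserting that if $\{\Cay(\pi_q(\Lambda),\pi_q(\Omega'))\}_{q\in\ccal}$ is a family of expanders then so is $\{\Cay(\pi_q(\Gamma),\pi_q(\Omega))\}_{q\in\ccal}$, for any set $\ccal$ of integers coprime to $q_0$, under the standing assumption that $\bbg$ is connected, simply-connected and perfect). This lemma is exactly the tool that lets us replace the original generating set $\Omega$ by the ``ping-pong'' generating set $\Omega'$ without loss of generality: to prove super-approximation for $\Gamma$ with respect to $\Omega$ it is enough to prove it for $\Lambda$ with respect to $\Omega'$. Thus, after renaming $\Lambda$ as $\Gamma$ and $\Omega'$ as $\Omega$, we have arranged that $\Omega=\overline{\Omega}\cup\overline{\Omega}^{-1}$, that $\overline{\Ad(\rho_1(\Omega))}$ freely generates a Zariski-dense subgroup of $\Ad(\bbg_s)$ (condition (1)), and that the escape estimate~(\ref{e:EscapeSubgroups}) holds for every proper algebraic subgroup (condition (2)). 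One should note that passing from $\Gamma$ to $\Lambda$ keeps $\Lambda$ Zariski-dense in the same group $\bbg$, so the structural reductions of Section~\ref{ss:Reductions} are not disturbed.

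The only genuinely delicate point — and the step I expect to require the most care rather than the most work — is checking that these successive reductions are mutually compatible: applying Lemma~\ref{l:AdelicClosure} may force us to pass to a finite-index subgroup of $\Gamma$ and to re-choose a generating set, and then Proposition~\ref{p:EscapeSubgroups} is applied to that subgroup; conversely the ping-pong lemma above requires $\Omega'\subseteq\Gamma$ and $\Lambda=\langle\Omega'\rangle$ Zariski-dense, which is exactly what Proposition~\ref{p:EscapeSubgroups} guarantees. Since each individual reduction is an ``it is enough to prove'' statement and they are chained in a fixed order (first Lemma~\ref{l:ReductionToConnectedSimplyConnectedCase}, then Lemma~\ref{l:AdelicClosure}, then the ping-pong lemma together with Proposition~\ref{p:EscapeSubgroups}), composing them is formally routine. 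I would therefore present the proof as: invoke the earlier reductions to put $\bbg$ in the connected/simply-connected/perfect form; apply Proposition~\ref{p:EscapeSubgroups} to obtain $\overline{\Omega'}$ and hence $\Omega'$ satisfying (1) and (2); apply the preceding lemma to transfer the expander conclusion from $(\Lambda,\Omega')$ back to $(\Gamma,\Omega)$; conclude that it suffices to prove the sufficiency part of Theorem~\ref{t:main} under (1) and (2).
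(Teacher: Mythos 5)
Your proposal is correct and follows essentially the same route the paper intends: the corollary is obtained by applying Proposition~\ref{p:EscapeSubgroups} to produce $\Omega'$ satisfying (1) and (2) and then using the lemma immediately preceding the corollary to transfer expansion from $(\Lambda,\Omega')$ back to $(\Gamma,\Omega)$, after the reductions of Lemmas~\ref{l:ReductionToConnectedSimplyConnectedCase} and~\ref{l:AdelicClosure}. The only caveat is your claim that the structural reductions of Section~\ref{ss:Reductions} are ``not disturbed'': as the paper notes in Remark~\ref{r:ChangingOmega}, after replacing $\Omega$ the adelic closure has the product form of Lemma~\ref{l:AdelicClosure} only at large primes (at small primes one only retains openness in $\gcal(\bbz_p)$), which is exactly how the compatibility issue you flag is resolved.
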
 
\begin{remark}\label{r:ChangingOmega}
Notice that by changing $\Omega$, the closure $\Gamma_p$ of $\Gamma$ in $\GL_{n_0}(\bbz_p)$ is of the form described in Lemma~\ref{l:AdelicClosure} only for large enough $p$. For small primes $\Gamma_p$ is still an open subgroup of $\bbg(\bbq_p)$. 
\end{remark}
\subsection{Summary of the initial reductions.}\label{ss:InitialReductions} In the rest of the article, (unless it is explicitly mentioned otherwise)  we assume that $\Omega\subseteq \GL_{n_0}(\bbz[1/q_0])$ is a finite symmetric set, and set $\Gamma=\langle \Omega\rangle$. And let $\gcal$ and $\bbg$ be the Zariski-closure of $\Gamma$ in $(\GL_{n_0})_{\bbz[1/q_0]}$ and $(\GL_{n_0})_{\bbq}$, respectively. 

\begin{enumerate}
	\item(Zariski closure)  By Lemma~\ref{l:ReductionToConnectedSimplyConnectedCase}, we can and will assume that $\bbg$ is a connected, simply connected, perfect group. We let $\bbg_s$ be a Levi $\bbq$-subgroup of $\bbg$, and $\bbu$ be the unipotent radical of $\bbg$. 
	
	\item(Adelic closure) By Lemma~\ref{l:AdelicClosure} and Remark~\ref{r:ChangingOmega}, we can and will assume that  the closure $\wh{\Gamma}$ of $\Gamma$ in $\prod_{p\nmid q_0}\GL_{n_0}(\bbz_p)$ is an open subgroup of $\prod_{p\nmid q_0} \bbg(\bbq_p)$ which contains a finite index subgroup of the form $\prod_{p\nmid q_0} P_p$ where $P_p=K_p\ltimes U_p$ and $K_p$ and $U_p$ satisfy the properties mentioned in Lemma~\ref{l:AdelicClosure}. In particular, $P_p=\bbg(\bbq_p)\cap\GL_{n_0}(\bbz_p)[p^{k_p}]$, $K_p=\bbg_s(\bbq_p)\cap\GL_{n_0}(\bbz_p)[p^{k_p}]$, and $U_p=\bbu(\bbq_p)\cap\GL_{n_0}(\bbz_p)[p^{k_p}]$ for some non-negative integers $k_p$ that are zero for large enough $p$.  
	
	\item(Generating set) We can and will assume that $\Omega$ is as in Corollary~\ref{c:InitialReductions}. In particular, $\ker(\Ad\circ \rho_1)\cap \Gamma=\{1\}$, and any proper algebraic subgroup of $\bbg$ intersects $\Gamma$ in an {\em exponentially small} subset (see Inequality (\ref{e:EscapeSubgroups})). 
\end{enumerate}

\subsection{Multiplicity bound.}
In order to execute Sarnak-Xue~\cite{SX} trick, we need to have a control on the degree of irreducible representations of $\pi_q(\Gamma)$. Here we prove such a result for an arbitrary modulus $q$ which is of independent interest. 
\begin{prop}\label{p:HighMultiplicity}
Let $\Gamma\subseteq \GL_n(\bbz[1/q_0])$ be a finitely generated group whose Zariski-closure $\bbg$ is a Zariski-connected perfect group. Suppose $\bbg$ is simply-connected; that means $\bbg=\bbg_s\ltimes \bbu$ where $\bbg_s$ is a simply-connected semisimple group and $\bbu$ is a unipotent group. Let $q$ be an integer which is co-prime to $q_0$. Let $\wh{\Gamma}$ be the closure of $\Gamma$ in $\prod_{p\nmid q_0}\GL_n(\bbz_p)$.

Then there are a finite set $\Sigma$ of complex irreducible representations of $\wh{\Gamma}$ and $\delta>0$ which depend on $\Gamma$ such that the following holds.

For any complex irreducible representation $\rho$ of $\pi_q(\Gamma)$ which does not factor through $\pi_{q'}(\Gamma)$ for any proper divisor $q'$ of $q$, we have that either
\begin{enumerate}
	\item The lift of $\rho$ to $\wh{\Gamma}$ is in $\Sigma$, or
	\item $\dim \rho\ge |\pi_q(\Gamma)|^{\delta}$.
\end{enumerate}
\end{prop}
For a group with entries in a number field and a semisimple Zariski-closure, a similar statement is proved in \cite[Proposition 33]{SGsemisimple}. An identical argument works here, but for the sake of the convenience a proof is included. It is worth pointing out that the main (behind the scene) representation theoretic tool is Howe's Kirillov theory for compact $p$-adic analytic groups~\cite{How}. 

For any odd prime $p$, let $a_0(p):=1$, and let $a_0(2):=2$. Then $\exp:p^{a_0(p)}\gl_n(\bbz_p)\rightarrow 1+p^{a_0(p)}\gl_n(\bbz_p)$, and $\log$ going backward, can be defined and satisfy the usual properties. Let $b_0(p):=1$ for any prime $p\ge 5$, and let $b_0(2)=b_0(3)=2$. Then, by Campbell-Hausdorff formula, we have that for any Lie $\bbz_p$-subalgebra $\hfr$ of $p^{a_0(p)}\gl_n(\bbz_p)$ we have that $\exp(\hfr)$ is a subgroup of $1+p^{a_0(p)}\gl_n(\bbz_p)$.

\begin{lem}\label{l:Local}
	Let $\bbh$ be an algebraic subgroup of $(\GL_n)_{\bbq_p}$. Let $H_c:=\bbh(\bbq_p)\cap (1+p^c\gl_n(\bbz_p))$ for any positive integer $c$. Let $\hfr:=\Lie(\bbh)(\bbq_p)\cap \gl_n(\bbz_p)$. Suppose $p^{s_0}\hfr \subseteq [\hfr,\hfr]$. Let $\rho$ be an irreducible complex representation of $H_c$, $c\ge b_0(p)+1$, and 
	\[
	m_{\rho}:=\min\{m\in \bbz^+|\h H_m\subseteq \ker(\rho)\}.
	\]  
	Then $\dim \rho \ge p^{(m_{\rho}-1-c-s_0)/2}$.
\end{lem}
\begin{proof}
	Since the analytic Lie algebra of $\bbh(\bbq_p)$ coincides with $\Lie(\bbh)(\bbq_p)$, we get that 
	\[
	\log:H_m\rightarrow p^m \hfr
	\]
	is a bijection and the exponential map is its inverse (for a detailed proof see~\cite[Proposition 7]{SGZ}). We notice also that $p^{m+s_0} (p^m\hfr) \subseteq [\pfr^m\hfr,\pfr^m\hfr]\subseteq p^m (p^m \hfr)$. Hence by \cite[Lemma 32]{SGsemisimple} we have 
	\[
	\dim \rho\ge p^{(m_{\rho}-1-c-s_0)/2}.
	\]
\end{proof}

\begin{proof}[Proof of Proposition~\ref{p:HighMultiplicity}]
Let $G_{p,m}:=\bbg(\bbq_p)\cap (1+p^m\GL_n(\bbz_p))$ for any prime $p$ and non-negative integer $m$. Let $\wh{\Gamma}$ be the closure of $\Gamma$ in $\prod_{p\nmid q_0} \GL_n(\bbz_p)$, let $\Gamma_p$ be the closure of $\Gamma$ in $\GL_n(\bbz_p)$, and $\Gamma_p[p^m]:=\Gamma_p\cap G_{p,m}$.  As in Lemma~\ref{l:AdelicClosure}, by Nori's strong approximation~\cite[Theorem 5.4]{Nor}, there is a finite-index subgroup $\Lambda$ of $\Gamma$ such that its closure $\wh{\Lambda}$ in $\prod_{p\nmid q_0}\GL_n(\bbz_p)$ is of the form 
 $\prod_{p\nmid q_0} \Lambda_p$, where $\Lambda_p$'s satisfy the following properties. 
\begin{enumerate}
\item For some positive integer $c_0$ we have that, for any prime $p\ge c_0$, $\Lambda_p=G_{p,0}$ and it is a perfect group, and $\pi_p(\Lambda_p)$ is generated by $p$-elements;
\item For any prime $p<c_0$, we have $\Lambda_p=G_{p,c(p)}$ where $c(p)$ is chosen large enough such that $\Lambda_p\subseteq G_{p,8}$.
\end{enumerate}
Since $\bbg$ is perfect, $[\Lie(\bbg)(\bbq),\Lie(\bbg)(\bbq)]=\Lie(\bbg)(\bbq)$. Hence there is a positive integer $s_0$ such that for any prime $p$ we have
\[
p^{s_0} \gfr_p\subseteq [\gfr_p,\gfr_p]
\]
where $\gfr_p:=\Lie(\bbg)(\bbq_p)\cap \gl_n(\bbz_p)$. 
 
 Let $\rho:\pi_q(\Gamma)\rightarrow \GL(V)$ be a complex irreducible representation which does not factor through $\pi_{q'}(\Gamma)$ for any proper divisor $q'$ of $q$. Then $V=\oplus_{i}\rho(g_i)W$ where $W$ is a simple $\pi_q(\Lambda)$-module and $g_i$'s are some elements of $\pi_q(\Gamma)$. Suppose $q=\prod_i p_i^{n_i}$ is the prime factorization of $q$. Since  $\pi_q(\Lambda)\simeq \prod_i \pi_{p_i^{n_i}}(\Lambda_{p_i})$, there are irreducible representations $\eta_i:\Lambda_{p_i}\rightarrow \GL(W_i)$ such that $W$ is isomorphic to $\otimes_i W_i$ as an $\wh{\Lambda}$-module. In particular, $\dim W=\prod_i \dim W_i$. 
 
  Except for finitely many irreducible representations $\rho$ of $\wh{\Gamma}$, we have that $\wh{\Lambda}$ acts non-trivially on $V$, and so $\otimes_i \eta_i$ is a non-trivial irreducible representation of $\wh{\Lambda}$. Suppose that 
  \[
  \Sigma\supseteq \{\rho\in {\rm Irr}(\wh{\Gamma})|\h \wh{\Lambda}\subseteq \ker(\rho)\},
  \]
 where ${\rm Irr}(\wh{\Gamma})$ is the set of all the complex irreducible representations of $\wh{\Gamma}$.
 
 For any $i$, $\eta_i$ is an irreducible representation of $\Lambda_{p_i}=G_{p_i,c(p_i)}$ where $c(p_i)$ is $0$ for $p\ge c_0$ and given as above for $c<p_0$. Since $\rho$ does not factor through $\pi_{q'}(\Gamma)$ for a proper divisor $q'$ of $q$, for any $i$ we have 
 \[
 m_i:=\min\{m\in \bbz^+|\h G_{p_i,m}\subseteq \ker(\eta_i)\}=\max\{n_i,c(p_i)\}.
 \]

  Next we partition the indexes into five types: (1) $n_i-c(p_i)\ge 2s_0+2$, and $c(p_i)\ge 8$; (2) $n_i\ge 2s_0+10$ and $c(p_i)=0$; (3) $1<n_i< 2s_0+10$ and $p_i\ge c_0$; (4) $n_i=1$ and $p_i\ge c_0$; and (5) $p_i<c_0$ and $n_i< 2s_0+10$. Now we find a lower bound for the dimension of $\eta_i$ for the first three types of indexes. 

{\bf Case 1.} $n_i-c(p_i)\ge 2s_0+2$ and $c(p_i)\ge 8$. 

In this case, by Lemma~\ref{l:Local}, we have 
\[
\dim \eta_i\ge p^{(n_i-c(p_i)-1-s_0)/2}\ge p^{(n_i-c(p_i))/4}=|\pi_{p_i^{n_i}}(\Lambda_{p_i})|^{\delta_1}
\]
where $\delta_1$ is a positive number which depends only on $\dim \bbg$.

{\bf Case 2.} $n_i\ge 2s_0+10$ and $c(p_i)=0$.

In this case, the restriction of $\eta_i$ to $G_{p_i,2}$ is non-trivial. Hence by Lemma~\ref{l:Local} we have 
\[
\dim \eta_i\ge p^{(n_i-2-1-s_0)/2}\ge p^{n_i/4} \ge |\pi_{p_i^{n_i}}(\Lambda_{p_i})|^{\delta_2}
\]
 where $\delta_2$ is a positive number which depends only on $\dim \bbg$.

{\bf Case 3.} $1< n_i< 2s_0+10$ and $p\ge c_0$. 

Notice that $\pi_{p_i^{n_i}}(G_{p_i,1})$ is a normal $p_i$-subgroup of $\pi_{p_i^{n_i}}(\Lambda_{p_i})$. Since the dimension of any irreducible representation of a $p_i$-group is a power of $p_i$, we have that either the restriction of $\eta_i$ to $G_{p_i,1}$ factors through $G_{p_i,1}/[G_{p_i,1},G_{p_i,1}]$ or $\dim \eta_i\ge p_i \ge |\pi_{p_i^{n_i}}(\Lambda_{p_i})|^{\delta_3}$, where $\delta_3$ is a positive number which depends on $\bbg$. 

Now suppose the restriction of $\eta_i$ to $G_{p_i,1}$ factors through $G_{p_i,1}/[G_{p_i,1},G_{p_i,1}]$. Then there is a line $l_i$ in $W_i$ which is invariant under $G_{p_i,1}$ and $W_i=\sum_{g \in G_{p_i,0}} \eta_i(g)(l_i)$. Since $G_{p_i,0}$ is perfect, the stabilizer of $l_i$ should be a proper subgroup of $G_{p_i,0}$. Since $\pi_{p_i^{n_i}}(G_{p_i,0})$ is generated by $p_i$-elements, we get that the index of the stabilizer of $l_i$ in $G_{p_i,0}$ is at least $p_i$. And so again $\dim \eta_i\ge p_i \ge |\pi_{p_i^{n_i}}(\Lambda_{p_i})|^{\delta_3}$.

{\bf Case 4.} $n_i=1$ and $p_i\ge c_0$.

By \cite{LS} (see \cite[Section 4.2]{SGV}), we have $\dim \eta_i\ge |\pi_{p_i^{n_i}}(\Lambda_{p_i})|^{\delta_4}$ where $\delta_4$ is a positive number which depends only on $\bbg$.

Since $\prod_{i \text{ in Case 5}} p_i^{n_i}\ll_{\bbg} 1$, by the above cases we get that either $q\ll_{\Gamma}1$ or $\dim \rho\ge |\pi_q(\Gamma)|^{\delta}$ where $\delta$ is a positive number which depends only on $\Gamma$. By enlarging $\Sigma$, we can assume that it contains all the irreducible representations of $\wh{\Gamma}$ which factor through $\pi_q(\Gamma)$ for a positive integer $q\ll_{\Gamma}1$. Such $\Sigma$ and $\delta$ satisfy the claimed properties. 
\end{proof}
\section{Expansion, approximate subgroup, and bounded generation.}
\subsection{Largeness of level-$Q$ approximate subgroups implies super-approximation.}
In this section, the Bourgain-Gamburd machine~\cite{BG1} is used to reduce a proof of spectral gap at level $Q$ to a proof of the {\em largeness} of a {\em generic} level-$Q$ approximate subgroup (see Theorem~\ref{t:ApproximateSubgroup}).

We work in the setting of Section~\ref{ss:InitialReductions}.  As in~\cite{SGsemisimple}, let us introduce the following notation for convenience: 
for a finite symmetric subset $A$ of $\Gamma$, positive integers $l$ and $Q$, and a positive number $\delta$, let $\Pfr_Q(\delta,A,l)$ be the following statement
\be\label{e:P(delta)}
(\pcal_{\Omega}^{(l)}(A)>Q^{-\delta})\h\wedge\h (l>\frac{1}{\delta} \log Q)\h \wedge\h (|\pi_Q(A\cdot A\cdot A)|\le |\pi_Q(A)|^{1+\delta}).
\ee
One can think of $A$ as a {\em generic level-$Q$ approximate subgroup} if $\Pfr_Q(\delta,A,l)$ holds for small enough $\delta$. 

Notice that the random-walk in $\pi_Q(\Gamma)$ gets interesting only at steps $O_{\Omega}(\log |\pi_Q(\Gamma)|)=O_{\Omega}(\log Q)$ since before this time the random-walk with respect to the probability measure $\pcal_{\pi_Q(\Omega)}$ is similar to a random-walk in a regular tree. The first two conditions of $\Pfr_Q(\delta,A,l)$ imply that at $O_{\Omega}(\log Q)$-range of random-walk the probability of hitting $A$ is not {\em exponentially small}. And so $A$ should be fairly generic. For instance Proposition~\ref{p:EscapeSubgroups} implies that $A$ cannot be a subset of a proper algebraic subgroup if $\delta$ is small enough. The genericness of such $A$ was crucial in the proof of \cite[Theorem 35]{SGsemisimple} where the largeness of a generic level-$Q$ approximate subgroup for a semisimple group is proved. 

Let $\ccal$ be either $\{p^n| p\in V_f(\bbq), p\nmid q_0\}$ or $\{q\in \bbz^+|\h \gcd(q,q_0)=1,\h \forall p\in V_f(\bbq), v_p(q_0)\le N\}$, where $v_p(q)$ is the $p$-adic valuation of $q$.

In this section, we deduce Theorem~\ref{t:main}  from the following result.
\begin{thm}[Approximate subgroups]\label{t:ApproximateSubgroup}
In the above setting, for any $\vare>0$ there is $\delta>0$ such that 
\begin{center}
	$\Pfr_Q(\delta,A,l)$ implies that $|\pi_Q(A)| \ge |\pi_Q(\Gamma)|^{1-\vare}$
\end{center}	
if $Q\in \ccal$ and $Q^{\vare^{\Theta_{\bbg}(1)}}\gg_{\Omega} 1$. 
\end{thm}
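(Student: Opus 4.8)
The plan is to deduce Theorem~\ref{t:ApproximateSubgroup} by a Bourgain--Gamburd-style argument combined with the reductions of Section~\ref{ss:InitialReductions}. First I would recall the standard equivalence between expansion of $\{\Cay(\pi_Q(\Gamma),\pi_Q(\Omega))\}_{Q\in\ccal}$ and an $\ell^2$-flattening statement for the convolution powers $\pcal_\Omega^{(l)}$ at logarithmic scales $l\asymp \log Q$: namely that $\|\pi_Q(\pcal_\Omega^{(l)})\|_2\le |\pi_Q(\Gamma)|^{-1/2+o(1)}$ for $l\asymp \log Q$, which in turn follows from Theorem~\ref{t:ApproximateSubgroup} via the usual dichotomy (either the measure flattens or it concentrates on an approximate subgroup $A$ satisfying $\Pfr_Q(\delta,A,l)$). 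This deduction step, i.e. showing Theorem~\ref{t:ApproximateSubgroup} $\Rightarrow$ Theorem~\ref{t:main}, is exactly the argument of \cite{SGsemisimple} adapted to the present $\ccal$, using the high-multiplicity bound on representations from Lemma~\ref{l:AdelicClosure}(4) / Section~\ref{ss:InitialReductions} (the quasirandomness input) to pass from $\ell^2$-flattening to a spectral gap for every non-trivial irreducible representation of $\pi_Q(\Gamma)$.

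For the proof of Theorem~\ref{t:ApproximateSubgroup} itself, I would run an induction on scales. Given $A$ with $\Pfr_Q(\delta,A,l)$, so $A$ is a genuine level-$Q$ approximate subgroup of tripling $\le|\pi_Q(A)|^\delta$ and the driving measure $\pcal_\Omega^{(l)}$ gives $A$ mass $\ge Q^{-\delta}$, I want to show $|\pi_Q(A)|\ge|\pi_Q(\Gamma)|^{1-\vare}$. The key structural step is a \emph{product theorem / Bezrukavnikov--Freedman-style classification of approximate subgroups} of $\pi_Q(\Gamma)\simeq \prod_i \pi_{p_i^{n_i}}(K_{p_i})\ltimes\pi_{p_i^{n_i}}(U_{p_i})$: up to polynomial error, $\pi_Q(A)$ is commensurable with a product of subgroups $H_i\le \pi_{p_i^{n_i}}(\Gamma)$, each of which is (essentially) the preimage of an approximate subgroup at a smaller level $p_i^{m_i}$ together with a full congruence kernel below level $p_i^{m_i}$. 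This reduces the prime-power case to \cite{SGsemisimple} (the single semisimple factor at a fixed prime) and the bounded-power-of-squarefree case to \cite{SGV}/\cite{BV}, by a Chinese-remainder decomposition: for the squarefree-type $\ccal$ the factors are at bounded height $\le M_0$, so each factor is handled by an $M_0$-fold iteration of the squarefree argument, and one multiplies the escape estimates over the (many) primes dividing $Q$. The escape input Proposition~\ref{p:EscapeSubgroups} and Corollary~\ref{c:InitialReductions} guarantee that $\pcal_\Omega^{(l)}$ cannot concentrate on the preimage of any proper algebraic subgroup, which is what forces $H_i$ to be almost all of the relevant factor once the smaller-level approximate subgroup has been shown (inductively) to be large.

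The technical heart, and the step I expect to be the main obstacle, is controlling the \emph{unipotent direction}. Inside each $P_p=K_p\ltimes U_p$ the semisimple part is handled by \cite{SGsemisimple}, but one must show that an approximate subgroup cannot be large in $K_p$ while being small in $U_p$ (or vice versa). Here I would use Lemma~\ref{l:AlmostCommute} and the short exact sequences \eqref{e:Exact}: project $A$ via $\rho_1$ to the semisimple quotient and via $\rho_2$ to $\bbg_s\ltimes\bbv$ with $\bbv=\bbu/[\bbu,\bbu]$, apply the (inductively known) largeness in those quotients, and then run a \emph{bounded-generation} argument --- the perfectness hypothesis, via \cite[Corollary 14]{SGV}, ensures the trivial representation is not a subrepresentation of $\phi_0:\bbg_s\to\mathbb{GL}(\bbv)$, so $\bbg_s$-orbits generate $\bbv$ in boundedly many steps, lifting largeness in $\bbg_s\ltimes\bbv$ to largeness in $\bbg_s\ltimes\bbu$ by induction on the lower central series of $\bbu$. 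Matching up the precise levels $p^{m_i}$ across these projections (the $q'$-discrepancies in Lemma~\ref{l:AlmostCommute}) while keeping the loss below $\vare$ is the delicate bookkeeping; one arranges $\delta$ small relative to $\vare$ and $Q$ large relative to $\vare^{\Theta_{\bbg}(1)}$ precisely to absorb these accumulated polynomial errors, which is why the conclusion is stated with the side condition $Q^{\vare^{\Theta_\bbg(1)}}\gg_\Omega 1$.
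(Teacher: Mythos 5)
There is a genuine gap, and it sits exactly where you place the ``key structural step.'' The classification you invoke --- that up to polynomial error $\pi_Q(A)$ is commensurable with a product $\prod_i H_i$ where each $H_i$ is the preimage of an approximate subgroup at a smaller level $p_i^{m_i}$ together with the full congruence kernel below that level --- is not available for $\pi_Q(\Gamma)\simeq\prod_i\pi_{p_i^{n_i}}(K_{p_i})\ltimes\pi_{p_i^{n_i}}(U_{p_i})$ and is not proved in your sketch; product/approximate-subgroup theorems of Breuillard--Green--Tao or Pyber--Szab\'o type exist for bounded-rank simple groups over fields, not for perfect groups with unipotent radical over $\bbz/p^n\bbz$. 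The paper does not classify approximate subgroups at all: it deduces Theorem~\ref{t:ApproximateSubgroup} from the bounded generation statement Theorem~\ref{t:BoundedGeneration} (via $\rho_2$, Lemma~\ref{l:UnipotentPropagation} to climb the lower central series of $\bbu$, and Ruzsa at the end --- this is the part of your outline that does match the paper), and then spends Sections 4 and 5 proving Theorem~\ref{t:BoundedGeneration}. Your ``reduction to \cite{SGsemisimple} and \cite{SGV}/\cite{BV} by CRT and induction on scales'' silently assumes largeness at smaller levels for $\bbg_s\ltimes\bbv$, which is precisely the new content: \cite{SGsemisimple} covers only the semisimple case and \cite{SGV} only square-free moduli. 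In particular ``an $M_0$-fold iteration of the squarefree argument'' does not pass from level $p$ to level $p^2$: the new congruence quotient $\gfr(\f_p)$ is abelian, so the level-one machinery gives nothing there, and the paper instead needs the non-splitting of $1\to\gfr(\f_p)\to\pi_{p^2}(P_p)\to\pi_p(P_p)\to1$ (Lemma~\ref{l:NotSplittingModPSquare}), the Bourgain--Varj\'u-style statistical non-splitting of sections (Lemma~\ref{l:SectionFarFromHomo}), and bounded generation by commutators (Lemmas~\ref{l:CommutatorPerfect}, \ref{l:CommutatorCongruencePerfect}) to run the regularization over the tree of prime factors.

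The same problem recurs in the prime-power case. Your statement that perfectness makes ``$\bbg_s$-orbits generate $\bbv$ in boundedly many steps'' is only useful quantitatively: acting by the congruence subgroup $P[p^{l}]$ on a vector $v$ loses a factor $p^{\Theta(l)}$, so one must first produce, inside boundedly many products of $A$, unipotent elements that are $p$-adically \emph{large} (norm at least $Q^{\vare}$). That is the content of Lemma~\ref{l:LargePAdic}, which in turn rests on escape from proper subgroups of $\pi_{p^n}(\Gamma)$ (Proposition~\ref{p:EscapeModQ}, requiring the Frattini structure of Lemma~\ref{l:Frattini}, the almost-invariant-subspace Lemmas~\ref{l:AlmostInvariantSubspaceSemisimple}--\ref{l:AlmostInvariantSubspacePerfect}, and the effective Nullstellensatz argument of Lemma~\ref{l:SmallLifts}), and on the quantitative open-image result of the appendix (Proposition~\ref{p:QuanOpenImage}) through Proposition~\ref{p:BoundedGenerationOfModules} to convert orbit sums of such vectors into a full congruence piece of $\bbv$ with controlled loss. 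None of these ingredients appears in, or is replaceable by, your escape input Proposition~\ref{p:EscapeSubgroups}, which lives at the level of algebraic subgroups of $\bbg$ and by itself says nothing about open subgroups of $\pi_{p^n}(\Gamma)$ of large index. Finally, note that your first paragraph argues the implication Theorem~\ref{t:ApproximateSubgroup} $\Rightarrow$ Theorem~\ref{t:main}, which is the known, comparatively routine step quoted from \cite{SGsemisimple}; it is not part of what the statement asks you to prove.
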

To show that Theorem~\ref{t:ApproximateSubgroup} implies Theorem~\ref{t:main}, we prove the following proposition based on the same proof as in \cite[Section 3.2]{SGsemisimple}. In particular, we see that the reduction to understanding generic level-$Q$ approximate subgroups can be done in much more generality. 

%%%%%%%%%%%%%%%%%%%%%%%%%%%%%%%% Triple Product %%%%%%%%%%%%%%%%%%%%%%%%%%%%%%%%%%%%%
\begin{prop}\label{p:ReductionToApproximateSubgroup}
	Let $\overline{\Omega}\subseteq\GL_n(\bbz[1/q_0])$ be a finite subset and $\Omega:=\overline{\Omega}\cup\overline{\Omega}^{-1}$. Suppose $\overline{\Omega}$ freely generates a subgroup $\Gamma$. Let $\ccal'$ be a family of positive integers which consists of numbers that are corpime to $q_0$. Suppose $\ccal'$ is closed under divisibility; that means if $q\in \ccal'$ and $q'|q$, then $q'\in \ccal'$. Suppose the assertion of Theorem~\ref{t:ApproximateSubgroup} holds for $\Omega$ and $\ccal'$. Then the family of Cayley graphs $\{\Cay(\pi_q(\Gamma);\pi_q(\Omega))\}_{q\in \ccal'}$ is a family of expanders.   	
\end{prop}
\begin{proof} 	
By Remark~\ref{r:ExpanderSpectralGap}, we have to show $\sup_{q\in \ccal'}\lambda(\pcal_{\pi_q(\Omega)};\pi_q(\Gamma))<1$. For $Q\in \ccal'$, let $T_Q:L^2(\pi_Q(\Gamma))\rightarrow L^2(\pi_Q(\Gamma)), T_Q(f):=\pi_Q[\pcal_{\Omega}]\ast f$ and $\lambda(Q):=\lambda(\pcal_{\pi_Q(\Omega)};\pi_Q(\Gamma))$. It is well-known that $\lambda(Q)$ is the second largest eigenvalue of $T_Q$. Since $L^2(\pi_Q(\Gamma))$ is a completely reducible representation of $\pi_Q(\Gamma)$, there is a unit function $f_0\in L^2(\pi_Q(\Gamma))$ such that $T_Q(f_0)=\lambda(Q) f_0$ and the $\pi_Q(\Gamma)$-module $V_{\rho}$ generated by $f_0$ is a simple $\pi_Q(\Gamma)$-module. Since $\lambda(Q)$ is not one, $\rho$ is not the trivial representation of $\pi_Q(\Gamma)$. Since there are $\dim \rho$-many copies of $\rho$ as subrepresentations of $L^2(\pi_Q(\Gamma))$, we have that the eigenvalue $\lambda(Q)$ of $T_Q$ has multiplicity at least $\dim \rho$. Since $Q\in \ccal'$ and $\ccal'$ is closed under divisibility, we can and will assume that $\rho$ does not factor through $\pi_q(\Gamma)$ for some proper divisor $q$ of $Q$. Hence by Proposition~\ref{p:HighMultiplicity} we have that either the lift of $\rho$ to a representation of $\wh{\Gamma}$, the closure of $\Gamma$ in $\prod_{p\nmid q_0} \GL_n(\bbz_p)$, is in a finite set $\Sigma$ or $\dim \rho\ge |\pi_Q(\Gamma)|^{\vare_0}$ where $\vare_0$ is a positive number depending only on $\Gamma$. As we are seeking a uniform upper bound for $\lambda(Q)$, without loss of generality we can and will assume that the lift of $\rho$ is not in $\Sigma$. And so we have that multiplicity of the eigenvalue $\lambda(Q)$ of $T_Q$ is at least $|\pi_Q(\Gamma)|^{\vare_0}$. Therefore for any positive integer $l$ we have
\[
|\pi_Q(\Gamma)|^{\vare_0}\lambda(Q)^{2l}\le {\rm Tr}(T_Q^{2l})=|\pi_Q(\Gamma)|\|\pcal_{\pi_Q(\Omega)}^{(l)}\|_2^2. 
\]     
So it is enough to show 
\be\label{e:NeededL2UpperBound}
\|\pcal_{\pi_Q(\Omega)}^{(l)}\|_2\le |\pi_Q(\Gamma)|^{-\frac{1}{2}+\frac{\vare_0}{4}},
\ee
for some $l\ll_{\Omega} \log Q$ where the implied constant depends only on $\Omega$; it is worth pointing out that we can and will assume that $Q$ is arbitrarily large depending only on $\Omega$. In particular, we can and will assume {\em the Largeness of level $Q$-Approximate Subgroups} in the following sense:

{\bf The LAS Assumption:} there is $c_2>0$ such that, if $Q>1/c_2$ and $\Pfr_Q(c_2,A,l)$ holds, then $|\pi_Q(A)| \ge |\pi_Q(\Gamma)|^{1-\vare_0/8}$.

On the other hand, the first $O_{\Omega}(\log Q)$ steps of the random-walk with respect to the probability measure $\pcal_{\pi_Q(\Omega)}$ can be identified with the random-walk with respect to the probability measure $\pcal_{\Omega}$. And the latter is the random-walk on a regular infinite tree. Hence by the Kesten bound~\cite[Theorem 3]{Kes} we have
\[
\|\pcal_{\Omega}^{(l)}\|_2^2=\pcal_{\Omega}^{(2l)}(I)\le \left(\frac{2|\overline{\Omega}|-1}{|\overline{\Omega}|^2} \right)^l=e^{-c_1l},
\]
where $c_1$ is a positive number which depends only on $\Omega$. Hence for some positive integer $l_0=\Theta_{\Omega}(\log Q)$ where the implied constants depend only on $\Omega$ and for some positive number $c_3$ depending on $\Omega$ we have
\be\label{e:InitialFlatness}
\|\pcal_{\pi_Q(\Omega)}^{(l_0)}\|_2=\|\pcal_{\Omega}^{(l_0)}\|_2\le |\pi_Q(\Gamma)|^{-c_3}.
\ee
Equation (\ref{e:InitialFlatness}) can be considered as {\em an initial $L^2$-flatness}. 

An important result of Bourgain and Gamburd roughly indicates that the $L^2$-norm of a probability measure should substantially drop after a convolution by itself unless an approximate subgroup has a lot of mass~\cite{BG1} (see also \cite[Lemma 15]{Var}). This result is proved using Tao's non-commutative version of Balog-Szemer\'{e}di-Gowers theorem~\cite{Tao}. Here is a precise formulation of their result:

\begin{prop}[Bourgain-Gamburd's $L^2$-flattening.]\label{BourgainGamurd}
Let $\mu$ be a symmetric probability measure with finite support on a group $G$. Let $K$ be a number more than $2$. If $\|\mu\ast\mu\|_2\ge \frac{1}{K} \|\mu\|_2$, then there is a finite symmetric subset $A$ of $G$ which satisfies the following properties:
\begin{itemize}
	\item (Size of $A$ is comparable with $\|\mu\|_2^{-2}$) 
	$
	K^{-R}\|\mu\|_2^{-2}\le |A| \le K^{R} \|\mu\|_2^{-2};
	$
	\item (Getting an approximate subgroup)   
	$
	|A\cdot A\cdot A|\le K^{R}|A|;
	$	
	\item (Almost equidistribution on $A$) 
	$
	\min_{a\in A}(\mu\ast \mu)(a)\ge K^{-R}|A|^{-1},
	$
\end{itemize}	
	where $R$ is an absolute positive number.
\end{prop} 
For a small (depending only on $\Omega$ and specified later) positive parameter $\delta$, 
consider the following pseudocode: 

\begin{algorithm}
	\caption{$L^2$-flattening process}
	\label{al:flattening}
	\begin{algorithmic}
		\State Give $Q\in \ccal'$, $\delta\in (0,1)$;
		\State $l_0:=\lfloor\log Q/2\log C(\Omega)\rfloor$; (as in Equation~\ref{e:InitialFlatness}) 
		\State $\mu_0:=\pcal_{\pi_Q(\Omega)}^{(l_0)}$;
		\State $i:=0$;
		\While{($\|\mu_i\|_2 >|\pi_Q(\Gamma)|^{-1/2+\vare_0/4}$ and $\|\mu_i\ast \mu_i\|_2\le \|\mu_i\|_2^{1+\delta}$) or $(2^il_0<(1/\delta)\log Q)$}
		\State $\mu_{i+1}:=\mu_i \ast \mu_i$;
		\State $i:=i+1$;

		\EndWhile
	\end{algorithmic}
\end{algorithm}
where  $C(\Omega):=\max_{s\in \Omega}\|s\|$, where $\|s\|$ is the operator norm of $s:\bbr^n\times \prod_{p|q_0} \bbq_p^n \rightarrow \bbr^n\times \prod_{p|q_0} \bbq_p^n$ with $\|(v_{\infty},v_p)_{p|q_0}\|:=\max\{\|v_{\infty}\|,\|v_p\|_p\}_{p|q_0}$ and the sup norm on $\bbr^n$ and $\bbq_p^n$.

{\bf Claim 1.} For any $0<\delta<1$ and $Q\in\ccal'$, the number $i_0(\delta,Q)$ of steps in  Algorithm~\ref{al:flattening} is at most $\frac{\log(2/c_3)}{\log(1+\delta)}+\log(\frac{2\log C(\Omega)}{\delta})$ where $c_3$ is given in Equation (\ref{e:InitialFlatness}).

\begin{proof}[Proof of Claim 1] By the Young inequality we have $\|\mu_{i+1}\|_2\le \|\mu_i\|_2$ for any non-negative integer $i$. Hence by the initial flatness, Equation (\ref{e:InitialFlatness}), we have $\|\mu_i\|_2\le \|\mu_0\|_2\le |\pi_Q(\Gamma)|^{-c_3}$ for any non-negative integer $i$. Let $j_0$ be the smallest integer such that $2^{j_0}l_0\ge (1/\delta)\log Q$. Then for any $j_0\le i<i_0(\delta,Q)$ we have 
	$
	\|\mu_{i+1}\|_2\le \|\mu_i\|_2^{(1+\delta)}.
	$
And so 
\[
\|\mu_{i_0(\delta,Q)}\|_2\le \|\mu_{j_0}\|_2^{(1+\delta)^{i_0-j_0}}.
\]
If $i_0(\delta,Q)>\frac{\log(2/c_3)}{\log(1+\delta)}+\log(\frac{2\log C(\Omega)}{\delta})$, then $i_0-j_0>\frac{\log(2/c_3)}{\log(1+\delta)}$ which implies $\|\mu_{i_0-1}\|_2 \le|\pi_Q(\Gamma)|^{-1}$. And this contradicts $\|\mu_{i_0-1}\|_2>|\pi_Q(\Gamma)|^{-1/2+\vare_0/4}$.
\end{proof}

{\bf Claim 2.} There is a positive number $\delta:=\delta(\Omega)$ depending only on $\Omega$, such that, for any $Q\in \ccal'$ which is at least $1/\delta$, $\|\mu_{i_0(\delta,Q)}\|_2\le |\pi_Q(\Gamma)|^{-1/2+\vare_0/4}$, where $i_0(\delta,Q)$ is given in Claim 1.

\begin{proof}[Proof of Claim 2] We proceed by contradiction. So to the contrary assumption for any positive number $\delta$ there is $Q(\delta)\in \ccal'$ such that 
\begin{align}
\label{e:QGoesToInfinity} Q(\delta)>1/\delta; & \text{ in particular } \lim_{\delta\rightarrow 0}Q(\delta)=\infty, \\
\label{e:LargeL2Norm} \|\mu_{i_0(\delta,Q(\delta))}\|_2&>|\pi_{Q(\delta)}(\Gamma)|^{-1/2+\vare_0/4}, \text{ and}\\
\label{e:L2NonFlatness} \|\mu_{i_0(\delta,Q(\delta))}\ast \mu_{i_0(\delta,Q(\delta))}\|_2&\ge \|\mu_{i_0(\delta,Q(\delta))}\|_2^{1+\delta}. 
\end{align}	

 Hence by (\ref{e:L2NonFlatness}) and  Proposition~\ref{BourgainGamurd} (for $K=\|\mu_{i_0(\delta,Q(\delta))}\|_2^{-\delta}$) there is a symmetric subset $\overline{A}(\delta)$ of $\pi_{Q(\delta)}(\Gamma)$ such that
 \begin{align}
 \label{e:SizeContorl}  \|\mu_{i_0(\delta,Q(\delta))}\|_2^{-2+R\delta} &\le
 |\overline{A}(\delta)|\le\|\mu_{i_0(\delta,Q(\delta))}\|_2^{-2-R\delta},\\
 \label{e:ApproximateSubgroup}  |\overline{A}(\delta)\cdot \overline{A}(\delta) \cdot \overline{A}(\delta)|&\le \|\mu_{i_0(\delta)}\|_2^{-R\delta}|\overline{A}(\delta)|, \text{ and}\\
 \label{e:LargeMass}  \mu_{i_0(\delta,Q(\delta))+1}(\overline{A}(\delta))&\ge \|\mu_{i_0(\delta,Q(\delta))}\|_2^{R\delta}.
 \end{align}
Hence there is a function $\delta'(\delta)$ of $\delta$ such that 
\begin{align}
\label{e:ParameterGoesToZero}  &\lim_{\delta\rightarrow 0}\delta'(\delta)=0 \text{ and } \delta'(\delta)\ge \delta,&\\
\label{e:TripleProduct} \text{(because of Inequalities (\ref{e:SizeContorl}) and (\ref{e:ApproximateSubgroup}),) }\hspace{1cm}& |\overline{A}(\delta)\cdot \overline{A}(\delta) \cdot \overline{A}(\delta)|\le |\overline{A}(\delta)|^{1+\delta'(\delta)}&\\
\label{e:LargeMassReformulate} \text{(because of Inequalities (\ref{e:LargeL2Norm}) and (\ref{e:LargeMass}),) }\hspace{1cm}&  \mu_{i_0(\delta,Q(\delta))+1}(\overline{A}(\delta))\ge Q(\delta)^{-\delta'(\delta)}.&
\end{align}

We notice that $\mu_j=\pcal_{\pi_{Q(\delta)}(\Omega)}^{(2^jl_0(\delta))}$, and let $A(\delta):=\pi_{Q(\delta)}^{-1}(\overline{A}(\delta))\cap {\rm supp}(\pcal_{\Omega}^{(l(\delta))})$, where $l(\delta):=2^{i_0(\delta,Q(\delta))+1}l_0(\delta)$; then we have
\be
\label{e:Symmetry} 
A(\delta) \text{ is a finite symmetric subset of } \Gamma
\text{ because $\overline{A}(\delta)$ and $\pcal_{\Omega}$ are symmetric,} 
\ee
\be
\label{e:LowerBoundForTheNumberOfSteps} 
l(\delta)\ge (1/\delta)\log Q(\delta)\ge (1/\delta'(\delta))\log Q(\delta)
\ee
because of the second condition of the loop in Algorithm~\ref{al:flattening} and the inequality in (\ref{e:ParameterGoesToZero}),
\begin{align}
\label{e:LargeMassOfTheLift}
\pcal_{\Omega(\delta)}^{(l(\delta))}(A(\delta))=\mu_{i_0(\delta)+1}(\overline{A}(\delta))\ge Q(\delta)^{-\delta'(\delta)}&
\text{ by Inequality (\ref{e:LargeMassReformulate}),} 
\\
\label{e:ApproximateStructure}
|\pi_{Q(\delta)}(A(\delta)\cdot A(\delta)\cdot A(\delta))|\le  |\pi_{Q(\delta)}(A(\delta))|^{1+\delta'(\delta)}&
\text{ by Inequality (\ref{e:TripleProduct}).}
\end{align}
Statements (\ref{e:Symmetry}), (\ref{e:LowerBoundForTheNumberOfSteps}), (\ref{e:LargeMassOfTheLift}), and (\ref{e:ApproximateStructure})
imply that $\Pfr_{Q(\delta)}(\delta'(\delta),A(\delta),l(\delta))$ holds for any $\delta$. Now suppose $\delta$ is small enough so that $\delta'(\delta)$ is less than $c_2$ where $c_2$ is given by the LAS Assumption (this can be done thanks to (\ref{e:ParameterGoesToZero})). Then $Q(\delta)>1/\delta\ge 1/\delta'(\delta)\ge 1/c_2$, and so by the LAS Assumption we have
\be
\label{e:SizeBound}
|\pi_{Q(\delta)}(A(\delta))|=|\overline{A}(\delta)|\ge |\pi_{Q(\delta)}(\Gamma)|^{1-\vare_0/8}.
\ee 
On the other hand, by Inequalities (\ref{e:SizeContorl}) and (\ref{e:LargeL2Norm}), we have
\[
|\overline{A}(\delta)|\le \|\mu_{i_0(\delta,Q(\delta))}\|_2^{-2-R\delta}\le |\pi_{Q(\delta)}(\Gamma)|^{1+R\delta/2-\vare_0/2-R\delta \vare_0/4}.
\]
Hence, if $\delta$ is small enough, we get $|\overline{A}(\delta)|\le |\pi_{Q(\delta)}(\Gamma)|^{1+R\delta/2-\vare_0/2-R\delta \vare_0/4}\le |\pi_{Q(\delta)}(\Gamma)|^{1-\vare_0/4}$, which contradicts (\ref{e:SizeBound}).
\end{proof}
{\em (Going back to the proof of Proposition~\ref{p:ReductionToApproximateSubgroup})} By Claim 1 and Claim 2, for $Q\gg_{\Omega} 1$ in $\ccal'$ we have $\|\pcal_{\pi_Q(\Omega)}^{(l)}\|_2\le |\pi_Q(\Gamma)|^{-1/2+\vare_0/4}$ where $l:=2^{i_0(\delta(\Omega),Q)}l_0\ll_{\Omega} \log Q$. And this gives us the needed $L^2$-norm upper bound (see Inequality (\ref{e:NeededL2UpperBound})) for $Q\gg_{\Omega} 1$. The rest are only finitely many $Q$'s for which we do have spectral gap.
\end{proof}
 %%%%%%%%%%%%%%%%%%%%%%%%%%%%% Bounded Generation %%%%%%%%%%%%%%%%%%%%%%%%%%%%%%%%%%%%%%%
\subsection{Reduction to bounded generation and abelian unipotent radical.}\label{ss:ReductionToBG}
The main goal of this section is to deduce Theorem~\ref{t:ApproximateSubgroup} from the following.
\begin{thm}[Bounded generation]\label{t:BoundedGeneration}
In the setting of Section~\ref{ss:InitialReductions}, suppose $\bbu$ is abelian. For any positive integer $q$, let $\Gamma[q]:=\Gamma\cap \GL_{n_0}(\bbz[1/q_0])[q]$, where 
\[
\GL_{n_0}(\bbz[1/q_0])[q]:=\ker(\GL_{n_0}(\bbz[1/q_0])\xrightarrow{\pi_q}\GL_{n_0}(\bbz[1/q_0]/q\bbz[1/q_0])).
\]
 Then for any $0<\vare\ll_{\Omega} 1$ there are $0<\delta\ll_{\Omega,\vare} 1$ and positive integer $C\gg_{\Omega,\vare} 1$ such that for a finite symmetric subset $A$ of $\Gamma$
\begin{center}
$\pcal_{\Omega}^{(l)}(A)>Q^{-\delta}$ for some $l>(1/\delta)\log Q$ implies $\pi_Q(\Gamma[q])\subseteq \prod_C\pi_Q(A)$ for some $q|Q$ and $q\le Q^{\vare}$
\end{center}
if $Q\in\ccal$ and $Q^{\vare^{O_{\bbg}(1)}}\gg_{\Omega} 1$. 
\end{thm}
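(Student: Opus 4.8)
The plan is to exploit the structure $\bbg=\bbg_s\ltimes\bbv$ forced by the hypotheses of Section~\ref{ss:InitialReductions} together with the extra assumption that $\bbu$ is abelian: then $\bbv=\bbu$ is a vector $\bbq$-group, $\bbg_s$ acts on it through a $\bbq$-representation $\phi_0\colon\bbg_s\to\mathbb{GL}(\bbv)$, and perfectness of $\bbg$ is equivalent to $\phi_0$ having no trivial subrepresentation (\cite[Corollary 14]{SGV}). By Lemma~\ref{l:AdelicClosure} one may moreover assume, up to bounded-index and bounded-level discrepancies, that $\pi_Q(\Gamma)=\pi_Q(K)\ltimes\pi_Q(U)$ is a full semidirect product, so every $(\bar h,w)$ with $\bar h\in\pi_Q(K)$ and $w\in\pi_Q(U)$ lies in $\pi_Q(\Gamma)$. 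I would produce a bounded-length product of $\pi_Q(A)$ containing $\pi_Q(\Gamma[q])$ by handling the semisimple factor and the abelian factor separately and then splicing them.

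First I would push the random walk forward along $\rho_1\colon\bbg\to\bbg_s$; since $\rho_1$ is a homomorphism, $\pcal_\Omega^{(l)}(A)\le\pcal_{\rho_1(\Omega)}^{(l)}(\rho_1(A))$, so $\pcal_{\rho_1(\Omega)}^{(l)}(\rho_1(A))>Q^{-\delta}$, and Lemma~\ref{l:AlmostCommute} lets me interchange $\pi_Q$ and $\rho_1$ at the cost of changing the level by a bounded factor. The \emph{semisimple} case of the statement is available: for $\ccal$ consisting of prime powers it follows from \cite{SGsemisimple}, and for $\ccal$ consisting of bounded powers of square-free integers from \cite{SGV}. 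Applied to $\rho_1(A)\subseteq\rho_1(\Gamma)$ it yields a divisor $q_1\mid Q$ with $q_1\le Q^{\vare/10}$ and a constant $C_1$ with $\pi_Q(\rho_1(\Gamma)[q_1])\subseteq\rho_1(\prod_{C_1}\pi_Q(A))$; so every $\bar h\in\pi_Q(\rho_1(\Gamma)[q_1])$ admits a lift $(\bar h,w_{\bar h})\in\prod_{C_1}\pi_Q(A)$, with $w_{\bar h}\in\pi_Q(U)$ a priori arbitrary. Using in addition bounded commutator width of congruence subgroups of simply connected semisimple groups, one may further arrange, for $\bar h$ lying deep enough in the congruence filtration, that the lift can be taken with $w_{\bar h}$ also deep: write $\bar h$ as a bounded product of commutators $[\alpha_i,\beta_i]$ with $\alpha_i,\beta_i$ deep and lying in $\rho_1(\prod_{C_1}\pi_Q(A))$, lift each commutator, and observe that the $\bbv$-component of a commutator $[\tilde\alpha,\tilde\beta]$ in $\bbg_s\ltimes\bbv$ equals $(1-\phi_0(\alpha\beta\alpha^{-1}))u_{\tilde\alpha}+(\phi_0(\alpha)-\phi_0([\alpha,\beta]))u_{\tilde\beta}$, which lands deep because $\phi_0$, being a $\bbq$-polynomial map, carries the congruence filtration into itself (Lemma~\ref{l:AlmostCommute}).

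Second I would show that $\pi_Q(U[q_2])\subseteq\prod_{C_2}\pi_Q(A)$ for some $q_2\mid Q$ with $q_2\le Q^{\vare/10}$ and a constant $C_2$; this is the new content. The tool is harmonic analysis on the finite abelian group $\pi_Q(U)$: writing the $\bbv$-component of an $l$-step trajectory as a twisted sum $\sum_j\phi_0(g_0\cdots g_{j-1})u_j$ and testing it against a character $\xi$ of $\pi_Q(U)$ produces cancellation unless $\xi$ is \emph{degenerate}, i.e.\ unless its $\bbg_s$-orbit in the dual module is small or it is annihilated by a small divisor of $Q$. For non-degenerate $\xi$ the decay follows from two facts already secured: equidistribution of the semisimple part of the walk (super-approximation for $\bbg_s$, i.e.\ \cite{SGsemisimple} resp.\ \cite{SGV}), and the escape estimate $\pcal_\Omega^{(l)}(\bbh(F))\le e^{-\delta_0 l}$ of (\ref{e:EscapeSubgroups}), available for $\Omega$ by Corollary~\ref{c:InitialReductions}, applied to the proper $\bbq$-subgroups that are stabilizers of characters and preimages of proper $\bbg_s$-submodules — this keeps the $\bbv$-increments out of proper invariant subspaces modulo $Q$. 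The degenerate characters are exactly those vanishing on $\pi_Q(U[q_2])$ for a controlled $q_2$: small primes, primes dividing the relevant discriminants of $\phi_0$, and primes occurring to a large power in $Q$ (which create small annihilators) all get swept into $q_2\le Q^{\vare/10}$. Combining this with $\pcal_\Omega^{(l)}(A)>Q^{-\delta}$ and a Pl\"unnecke--Ruzsa covering argument gives the claim; in the square-free range of $\ccal$ one runs in addition the tensor/CRT bookkeeping across the prime divisors of $Q$ as in \cite{SGV, BV}.

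Finally I would splice: set $q$ to be a bounded power of $\lcm(q_1,q_2)$, so that $q\le Q^{\vare}$, and $C:=C_1+C_2+O(1)$. For $\bar h\in\pi_Q(\rho_1(\Gamma)[q])$ choose a lift $(\bar h,w_{\bar h})\in\prod_{C_1}\pi_Q(A)$ with $w_{\bar h}\in\pi_Q(U[q_2])$ as arranged in the second paragraph; then $(\bar h,w_{\bar h})\cdot\pi_Q(U[q_2])=(\bar h,\pi_Q(U[q_2]))$ since $\phi_0(\bar h)$ preserves the congruence submodule and $w_{\bar h}$ lies in it, whence $\prod_C\pi_Q(A)\supseteq\pi_Q(\rho_1(\Gamma)[q])\ltimes\pi_Q(U[q_2])\supseteq\pi_Q(\Gamma[q])$. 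I expect the main obstacle to be the second step — establishing $\pi_Q(U[q_2])\subseteq\prod_{C_2}\pi_Q(A)$ with $C_2$ bounded and $q_2$ polynomially small \emph{uniformly over $Q\in\ccal$}: one must control every degenerate character modulo an arbitrary prime power (or bounded power of a square-free integer), not merely over $\bbq$, and check that the exceptional level they generate stays below $Q^{\vare/10}$; the composite case additionally requires the decay not to degrade across the prime factors of $Q$, which is exactly where the technology of \cite{SGV} and \cite{BV} enters. Throughout one must propagate the bounded-index and bounded-level discrepancies coming from Lemma~\ref{l:AlmostCommute} and Remark~\ref{r:ChangingOmega} so that the final $q$ remains $\le Q^\vare$ and $C$ remains bounded in terms of $\Omega$ and $\vare$.
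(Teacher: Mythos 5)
Your overall splice (handle the semisimple quotient, handle the abelian unipotent part, then combine via commutators) is in the spirit of the paper's Section 5, but there are two genuine gaps. First, for $\ccal$ the set of bounded powers of square-free integers, the semisimple input you invoke does not exist: \cite{SGV} proves expansion and the relevant product/bounded-generation statements only for \emph{square-free} moduli, not for $Q\in\ccal_N$ with $N\ge 2$. Passing from the square-free level $Q_s$ to the levels $Q_s^2,\dots,Q_s^N$ is precisely the main new content of the paper's Section 4: it is done via Lemma~\ref{l:SectionFarFromHomo} (any section of $\pi_{p^2}(P_p)/V\to\pi_p(P_p)$ is statistically far from a homomorphism, resting on the non-splitting Lemma~\ref{l:NotSplittingModPSquare}), Lemma~\ref{l:Enlarging}, and the commutator results Lemma~\ref{l:CommutatorPerfect} and Lemma~\ref{l:CommutatorCongruencePerfect}; moreover that bootstrap is run for the whole perfect group $\bbg$ at once rather than for $\bbg_s$ separately. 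By quoting a ``semisimple case for $\ccal_N$ from \cite{SGV}'' you have assumed the hardest step of that half of the theorem.

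Second, in the prime-power case your treatment of the unipotent part replaces the paper's argument by a Fourier-analytic one whose key estimate is asserted rather than proved. The $\bbv$-component of an $l$-step trajectory is $\sum_j\phi_0(g_0\cdots g_{j-1})u_j$, where the increments $u_j$ and the prefixes $g_0\cdots g_{j-1}$ come from the \emph{same} letters of the word and are therefore strongly correlated; cancellation at a non-degenerate character of $\pi_Q(U)$ does not follow by conditioning on the semisimple part and invoking \cite{SGsemisimple} together with the escape bound (\ref{e:EscapeSubgroups}) --- such decay is a theorem of Bourgain--Furman--Lindenstrauss--Mozes type and would itself have to be proved uniformly modulo arbitrary $p^n$, with the bookkeeping that keeps the degenerate level $q_2\le Q^{\vare}$. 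The paper avoids this entirely: Lemma~\ref{l:LargePAdic} shows that if a bounded product of $A$ met $\bbv'(\bbq_p)$ only in $p$-adically small vectors, the section $s$ would be an actual homomorphism, producing a subgroup $H$ of $\pi_{q_1}(\Gamma)$ of large index and contradicting escape mod $Q$ (Proposition~\ref{p:EscapeModQ}, whose proof needs Lemma~\ref{l:SmallLifts} and an effective Nullstellensatz --- ingredients your sketch does not supply); the resulting large vectors are then converted into the full congruence submodule $p^{\Theta(n\vare)}V_p$ by Proposition~\ref{p:BoundedGenerationOfModules} and the quantitative open-image results of the appendix, together with Corollaries~\ref{c:NoTrivialFactor} and~\ref{c:ManyFactorsLargePrime}. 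Until you either prove the Fourier decay estimate and the passage from decay to the containment $\pi_Q(U[q_2])\subseteq\prod_{C_2}\pi_Q(A)$, or substitute arguments of the paper's kind, the second step of your plan --- which you correctly single out as the new content --- is missing, and with it the theorem.
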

{\bf The idea of the proof of Theorem~\ref{t:ApproximateSubgroup} modulo Theorem~\ref{t:BoundedGeneration}.} It is not clear to the author if a bounded generation result is true when the unipotent radical $\bbu$ is not abelian (specially since a perfect group might have a non-trivial center.). So to prove the {\em largeness of a generic level-$Q$ approximate subgroup $A$ of $\Gamma$} with Zariki-closure $\bbg:=\bbg_s\ltimes \bbu$, first we go to the quotient $\bbg_s\ltimes \bbu/[\bbu,\bbu]$. Let $\rho_2:\bbg\rightarrow \bbg_s\ltimes \bbu/[\bbu,\bbu]$ be the quotient map. For simplicity let us assume $\rho_2$ commutes with the congruence maps $\pi_Q$. This assumption together with the way the generating set $\Omega$ is chosen (recall that we have $\Omega=\overline{\Omega}\cup\overline{\Omega}^{-1}$ and $\rho_2(\overline{\Omega})$ freely generates a free group) implies that $\rho_2(A)$ is {\em a generic level-$Q$ approximate subgroup of $\rho_2(\Gamma)$}. Now by the bounded generation result for $\rho_2(\Gamma)$, we get a {\em large} congruence subgroup of $\rho_2(\Gamma)$ modulo $Q$ in a {\em controlled} number of steps. Next we use the techniques of studying a nilpotent group in order to translate our information on $\bbu/[\bbu,\bbu]$ to some data on $\bbu$. For a unipotent group $\bbu$ (and in general any nilpotent group), it is a useful method to consider  
\[
L_{\bbu}:=\gamma_1(\bbu)/\gamma_2(\bbu)\oplus \gamma_2(\bbu)/\gamma_3(\bbu) \oplus \cdots \oplus \gamma_c(\bbu)/\gamma_{c+1}(\bbu),
\]  
where $\gamma_i(\bbu)$ is the $i$-th lower central series; that means $\gamma_1(\bbu):=\bbu$ and $\gamma_{i+1}:=[\gamma_i(\bbu),\bbu]$, and $\gamma_{c+1}(\bbu)=\{1\}$. It is well-known that $[g_i\gamma_{i+1}(\bbu),g_j\gamma_j(\bbu)]:=g_i^{-1}g_j^{-1}g_ig_j \gamma_{i+j+1}(\bbu)$ for $g_i\in \gamma_i(\bbu)$ and $g_j\in \gamma_j(\bbu)$ defines a graded Lie ring structure on $L_{\bbu}$. And $L_{\bbu}$ is generated by its degree 1 elements, i.e. $\gamma_1(\bbu)/\gamma_2(\bbu)$, as a Lie ring. So we get an onto multi-linear map
\[
\phi_k:\underbrace{\gamma_1(\bbu)/\gamma_2(\bbu)\times \cdots \times \gamma_1(\bbu)/\gamma_2(\bbu)}_\text{$k$-times}\rightarrow \gamma_k(\bbu)/\gamma_{k+1}(\bbu), \hspace{1cm}
\phi_k(x_1,\ldots,x_k):=[x_1,\ldots,x_k],
\] 
where $[x_1,\ldots,x_k]:=[[x_1,\ldots,x_{k-1}],x_k]$. Having a set $X$ which maps onto a set of representatives for a {\em large} congruence subgroup modulo $Q$ in $\gamma_1(\bbu)/\gamma_2(\bbu)$, using the above multi-linear maps $\phi_k$, we can get a {\em subset} of $\bbu$ which is {\em large} modulo $Q$. {\em Notice that at this step we can get a control only on the {\em size} of the product set $\pi_Q(\prod_{O_{\bbu}(1)} X)$. Though we do get a set of representatives of a {\em large} congruence subset of the associated graded Lie ring, we do not get such a structural understanding of the product set itself.} Altogether we get that $|\prod_C \pi_Q(A)|$ is {\em large}. On the other hand, since $|\pi_Q(A)\cdot \pi_Q(A)\cdot \pi_Q(A)|\le |\pi_Q(A)|^{1+\delta}$, by the Ruzsa inequality (see~\cite[Lemma 2.2]{Hel1}), we get 
\[
\textstyle |\prod_C\pi_Q(A)|\le |\pi_Q(A)| \left(\frac{|\prod_3\pi_Q(A)|}{|\pi_Q(A)|}\right)^{C-2} \le |\pi_Q(A)|^{1+(C-2)\delta},
\]        
which helps us to finish the proof.

\begin{proof}[Theorem~\ref{t:BoundedGeneration} implies Theorem~\ref{t:ApproximateSubgroup}.] 
By the contrary assumption, there is $\varepsilon_0>0$ such that for any $\delta>0$ there are a finite symmetric subset $A_{\delta}$, a positive integer $l_{\delta}$ and ${Q_{\delta}}\in\ccal$ such that $\Pfr_{Q_{\delta}}(\delta,A_{\delta},l_{\delta})$ holds and at the same time $|\pi_{Q_{\delta}}(A_{\delta})|< |\pi_{Q_{\delta}}(\Gamma)|^{1-\varepsilon_0}$. Since $l_{\delta}>\log Q_{\delta}/\delta$, we have $\lim_{\delta\to 0} l_{\delta}=\infty$. Now we claim that $Q_{\delta}\rightarrow \infty$ as $\delta\rightarrow 0$. If not, for infinitely many $\delta$ we have $Q_{\delta}=Q$, and $\pi_Q(A_{\delta})=\overline{A}$ is independent of $\delta$. And so
\[
\lim_{\delta\rightarrow 0} \pcal_{\pi_Q(\Omega)}^{(l_{\delta})}(\pi_Q(A_{\delta}))=\frac{|\overline{A}|}{|\pi_Q(\Gamma)|}<\frac{1}{|\pi_Q(\Gamma)|^{\vare_0}}.
\]
On the other hand, we have
\[
\pcal_{\pi_Q(\Omega)}^{(l_{\delta})}(\pi_Q(A_{\delta}))\ge \pcal_{\Omega}^{(l_{\delta})}(A_{\delta})> Q_{\delta}^{-\delta}=Q^{-\delta}.
\]
Hence as $\delta\rightarrow 0$, we get $|\pi_Q(\Gamma)|^{-\vare_0}>1$, which is a contradiction. 

Let $\{k_p\}_{p\nmid q_0}$ be such that $P_p=\bbg(\bbq_p)\cap\GL_{n_0}(\bbz_p)[p^{k_p}]$ where $\{P_p\}_p$ is as in Section~\ref{ss:InitialReductions}. Let $\rho_2:\bbg\rightarrow \bbg_s\ltimes \bbu/[\bbu,\bbu]$ be the quotient map. Let us recall that, by the discussion at the end of Section~\ref{ss:AlgHomoAlmostCommMod}, we can and will assume $\rho_2(P_p)\subseteq \GL_{n_2}(\bbz_p)$.

Let $q_0'=\prod_{p\nmid q_0} p^{k'_p}$ be a positive integer given by Lemma~\ref{l:AlmostCommute}; that means for any sequence $\{m_p\}_{p\nmid q_0}\in \bigoplus_{p\nmid q_0} \bbz$ such that $m_p\ge k_p+k_p'$ we have
\be\label{e:CommutingDiscrepency}
\prod_{p\nmid q_0} \pi_{p^{m_p+k_p'}}(P_p)/\pi_{p^{m_p+k_p'}}
([\bbu,\bbu](\bbq_p)\cap P_p) \twoheadrightarrow
	\prod_{p\nmid q_0} \pi_{p^{m_p}}(\rho_2(P_p)) \twoheadrightarrow
	\prod_{p\nmid q_0} \pi_{p^{m_p-k'_p}}(P_p)/\pi_{p^{m_p-k'_p}}([\bbu,\bbu](\bbq_p)\cap P_p).
\ee
In order to use (\ref{e:CommutingDiscrepency}), we modify $Q_{\delta}$ so that for any prime $p$ either $v_p(Q_{\delta})\ge k_p'$ or $v_p(Q_{\delta})=0$. To this end, it is enough to show that we are allowed to change $Q_{\delta}$ to $Q_{\delta}/q_{\delta}'$ where $q_{\delta}':=\prod_{p\nmid q_0, 0<v_p(Q_{\delta})<2k_p'+k_p} p^{v_p(Q_{\delta})}$.

\begin{lem}~\label{l:UptoConstant}
Let $A_{\delta}$, $Q_{\delta}$, and $l_{\delta}$ be as above. Suppose  $q_{\delta}'|Q_{\delta}$ and $q_{\delta}'<C'$ (for some constant $C'$). Then $\Pfr_{Q_{\delta}/q_{\delta}'}(\Theta_{C',\Omega}(\delta),A_{\delta},l_{\delta})$ holds  and $|\pi_{Q_{\delta}/q_{\delta}'}(A_{\delta})|\le |\pi_{Q_{\delta}/q_{\delta}'}(\Gamma)|^{1-\vare_0/2}$ for $0<\delta\ll_{C'} 1$.
\end{lem}
\begin{proof}
Since $\lim_{\delta\to 0}Q_{\delta}=\infty$, for $C''>1$ and small enough $\delta$ we have $Q_{\delta}/q_{\delta}'>Q_{\delta}^{1/C''}$. And so
\[ \pcal^{(l_{\delta})}(A_{\delta})>Q_{\delta}^{-\delta}>(Q_{\delta}/q_{\delta}')^{-C''\delta},\h\text{and}\hspace{1cm} 
 l_{\delta}>\frac{\log Q_{\delta}}{\delta}>\frac{\log (Q_{\delta}/q_{\delta}')}{C''\delta}.
\]
To show that $\Pfr_{Q_{\delta}/q_{\delta}'}(C''\delta,A_{\delta},l_{\delta})$ holds, it is enough to prove that we have $|\pi_{Q_{\delta}/q_{\delta}'}(\prod_3 A_{\delta})|\le |\pi_{Q_{\delta}/q_{\delta}'}(A_{\delta})|^{1+C''\delta}$
for large enough $C''$.

We notice that, if $c$ is small enough (depending on $\Omega$), $\pi_{Q_{\delta}/q_{\delta}'}$ induces an injection on the ball of radius $c\log(Q_{\delta}/q_{\delta}')$ (with respect to the word metric). By \cite[Remark, page 1060]{BG3} (see \cite[Lemma 8]{SGsemisimple}), we have $\pcal^{(\lfloor 2c\log(Q_{\delta}/q_{\delta}')\rfloor)}(A_{\delta}\cdot A_{\delta})> Q_{\delta}^{-2\delta}$. Hence by the Kesten bound (see Inequality (\ref{e:KestenBound})) for small enough $\delta$ we have
\begin{align}
\label{e:Size} |\pi_{Q_{\delta}/q_{\delta}'}(A_{\delta}\cdot A_{\delta})|&=
|A_{\delta}\cdot A_{\delta}\cap B_{\lfloor 2c\log(Q_{\delta}/q_{\delta}')\rfloor}|\\
\notag &\ge 
\pcal^{(\lfloor 2c\log(Q_{\delta}/q_{\delta}')\rfloor)}(A_{\delta}\cdot A_{\delta})/\|\pcal^{(\lfloor 2c\log(Q_{\delta}/q_{\delta}')\rfloor)}\|_{\infty}\\
\notag &\ge
Q_{\delta}^{-2\delta}\cdot (2c\log(Q_{\delta}/q_{\delta}'))^{\Theta_{\Omega}(1)}\ge
Q_{\delta}^{\Theta_{C',\Omega}(1)},
\end{align}
which implies that $|\pi_{Q_{\delta}/q_{\delta}'}(A_{\delta})|\ge Q_{\delta}^{\Theta_{C',\Omega}(1)}$.

Since $|\pi_{Q_{\delta}}(\prod_3 A_{\delta})|\le |\prod_{Q_{\delta}}(A_{\delta})|^{1+\delta}$, by the Ruzsa inequality (see \cite[Lemma 2.2]{Hel1}) and \cite[Lemma 7.4]{Hel2} we have
\[
|\pi_{Q_{\delta}}(A_{\delta})|^{3\delta}\ge \frac{|\pi_{Q_{\delta}}(\prod_5 A_{\delta})|}{|\pi_{Q_{\delta}}(A_{\delta})|}\ge \frac{|\pi_{Q_{\delta}/q_{\delta}'}(\prod_3 A_{\delta})|}{|\pi_{Q_{\delta}/q_{\delta}'}(A_{\delta})|},
\]
which implies that $|\pi_{Q_{\delta}/q_{\delta}'}(\prod_3 A_{\delta})|/|\pi_{Q_{\delta}/q_{\delta}'}(A_{\delta})|\le |\pi_{Q_{\delta}/q_{\delta}'}(A_{\delta})|^{C''\delta}$ for large enough $C''$. Since $Q_{\delta}$ goes to infinity, it is clear that for small enough $\delta$ we have $|\pi_{Q_{\delta}/q_{\delta}'}(A_{\delta})|\le |\pi_{Q_{\delta}/q_{\delta}'}(\Gamma)|^{1-\vare_0/2}$. 
\end{proof}

By Lemma~\ref{l:UptoConstant} we can and will modify $\{Q_{\delta}\}_{\delta}$ and assume that, if $k_p+k_p'>0$, then either $v_p(Q_{\delta})=0$ for any $\delta$ or $k_p+2k_p'\le \min_{\delta}v_p(Q_{\delta})<\sup_{\delta} v_p(Q_{\delta})=\infty$. In particular, without loss of generality we can and will assume that $q_0'^2|Q_{\delta}$ for any $\delta$.

On the other hand, by Corollary~\ref{c:InitialReductions}, we have that $\ker(\rho_2)\cap \Gamma=\{1\}$. So $\pcal_{\rho_2(\Omega)}^{(l_{\delta})}(\rho_2(A_{\delta}))=\pcal_{\Omega}^{(l_{\delta})}(A_{\delta})>Q_{\delta}^{-\delta}>(q_0'Q_{\delta})^{-\Theta_{\Omega}(\delta)}$ for suitable constant and small enough $\delta$. Hence by Theorem~\ref{t:BoundedGeneration} for any $0<\vare'\ll_{\Omega} 1$ there is a positive integer $C=O_{\Omega,\vare'}(1)$ such that for any $0<\delta\ll_{\vare',\Omega} 1$ we have
\be\label{e:BGAbelian}
\textstyle
\pi_{q_0'Q_{\delta}}(\rho_2(\Gamma)[q_{\delta}])\subseteq \prod_C \pi_{q_0'Q_{\delta}}(\rho_2(A_{\delta}))
\ee
for some $q_{\delta}| q_0'Q_{\delta}$ and $q_{\delta}\le (q_0'Q_{\delta})^{\vare'}\le Q_{\delta}^{\Theta(\vare')}$.

On the other hand we have $\pi_{q_0'Q_{\delta}}(\prod_{p\nmid q_0}\rho_2(U_p))\subseteq \pi_{q_0'Q_{\delta}}(\rho_2(\Gamma))$ (recall that $U_p:=\bbu(\bbq_p)\cap\GL_{n_0}(\bbz_p)[p^{k_p}]$. Let $\overline{U}_p:=\GL_{n_2}(\bbz_p)\cap \bbu(\bbq_p)$); and so we have that $\overline{U}_p=U_p$ for large enough $p$, and $U_p$ is an open subgroup of $\overline{U}_p$.  By Lemma~\ref{l:UptoConstant}, we can and will assume that $v_p(Q_{\delta})$ are either zero or large enough depending on the index of $U_p$ in $\overline{U}_p$ for all the primes $p$. And so by enlarging $q_{\delta}$ by a multiplicative constant we can and will assume that 
\[
\textstyle
\pi_{q_0'Q_{\delta}}(\rho_2(\prod_{p\nmid q_0} \overline{U}_p)^{q_{\delta}})\subseteq \pi_{q_0'Q_{\delta}}(\rho_2(\Gamma)[q_{\delta}])\subseteq \prod_C \pi_{q_0'Q_{\delta}}(\rho_2(A_{\delta})),
\]
where $\rho_2(\prod_{p\nmid q_0} \overline{U}_p)^{q_{\delta}}:=\{x^{q_{\delta}}|\h x\in \rho_2(\prod_{p\nmid q_0} \overline{U}_p)\}$.
So there is $B_{\delta}\subseteq \prod_C A_{\delta}$ such that 
\be\label{e:LargeAbelianUnipotent}
\pi_{q_0'Q_{\delta}}(\rho_2(B_{\delta}))=\pi_{q_0'Q_{\delta}}(\rho_2(\prod_{p\nmid q_0} \overline{U}_p)^{q_{\delta}}).
\ee
\begin{lem}\label{l:UnipotentPropagation}
Let $\bbu$ be a unipotent $\bbq$-group with a given $\bbq$-embedding in $\mathbb{GL}_n$. For any prime $p$, let $\overline{U}_p:=\bbu(\bbq_p)\cap \GL_n(\bbz_p)$. Let $\phi:\bbu\rightarrow \bbu/[\bbu,\bbu]$ be the quotient map. Fix a $\bbq$-basis $\{\ebf_i\}$ of $(\bbu/[\bbu,\bbu])(\bbq)$ such that $\phi(\overline{U}_p)\subseteq \sum_i \bbz_p \ebf_i$ for any prime $p$. Let $q|Q$ be two positive integers. Suppose $X$ is a finite subset of $\prod_p \overline{U}_p$ such $\pi_{Q}(\phi(X))=\pi_Q(q\sum_i\bbz \ebf_i)$. Then 
\[
|\textstyle \prod_C \pi_Q(X)|\ge \frac{|\pi_Q(\prod_p \overline{U}_p))|}{Cq^{C}},
\] 
where $C$ depends on the dimension of $\bbu$ and the choice of embeddings.  
\end{lem}
\begin{proof}
Let $\gamma_i(\bbu)$ be the $i$-th lower central series; that means $\gamma_1(\bbu):=\bbu$ and $\gamma_{i+1}(\bbu):=[\gamma_i(\bbu),\bbu]$. It is well-known that the long commutators $(g_1,\ldots,g_k):=((g_1,\ldots,g_{k-1}),g_k)$, where $(g_1,g_2):=g_1^{-1}g_2^{-1}g_1g_2$, induce multilinear maps $f_k$ from $\bbu/[\bbu,\bbu]\times \cdots \times \bbu/[\bbu,\bbu]$ ($k$ times) to $\gamma_k(\bbu)/\gamma_{k+1}(\bbu)$, and the $\bbq$-span of $\{f_k(\ebf_{i_1},\ldots,\ebf_{i_k})\}$ is $(\gamma_k(\bbu)/\gamma_{k+1}(\bbu))(\bbq)$. For any $k$, let $\phi_k:\gamma_k(\bbu)\rightarrow \gamma_k(\bbu)/\gamma_{k+1}(\bbu)$. Then for any $k$ we have
\[
\pi_Q(\phi_k(\textstyle\prod_{4k}X\cap \gamma_k(\bbu)(\prod_p \bbq_p))\supseteq \cup_{i_1,\ldots,i_k} \pi_Q(q^k \bbz f_k(\ebf_{i_1},\ldots,\ebf_{i_k})),
\] 
and so 
\[
\pi_Q(\phi_k(\textstyle\prod_{4k\dim \bbu}X\cap \gamma_k(\bbu)(\prod_p \bbq_p)))\supseteq
\pi_Q(q^k \sum_{i_1,\ldots,i_k}\bbz f_k(\ebf_{i_1},\ldots,\ebf_{i_k}))
\]
In particular, we have
\be~\label{e:UnipotentLevelk}
|\pi_Q(\phi_k(\textstyle\prod_{4k\dim \bbu}X\cap \gamma_k(\bbu)(\prod_p \bbq_p)))|\ge (Q/q^k)^{\dim \gamma_k(\bbu)-\dim \gamma_{k+1}(\bbu)}.
\ee
Since $\phi_k$'s are regular morphisms defined over $\bbq$, there is a positive integer $q'_1$ depending on the embedding of $\bbu$ and the choice of $\{\ebf_i\}$) such that for any prime power $p^m$ we have
\[
\phi_k\left(\gamma_k(\bbu)(\prod_p \bbq_p)\cap \prod_p \overline{U}_p[q_1'Q]\right)\subseteq \prod_p (Q\sum_{i_1,\ldots,i_k} \bbz_p f_k(\ebf_{i_1},\ldots,\ebf_{i_k})).
\] 
And so $\phi_k$ induces homomorphism $\overline{\phi}_k:\pi_{q'_1Q}(\gamma_k(\bbu)(\prod_p \bbq_p)\cap \prod_p \overline{U}_p)\rightarrow \pi_Q(\sum_{i_1,\ldots,i_k}\bbz f_k(\ebf_{i_1},\ldots,\ebf_{i_k}))$, and we have $\overline{\phi}_k\circ \pi_{q_1'Q}=\pi_Q\circ \phi_k$.
Thus by (\ref{e:UnipotentLevelk}) we have 
\be~\label{e:UnipLowerBound}
 |\pi_{q'_1Q}({\textstyle\prod_{4\dim \bbu^3} X})|\ge \prod_k |\pi_Q(\phi_k({\textstyle\prod_{4k\dim \bbu}X}\cap \gamma_k(\bbu)(\prod_p \bbq_p)))|\ge (Q/q^{\dim \bbu})^{\dim \bbu}.
\ee
Since $\bbu$ is a $\bbq$-unipotent, there is a positive integer $q'_2$ (depending on the embedding and $\bbu$) such that ${q'_2}^{-1}Q^{\dim \bbu}\le |\pi_Q(\prod_p \overline{U}_p)|\le q'_2Q^{\dim\bbu}$. And so
\[
|\pi_{Q}(\textstyle\prod_{4\dim \bbu^3} X)|\ge \frac{|\pi_Q(\prod_p \overline{U}_p)|}{{q'_2}^2 {q'_1}^{\dim\bbu}q^{\dim\bbu^2}};
\] 
and the claim follows.
\end{proof}
By~(\ref{e:LargeAbelianUnipotent}) there is $X_{\delta}\subseteq \prod_p U_p$ such that $\pi_{q_0'Q_{\delta}}(X_{\delta})=\pi_{q_0'Q_{\delta}}(B_{\delta})$, which implies that $\pi_{Q_{\delta}}(\rho_2(X_{\delta}))=\pi_{Q_{\delta}}(\rho_2(B_{\delta}))$. Hence by Lemma~\ref{l:UnipotentPropagation} we have 
\be\label{e:LargeUnipotentPart}
\textstyle
|\pi_{Q_\delta}(\prod_{O_{\Omega,\vare'}(1)} A_{\delta})\cap \pi_{Q_{\delta}}(\prod_{p\nmid q_0} \overline{U}_p)|\ge \frac{|\pi_{Q_{\delta}}(\prod_{p\nmid q_0} \overline{U}_p)|}{\Theta_{\dim \bbu}(1) q_{\delta}^{\Theta_{\bbu}(1)}}\ge \frac{|\pi_{Q_{\delta}}(\prod_{p\nmid q_0} \overline{U}_p)|}{ Q_{\delta}^{\Theta_{\dim\bbu}(\vare')}}\ge |\pi_{Q_{\delta}}(\prod_{p| Q_{\delta}} \overline{U}_p)|^{1-\Theta_{\bbu}(\vare')},
\ee
for small enough $\delta$ (depending on $\vare'$) as $\lim_{\delta\to 0} Q_{\delta}=\infty$. 

On the other hand, by~(\ref{e:BGAbelian}), we have 
\[
\textstyle
\pi_{Q_{\delta}}(\prod_{p\nmid q_0} K_p[q_{\delta}])\subseteq \prod_{C} \pi_{Q_{\delta}}(A_{\delta}).
\]
Therefore we have 
\be\label{e:LargeSemisimplePart}
|\pi_{Q_{\delta}}(\textstyle \prod_C A_{\delta})\cap \pi_{Q_{\delta}}(\prod_{p|Q_{\delta}}K_p)|\ge |\pi_{Q_{\delta}}(\prod_{p|Q_{\delta}} K_p)|^{1-\Theta_{\bbg_s}(\vare')}, 
\ee
for small enough $\delta$ (depending on $\vare'$) as $\lim_{\delta\to 0}Q_{\delta}=\infty$. 

By (\ref{e:LargeUnipotentPart}), (\ref{e:LargeSemisimplePart}), $\lim_{\delta\to 0}Q_{\delta}=\infty$, and the fact that $\prod_{p\nmid q_0} K_p\ltimes U_p$ is of finite index in $\wh{\Gamma}$, we have
\be\label{e:LowerBound}
|\pi_{Q_{\delta}}(\textstyle\prod_C A_{\delta})|\ge |\pi_{Q_{\delta}}(\Gamma)|^{1-\Theta_{\Omega}(\vare')},
\ee
for small enough $\delta$ (depending on $\vare'$). 

On the other hand, since $|\pi_{Q_{\delta}}(\prod_3A_{\delta})|\le |\pi_{Q_{\delta}}(A_{\delta})|^{1+\delta}$ by the Ruzsa inequality (see~\cite{Hel1}), we have
\be\label{e:UpperBound}
|\pi_{Q_{\delta}}(\textstyle\prod_C A_{\delta})|\le |\pi_{Q_{\delta}}(A_{\delta})|^{1+(C-2)\delta}\le |\pi_{Q_{\delta}}(\Gamma)|^{(1-\vare_0)(1+(C-2)\delta)}.
\ee
By (\ref{e:LowerBound}) and (\ref{e:UpperBound}), for any $\vare'$ and small enough $\delta$ (in particular it can approach zero), we have
\[
1-\Theta_{\Omega}(\vare')\le (1-\vare_0)(1+(C(\vare')-2)\delta),
\] 
which is a contradiction.
\end{proof}

\section{Super-approximation: bounded power of square-free integers case.}  
By Section~\ref{ss:ReductionToBG}, to prove Theorem~\ref{t:main} for $\ccal_{N}:=\{q\in \bbz^+|\h \forall p\nmid q_0, v_p(q)\le N\}$ it is enough to prove Theorem~\ref{t:BoundedGeneration} (Bounded generation) for $\ccal_{N}$. In particular, we can assume that $\bbu$ is abelian. Since I believe it is interesting to know if some of the auxiliary results are true for more general perfect groups, I do not assume $\bbu$ is abelian till towards the end of this section. As a result some extra lemmas are proved which are not necessary for the proof of Theorem~\ref{t:BoundedGeneration} for $\ccal_{N}$.\footnote{If the reader is only interested in the proof of Theorem~\ref{t:BoundedGeneration} for $\ccal_{N}$, s/he can skip Lemma \ref{l:ProperAdInvariant}, Lemma \ref{l:UnipotentModPSquare}, and Lemma \ref{l:CommutatorPGroup}.}
\subsection{The residue maps do not split.}
The main goal of this section is to show Lemma~\ref{l:NotSplittingModPSquare} which roughly asserts that $\pi_p$ does split under mild conditions.
\begin{lem}~\label{l:ProperAdInvariant}
Let $\bbg=\bbh\ltimes \bbu$ be a $\bbq$-group with a given $\bbq$-embedding into $(\GL_n)_{\bbq}$, where $\bbh$ is a connected semisimple $\bbq$-group and $\bbu$ is a unipotent $\bbq$-group. 
Let $\gcal$ (resp. $\cal$, $\ucal$) be the Zariski-closure of $\bbg$ (resp. $\bbh$, $\bbu$) in the $\bbz$-group scheme $(\GL_n)_{\bbz}$, and $\gfr=\Lie(\gcal)$, $\hfr=\Lie(\cal)$, and $\ufr=\Lie(\ucal)$. For large enough $p$, if $V\subsetneq \gfr(\f_p)$ is a $\gcal(\f_p)$-invariant proper subspace, then $V+[\ufr(\f_p),\ufr(\f_p)]$ is a proper subspace. Moreover if $\bbg$ is perfect, then $V+\zfr(\f_p)$ is also a proper subspace, where $\zfr=\Lie(Z(\gcal))$ is the Lie ring of the schematic center of $\gcal$.  
\end{lem}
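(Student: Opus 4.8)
The plan is to pass from the group-theoretic hypothesis to a statement about ideals of the Lie ring $\gfr(\f_p)$, argue with the lower central series of $\ufr$, and deduce the perfectness assertion from the case already handled. \textbf{Reduction to ideals.} For $p$ larger than the primes of bad reduction of the fixed integral models, larger than $\dim\gfr$, and larger than the nilpotency class of $\ufr$, the group $\gcal(\f_p)$ contains every root subgroup $U_\alpha(\f_p)$ of $\cal(\f_p)=\bbh(\f_p)$ and the $p$-group $\ucal(\f_p)=\bbu(\f_p)=\exp(\ufr(\f_p))$. Each of these is generated by one-parameter unipotent subgroups $t\mapsto\exp(tX)$, $t\in\f_p$, with $X\in\gfr(\f_p)$ nilpotent of index $\le\dim\gfr<p$; the corresponding action on $\gfr(\f_p)$ is $\Ad(\exp(tX))=\sum_{k\ge0}t^k\ad(X)^k/k!$, a polynomial of degree $<p$ in $t$, so invariance of $V$ under all $t\in\f_p$ forces $\ad(X)V\subseteq V$. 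Letting $X$ run over the root vectors and over $\ufr(\f_p)$, and using that these generate $\gfr(\f_p)$ as a Lie ring, we conclude that $V$ is an ideal of $\gfr(\f_p)$. This is the type of good-reduction argument used in \cite{Nor,SGV}.

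\textbf{Lower central series.} Let $\ufr=\ufr_1\supseteq\ufr_2=[\ufr,\ufr]\supseteq\cdots\supseteq\ufr_{c+1}=0$ be the lower central series of $\ufr$; since $\bbu$ is normal in $\bbg$, each $\ufr_i$ is an ideal of $\gfr$, and this persists mod $p$. Assume for contradiction that $V+\ufr_2(\f_p)=\gfr(\f_p)$; I claim $V+\ufr_i(\f_p)=\gfr(\f_p)$ for all $i\ge2$, by induction on $i$. Granting this for $i$, the modular law gives $\ufr_{i-1}(\f_p)=(V\cap\ufr_{i-1}(\f_p))+\ufr_i(\f_p)$, whence
\[
\ufr_i(\f_p)=[\ufr(\f_p),\ufr_{i-1}(\f_p)]\subseteq[\ufr(\f_p),V\cap\ufr_{i-1}(\f_p)]+[\ufr(\f_p),\ufr_i(\f_p)]\subseteq V+\ufr_{i+1}(\f_p),
\]
where the first inclusion uses that $V$ is an ideal; hence $\gfr(\f_p)=V+\ufr_i(\f_p)\subseteq V+\ufr_{i+1}(\f_p)$. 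Taking $i=c+1$ gives $V=\gfr(\f_p)$, contradicting properness. Therefore $V+[\ufr(\f_p),\ufr(\f_p)]$ is a proper subspace.

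\textbf{The perfect case.} It is enough to show $\zfr(\f_p)\subseteq[\ufr(\f_p),\ufr(\f_p)]$, for then $V+\zfr(\f_p)\subseteq V+[\ufr(\f_p),\ufr(\f_p)]$ is proper by the previous step. Since $Z(\gcal)$ acts trivially via $\Ad$, we have $\zfr(\f_p)\subseteq Z(\gfr(\f_p))$; and if $X+n\in Z(\gfr(\f_p))$ with $X\in\hfr(\f_p)$ and $n\in\ufr(\f_p)$, then $[X,\hfr(\f_p)]=0$ together with $Z(\hfr(\f_p))=0$ (valid for $p$ large) force $X=0$, so $Z(\gfr(\f_p))\subseteq\ufr(\f_p)$ and its image in $\ufr(\f_p)/[\ufr(\f_p),\ufr(\f_p)]$ lies in the $\hfr(\f_p)$-fixed points. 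By \cite[Corollary 14]{SGV} and the discussion in Section~\ref{ss:AlgHomoAlmostCommMod}, $\bbg$ being perfect means the trivial representation is not a subrepresentation of the $\bbh$-action on $\ufr/[\ufr,\ufr]$, so for $p$ large $(\ufr(\f_p)/[\ufr(\f_p),\ufr(\f_p)])^{\hfr(\f_p)}=0$; therefore $Z(\gfr(\f_p))\subseteq[\ufr(\f_p),\ufr(\f_p)]$, completing the proof.

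\textbf{Main obstacle.} The lower central series computation is characteristic-free and routine; the real content is calibrating ``$p$ large enough'' in two places: (i) that $\gcal(\f_p)$-invariant subspaces of $\gfr(\f_p)$ are exactly Lie-ring ideals, and (ii) that the absence of a trivial $\bbh$-subrepresentation in $\ufr/[\ufr,\ufr]$ over $\bbq$ (together with $Z(\hfr(\f_p))=0$) persists over $\f_p$. Both follow from good reduction of the integral models away from finitely many primes, and this is where every hypothesis on $p$ is consumed.
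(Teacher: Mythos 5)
Your argument is correct, but the core of it takes a genuinely different route from the paper's. Both proofs hinge on the same computation: for $\exp(tY)\in\ucal(\f_p)$ one has $\Ad(\exp(tY))=\exp(t\,\ad Y)$, a polynomial of degree $<p$ in $t$, so $\gcal(\f_p)$-invariance of $V$ together with a Vandermonde/interpolation argument yields bracket information. From there the paper works in the algebra $A_G=\f_p[\Ad(\gcal(\f_p))]$: for large $p$ the subalgebra $A_H$ coming from the semisimple part is semisimple (complete reducibility of $\gfr(\f_p)$ as a module over the reduction of $\bbh$), so the ideal generated by $\Ad(\ucal(\f_p))-1$ is exactly the Jacobson radical $J(A_G)$; the Vandermonde step shows $[\ufr(\f_p),\ufr(\f_p)]\subseteq J(A_G)\gfr(\f_p)$, and Nakayama's lemma finishes. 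You instead extract $[\ufr(\f_p),V]\subseteq V$ directly and run an elementary induction down the lower central series of $\ufr(\f_p)$ via the modular law; this avoids complete reducibility and Nakayama altogether, at the mild cost of the good-reduction identities $\ufr_i(\f_p)=[\ufr(\f_p),\ufr_{i-1}(\f_p)]$ for large $p$, which is a standard spreading-out statement. Your treatment of the perfect case is a fleshed-out version of the paper's one-line assertion that $\zfr(\f_p)\subseteq[\ufr(\f_p),\ufr(\f_p)]$ for large $p$, and it is fine at the paper's own level of rigor. One caution: your opening claim that $V$ is a full Lie-ring ideal of $\gfr(\f_p)$ leans on root subgroups of the special fiber of the semisimple part (which is only quasi-split, not split, over $\f_p$) and on generation of $\hfr(\f_p)$ by root vectors including the toral part; this is true for $p\gg 1$ but needs more care than you give. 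Fortunately nothing in your proof uses more than $\ad(\ufr(\f_p))$-invariance of $V$, which your exponential/Vandermonde step does establish, so the overclaim is harmless.
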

\begin{proof}
Let $A_G:=\f_p[\Ad(\gcal(\f_p))]\subseteq {\rm End}_{\f_p}(\gfr(\f_p))$ be the $\f_p$-span of $\Ad(\gcal(\f_p))$, and $A_H:=\f_p[\Ad(\cal(\f_p))]$ be its subalgebra. Since $\bbh$ is connected semisimple, for large enough $p$, $\gfr(\f_p)$ is a faithful completely reducible $A_H$-module. Thus $A_H$ is a semisimple algebra, and so its Jacobson radical $J(A_H)$ is zero. Let $\afr$ be the ideal generated by $\{u-1|\h u\in \ucal(\f_p)\}$. Then we have the following short exact sequence of $A_G$-modules:
\[
0\rightarrow \afr \rightarrow A_G \rightarrow A_H\rightarrow 0.
\] 
Since $A_H$ is semisimple, $J(A_G)\subseteq \afr$. On the other hand, since $\bbu$ is a unipotent normal subgroup of $\bbg$, $\afr$ is a nilpotent ideal. So overall we have $\afr= J(A_G)$. 

Suppose to the contrary that there is a proper $A_G$-submodule $V$ of $\gfr(\f_p)$ such that $V+[\ufr(\f_p),\ufr(\f_p)]=\gfr(\f_p)$. If we show that $J(A_G)\gfr(\f_p)\supseteq [\ufr(\f_p),\ufr(\f_p)]$, then we get that $V+J(A_G)\gfr(\f_p)=\gfr(\f_p)$. And so by Nakayama's lemma we have $V=\gfr(\f_p)$, which is a contradiction.  

Since $\bbu$ is unipotent, $\log$ and $\exp$ define $\bbq$-morphisms between $\bbu$ and its Lie algebra. And for large enough $p$ they induce bijections between $\ucal(\f_p)$ and $\ufr(\f_p)$. Furthermore for any $t\in\f_p$, $x,y\in \ufr(\f_p)$ we have
\be\label{e:ExpAndAd}
\Ad(\exp(tx))(y)=\exp(t\ad(x))(y).
\ee
Thus for $t\in\f_p$, $x,y\in \ufr(\f_p)$ we have
\be\label{e:AugmentationIdeal}
t^{-1}(\Ad(\exp(tx))(y)-y)=[x,y]+\sum_{i=1}^{\dim\bbu} \frac{t^i}{i!} \ad(x)^i(y)\in J(A_G)\gfr(\f_p).
\ee
Therefore for large enough $p$ by the Vandermonde determinant we have that $[x,y]\in J(A_G)\gfr(\f_p)$ which shows our claim.

To finish the proof, it is enough to notice that $\bbg$ is perfect if and only if $\bbh$ acts without a non-zero fixed vector on $\bbu/[\bbu,\bbu]$. And so when $\bbg$ is perfect, for large enough $p$ we have $\zfr(\f_p)\subseteq [\ufr,\ufr](\f_p)=[\ufr(\f_p),\ufr(\f_p)]$. 
\end{proof}
\begin{lem}\label{l:UnipotentModPSquare}
Let $\bbu\subseteq (\GL_n)_{\bbq_p}$ be a $\bbq_p$-subgroup, and $U:=\bbu(\bbq_p)\cap \GL_n(\bbz_p)$. Then if $p$ is large enough depending only on $n$, then we have
\[
\pi_{p^2}(U)=\langle g\in\pi_{p^2}(U)|\h g^p\neq 1\rangle,\h\text{and}\h\h\h U[p^m]=U^{p^m}:=\{u^{p^m}|\h u\in U\},
\]
for any positive integer $m$.
\end{lem}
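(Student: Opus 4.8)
The plan is to linearize $U$ via the truncated logarithm and exponential maps and thereby reduce both assertions to trivialities about a free $\bbz_p$-module carrying the Baker--Campbell--Hausdorff (BCH) group law.

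Set $\ufr:=\Lie(\bbu)$ and $\ufr(\bbz_p):=\ufr(\bbq_p)\cap M_n(\bbz_p)$; this is a $\bbz_p$-Lie subring which is a free $\bbz_p$-module of rank $\dim\bbu$. Over $\overline{\bbq}_p$ one may conjugate $\bbu$ into the upper unitriangular group, so $(u-1)^n=0$ for every $u\in\bbu$ and $X^n=0$ for every $X\in\ufr$; being polynomial identities these hold over $\bbq_p$. Hence for $p\ge n$ the truncated series $\log(u)=\sum_{k=1}^{n-1}(-1)^{k-1}(u-1)^k/k$ and $\exp(X)=\sum_{k=0}^{n-1}X^k/k!$ have $p$-adically integral coefficients, and since they realize the $\bbq_p$-morphisms $\bbu\to\ufr$ and $\ufr\to\bbu$ they restrict to mutually inverse bijections $\log\colon U\xrightarrow{\ \sim\ }\ufr(\bbz_p)$ and $\exp\colon\ufr(\bbz_p)\xrightarrow{\ \sim\ }U$; a short estimate on the tails then yields $\log(U[p^j])=p^j\ufr(\bbz_p)$ for all $j\ge1$. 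After enlarging $p$ (still depending only on $n$) so that the BCH coefficients up to degree $n-1$ are $p$-adic units, $\log$ transports the multiplication of $U$ to the BCH law $X\ast Y=X+Y+\tfrac12[X,Y]+\cdots$ on $\ufr(\bbz_p)$, which is then polynomial with integral coefficients; in particular $X\ast Y-(X+Y)$ is a $\bbz_p$-combination of iterated brackets each of which involves $Y$, the subsets $p^j\ufr(\bbz_p)$ are $\ast$-normal, and one obtains group isomorphisms $\pi_{p^j}(U)\cong(\ufr(\bbz/p^j),\ast)$ compatible with reduction. I expect the only point requiring genuine care to be exactly this dictionary together with the uniformity of the bound on $p$ (control of the $\exp$, $\log$, and BCH denominators by a function of $n$ only); everything after it is formal.

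Granting this, the equality $U[p^m]=U^{p^m}$ is immediate: since $\exp(X)^{p^m}=\exp(p^mX)$, the $p^m$-th power map on $U$ corresponds under $\log$ to the scalar multiplication $X\mapsto p^mX$ on $\ufr(\bbz_p)$, hence $\log(U^{p^m})=p^m\ufr(\bbz_p)=\log(U[p^m])$, and the injectivity of $\log$ gives $U^{p^m}=U[p^m]$ (in particular the set of $p^m$-th powers is already a subgroup).

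For the first assertion, identify $\pi_{p^2}(U)$ with $(\ufr(\bbz/p^2),\ast)$. If $g$ corresponds to $X$, then $g^p$ corresponds to $pX$ (all higher BCH terms vanish because $[X,X]=0$), so $g^p\ne1$ precisely when $X\notin p\ufr(\bbz/p^2)$. Assuming $\ufr\ne0$ (otherwise the statement is vacuous), it suffices to write every $X_0\in p\ufr(\bbz/p^2)$ as a $\ast$-product of two elements lying outside $p\ufr(\bbz/p^2)$, since the elements outside $p\ufr(\bbz/p^2)$ are themselves among the permitted generators. Choose any $Z\notin p\ufr(\bbz/p^2)$ and put $W:=(-Z)\ast X_0$; because $X_0\in p\ufr(\bbz/p^2)$ and every BCH correction term of $(-Z)\ast X_0$ involves $X_0$, we get $W\equiv -Z\pmod{p\ufr}$, whence $W\notin p\ufr(\bbz/p^2)$, while $Z\ast W=(Z\ast(-Z))\ast X_0=X_0$. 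Therefore $\pi_{p^2}(U)=\langle g\mid g^p\ne1\rangle$, as required.
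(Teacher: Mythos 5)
Your proof is correct, and its backbone is the same as the paper's: for $p>n$ the truncated $\log$ and $\exp$ have $p$-integral coefficients and give mutually inverse bijections between $U$ and $\ufr(\bbz_p)=\Lie(\bbu)(\bbq_p)\cap \gl_n(\bbz_p)$ carrying $U[p^j]$ onto $p^j\ufr(\bbz_p)$, which immediately yields $U[p^m]=U^{p^m}$ exactly as in the paper. The difference is in the first assertion: you transport the full multiplication to the Baker--Campbell--Hausdorff law (checking its coefficients in degree $\le n-1$ are $p$-integral, still a condition on $p$ depending only on $n$) and then write each $X_0\in p\hh\ufr(\bbz/p^2\bbz)$ as $Z\ast\bigl((-Z)\ast X_0\bigr)$ with both factors outside $p\hh\ufr$, i.e.\ each $p$-torsion class is a product of two elements $g$ with $g^p\neq 1$. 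The paper avoids invoking BCH altogether: it only uses one-parameter subgroups, noting that if $\pi_{p^2}(u)^p=1$ then $u^p\in U[p^2]=U^{p^2}$ has a unique root $u'\in U\setminus U^p$ with $u'^{p^m}=u^p$, whence $u=u'^{p^{m-1}}$ and every element of $\pi_{p^2}(U)$ is a power $g^{p^i}$ ($i\le 2$) of some $g$ with $g^p\neq1$ --- a slightly stronger statement than generation. So your route costs a little extra bookkeeping (integrality of BCH denominators, normality of $p^j\ufr$) but gives a clean group-law dictionary $\pi_{p^j}(U)\cong(\ufr(\bbz/p^j\bbz),\ast)$; the paper's is more minimal, needing only powers of single elements and uniqueness of $p$-power roots via $\log$. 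Two cosmetic points: the BCH coefficients need only be $p$-integral, not units, and (as in the paper) the hypothesis should be read as $\bbu$ unipotent, which both arguments use.
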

\begin{proof}
Let $\exp$ and $\log$ define $\bbq_p$-morphisms between $\Lie(\bbu)$ and $\bbu$. And so for large enough prime $p$ we have 
\be\label{e:ExpLog}
\|\exp x-I\|_p=\|x\|_p,\h \|\log u\|_p=\|u-I\|_p\hspace{1cm} \text{if}\h \|x\|_p,\|u\|_p\le 1 \text{ and } x^n=0, (u-I)^n=0.
\ee
Hence $\exp$ and $\log$ induce homeomorphisms between $\ufr:=\Lie(\bbu)(\bbq_p)\cap \gl_n(\bbz_p)$ and $U$, and moreover for any positive integer $m$ 
\be\label{e:Uniform}
U[p^m]=U^{p^m}:=\{u^{p^m}|\h u\in U\}.
\ee
We have (\ref{e:Uniform}) because of the following observation:
\begin{align*}
u\in U[p^m] &\Leftrightarrow \|u-I\|_p\le p^{-m} \Leftrightarrow u=\exp x,\h x\in \Lie(\bbu)(\bbq_p)\text{ and}\h \|x\|_p\le p^{-m}, \\
& \Leftrightarrow u=\exp(p^mx')=(\exp x')^{p^m} \h\text{and}\h x'\in \ufr,\\ 
&\Leftrightarrow u\in U^{p^m}.
\end{align*}
If $g^p=1$ for some $g\in \pi_{p^2}(U)$, then for some $u\in U$ we have 
$g=\pi_{p^2}(u)$ and $u^p\in \ker(\pi_{p^2})$. So there is a (unique) $u'\in U\setminus U^p$ such that $u'^{p^m}=u^p$ for some $m\in \bbz^{\ge 2}$. Another use of the logarithm map, we have $u=u'^{p^{m-1}}$. Hence 
\[
\pi_{p^2}(U)=\{g^{p^i}|\h g\in \pi_{p^2}(U),\h g^p\neq 1,\h i=0,1,2\}.
\] 
\end{proof}
\begin{lem}\label{l:HyperspecialParahoric}
Let $\bbh$ be a simply-connected semisimple $\bbq$-group with a given embedding into $(\GL_n)_{\bbq}$. Let $P_0=\bbh(\bbq_p)\cap\GL_n(\bbz_p)$ and $P_1:=\ker(P_0\xrightarrow{\pi_p} \GL_n(\bbz/p\bbz))$. Then for large enough $p$ we have
\[
P_i=\overline{\langle \bbu(\bbq_p)\cap P_i|\h \bbu\subseteq \bbh\h\text{unipotent $\bbq_p$-subgroup}\rangle},
\] 
for $i=0,1$.
\end{lem}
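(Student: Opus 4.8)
The plan is to reduce everything to good reduction together with the Lie-algebra picture of the first congruence subgroup. For $p$ outside a finite set depending only on $\bbh$ and the given embedding, the schematic closure $\gcal$ of $\bbh$ in $(\mathbb{GL}_n)_{\bbz_p}$ is a smooth group scheme whose fibres are semisimple and simply connected of the same type, so $P_0=\gcal(\bbz_p)$ is a hyperspecial parahoric and $P_1=\ker(\gcal(\bbz_p)\to\gcal(\f_p))$; moreover $\bbh$ is unramified over $\bbq_p$, hence quasi-split, hence (assuming $\bbh\neq 1$) $\bbq_p$-isotropic. Since the scheme of parabolic subgroups of $\gcal$ is smooth and projective over $\bbz_p$, every parabolic $\f_p$-subgroup of $\gcal_{\f_p}$ lifts to a parabolic $\bbz_p$-subgroup $\mathcal{P}\subseteq\gcal$; writing $\ucal$ for its unipotent radical and $\bbu:=\ucal_{\bbq_p}$, one has for $p$ large that $\ucal$ is the schematic closure of $\bbu$ in $(\mathbb{GL}_n)_{\bbz_p}$, is smooth over $\bbz_p$, satisfies $\bbu(\bbq_p)\cap P_0=\ucal(\bbz_p)$, and $\ucal(\bbz_p)\twoheadrightarrow\ucal(\f_p)=\mathrm{rad}_u(\mathcal{P}_{\f_p})(\f_p)$. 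Finally, for $p$ large $P_1$ is uniformly powerful and the matrix logarithm is a homeomorphism $\log\colon P_1\xrightarrow{\ \sim\ } p\,\gfr$, where $\gfr:=\Lie(\gcal)(\bbz_p)$ is $\bbz_p$-free of rank $\dim\bbh$; under $\log$, the closed subgroup topologically generated by a subset $S$ of $P_1$ is uniform and its image is a $\bbz_p$-submodule containing the $\bbz_p$-span of $\log S$, so if $\log S$ spans $p\,\gfr$ then $\overline{\langle S\rangle}=P_1$.

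I would treat the case $i=1$ first. Fix a Borel $\bbz_p$-subgroup $\bcal\subseteq\gcal$ (which exists for $p$ large since $\bbh_{\bbq_p}$ is quasi-split), let $\ucal:=\mathrm{rad}_u(\bcal)$, $\bbu_0:=\ucal_{\bbq_p}$, and $\ufr:=\Lie(\ucal)(\bbz_p)\subseteq\gfr$. For any $g\in P_0=\gcal(\bbz_p)$, the conjugate $g\ucal g^{-1}$ is again the unipotent radical of a parabolic $\bbz_p$-subgroup, $g\bbu_0g^{-1}$ is a unipotent $\bbq_p$-subgroup of $\bbh$, and (using that $P_1\trianglelefteq P_0$) $\log\!\big((g\bbu_0g^{-1})(\bbq_p)\cap P_1\big)=\Ad(g)\big(\log(\bbu_0(\bbq_p)\cap P_1)\big)=p\,\Ad(g)(\ufr)$. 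Hence if $N_1$ is the closed subgroup of $P_1$ generated by all $\bbu(\bbq_p)\cap P_1$, then $\log N_1\supseteq p\,\mathfrak{m}_0$, where $\mathfrak{m}_0\subseteq\gfr$ is the $\Ad(\gcal(\bbz_p))$-submodule generated by $\ufr$. Reducing mod $p$ and using surjectivity of $\gcal(\bbz_p)\to\gcal(\f_p)$, the image $\overline{\mathfrak{m}}_0$ of $\mathfrak{m}_0$ in $\gfr(\f_p):=\gfr/p\gfr$ is an $\Ad(\gcal(\f_p))$-submodule containing $\Lie(\mathrm{rad}_u\bcal_{\f_p})(\f_p)$. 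For $p$ large, $\gfr(\f_p)$ is a multiplicity-free semisimple $\gcal(\f_p)$-module whose simple summands are the Lie algebras of the $\f_p$-simple factors of $\gcal_{\f_p}$, and $\Lie(\mathrm{rad}_u\bcal_{\f_p})$ has nonzero image in each such summand because every $\f_p$-simple factor of $\gcal_{\f_p}$ is $\f_p$-isotropic (quasi-split, by Lang). Therefore $\overline{\mathfrak{m}}_0=\gfr(\f_p)$, so $\mathfrak{m}_0+p\gfr=\gfr$, and Nakayama's lemma over the local ring $\bbz_p$ gives $\mathfrak{m}_0=\gfr$. Thus $\log N_1\supseteq p\,\gfr=\log P_1$, i.e. $N_1=P_1$.

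For $i=0$, let $N_0$ be the closed subgroup of $P_0$ generated by all $\bbu(\bbq_p)\cap P_0$. Reducing modulo $p$ and using that every parabolic $\f_p$-subgroup lifts to a parabolic $\bbz_p$-subgroup $\mathcal{P}$ and that $\ucal(\bbz_p)\twoheadrightarrow\mathrm{rad}_u(\mathcal{P}_{\f_p})(\f_p)$, we see that $\pi_p(N_0)$ contains the subgroup of $\gcal(\f_p)$ generated by the $\f_p$-points of the unipotent radicals of all parabolic $\f_p$-subgroups; since $\gcal_{\f_p}$ is simply connected, semisimple and has all factors $\f_p$-isotropic, this subgroup is all of $\gcal(\f_p)$. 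Hence $N_0P_1=P_0$. On the other hand $\bbu(\bbq_p)\cap P_1\subseteq\bbu(\bbq_p)\cap P_0$ for every $\bbu$, so $N_0\supseteq N_1=P_1$ by the previous step, and therefore $N_0=N_0P_1=P_0$, which completes the proof.

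The main obstacle is the bookkeeping required to make all the ``for $p$ large'' assertions hold simultaneously for $p$ outside a single finite set depending only on $\bbh$ and the embedding: that $\gcal$ is a smooth $\bbz_p$-model compatible with the matrix embedding (so that $\bbu(\bbq_p)\cap P_0$ really equals $\ucal(\bbz_p)$ and reductions are surjective), that $\bbh$ is $\bbq_p$-isotropic, that $P_1$ is uniformly powerful with the stated logarithm correspondence, and — the most delicate input — that the adjoint module $\gfr(\f_p)$ is multiplicity-free and semisimple with its $\f_p$-simple factors as simple summands; this last point is where one invokes the Steinberg tensor product theorem (applied to the restriction of scalars of the adjoint representation of each absolutely simple factor), together with the absence of bad and small primes once $p$ exceeds the rank. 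Everything else is formal once these inputs are in place.
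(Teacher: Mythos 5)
Your overall route is essentially the paper's: for large $p$ you pass to the smooth $\bbz_p$-model with $P_0$ hyperspecial, use quasi-splitness (hence isotropy of every $\f_p$-simple factor) and simple connectedness to see that the mod-$p$ image is generated by $\f_p$-points of unipotent radicals, and then climb the congruence filtration using that, for large $p$, the $\f_p$-points of the Lie algebras of the simple factors are simple $\gcal(\f_p)$-modules. Your specific device --- a Borel $\bbz_p$-subgroup, its $P_0$-conjugates, and the adjoint submodule they generate, plus Nakayama --- replaces the paper's observation that the subgroup generated by all the unipotent pieces is normal in $P_0$; that difference is cosmetic.

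There is, however, one step that is unjustified and is not a valid general principle: the claim that for a closed subgroup $H=\overline{\langle S\rangle}$ of the uniform group $P_1$ the set $\log H$ is a $\bbz_p$-submodule containing the $\bbz_p$-span of $\log S$ (nor need $H$ be uniform). In a uniform pro-$p$ group one only gets $\log(g^{\lambda})=\lambda\log g$ for $\lambda\in\bbz_p$; sums are not available, since by Campbell--Hausdorff $\log(gh)$ differs from $\log g+\log h$ by commutator terms, and $\log$-images of closed subgroups need not be closed under addition. This is exactly the point where you pass from the individual conjugates $p\,\Ad(g)(\ufr)\subseteq\log N_1$ to their span $p\,\mathfrak{m}_0$, so the inclusion $\log N_1\supseteq p\,\mathfrak{m}_0$, and with it the one-shot conclusion $N_1=P_1$, does not follow as written.

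The repair is exactly what the paper does: work in the successive quotients of the congruence filtration, which are abelian. The image of $N_1$ in $P_1/P_2\simeq\gfr/p\gfr$ is a subgroup, hence an $\f_p$-subspace containing $\Ad(\bar g)(\bar\ufr)$ for all $\bar g\in\gcal(\f_p)$, i.e.\ the $\gcal(\f_p)$-submodule generated by $\bar\ufr$; your semisimplicity argument (in fact multiplicity-freeness is not needed, since the submodule meets the Lie algebra of each simple factor nontrivially and these are simple modules) identifies this with all of $\gfr/p\gfr$, so $N_1P_2=P_1$. To conclude $N_1=P_1$ you then need either the Frattini-type statement (Lemma~\ref{l:Frattini}, valid for large $p$), or to repeat the same computation on each quotient $P_i/P_{i+1}\simeq\gfr/p\gfr$ and use that $N_1$ is closed --- which is precisely the level-by-level induction in the paper's proof. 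With this modification your $i=1$ case, and hence the $i=0$ case built on it, goes through.
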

\begin{proof}
For large enough $p$, we have that $\bbh$ is quasi-split over $\bbq_p$, and splits over an unramified extension of $\bbq_p$, and $P_0$ is a hyper-special parahoric subgroup. 
Let $Q_i=\langle \bbu(\bbq_p)\cap P_i|\h \bbu\subseteq \bbh\h\text{unipotent $\bbq_p$-subgroup}\rangle.$ Then clearly $Q_i$ is a normal subgroup of $P_0$. By the Bruhat-Tits theory, there is a smooth $\bbz_p$-group scheme $\cal$ such that
\begin{enumerate}
	\item $P_0=\cal(\bbz_p)$,
	\item The generic fiber of $\cal$ is isomorphic to $\bbh$,
	\item The special fiber $\cal_p:=\cal\otimes_{\bbz_p}\f_p$ is a simply-connected semisimple $\f_p$-group since $\bbh$ is simply connected quasi-split over $\bbq_p$ and splits over an unramified extension of $\bbq_p$,  
	\item $\pi_p(P_0)=\cal_p(\f_p)$ is a product of quasi-simple groups. 
\end{enumerate} 
Since $\bbh$ is quasi-split over $\bbq_p$, $\pi_p(Q_0)$ intersects any quasi-simple factor subgroup of $\cal_p(\f_p)$ in a non-central subgroup. As $\pi_p(Q_0)$ is a normal subgroup of $\cal_p(\f_p)$, $\cal_p(\f_p)^+$ is contained in $\pi_p(Q_0)$. As $\cal_p$ is simply-connected, we get that $\pi_p(Q_0)=\pi_p(P_0)$. 
For any positive integer $i$, let $P_i:=\ker(P\xrightarrow{\pi_{p^i}} \cal_p(\bbz_p/p^i\bbz_p))$. It is well-known that $P_i/P_{i+1}$ as an $\cal_p(\f_p)$-module is isomorphic to $\Lie(\cal_p)(\f_p)$ via the adjoint action. Let's identify $\pi_{p^i}(Q_1\cap P_{i+1})$ with $\hfr_i\subseteq \Lie(\cal_p)(\f_p)$ . Since $Q_1\lhd P$, $\hfr_i$ is $\cal_p(\f_p)$-invariant. On the other hand, since $\bbh$ is quasi-split, $\hfr_i$ intersects each one of the Lie algebras of the simple factors of $\cal_p$. Since for large enough $p$ the $\f_p$-points of the Lie algebra of the simple factors of $\cal_p$ are simple $\cal_p(\f_p)$-module, $\hfr_i=\Lie(\cal_p)(\f_p)$. Hence by induction we have $\pi_{p^i}(Q_j)=\pi_{p^i}(P_j)$ for $j=0,1$ and any positive integer $i$, which finishes the proof.
\end{proof}

\begin{lem}\label{l:NotSplittingModPSquare}
Let $\bbg\subseteq (\GL_n)_{\bbq}$ be a perfect $\bbq$-group. Assume that the semisimple part of $\bbg$ is simply-connected; that means $\bbg\simeq \bbh\ltimes \bbu$ where $\bbh$ is a simply-connected semisimple $\bbq$-group and $\bbu$ is a unipotent $\bbq$-group. Let $P_p=\bbg(\bbq_p)\cap \GL_n(\bbz_p)$, $\gcal$ be the closure of $\bbg$ in $(\GL_n)_{\bbz}$, and $\gfr=\Lie(\gcal)$. Then 
\begin{enumerate}
\item For large enough $p$, the following is a short exact sequence:
\[
1\rightarrow \gfr(\f_p)\rightarrow \pi_{p^2}(P_p) \rightarrow \pi_p(P_p) \rightarrow 1.
\]
\item For large enough $p$ and any proper subgroup $V\subsetneq \gfr(\f_p)$ which is a normal subgroup of $\pi_{p^2}(P_p)$, the following short exact sequence does not split
\[
1\rightarrow \gfr(\f_p)/V\rightarrow \pi_{p^2}(P_p) /V \rightarrow \pi_p(P_p) \rightarrow 1
\]
\end{enumerate}
\end{lem}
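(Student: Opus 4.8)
For part (1), the plan is to invoke generic smoothness of the integral model: for all but finitely many $p$ the closure $\gcal$ is a smooth $\bbz_p$-group scheme of relative dimension $\dim\bbg$ with $P_p=\gcal(\bbz_p)$, and reduction modulo $p$ and modulo $p^2$ is surjective onto $\gcal(\f_p)$ and $\gcal(\bbz/p^2\bbz)$ by Hensel's lemma. The kernel of $\gcal(\bbz/p^2\bbz)\to\gcal(\f_p)$ is, via $1+pX\mapsto X\bmod p$, an abelian group canonically isomorphic to $\gfr(\f_p)=(\Lie\gcal)\otimes\f_p$, on which $\gcal(\f_p)=\pi_p(P_p)$ acts through the adjoint representation; this is exactly the asserted short exact sequence.

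For part (2), the plan is to argue by contradiction: assume a group-theoretic splitting $s\colon\pi_p(P_p)\to\pi_{p^2}(P_p)/V$ and deduce $V=\gfr(\f_p)$. The heart of the matter is a \emph{unipotent propagation} statement: for every unipotent $\bbq_p$-subgroup $\bbw\subseteq\bbg$ one has $\Lie(\bbw)(\f_p)\subseteq V$ inside $\gfr(\f_p)$. To see this, fix $\bbw$, put $W_p:=\bbw\cap\GL_n(\bbz_p)$, and use Lemma~\ref{l:UnipotentModPSquare}: for $p$ large, $\exp$ identifies $\pi_{p^2}(W_p)$ with $\Lie(\bbw)(\bbz/p^2\bbz)$ under the Baker--Campbell--Hausdorff product, so $\pi_p(W_p)$ has exponent $p$ and every $g\in\pi_p(W_p)$ lifts to some $w\in\pi_{p^2}(W_p)$ with $\pi_p(w)=g$ and $w^p=1+p\,\overline{x}$, where $\overline{x}=\log g\in\Lie(\bbw)(\f_p)$, and $\overline{x}$ runs over all of $\Lie(\bbw)(\f_p)$ as $g$ does. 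Since $s$ is injective, $s(g)$ has order dividing $p$, so any lift $\widetilde{s}(g)\in\pi_{p^2}(P_p)$ satisfies $\widetilde{s}(g)^p\in V$; and $\widetilde{s}(g)$ reduces to $g$ modulo $p$, so we may write $\widetilde{s}(g)=wz$ with $w\in\pi_{p^2}(W_p)$, $\pi_p(w)=g$, $z\in\gfr(\f_p)$. As $\gfr(\f_p)$ is abelian and conjugation by $w$ acts on it as $\Ad(g)$, one computes $\widetilde{s}(g)^p=w^p+\left(\sum_{i=0}^{p-1}\Ad(g)^{i}\right)z$; writing $\Ad(g)=I+N$ with $N$ nilpotent of index $<p$ (using $p>\dim\bbg$), the hockey-stick identity gives $\sum_{i=0}^{p-1}(I+N)^i=\sum_{k\ge 0}\binom{p}{k+1}N^k\equiv 0\pmod p$, whence $\widetilde{s}(g)^p=w^p=1+p\,\overline{x}$, so $\overline{x}\in V$. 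This proves $\Lie(\bbw)(\f_p)\subseteq V$.

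It then remains to run this over enough subgroups. Taking $\bbw=\bbu$ gives $\ufr(\f_p)\subseteq V$. For the semisimple direction, take $\bbw=\bbv$ over the $\bbq_p$-unipotent subgroups of $\bbh$ and feed in the $i=1$ case of Lemma~\ref{l:HyperspecialParahoric}: reducing the identity $P_1=\overline{\langle\,\bbv(\bbq_p)\cap P_1\,\rangle}$ modulo $p^2$ identifies $\hfr(\f_p)=\ker(\cal(\bbz/p^2\bbz)\to\cal(\f_p))$ with $\sum_{\bbv}\pi_{p^2}(\bbv(\bbq_p)\cap P_1)$, and the identity $\bbv(\bbq_p)\cap P_1=(\bbv\cap\GL_n(\bbz_p))^{p}$ from Lemma~\ref{l:UnipotentModPSquare} rewrites the right-hand side as $\sum_{\bbv}\Lie(\bbv)(\f_p)$; hence $\hfr(\f_p)\subseteq V$. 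Since $\gfr(\f_p)=\hfr(\f_p)\oplus\ufr(\f_p)$, this forces $V=\gfr(\f_p)$, contradicting $V\subsetneq\gfr(\f_p)$. (Alternatively, since $V$ is $\Ad(\gcal(\f_p))$-invariant as a normal subgroup of $\pi_{p^2}(P_p)$, one can conclude instead via Lemma~\ref{l:ProperAdInvariant} once $\Lie(\bbw)(\f_p)\subseteq V$ is known for the relevant $\bbw$.)

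I expect the main obstacle to be bookkeeping rather than a single hard step: keeping compatible all the identifications ($\pi_{p^2}(W_p)$ as a Baker--Campbell--Hausdorff group, $W_p[p]=W_p^{p}$, the reduction kernel of $\pi_{p^2}$ as $\gfr(\f_p)$, and the inclusions $\Lie(\bbw)(\f_p)\hookrightarrow\gfr(\f_p)$), and checking that the finitely many "$p$ large" hypotheses (smoothness of the integral models, $p>\dim\bbg$, and $\bbh$ quasi-split over $\bbq_p$ and split over an unramified extension so that Lemma~\ref{l:HyperspecialParahoric} applies) can all be imposed at once. The one genuinely computational point is the congruence $\sum_{i=0}^{p-1}\Ad(g)^{i}\equiv 0\pmod p$, which is what makes the $p$-th power of a lift land exactly on the "logarithm" and discard the correction term $z$.
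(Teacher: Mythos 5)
Your proposal is correct, and it relies on exactly the two lemmas the paper's proof runs on (Lemma~\ref{l:UnipotentModPSquare} and Lemma~\ref{l:HyperspecialParahoric}), but the contradiction is extracted in the opposite order and by a different mechanism. The paper first uses those lemmas to find a unipotent $\bbq_p$-subgroup $\bbu'$ and $g\in P_p\cap\bbu'(\bbq_p)$ with $\pi_{p^2}(g^p)\notin V$, so that $\pi_{p^2}(g)$ has order $p^2$ in $\pi_{p^2}(P_p)/V$; it then observes that a splitting would make $\pi_{p^2}(P_p)/V\simeq \pi_p(P_p)\ltimes(\gfr(\f_p)/V)$ a linear group over $\f_p$, in which every $p$-element is unipotent and hence of order $p$ for $p$ large -- a one-line soft contradiction. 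You instead assume the splitting $s$, and prove directly that for every unipotent $\bbq_p$-subgroup $\bbw$ the image of $\Lie(\bbw)$ lies in $V$, via the computation $(wz)^p=w^p\cdot\bigl(\sum_{i=0}^{p-1}\Ad(g)^i\bigr)z$ together with the hockey-stick congruence $\sum_{i=0}^{p-1}\Ad(g)^i\equiv 0\pmod p$ (valid since $\Ad(g)-I$ is nilpotent of index $<p$); only then do you invoke Lemma~\ref{l:HyperspecialParahoric} (for $\hfr(\f_p)$) and the case $\bbw=\bbu$ (for $\ufr(\f_p)$) to force $V=\gfr(\f_p)$. What your route buys is self-containedness: the explicit congruence replaces the appeal to the order-$p$ property of $p$-elements of linear $\f_p$-groups, and it makes transparent that a homomorphic section forces all unipotent "logarithms" into $V$; the paper's route is shorter and needs only one element outside $V$ rather than the full generation statement at the Lie-algebra level. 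Two small points: the phrase "since $s$ is injective, $s(g)$ has order dividing $p$" should read "since $s$ is a homomorphism" (injectivity is irrelevant there, $s(g)^p=s(g^p)=1$ is what you use); and in the last step you should say explicitly that the various $\Lie(\bbw)(\f_p)$ are the reductions of the lattices $\Lie(\bbw)(\bbq_p)\cap\gl_n(\bbz_p)$, so that the sum produced by Lemma~\ref{l:HyperspecialParahoric} and Lemma~\ref{l:UnipotentModPSquare} is literally the same set of cosets you proved lies in $V$ -- with that bookkeeping (and the finitely many "$p$ large" hypotheses you already list) the argument is complete. Part (1) is standard in both treatments and your smoothness/Hensel sketch is fine.
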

\begin{proof}
For large enough $p$, $Q_p:=\bbh(\bbq_p)\cap \GL_n(\bbz_p)$ is a hyperspecial parahoric subgroup which acts on $U_p:=\bbu(\bbq_p)\cap \GL_n(\bbz_p)$, and $P_p=Q_p\ltimes U_p$. Furthermore for large enough $p$, the assumptions of Lemma~\ref{l:HyperspecialParahoric} hold and so $\pi_{p^i}(P_p)=\pi_{p^i}(\langle P_p\cap \bbu'(\bbq_p)|\h \bbu'\h\text{unipotent $\bbq_p$-subgroup}\rangle)$. Hence by Lemma~\ref{l:UnipotentModPSquare} we have 
\[
\pi_{p^2}(P_p)=\langle g\in\pi_{p^2}(P_p\cap\bbu'(\bbq_p))|\h \bbu'\h\text{unipotent $\bbq_p$-subgroup},\h g^p\neq 1\rangle.
\]
On the other hand, if for any $\bbq_p$-unipotent subgroup $\bbu'$ of $\bbg$ we have $\pi_{p^2}( P_p[p]\cap \bbu'(\bbq_p)) \subseteq V$, then by Lemma~\ref{l:HyperspecialParahoric} we get that $\pi_{p^2}(P_p[p])\subseteq V$ which is a contradiction. Hence for some unipotent $\bbq_p$-subgroup $\bbu'$ there is $x\in \pi_{p^2}(P_p[p]\cap \bbu'(\bbq_p)) \setminus V$. By Lemma~\ref{l:UnipotentModPSquare} there is $g\in \GL_n(\bbz_p)\cap \bbu'(\bbq_p)$ such that $\pi_{p^2}(g^p)=x$. Since $\bbu'$ is a $\bbq_p$-subgroup of $\bbg$ and $P_p=\GL_n(\bbz_p)\cap \bbg(\bbq_p)$, we have that $g\in P_p$, and the order of $\pi_{p^2}(g)$ in $\pi_{p^2}(P_p)/V$ is $p^2$. Now suppose to the contrary that the given exact sequence splits; then $\pi_{p^2}(P_p)/V\simeq \pi_p(P_p)\ltimes (\gfr(\f_p)/V)$, and the latter is a linear group over $\f_p$. However any $p$-element of a linear group over $\f_p$ is unipotent, and so it is of order $p$ (for large enough $p$), which is a contradiction.  
\end{proof}
\subsection{Statistical non-splitting of residue maps.} 
In this section, following~\cite{BV}, we prove that any section of $\pi_p$ is statistically far from being a group homomorphism. 
\begin{lem}\label{l:SectionFarFromHomo}
Let $\bbg$, $\gfr$, and $P_p$ be as in Lemma~\ref{l:NotSplittingModPSquare}. Then there is a positive number $\delta$ (depending on $\bbg$) such that for large enough $p$ (depending on $\bbg$ and its embedding in $(\GL_n)_{\bbq}$) the following holds: Let
$V\subsetneq \gfr(\f_p)$ be a proper subgroup which is normal in $\pi_{p^2}(P_p)$, and $\psi:\pi_p(P_p)\rightarrow \pi_{p^2}(P_p)/V$ be a section of the quotient map $\pi:\pi_{p^2}(P_p) /V\rightarrow \pi_p(P_p)$; that means $\pi\circ\psi={\rm id}_{\pi_p(P_p)}$. Let $\mu=\pcal_{\pi_p(P_p)}$ be the probability Haar measure (i.e. the normalized counting measure) on $\pi_p(P_p)$. Then
\[
(\mu\times\mu)(\{(x,y)\in \pi_p(P_p)\times \pi_p(P_p)|\h \psi(x)\psi(y)=\psi(xy)\})<p^{-\delta}.
\]
\end{lem}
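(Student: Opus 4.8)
The plan is to derive a contradiction from the assumption that a positive proportion (in fact more than $p^{-\delta}$) of pairs $(x,y)$ satisfy $\psi(x)\psi(y)=\psi(xy)$. The key point is that the set of ``good'' pairs being large forces a large ``good'' subset $S\subseteq\pi_p(P_p)$ on which $\psi$ restricted to $S$ behaves like a genuine homomorphism in a $3$-variable sense, and then a standard Ruzsa/Balog--Szemer\'edi--Gowers-type covering argument (exactly as in the relevant part of~\cite{BV}) would upgrade this to: $\psi$ agrees with a genuine group homomorphism $\pi_p(P_p)\to\pi_{p^2}(P_p)/V$ after passing to a bounded-index subgroup, or more precisely one produces an actual splitting of the extension $1\to\gfr(\f_p)/V\to\pi_{p^2}(P_p)/V\to\pi_p(P_p)\to 1$ (possibly after replacing $\pi_p(P_p)$ by a subgroup of index $O_{\bbg}(1)$, which is harmless since $\pi_p(P_p)\simeq\cal_p(\f_p)$ is, for large $p$, generated by a bounded number of its ``positive-level'' pieces and has no small-index subgroups except those coming from the center). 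Once there is such a splitting, Lemma~\ref{l:NotSplittingModPSquare}(2) is directly contradicted, provided the $\gfr(\f_p)/V$ in question is the full quotient attached to the level-$p$ congruence filtration.

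Concretely, first I would set up the ``cocycle'' measuring the failure of $\psi$ to be a homomorphism: define $c(x,y)\in\gfr(\f_p)/V$ by $\psi(x)\psi(y)=c(x,y)\psi(xy)$. Because $V$ is normal in $\pi_{p^2}(P_p)$ and $\gfr(\f_p)/V$ is central in $\pi_{p^2}(P_p)/V$ (this is the point of part (1) of Lemma~\ref{l:NotSplittingModPSquare}: the kernel $\gfr(\f_p)$ of $\pi_{p^2}(P_p)\to\pi_p(P_p)$ is abelian and is acted on by $\pi_p(P_p)$ via the adjoint representation, so modulo $V$ it sits centrally up to that action), $c$ is a $2$-cocycle for the action of $\pi_p(P_p)$ on $A:=\gfr(\f_p)/V$. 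The hypothesis says $(\mu\times\mu)(c^{-1}(0))>p^{-\delta}$. I would then run the Balog--Szemer\'edi--Gowers machinery over the group $\pi_p(P_p)$: the level set $c^{-1}(0)$ having density $>p^{-\delta}$ means, after restricting to a subset $S$ of density $\gg p^{-O(\delta)}$, that $c$ vanishes on $\gg p^{-O(\delta)}$-proportion of the triples in $S\times S\times S$ in a suitably ``additive-combinatorial'' sense, and this gives a subgroup $H\le\pi_p(P_p)$ of index $\ll p^{O(\delta)}$ together with a class in $H^2(H,A)$ that is a coboundary, i.e. a partial splitting over $H$. Pulling back to the full group via a transfer/corestriction argument (using that $|A|$ and $[\pi_p(P_p):H]$ interact controllably, and choosing $\delta$ small enough that $p^{O(\delta)}$ is smaller than any fixed index forced by Lemma~\ref{l:HyperspecialParahoric}) yields a genuine splitting of the extension, contradicting Lemma~\ref{l:NotSplittingModPSquare}(2).

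I expect the main obstacle to be making the passage from ``many good pairs/triples'' to ``an actual splitting of a group extension'' fully rigorous in the nonabelian setting of $\pi_p(P_p)=\cal_p(\f_p)$ (rather than the abelian groups where Balog--Szemer\'edi--Gowers is usually stated); one has to either replay the argument of~\cite{BV} adapted to $\SL$-type groups over $\f_p$, using that such groups are uniformly generated by bounded products of unipotent one-parameter subgroups and have bounded-dimensional representations and strong quasirandomness (the last ingredient is exactly point (4) of Lemma~\ref{l:AdelicClosure}, giving the Gowers trick its teeth), or else phrase the whole thing cohomologically and argue that a cohomology class that is ``statistically trivial'' on a positive-density set must be trivial on a bounded-index subgroup. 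A secondary technical point is to guarantee that the subgroup $V$ is genuinely proper and that the relevant $A=\gfr(\f_p)/V$ is the one appearing in Lemma~\ref{l:NotSplittingModPSquare}(2), i.e. that one has not accidentally quotiented out everything; this is handled by the observation (from Lemma~\ref{l:ProperAdInvariant}) that a proper $\pi_{p^2}(P_p)$-invariant $V$ stays proper after adding $[\ufr(\f_p),\ufr(\f_p)]$ and $\zfr(\f_p)$, so there is always enough room for the non-splitting obstruction to survive. Throughout, $\delta$ is chosen at the very end, after all the $O_{\bbg}(1)$ constants from the generation and index bounds are fixed.
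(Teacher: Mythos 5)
There is a genuine gap, and it is exactly at the step you flagged yourself: passing from ``$\psi$ is multiplicative on a set of pairs of density $>p^{-\delta}$'' to ``the extension splits over a subgroup of index $p^{O(\delta)}$.'' At density $p^{-\delta}$ (which is tiny, not a $99\%$-type hypothesis) no Balog--Szemer\'edi--Gowers or covering argument produces an exact homomorphic section; BSG-type statements only give an \emph{approximate subgroup} with $p^{O(\delta)}$ losses, and there is no known ``statistically trivial cocycle $\Rightarrow$ trivial cocycle on a bounded-index subgroup'' principle in this regime. The transfer/corestriction idea does not rescue this, both because the input (an honest splitting over some $H$) is never produced, and because the kernel $A=\gfr(\f_p)/V$ is a $p$-group while the index of any $H$ you might extract could be divisible by $p$. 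The paper's proof deliberately avoids ever constructing a splitting: after the $L^2$-flattening step (Varj\'u's Lemma~15, applied to $\nu=\psi_*\mu$) it obtains a symmetric approximate subgroup $S\subseteq\pi_{p^2}(P_p)/V$ with $|S|\approx|\pi_p(P_p)|^{1\pm\Theta(\delta)}$ and $\pi(S)$ of density $\ge|\pi_p(P_p)|^{-\Theta(\delta)}$, uses quasirandomness and Gowers's trick to get $\pi(\prod_3S)=\pi_p(P_p)$, and then runs a dichotomy on $S'=\prod_3S$: either $S'$ meets the kernel only in $\{1\}$, in which case $S'$ is an honest subgroup splitting the extension and Lemma~\ref{l:NotSplittingModPSquare} is contradicted; or $S'$ contains a nontrivial kernel element $x$, and then---this is the ingredient entirely missing from your proposal---the adjoint action of $\pi_p(P_p)$ on $\gfr(\f_p)/V$ has no nonzero fixed vector (here one first enlarges $V$ to contain $\zfr(\f_p)$ by Lemma~\ref{l:ProperAdInvariant}), so by \cite[Lemma 30]{SGV} boundedly many products of $\pi(S')\cdot x$ fill out a subspace of size $\ge p$, forcing $|\prod_{O(1)}S'|\ge p\,|\pi_p(P_p)|$, which contradicts the approximate-subgroup size bound $|\prod_{O(1)}S'|\le|\pi_p(P_p)|^{1+\Theta(\delta)}$ once $\delta$ is small. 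So the non-splitting lemma is only invoked in the easy exact-subgroup branch, not via any statistical-to-exact upgrade.

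A smaller but real error in your setup: $\gfr(\f_p)/V$ is \emph{not} central in $\pi_{p^2}(P_p)/V$; the conjugation action factors through the adjoint action of $\pi_p(P_p)$, which is nontrivial, and in fact the proof needs this action to be fixed-point-free (that is why $V$ is first replaced by $V+\zfr(\f_p)$). The cocycle formalism for an abelian kernel with a nontrivial action is fine, but the centrality claim, if taken literally, would make the decisive growth step above impossible.
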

\begin{proof}
We notice that by Lemma~\ref{l:ProperAdInvariant}, for large enough $p$, $V+\zfr(\f_p)$ is a proper subspace of $\gfr(\f_p)$ (which is normal in $\pi_{p^2}(P_p)$). And so without loss of generality we can assume that $V$ contains $\zfr(\f_p)$.

Let $\acal:=\{(x,y)\in \pi_p(P_p)\times \pi_p(P_p)|\h \psi(x)\psi(y)=\psi(xy)\}$ and suppose to the contrary that $(\mu\times\mu)(\acal)\ge p^{-\delta}.$
We will get a contradiction if $\delta$ is small enough. Let $\nu$ be the push-forward of $\mu$ via $\psi$. For any $g\in \pi_p(P_p)$ we have
\begin{align*}
(\nu\ast\nu)(\psi(g))&=\sum_{h_1h_2=\psi(g)}\nu(h_1)\nu(h_2)=\sum_{{\scriptsize \begin{array}{c}x,y\in \pi_p(P_p)\\ \psi(x)\psi(y)=\psi(g)\end{array}}} \mu(x)\mu(y)\hspace{2.25cm}\text{(since $\supp \nu={\rm Im} \psi$)}\\
&=\sum_{{\scriptsize\begin{array}{c}(x,y)\in \acal\\ xy=g\end{array}}} \mu(x)\mu(y). 
\hspace{4.5cm}
\begin{array}{c}
(\text{since } g=\pi(\psi(g))=\pi(\psi(x)\psi(y))
	\\
=\pi(\psi(x))\pi(\psi(y))=xy.)	
\end{array}
\end{align*}

In particular, we have $(\nu\ast\nu)(\supp \nu)=(\mu\times \mu)(\acal)$. And so by the Cauchy-Schwarz inequality, we get 
\[
\sqrt{|\supp \nu|}\|\nu\ast\nu\|_2\ge (\nu\ast\nu)(\supp \nu)=(\mu\times\mu)(\acal)\ge p^{-\delta}.
\]
Hence $\|\nu\ast\nu\|_2\ge p^{-\delta} \|\nu\|_2$. Thus by \cite[Lemma 15]{Var} (which is based on \cite{BG1} (see Proposition~\ref{BourgainGamurd})) there is a symmetric subset $S\subseteq \pi_{p^2}(P_p)/V$ with the following properties:
\begin{enumerate}
	\item[(P1)] $|\pi_p(P_p)|^{1-\Theta_{\bbg}(\delta)}\le |S|\le |\pi_p(P_p)|^{1+\Theta_{\bbg}(\delta)}$,
	\item[(P2)] $|\prod_3 S|\le |\pi_p(P_p)|^{\Theta_{\bbg}(\delta)} |S|$,
	\item[(P3)] $\min_{s\in S}(\widetilde{\nu}\ast\nu)(s)\ge (|\pi_p(P_p)|^{\Theta_{\bbg}(\delta)}|S|)^{-1}$.
\end{enumerate}
Since the push-forward $\pi[\nu]$ of $\nu$ via $\pi$ is the probability Haar measure of $\pi_p(P_p)$, we have $\mu=\pi[\widetilde{\nu}\ast\nu]$. And so  
the third part of properties of $S$ implies that $\mu(\pi(S))\ge |\pi_p(P_p)|^{-\Theta_{\bbg}(\delta)}$. Therefore $|\pi(S)|\ge |\pi_p(P_p)|^{1-\Theta_{\bbg}(\delta)}$. On the other hand, by \cite[Corollary 14]{SGV} and the main theorem of \cite{LS}, we have that $\pi_p(P_p)$ is a quasi-random group; that means the minimal degree of its non-trivial irreducible representations is at least $|\pi_p(P_p)|^{\Theta_{\bbg}(1)}$ for large enough $p$. Thus by Gowers's result~\cite{Gow} (we use the formulation in~\cite[Corollary 1]{NP}) we have that $\pi(\prod_3S)=\pi_p(P_p)$ if $\delta$ is small enough. So $S'=\prod_3 S$ has the following properties:
\begin{enumerate}
	\item $\pi(S')=\pi_p(P_p)$,
	\item $S'$ is a $|\pi_p(P_p)|^{\Theta_{\bbg}(\delta)}$-approximate subgroup by \cite[Corollary 3.11]{Tao},
	\item $|S'|\le |\pi_p(P_p)|^{1+\Theta_{\bbg}(\delta)}$ by properties (P1) and (P2) of $S$.
\end{enumerate}

Next we prove that there is $0\neq x\in \prod_3 S'\cap (\gfr(\f_p)/V)$. We proceed by contradiction. For any $s_1,s_2\in S'$, there is $s_3\in S'$ such that $\pi(s_1s_2)=\pi(s_3)$. Hence $s_1s_2s_3^{-1}\in \prod_3 S'\cap (\gfr(\f_p)/V)$, and so by the contrary assumption we have $s_1s_2=s_3$. Thus $S'$ is a subgroup which contradicts Lemma~\ref{l:NotSplittingModPSquare}.   

Since $V$ contains $\zfr(\f_p)$, $\pi(S)=\pi_p(P_p)$ acts on $(\gfr(\f_p)/V)$ via the adjoint action without non-zero fixed point for large enough $p$. Hence by \cite[Lemma 30]{SGV}, $\sum_{O_{\bbg}(1)}\pi(S')\cdot x$ contains a non-zero subspace of $\gfr(\f_p)/V$. Therefore $|\prod_{O_{\bbg}(1)} S' \cap (\gfr(\f_p)/V)|\geq p$, and we get $
|\prod_{O_{\bbg}(1)} S'|\ge p|\pi_p(P_p)|.$
Since $S'$ is an approximate subgroup, we get
\[
p|\pi_p(P_p)|\leq |\textstyle\prod_{O_{\bbg}(1)} S'|\leq |\pi_p(P_p)|^{1+\Theta_{\bbg}(\delta)}.
\]
So $p\leq |\pi_p(P_p)|^{\Theta_{\bbg}(\delta)}$ which is a contradiction for small enough $\delta$ (independent of $p$).    
\end{proof}
\subsection{Bounded generation of perfect groups by commutators.}
\begin{lem}\label{l:CommutatorPGroup}
Let $P$ be a finite $p$-group. Suppose $|P|=p^n$ and $|P/[P,P]|=p^m$. Then 
\[
\textstyle \prod_{n-m} \{[g_1,g_2]|\h g_1,g_2\in P\}=[P,P],
\]
where $[g_1,g_2]:=g_1g_2g_1^{-1}g_2^{-1}$.
\end{lem}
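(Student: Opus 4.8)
The plan is to induct on $n-m$, the number of factors claimed to suffice. The base case $n-m=0$ means $P=[P,P]$, but a nontrivial finite $p$-group cannot be perfect (it has a nontrivial abelianization, since its Frattini quotient is a nontrivial $\bbf_p$-vector space); hence $P$ is trivial and there is nothing to prove. For the inductive step, I would consider the lower central series $P=\gamma_1(P)\supseteq \gamma_2(P)=[P,P]\supseteq \gamma_3(P)\supseteq\cdots$, or more efficiently work one step at a time: pick a central subgroup $Z\subseteq [P,P]$ of order $p$ (which exists inside any nontrivial normal subgroup of a $p$-group, in particular inside $[P,P]$ when $P$ is nonabelian), and pass to $\overline{P}:=P/Z$. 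Writing $|\overline{P}|=p^{n-1}$ and noting $[\overline{P},\overline{P}]=[P,P]/Z$ has index $p^m$ in $\overline{P}$, the induction hypothesis gives $\prod_{(n-1)-m}\{[\bar g_1,\bar g_2]\}=[\overline{P},\overline{P}]$, i.e.\ every element of $[P,P]$ is congruent mod $Z$ to a product of $n-1-m$ commutators of $P$.

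The remaining task is to absorb the error in $Z$ using one extra commutator. So I would show that $Z\subseteq\{[g_1,g_2]\mid g_1,g_2\in P\}$. This is where the nonabelianness of $P$ is used: since $Z$ has order $p$ and $Z\subseteq[P,P]$, it suffices to exhibit a single commutator lying in $Z\setminus\{1\}$, because then all of $Z$ is a power of it — but powers of a commutator need not be commutators, so instead I would argue as follows. Choose $g_1,g_2\in P$ with $[g_1,g_2]\notin$ (the appropriate subgroup) so that $[g_1,g_2]$ together with $Z$ generates enough; more cleanly, use that if $[P,P]\not\subseteq$ some maximal subgroup containing $Z$, then by replacing one generator we can arrange $[g_1,g_2]\in Z$, $[g_1,g_2]\ne 1$, and then as $g_2$ ranges over a coset the commutator $[g_1,g_2]$ ranges over a coset of a subgroup of $Z$, hence over all of $Z$ since $|Z|=p$. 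Concretely: fix $z_0=[g_1,g_2]$ a nontrivial element of $Z$; for $k=0,1,\dots,p-1$ we have $[g_1,g_2 h]=[g_1,h]\,[g_1,g_2]^h=[g_1,h]\,z_0$ (using $z_0$ central), and choosing $h\in P$ so that $[g_1,h]$ runs through $\langle z_0\rangle$ — possible because the map $h\mapsto[g_1,h]$ into $P/[P,[P,g_1]]$ is a homomorphism whose image, intersected appropriately, we can make hit $Z$ — shows $Z\subseteq\{[g_1,\cdot]\}$. Combining, any element of $[P,P]$ is a product of $n-1-m$ commutators times an element of $Z$, which is itself one more commutator, giving $n-m$ commutators total.

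The main obstacle is the last point: verifying that a single coset of commutators $\{[g_1, g_2h]\mid h\in P\}$ actually sweeps out all of the order-$p$ subgroup $Z$, i.e.\ that some commutator subgroup generated by fixing one argument meets $Z$ nontrivially. The cleanest way around this, which I would adopt if the direct argument gets delicate, is the standard fact that in a finite $p$-group $P$ with $|P/[P,P]|=p^m$, the set of commutators already generates $[P,P]$ as a group in at most $\log_p|[P,P]| = n-m$ steps — proved precisely by the central-extension induction above — and then one only needs the softer statement that $Z\subseteq\{[g_1,g_2]\}$ whenever $Z$ is a central subgroup of order $p$ inside $[P,P]$, which follows because $[P,P]/[P,[P,P]]$ is generated by (images of) commutators and a lifting/averaging argument over the center produces an honest commutator in each class. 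I expect the bookkeeping of "which subgroup contains $Z$" to be the fiddly part, but no deep input beyond elementary $p$-group theory is needed.
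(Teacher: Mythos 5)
The step that fails is your absorption step: you reduce everything to the claim that for an \emph{arbitrary} central subgroup $Z\le [P,P]$ of order $p$, every element of $Z$ is a single commutator, and this claim is false. For odd $p$, let $P$ be the relatively free group of nilpotency class $2$ and exponent $p$ on four generators; then $[P,P]=Z(P)$ is identified with $\Lambda^2V$, $V=\bbf_p^4$, and the set of commutators $\{[g_1,g_2]\}$ is exactly the set of decomposable vectors $v\wedge w$. The subgroup $Z$ spanned by $z=e_1\wedge e_2+e_3\wedge e_4$ is central of order $p$ and lies in $[P,P]$, but no nontrivial multiple $\lambda z$ is a commutator, since $(\lambda z)\wedge(\lambda z)=2\lambda^2\,e_1\wedge e_2\wedge e_3\wedge e_4\neq 0$ while decomposables have zero wedge square. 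In particular your ``concrete'' argument already breaks at ``fix $z_0=[g_1,g_2]$ a nontrivial element of $Z$'' (such a $z_0$ need not exist), and the ``softer statement'' you propose to fall back on is precisely this false claim, so no lifting/averaging argument over the center can rescue it.

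The gap is repairable by choosing $Z$ correctly rather than arbitrarily, and this is in effect what the paper does. The last nontrivial term $\gamma_c(P)$ of the lower central series is central and is generated by commutators $w=[x,y]$ (with $x\in\gamma_{c-1}(P)$, $y\in P$); since such a $w$ is central, the identity $[x,y^j]=[x,y^{j-1}][x,y]$ gives $[x,y^j]=w^j$, so every power of a central commutator is again a commutator. Hence you may take $Z=\langle z_0\rangle$ with $z_0=w^{p^{k-1}}=[x,y^{p^{k-1}}]$ of order $p$ (where $w\neq 1$ has order $p^k$), and then every element of $Z$ genuinely is a commutator, so your induction on $n-m$ closes. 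The paper instead inducts on the nilpotency class and strips off all of $\gamma_c(P)$ in one step, using that it is generated by long commutators (each an honest commutator) to write every element of $\gamma_c(P)$ as a product of at most $\log_p|\gamma_c(P)|$ commutators before passing to $P/\gamma_c(P)$; both routes rest on the same point — centrality of the commutators generating $\gamma_c(P)$ — which is exactly the property your arbitrarily chosen $Z$ lacks.
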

\begin{proof}
We proceed by induction on the nilpotency class of $P$. If $P$ is abelian, there is nothing to prove. Now suppose the nilpotency class of $P$ is $c$; that means its $c+1$-th lower central series $\gamma_{c+1}(P)$ is trivial and $\gamma_c(P)$ is not trivial. It is well-known that $\gamma_c(P)$ is an $\f_p$-vector space which is spanned by the long commutators. So $\gamma_c(P)$ is contained in $\prod_{\dim_{\f_p} \gamma_c(P)} \{[g_1,g_2]|\h g_1,g_2\in P\}$. By the induction hypothesis, we have 
\[
\textstyle\prod_{n-m-\dim_{\f_p} \gamma_c(P)} \{[\bar{g}_1,\bar g_2]|\h \bar g_1,\bar g_2\in P/\gamma_c(P)\}=P/\gamma_c(P).
\]
And so $\prod_{n-m} \{[g_1,g_2]|\h g_1,g_2\in P\}=P$.
\end{proof}
\begin{lem}\label{l:CommutatorPerfect}
Let $\bbg$ be a Zariski-connected perfect $\bbq$-subgroup of $(\GL_n)_{\bbq}$, and let $\gcal$ be its Zariski-closure in $(\GL_n)_{\bbz}$. Suppose that $\bbg$ is simply-connected; that means its semisimple part is simply-connected. Then there is a positive integer $C$ depending on $\bbg$ such that for large enough prime $p$ we have 
\[
\textstyle\prod_{C}\{[g_1,g_2]|\h g_1,g_2\in \gcal_p(\f_p)\}=\gcal_p(\f_p),
\]   
where $\gcal_p=\gcal\times_{\Spec \bbz} \Spec \f_p$.
\end{lem}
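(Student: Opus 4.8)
The plan is to exploit the Levi decomposition. Since $\bbg$ is Zariski-connected and perfect its radical is unipotent, so $\bbg\simeq\bbh\ltimes\bbu$ with $\bbh$ a simply-connected semisimple $\bbq$-group (a Levi factor) and $\bbu$ the unipotent radical; correspondingly, for all large $p$ one has $\gcal_p=\cal_p\ltimes\ucal_p$, where $\cal_p$ is the hyperspecial special fibre as in the proof of Lemma~\ref{l:HyperspecialParahoric} and $\ucal_p$ is a smooth connected unipotent $\f_p$-group. I would bound the commutator width of $\gcal_p(\f_p)$ by treating three layers separately and concatenating: the semisimple quotient $\cal_p(\f_p)$, the abelianized unipotent part $\bbv(\f_p)$ with $\bbv:=\bbu/[\bbu,\bbu]$, and the derived subgroup $[\ucal_p(\f_p),\ucal_p(\f_p)]$.

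For the semisimple layer: for $p$ large $\cal_p(\f_p)$ is a direct product of at most $\rk\bbh$ quasi-simple finite groups of Lie type, each of rank at most $\rk\bbh$ over a field of size $\ge p$, and since the commutator width of a direct product is the maximum of the factors' widths, the classical uniform bounds on the commutator width of finite quasi-simple groups of bounded Lie rank over sufficiently large fields give $\cal_p(\f_p)=\prod_{C_1}\{[g_1,g_2]\}$ with $C_1=O_{\bbg}(1)$. For the unipotent-derived layer: for $p$ large the maps $\exp$, $\log$ identify $\ucal_p(\f_p)$ with $\Lie(\ucal_p)(\f_p)$ compatibly with the lower central series (cf.\ Lemma~\ref{l:UnipotentModPSquare}), so $[\ucal_p(\f_p),\ucal_p(\f_p)]$ is the $\f_p$-points of $[\bbu,\bbu]$ and $\ucal_p(\f_p)/[\ucal_p(\f_p),\ucal_p(\f_p)]\cong\bbv(\f_p)$ has order $p^{\dim\bbv}$; then Lemma~\ref{l:CommutatorPGroup} applied to the finite $p$-group $\ucal_p(\f_p)$ yields $[\ucal_p(\f_p),\ucal_p(\f_p)]=\prod_{\dim\bbu-\dim\bbv}\{[u_1,u_2]:u_1,u_2\in\ucal_p(\f_p)\}$.

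The linking layer is to cover $\bbv(\f_p)$ by boundedly many mixed commutators $[h,u]$ with $h\in\cal_p(\f_p)$, $u\in\ucal_p(\f_p)$. The image of such a commutator under $\ucal_p(\f_p)\twoheadrightarrow\bbv(\f_p)$ has the form $(\Ad(g)-1)v$ for suitable $g\in\cal_p(\f_p)$, $v\in\bbv(\f_p)$, and conversely every such vector arises; thus the set $X$ of these images is closed under $\f_p$-scaling and contains $0$. Since $\bbg$ is perfect, $\bbh$ has no nonzero fixed vector on $\bbv$, hence for $p$ large (the reduction of $\bbv$ being a semisimple $\cal_p(\f_p)$-module) $\cal_p(\f_p)$ has no nonzero fixed vector on $\bbv(\f_p)$, so the $\cal_p(\f_p)$-submodule spanned by $X$ is all of $\bbv(\f_p)$. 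A scaling-closed spanning subset of a $d$-dimensional $\f_p$-space realizes every vector as a sum of at most $d$ of its members, so a product of at most $\dim\bbv$ mixed commutators can be arranged to lie in any prescribed coset of $[\ucal_p(\f_p),\ucal_p(\f_p)]$ in $\ucal_p(\f_p)$. Assembling: given $g\in\gcal_p(\f_p)$ write $g=hu$ with $h\in\cal_p(\f_p)$, $u\in\ucal_p(\f_p)$; express $h$ as a product of $\le C_1$ commutators from $\cal_p(\f_p)$; choose $\le\dim\bbv$ mixed commutators with product $u'$ congruent to $u$ modulo $[\ucal_p(\f_p),\ucal_p(\f_p)]$; then $u(u')^{-1}$ lies in the derived subgroup and is a product of $\le\dim\bbu-\dim\bbv$ commutators in $\ucal_p(\f_p)$; concatenating writes $g$ as a product of at most $C:=C_1+\dim\bbu$ commutators in $\gcal_p(\f_p)$.

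The two points requiring genuine care are (i) the uniform (in $p$) bound on the commutator width of the finite groups of Lie type $\cal_p(\f_p)$, which I would quote from the literature rather than reprove, and (ii) the spreading-out bookkeeping guaranteeing simultaneously for all large $p$ the structure $\gcal_p=\cal_p\ltimes\ucal_p$ with $\cal_p$ hyperspecial, the $\exp$/$\log$ dictionary and its compatibility with the lower central series, and the semisimplicity and fixed-point-freeness of the $\cal_p(\f_p)$-module $\bbv(\f_p)$. I expect (i) to be the only place an external input is essential; everything else is elementary group theory once the reductions are in place.
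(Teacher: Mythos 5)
Your proposal is correct and follows essentially the same route as the paper's proof: decompose $\gcal_p(\f_p)=\cal_p(\f_p)\ltimes\ucal_p(\f_p)$ for large $p$, invoke the bounded commutator width of the quasi-simple factors of $\cal_p(\f_p)$ (the paper cites Shalev/Nikolov--Pyber for $\prod_3 w(\cal_p(\f_p))=\cal_p(\f_p)$), cover the abelianized unipotent part by mixed commutators of the form $(\Ad(h)-1)v$ using complete reducibility and the absence of nonzero fixed vectors, and finish off $[\ucal_p(\f_p),\ucal_p(\f_p)]$ with Lemma~\ref{l:CommutatorPGroup}. The only difference is cosmetic bookkeeping: your scaling-closed spanning-set trick gives $\le\dim\bbv$ mixed commutators where the paper uses $O(\dim V_p)$ via the generated-submodule argument.
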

\begin{proof}
As it was mentioned earlier, for large enough $p$, $\gcal_p\simeq \cal_p\ltimes \ucal_p$ is a perfect $\f_p$-group, where $\cal_p$ is a simply connected semisimple $\f_p$-group and $\ucal_p$ is a unipotent $\f_p$-group. Moreover $\gcal_p(\f_p)=\cal_p(\f_p)\ltimes \ucal_p(\f_p)$, and $V_p:=\ucal_p(\f_p)/[\ucal_p(\f_p),\ucal_p(\f_p)]$ is a completely reducible $\cal_p(\f_p)$-module with no non-zero fixed vector. Let $w(x_1,x_2)=[x_1,x_2]$ and, for any group $K$, let $w(K):=\{w(k_1,k_2)|\h k_1,k_2\in K\}$. Then by \cite[Theorem 1.1]{Sha} (see an alternative approach in \cite[Theorem 3]{NP}) we have that $\prod_3 w(\cal_p(\f_p))=\cal_p(\f_p)$ for large enough $p$. 

Let $H_p=\cal_p(\f_p)$ and $G_p=H_p\ltimes V_p$. Let $\rho:H_p\rightarrow \GL(V_p)$ be the homomorphism induced from the action of $H_p$ on $V_p$. Then $\prod_{7} w(G_p)$ contains 
\[
\{\rho(h)((\rho(h')-I)(v))|\h h,h'\in H_p,\h v\in V_p\}.
\] 
And so $\prod_{7\dim_{\f_p} V_p} w(G_p)$ contains the $\f_p[H_p]$-submodule $M_p$ of $V_p$ generated by $\{(\rho(h)-I)(v)|\h h\in H_p,\h v\in V_p\}$. Therefore $H_p$ acts trivially on $V_p/M_p$. Since $V_p$ is a completely reducible $H_p$-module with no non-zero fixed vectors, $M_p=V_p$. Thus 
\[
\textstyle\prod_{O(\dim \bbg)} w(G_p)=G_p.
\]          
And so by Lemma~\ref{l:CommutatorPGroup} applied for $P:=\ucal_p(\f_p)$ the claim follows.
\end{proof}
\begin{lem}\label{l:CommutatorCongruencePerfect}
Let $\bbg$ and $\gcal$ be as in Lemma~\ref{l:CommutatorPerfect}, and $N$ be a positive integer. Suppose the unipotent radical of $\bbg$ is abelian. Then there is a positive integer $C$ depending on $\gcal$ and $N$ such that for large enough $p$ we have
\[
\textstyle \prod_C \{[g_1,g_2]|\h g_i\in \gcal(\bbz/p^N\bbz)\}=\gcal(\bbz/p^N \bbz).
\]
\end{lem}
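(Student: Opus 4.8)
The plan is to induct on $N$. The base case $N=1$ is Lemma~\ref{l:CommutatorPerfect}, since $\gcal(\bbz/p\bbz)=\gcal_p(\f_p)$ for large $p$; call the resulting constant $C_1$. For the inductive step I assume $N\ge 2$ and that $\prod_{C_{N-1}}w(\gcal(\bbz/p^{N-1}\bbz))=\gcal(\bbz/p^{N-1}\bbz)$ for all large $p$. Fix such a $p$, also large enough that $\gcal$ is smooth over $\bbz_p$ and that the facts about $\gfr(\f_p)$ used below hold; write $G=\gcal(\bbz/p^N\bbz)$, $\bar G=\gcal(\bbz/p^{N-1}\bbz)$, let $\pi\colon G\to\bar G$ be the reduction map and $K=\ker\pi$. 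By smoothness, $K$ is abelian, the map $1+p^{N-1}X\mapsto X$ is an isomorphism $K\xrightarrow{\ \sim\ }(\gfr(\f_p),+)$, and conjugation of $G$ on $K$ becomes, in these coordinates, the adjoint action of $G$ on $\gfr(\f_p)$ through the reduction $G\to\gcal(\f_p)$. The key claim is that $K\subseteq\prod_{\dim\bbg}w(G)$. Granting it, the induction closes: for any $g\in G$ write $\pi(g)$ as a product of $C_{N-1}$ commutators in $\bar G$, lift these commutators to $G$ and multiply; then $g$ equals this product times some $k\in K\subseteq\prod_{\dim\bbg}w(G)$, so $\prod_{C_N}w(G)=G$ with $C_N=C_{N-1}+\dim\bbg$, which depends only on $\gcal$ and $N$.

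To prove the claim I would, for each $g\in G$ with reduction $h\in\gcal(\f_p)$, consider the map $\phi_g\colon K\to K$, $k\mapsto[k,g]$. Using $[k_1k_2,g]=[k_1,g]^{k_2}[k_2,g]$ together with $[K,G]\subseteq K$ and commutativity of $K$, one sees $\phi_g$ is a homomorphism, and a short computation modulo $p^N$ with $k=1+p^{N-1}X$ shows that in the coordinates $K\cong\gfr(\f_p)$ it is (up to conventions) the linear operator $I-\Ad(h)$; hence $\phi_g(K)=\mathrm{Im}(I-\Ad(h))$ is an $\f_p$-subspace of $\gfr(\f_p)$. Next I would note that $\sum_{h\in\gcal(\f_p)}\mathrm{Im}(I-\Ad(h))=\gfr(\f_p)$: by the Vandermonde argument behind~\eqref{e:AugmentationIdeal}, applied to nilpotent elements of $\gfr(\f_p)$ (which span it for large $p$), this sum contains $[\gfr(\f_p),\gfr(\f_p)]$, and $[\gfr(\f_p),\gfr(\f_p)]=\gfr(\f_p)$ for large $p$ because $\bbg$ is perfect. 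Therefore one can pick greedily $h_1,\dots,h_r\in\gcal(\f_p)$, $r\le\dim\bbg$, with $\sum_{i=1}^r\mathrm{Im}(I-\Ad(h_i))=\gfr(\f_p)$ — each new $h_i$ chosen so that $\mathrm{Im}(I-\Ad(h_i))$ is not contained in the span of the previous ones, which remains possible as long as that span is proper. Lifting $h_i$ to $g_i\in G$, any $Y\in K$ can be written $Y=\sum_{i=1}^r Y_i$ with $Y_i=\phi_{g_i}(k_i)$ for suitable $k_i\in K$; since $K$ is abelian this reads $Y=[k_1,g_1]\cdots[k_r,g_r]\in\prod_r w(G)$, proving $K\subseteq\prod_{\dim\bbg}w(G)$.

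The step I expect to be most delicate is securing the uniformity in $p$: for all large $p$ one needs $\gcal$ smooth over $\bbz_p$ (so that $K\cong\gfr(\f_p)$ with the adjoint action), $\gfr(\f_p)$ spanned by nilpotent elements, and $[\gfr(\f_p),\gfr(\f_p)]=\gfr(\f_p)$. Each of these is obtained from its characteristic-zero counterpart by a reduction-mod-$p$ argument of the kind already carried out in the excerpt for $\gcal_p$ in Lemma~\ref{l:CommutatorPerfect} and for $[\ufr,\ufr]$ in Lemma~\ref{l:ProperAdInvariant}. The hypothesis that the unipotent radical is abelian is not strictly necessary for this line of argument — only perfectness of $\bbg$ and the base case enter — but under it the last point is transparent, since then $\gfr(\f_p)=\hfr(\f_p)\oplus\ufr(\f_p)$ and $[\gfr(\f_p),\gfr(\f_p)]=\gfr(\f_p)$ reduces to the semisimple part being perfect and to the action of the semisimple part on $\ufr(\f_p)$ having no nonzero fixed vector.
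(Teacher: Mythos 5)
Your proof is correct, but it organizes the induction differently from the paper. The paper runs your kind of kernel-filling argument only once, at level two: using the exact sequence $1\to\gfr(\f_p)\to\gcal(\bbz/p^2\bbz)\to\gcal(\f_p)\to 1$ it fills the kernel with boundedly many commutators by the same mechanism as in Lemma~\ref{l:CommutatorPerfect}, justifying that the $(\Ad(h)-I)$-images generate by asserting that $\gfr(\f_p)$ is a completely reducible $\gcal(\f_p)$-module with no nonzero invariant vector (this is where the abelian-radical hypothesis is invoked). For the levels $3,\ldots,N$ it then switches mechanisms: via the finite logarithmic maps of \cite[Lemma 29]{SGsemisimple}, the isomorphisms $\gcal(\bbz_p)[p^i]/\gcal(\bbz_p)[p^{i+1}]\simeq\gfr(\f_p)$, and the perfectness of $\gfr(\f_p)$, commutators of elements already produced at intermediate congruence levels fill the deeper congruence kernels. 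You instead repeat the level-two argument at every level, which is more uniform and avoids the logarithm bookkeeping, at the harmless cost of a constant growing linearly in $N$; your justification that $\sum_h\mathrm{Im}(I-\Ad(h))=\gfr(\f_p)$ (Vandermonde as in Lemma~\ref{l:ProperAdInvariant} applied to exponentiable nilpotents, plus perfectness of $\gfr(\f_p)$) likewise replaces the paper's complete-reducibility statement and, as you note, uses only perfectness. The one point to pin down there is that enough nilpotent elements both exponentiate into $\gcal(\f_p)$ and span: it suffices to take $\ufr(\f_p)$ together with root vectors of the quasi-split reduction of the semisimple part (as in Lemma~\ref{l:HyperspecialParahoric}), a large-$p$ fact of the same kind the paper already relies on; the remaining uniformity requirements (smoothness of $\gcal$ over $\bbz_p$ and perfectness of $\gfr(\f_p)$ for large $p$) are exactly those the paper also uses, so nothing essential is missing.
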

\begin{proof}
By Lemma~\ref{l:CommutatorPerfect} we get the case of $N=1$. It is well-known that $\pi_p$ induces the following short exact sequence
\[
1\rightarrow \gfr(\f_p) \rightarrow \gcal(\bbz/p^2\bbz) \xrightarrow{\pi_p} \gcal(\f_p) \rightarrow 1,
\]
where $\gfr=\Lie(\gcal)$, and the conjugation action of $\gcal(\bbz/p^2\bbz)$ on $\gfr(\f_p)$ factors through the adjoint action of $\gcal(\f_p)$. Let $\ufr$ be the Lie algebra of the Zariski-closure of the unipotent radical of $\bbg$ in $\gcal$. For large enough $p$, $\gcal(\f_p)$ and $\gfr(\f_p)$ are perfect, and $\gfr(\f_p)$ is a completely reducible $\gcal(f_p)$-module without any non-zero invariant vectors (as $\bbu$ is abelian). So by a similar argument as in the proof of Lemma~\ref{l:CommutatorPerfect}, we have
\[
\textstyle\prod_{O_{\bbg}(1)}w(\gcal(\bbz/p^2\bbz))\supseteq \gfr(\f_p),
\]
and so by Lemma~\ref{l:CommutatorPerfect}, we have that
\[
\textstyle\prod_{O_{\bbg}(1)}w(\gcal(\bbz/p^2\bbz))=\gcal(\bbz/p^2\bbz).
\]   
Now using the facts that $\gcal(\bbz_p)[p^i]/\gcal(\bbz_p)[p^{i+1}]\simeq \gfr(\f_p)$, the group commutators are mapped to the Lie algebra commutators, and $\gfr(f_p)$ is perfect, we get the desired result. (See \cite[Lemma 34]{SGsemisimple} for the relations between the group commutators and the Lie algebra commutators.)
\end{proof}
\subsection{Proof of Theorem~\ref{t:main} for bounded powers of square-free integers.}\label{ss:ProofOfBoundedPowerOfSquareFrees}
Now we go to the proof of Theorem~\ref{t:main} for $\ccal_{N}:=\{q\in \bbz^+|\h \gcd(q,q_0)=1,\h \forall p\in V_f(\bbq), v_p(q_0)\le N\}$.  Having the above lemmas the rest of the proof is a modification of the argument in~\cite[Section 3]{BV}.

By Section~\ref{ss:ReductionToBG}, it is enough to prove Theorem~\ref{t:BoundedGeneration} for $\ccal_{N}$. First we point out that it is enough to prove Theorem~\ref{t:BoundedGeneration} for $\ccal_{N}^{\ge p_0}:=\{q\in \bbz^+|\h \gcd(q,q_0)=1,\h \forall p\in V_f(\bbq), v_p(q_0)\le N, \forall p\le p_0, v_p(q_0)=0\}$. This Lemma helps us to avoid all the small primes $p$ where $\pi_p(\Gamma)$ does not behave nicely; in particular we will be able to assume Lemmas \ref{l:ProperAdInvariant}-\ref{l:CommutatorCongruencePerfect} to hold for all the prime divisors of $Q$. 

\begin{lem}\label{l:PrimeFactorsAreArbitrarilyLarge}
 	Let $p_0$ be a prime number. Then a bounded generation in the sense of Theorem~\ref{t:BoundedGeneration} for $\ccal_{N}^{\ge p_0}$ implies Theorem~\ref{t:BoundedGeneration} for $\ccal_{N}$ where the implied constants depend also on $p_0$.
\end{lem}
\begin{proof}
	Fix a positive number $\delta_0:=\delta_0(\vare,\Omega,p_0)$ and a positive integer $C_0:=C_0(\vare,\Omega,p_0)$ for which Theorem~\ref{t:BoundedGeneration} holds for $\ccal_N^{\ge p_0}$. 
		
	For $Q\in \ccal_N$, let $q_0:=\prod_{p|Q, p\le p_0} p^{v_p(Q)}$. Since $q_0< p_0^{Np_0}$, we have $[\Gamma:\Gamma[q_0]]\ll_{\bbg,p_0,N} 1$. Suppose $Q$ is large enough (depending only on $\vare,\Omega,p_0,N$) such that the bounded generation for $\ccal_N^{\ge p_0}$ can be applied for $Q/q_0\in\ccal_N^{\ge p_0}$. Moreover we assume $Q$ is large enough (again depending only on $\vare,\Omega,p_0,N$) such that $Q^{-\delta_0/2}/[\Gamma,\Gamma[q_0]]>(Q/q_0)^{-\delta_0}$. Now suppose $\delta$ is a positive number which is at most $\delta_0/4$, $A$ is a symmetric set, and $l$ is a positive integer which is more than $(1/\delta)\log Q$ such that $\pcal_{\Omega}^{(l)}(A)>Q^{-\delta}$. Then 
	\[
	\pcal_{\Omega}^{(2l)}(A\cdot A\cap \Gamma[q_0])\ge Q^{-2\delta}/[\Gamma:\Gamma[q_0]]>Q^{-\delta_0/2}/[\Gamma:\Gamma[q_0]]>(Q/q_0)^{-\delta_0}.
	\]
	Hence $\Pfr_{Q/q_0}(\delta_0,A\cdot A\cap \Gamma[q_0],l)$ holds. Therefore by the bounded generation for $\ccal_N^{\ge p_0}$ we have that 
	\be\label{e:LargeSubgroup}
	\textstyle	\pi_{Q/q_0}(\Gamma[q])\subseteq \pi_{Q/q_0}(\prod_{C_0} (A\cdot A\cap \Gamma[q_0])),
	\ee
	for some $q|Q/q_0$ and $q\le (Q/q_0)^{\vare}$. Since $Q$ and $Q/q_0$ are coprime, $\pi_Q(\Gamma)$ can be diagonally embedded into $\pi_{q_0}(\Gamma)\oplus \pi_{Q/q_0}(\Gamma)$, and by enlarging $p_0$, if needed, we can and will assume that $\pi_{Q}(\Gamma[qq_0])$ gets identified with $\{1\}\oplus \pi_{Q/q_0}(\Gamma[q])$ under the diagonal embedding of $\pi_Q(\Gamma)$ into $\pi_{q_0}(\Gamma)\oplus \pi_{Q/q_0}(\Gamma)$. Notice that $\pi_Q(A\cdot A \cap \Gamma[q_0])$ gets mapped to a subset of $\{1\}\oplus \pi_{Q/q_0}(\Gamma[q])$ and so by (\ref{e:LargeSubgroup}) we get that 
	\[
	\textstyle\pi_Q(\Gamma[qq_0])\subseteq \prod_{2C_0} \pi_Q(A).
	\] 
	And we have $qq_0|Q$ and $qq_0\le Q^{\vare}q_0^{1-\vare}\le Q^{2\vare}$ for large enough $Q$ (again depending on $\vare,\Omega,p_0,N$). 
\end{proof}

Next we show a reduction which works for any family of positive integers $\ccal'$: 

\begin{lem}
	\label{l:ToProveBGWeCanAssumeAS}
	To prove Theorem~\ref{t:BoundedGeneration} for $Q$ in a family of positive integers $\ccal'$, it is enough to prove the bounded generation claim for a subset $A$ for which $\Pfr_Q(\delta, A, l)$ and $\Pfr_Q(\delta,\rho_1(A),l)$ hold (where $\rho_1:\bbg\rightarrow \bbg_s$ is the quotient map). That means for any $0<\vare\ll_{\Omega} 1$ there are $0<\delta\ll_{\Omega,\vare} 1$ and positive integer $C\gg_{\Omega,\vare} 1$ for which the following holds:
	\begin{center}
	 {\rm {\bf (Restricted BG)}:}	$\Pfr_Q(\delta, A, l)\wedge\Pfr_Q(\delta,\rho_1(A),l)$ implies $\pi_Q(\Gamma[q])\subseteq \prod_C \pi_Q(A)$ for some $q|Q$ such that $q\le Q^{\vare}$ 	if $Q\in\ccal'$ and $Q^{\vare^{O_{\bbg}(1)}}\gg_{\Omega}1$.
	\end{center} 
\end{lem} 
\begin{proof}
	Suppose $\delta$ is a positive number and $\pcal_{\Omega}^{(l)}(A)>Q^{-\delta}$ for a symmetric set $A$ and a positive integer $l>(1/\delta)\log Q$. Since $\ker(\rho_1)\cap \Gamma=\{1\}$, we have $\pcal_{\rho_1(\Omega)}^{(l)}(\rho_1(A))>Q^{-\delta}$.	Then by a similar argument as in (\ref{e:Size}) (based on the Kesten bound (see Inequality~(\ref{e:KestenBound}))) we have 
	\be\label{e:LowerBoundOnSize}
	|\pi_Q(A)|>Q^{\eta_0}\h\h {\rm and }\h\h |\pi_Q(\rho_1(A))|>Q^{\eta_0},
	\ee
	for small enough $\delta$ (depending only on $\Omega$) and some positive number $\eta_0:=\eta_0(\Omega)$ which depends only on $\Omega$. 

	Fix a positive number $\delta_0:=\delta_0(\vare,\Omega)$ and a positive integer $C_0:=C_0(\vare,\Omega)$ for which the {\bf Restricted BG} holds. And let 
	\begin{align}
	\label{e:SetOfGoodIndexes}
	 I_{A}& \textstyle :=\{i\in \bbz^+|\h |\prod_{3^{i+1}}\pi_Q(A)|\le |\prod_{3^i}\pi_Q(A)|^{1+\delta_0}\} \text{ and }\\
	\label{e:SetOfGoodIndexesSS}
	I_{\rho_1(A)}&\textstyle :=\{i\in \bbz^+|\h |\prod_{3^{i+1}}\pi_Q(\rho_1(A))|\le |\prod_{3^i}\pi_Q(\rho_1(A))|^{1+\delta_0}\}.
	\end{align}
	Notice that $|\pi_Q(\Gamma)|\le Q^{N_0(\bbg)}$ and $|\pi_Q(\rho_1(\Gamma))| \le Q^{N_0(\bbg)}$  for some positive integer $N_0$ which depends only on the dimension of $\bbg$.  Now let $N_1:=N_1(\delta_0)$ be the smallest positive integer such that
	\be\label{e:NumberOfTripling}
	(1+\delta_0)^{N_1}\eta_0>N_0.
	\ee
	
	So by Inequalities (\ref{e:LowerBoundOnSize}) and (\ref{e:NumberOfTripling}), the complements of $I_{A}$ and $I_{\rho_1(A)}$ in $\bbz^+$ have at most $N_1$ elements. Hence there is a positive integer $C_1\le 3^{2N_1+1}\ll_{\Omega,\vare} 1$ such that 
	\begin{align*}
	\textstyle
	|\pi_Q(\prod_{C_1}\rho_1(A))\cdot \pi_Q(\prod_{C_1} \rho_1(A))\cdot \pi_Q(\prod_{C_1} \rho_1(A))|&	\textstyle\le |\pi_Q(\prod_{C_1} \rho_1(A))|^{1+\delta_0}\text{ and } \\
	\textstyle	
	|\pi_Q(\prod_{C_1}A)\cdot \pi_Q(\prod_{C_1} A)\cdot \pi_Q(\prod_{C_1} A)|&
	\textstyle\le |\pi_Q(\prod_{C_1} A)|^{1+\delta_0}.
	\end{align*}
	Therefore, if $\delta<\delta_0$, then $\Pfr_Q(\delta_0,\prod_{C_1}A,l)$ and $\Pfr_Q(\delta_0,\prod_{C_1}(\rho_1(A)),l)$ hold. Thus, by the {\bf Restricted BG} and the fact that $C_1C_0\ll_{\Omega,\vare} 1$, we are done. 
\end{proof}
{\em (Going back to the proof of Theorem~\ref{t:BoundedGeneration} for $\ccal_{N}$:)}
Suppose $p_0$ is a prime number such that $p_0\ll_{\Omega} 1$ and all the claims of Lemmas \ref{l:ProperAdInvariant}-\ref{l:CommutatorCongruencePerfect} hold for all the primes $p\ge p_0$ (later we will assume $p_0$ is large enough depending on $\Omega$ to gain additional properties for $\pi_p(\Gamma)$ where $p$ is a prime divisor of $Q$.). So by Lemma~\ref{l:PrimeFactorsAreArbitrarilyLarge} we can assume that all the prime factors of $Q$ are at least $p_0$. By Lemma~\ref{l:ToProveBGWeCanAssumeAS} (for $\ccal':=\ccal_N^{\ge p_0}$), without loss of generality we can and will assume that $\Pfr_Q(\delta, A, l)$ and $\Pfr_Q(\delta,\rho_1(A),l)$ hold. 

 Let $Q_s:=\prod_{p|Q} p$ (, and so $Q|Q_s^N$). By \cite[Theorem 1]{SGV}, \cite[Lemma 31]{SGsemisimple}, and $l>\frac{N}{\delta}\log Q_s$ we have that
\[
\left|\pcal_{\pi_{Q_s}(\Omega)}^{(l)}(\pi_{Q_s}(A))-\frac{|\pi_{Q_s}(A)|}{|\pi_{Q_s}(\Gamma)|}\right|\le \frac{1}{|\pi_{Q_s}(\Gamma)|}
\]
for small enough $\delta$. And so we have
\[
{Q_s}^{-N\delta}\leq Q^{-\delta}\leq \pcal_{\pi_{Q_s}(\Omega)}^{(l)}(\pi_{Q_s}(A)) \leq \frac{|\pi_{Q_s}(A)|+1}{|\pi_{Q_s}(\Gamma)|}, 
\]
which implies that $|\pi_{Q_s}(A)|\geq |\pi_{Q_s}(\Gamma)|Q_s^{-\Theta_{\Omega,N}(\delta)}$.
Since we assumed that the prime factors of $Q$ are large enough, we have that $\pi_{Q_s}(\Gamma)\simeq \prod_{p|Q} \pi_p(\Gamma)$. Writing $Q_s=\prod_{j=1}^l p_j$, to the product set $\prod_{i=1}^l \pi_{p_i}(\Gamma)$ one can associate a rooted tree with $l$ levels. The root is considered to be the zero level. The vertices of the $j$-th level are elements of $\prod_{i=1}^j \pi_{p_i}(\Gamma)$, and each vertex is connected to its projection in the previous level. By a regularity argument similar to the proof of \cite[Lemma 12]{SGsemisimple} (see \cite[Lemma 5.2]{BGS} or \cite[Page 26]{Var}), one can show that there are a set $A'\subseteq A$ and a sequence of integers $\{k_i\}_{i=1}^l$ such that for any $g\in A'$ and for any $1\le i\le l$ we have
\[
|\{x\in \pi_{p_i}(\Gamma)|\h \exists h\in A':\h \pi_{p_1\cdots p_{i-1}}(h)=\pi_{p_1\cdots p_{i-1}}(g)\h\text{and}\h\pi_{p_i}(h)=x\}|=k_i,
\]
and 
\[
|A'|=\prod_{i=1}^l k_i\ge (\prod_{i=1}^l 2\log |\pi_{p_i}(\Gamma)|)^{-1} |A|.
\]

By \cite[Corollary 14]{SGV} and the main theorem of \cite{LS}, $\pi_p(\Gamma)$ is a quasi-random group for large enough $p$; that means the dimension of any non-trivial irreducible representation of $\pi_p(\Gamma)$ is at least $|\pi_p(\Gamma)|^{\Theta_{\bbg}(1)}$ (for $p\gg_{\Omega} 1$) (see also Proposition~\ref{p:HighMultiplicity}). So by a result of Gowers~\cite{Gow} (see~\cite[Corollary 1]{NP}) there is a positive number $c_0$ depending on $\bbg$ such that for large enough $p$ the following holds:
if $A,B,C\subseteq \pi_p(\Gamma)$ and $|A|,|B|,|C|\ge |\pi_p(\Gamma)|^{1-c_0}$, then $A\cdot B\cdot C=G$. We assume that the prime factors of $Q$ are large enough so that $2\log |\pi_{p_i}(\Gamma)|<|\pi_{p_i}(\Gamma)|^{c_0\vare}$. Hence
\[
\prod_{i=1}^l k_i\ge |\pi_{Q_s}(\Gamma)|^{1-\Theta_{\bbg}(\delta)-c_0\vare}.
\]
Let $I:=\{i|\h k_i<|\pi_{p_i}(\Gamma)|^{1-c_0}\}$; then we have
\[
|\pi_{Q_s}(\Gamma)|/\prod_{i\in I}|\pi_{p_i}(\Gamma)|^{c_0}\ge \prod_{i=1}^l k_i\ge |\pi_{Q_s}(\Gamma)|^{1-\Theta_{\bbg}(\delta)-c_0\vare},
\]
and so 
\[
\prod_{i\in I} p_i\le Q_s^{\Theta_{\bbg}(\delta)+2\vare}\le Q_s^{4\vare},
\]
for small enough $\delta$ (depending on $\Omega$, $N$ and $\vare$). Now by a similar argument as in~\cite[Page 26]{Var} (using the mentioned result of Gowers) one can show that
\be\label{e:FirstLevel}
\pi_{\prod_{i\not\in I}p_i}(A\cdot A\cdot A)=\pi_{\prod_{i\not\in I}p_i}(\Gamma).
\ee
Let $Q'_s=\prod_{i\not\in I}p_i$ and $q'=Q_s/Q_s'$. By (\ref{e:FirstLevel}) there is a function 
$f_1:\pi_{Q_s'}(\Gamma)\rightarrow \pi_{q'^N}(\Gamma)$ such that $\{(g,f_1(g))|\h g\in \pi_{Q_s'}(\Gamma)\}$ is contained in $\pi_{Q_s'q'^N}(\prod_3 A)$. Thus there is a fiber of $f_1$ with at least $Q_s^{1-\Theta_{\bbg,N}(\vare)}$-many elements. Hence, as before by a regularization argument and another application of the mentioned Gowers's result (for $\vare\ll_{\bbg,N}1$), there is $q''|Q_s'$ and a function $f_2:\pi_{Q_s'/q''}(\Gamma)\rightarrow \pi_{q''}(\Gamma)$ such that $q''\le Q^{\Theta_{\bbg,N}(\vare)}$ and 
\[
\{(g,f_2(g),1)\in \pi_{Q_s'/q''}(\Gamma)\times \pi_{q''}(\Gamma)\times \pi_{q'^N}(\Gamma)| g\in \pi_{Q_s'/q''}(\Gamma)\}
\]
is contained in $\pi_{Q_s'q'^N}(\prod_{O_{\bbg}(1)}A)$. Hence, for any $g_1,g_2\in \pi_{Q_s'/q''}(\Gamma)$, we have 
\[
\textstyle ([g_1,g_2],1,1)=[(g_1,f_2(g_1),1),(g_2,1,f_1(g_2,1))]\in \pi_{Q_s'q'^N}(\prod_{O_{\bbg}(1)}A).
\]
Therefore by Lemma~\ref{l:CommutatorPerfect} 
\[
\textstyle \pi_{Q_s'q'^N}(\Gamma[q'^N])\subseteq \pi_{Q_s'q'^N}(\prod_{Q_{\bbg}(1)}A).
\]
Let $A'$ be a subset of $\prod_{Q_{\bbg}(1)}A$ such that $\pi_{Q_s'q'^N}(\Gamma[q'^N])=\pi_{Q_s'q'^N}(A')$. Now we will go to the second level; that means we will show that there are a positive integer $C$ and  $\bar q|Q'_s$ such that $\bar q\le Q^{\Theta_{\bbg}(\vare)}$ and $\pi_{{Q'_s}^2}(\prod_C A')$ contains $\pi_{{Q'_s}^2}(\Gamma[{\bar q}^2])$. 

\begin{lem}\label{l:Enlarging}
In the above setting, assume $\bbu$ is abelian. Then for any $\vare>0$ there are $\delta>0$ and a positive integer $C$ such that the following holds.

Assume $A\subseteq \Gamma$ and a square-free integer $Q_s$ have the following properties:
\begin{enumerate}
\item $Q_s^{\vare}\gg_{\Omega} 1$.
\item The prime factors of $Q_s$ are sufficiently large (depending on $\Omega$); in particular the assertions of Lemma~\ref{l:NotSplittingModPSquare} hold, and $\pi_{Q_s^2}(\Gamma)\simeq \prod_{p|Q_s} \pi_{p^2}(\Gamma)$.
\item $\pi_{Q_s}(A)=\pi_{Q_s}(\Gamma)$.
\item For any prime factor $p$ of $Q_s$, let $V_p\subsetneq \gfr(\f_p)$ be a $\pi_p(\Gamma)$-invariant under the adjoint action. And $\prod_{p|Q_s} V_p\subseteq \pi_{Q_s^2}(A)$. 
\end{enumerate}  
Then there are $Q_s'$ and $\{M_p\}_{p|Q_s'}$ such that
\begin{enumerate}
\item $Q_s'|Q_s$ and $Q_s^{1-\vare}\le Q_s'$.
\item For any $p$, we have $V_p\subsetneq M_p\subseteq \gfr(\f_p)$, and $M_p$ is $\pi_p(\Gamma)$-invariant under the adjoint action. 
\item $\prod_{p|Q_s'} M_p\oplus \prod_{p|(Q_s/Q_s')} \{0\} \subseteq \pi_{{Q_s}^2}(\prod_C A)$.
\end{enumerate}
\end{lem}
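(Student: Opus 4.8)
The plan is to run the argument of \cite[Section 3]{BV}: regard $\pi_{Q_s^2}(A)$ as the graph of a set-theoretic section of the congruence map $\pi\colon\pi_{Q_s^2}(\Gamma)\to\pi_{Q_s}(\Gamma)$ — which is exactly what hypothesis (3) supplies — use the statistical non-splitting statement Lemma~\ref{l:SectionFarFromHomo} to force the defect of this section to be nonzero modulo \emph{most} prime factors of $Q_s$, and then use the module structure of $\gfr(\f_p)$ together with bounded generation by commutators (Lemma~\ref{l:CommutatorCongruencePerfect}) to carry this back into a bounded power of $A$.

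First I would record the module-theoretic input. Since $\bbu$ is abelian and $\bbg$ is perfect, for large $p$ the $\pi_p(\Gamma)$-module $\gfr(\f_p)=\hfr(\f_p)\oplus\ufr(\f_p)$ is completely reducible with no nonzero fixed vector: the semisimple part has no trivial summand, and perfectness says $\bbh$ fixes no nonzero vector of $\bbu$. In particular $\zfr(\f_p)=0$, so each $V_p$ already contains $\zfr(\f_p)$ and each $W_p:=\gfr(\f_p)/V_p$ is again completely reducible and fixed-vector-free, and by Lemma~\ref{l:NotSplittingModPSquare} the extension $1\to W_p\to\pi_{p^2}(\Gamma)/V_p\to\pi_p(\Gamma)\to 1$ does not split. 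We may assume all prime factors of $Q_s$ exceed a large constant $P_0$, so that these facts, Lemma~\ref{l:SectionFarFromHomo} (with exponent $\delta_0=\delta_0(\bbg)>0$), and the identifications $\pi_{Q_s}(\Gamma)\simeq\prod_{p\mid Q_s}\pi_p(\Gamma)$, $\pi_{Q_s^2}(\Gamma)\simeq\prod_{p\mid Q_s}\pi_{p^2}(\Gamma)$ all hold.

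Next, fix a section $s\colon\pi_{Q_s}(\Gamma)\to\pi_{Q_s^2}(A)$ of $\pi$ with $s(e)=e$ (possible by (3) and (4), as $e\in\prod_p V_p\subseteq\pi_{Q_s^2}(A)$), let $\bar s$ be its reduction modulo $N:=\prod_p V_p$, and let $\beta(x,y):=\bar s(x)\bar s(y)\bar s(xy)^{-1}$, an element of $\prod_p W_p$; using $N\subseteq\pi_{Q_s^2}(A)$ and symmetry of $A$, every lift of $\beta(x,y)$ to $\pi_{Q_s^2}(\Gamma)$ lies in $\pi_{Q_s^2}(\prod_4 A)\cap\ker\pi$. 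For a fixed $p$, restricting $\beta$ to pairs supported at the coordinate $p$ gives precisely the defect of a section of $\pi_{p^2}(\Gamma)/V_p\to\pi_p(\Gamma)$, so Lemma~\ref{l:SectionFarFromHomo} shows its $p$-component is nonzero on at least a $(1-p^{-\delta_0})$-proportion of such pairs. A first-moment argument over $\prod_{p\mid Q_s}$ — the ``rooted tree of fibers'' regularization used just before this lemma (see \cite[Lemma 10]{SGsemisimple}, \cite[Lemma 5.2]{BGS}, \cite[p.~26]{Var}) — combined with $P_0^{-\delta_0}<\vare$ then produces a divisor $Q_s'\mid Q_s$ with $\prod_{p\mid Q_s/Q_s'}p<Q_s^{\vare}$, i.e. $Q_s'\ge Q_s^{1-\vare}$, together with an element $w\in\pi_{Q_s^2}(\prod_{O_{\bbg}(1)}A)\cap\ker\pi$ that vanishes at every $p\mid Q_s/Q_s'$ and is nonzero modulo $V_p$ at every $p\mid Q_s'$. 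The key structural point is that the congruence kernel $\gfr(\f_p)\lhd\pi_{p^2}(\Gamma)$ is \emph{abelian}: taking $w$ of the shape of a commutator $[a,b]$ of elements $a,b\in\prod_{O_{\bbg}(1)}A$ that are trivial modulo $Q_s/Q_s'$ — as in the construction of $([g_1,g_2],1,1)$ preceding this lemma — forces $w$ to vanish at every bad prime, which is exactly the ``$\oplus\prod_{p\mid Q_s/Q_s'}\{0\}$'' part of the conclusion, while the non-vanishing at good primes is provided by Lemma~\ref{l:SectionFarFromHomo}.

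Finally I would enlarge each nonzero coordinate of $w$ to a full subspace: conjugating $w$ by elements of $A$ — which by (3) realize the entire adjoint action of $\prod_p\pi_p(\Gamma)$ coordinatewise — and applying \cite[Lemma 30]{SGV}, valid because $W_p$ has no nonzero fixed vector, shows that $O_{\bbg}(1)$-fold products of such conjugates of $w$ contain $\prod_{p\mid Q_s'}M_p\oplus\prod_{p\mid Q_s/Q_s'}\{0\}$, where $M_p\supsetneq V_p$ is the $\pi_p(\Gamma)$-submodule of $\gfr(\f_p)$ generated by $V_p$ together with the $p$-th coordinate of $w$; Lemma~\ref{l:CommutatorCongruencePerfect} guarantees that the word length stays bounded in terms of $\bbg$ and $\vare$ only, independently of the number of prime factors of $Q_s$. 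Bookkeeping of the constants then yields $C=C(\bbg,\vare)$ as required. The step I expect to be the main obstacle is precisely this passage from the \emph{per-prime} output of Lemma~\ref{l:SectionFarFromHomo} to a \emph{simultaneous} statement over a density-one set of prime divisors of $Q_s$ while keeping the number of multiplications of $A$ uniformly bounded — the delicate counting of \cite[Section 3]{BV}, in which the completely reducible, fixed-vector-free structure of $\gfr(\f_p)$ and the abelianness of $\bbu$ enter essentially.
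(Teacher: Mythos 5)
Your overall strategy is the paper's: use hypothesis (3) to build a section of $\pi_{Q_s^2}(\Gamma)\rightarrow\pi_{Q_s}(\Gamma)$ with values in $\pi_{Q_s^2}(A)$, feed its defect into the statistical non-splitting Lemma~\ref{l:SectionFarFromHomo}, discard a divisor of size at most $Q_s^{\vare}$, and then enlarge $V_p$ using the adjoint action of $\pi_p(\Gamma)$ (realized inside $A$ by hypothesis (3)) together with bounded submodule generation. But there is a genuine gap at exactly the step you yourself defer as ``the main obstacle'': producing a \emph{single} element of $\pi_{Q_s^2}(\prod_{O_{\bbg}(1)}A)\cap\ker\pi$ whose kernel coordinate lies outside $V_p$ simultaneously for all $p$ dividing some $Q_s'\ge Q_s^{1-\vare}$. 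Your route --- apply Lemma~\ref{l:SectionFarFromHomo} to pairs supported at the single coordinate $p$ and then invoke the ``rooted tree of fibers'' regularization --- does not deliver this: a defect coming from a pair supported at $p$ has completely uncontrolled kernel coordinates at every other prime, so combining one such element per prime would need a number of multiplications growing with $\omega(Q_s)$, destroying the bounded constant $C$ (and the rooted-tree regularization is a tool from the earlier part of Section 4.4, not relevant here). The paper resolves this by a first moment over \emph{one} pair: setting $\psi_p(g):=\pi_{p^2}(\psi(g))V_p$ for a section $\psi$ with values in $A$, it bounds $\bbe\bigl(\sum_{p\mid Q_s,\ \psi_p(xy)=\psi_p(x)\psi_p(y)}\log p\bigr)\le\sum_{p\mid Q_s}p^{-\delta_0}\log p\le\vare\log Q_s$, the per-prime probability bound being Lemma~\ref{l:SectionFarFromHomo} applied (after conditioning on the components away from $p$) to the induced sections; Markov then gives a single pair $(x,y)$, hence a single $a=\psi(x)\psi(y)\psi(xy)^{-1}\in\prod_3A$, which is nontrivial modulo $V_p$ at every $p\mid Q_s'$ for some $Q_s'\ge Q_s^{1-\vare}$. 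This is the step your write-up names rather than proves.

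Two smaller points. The role you assign to Lemma~\ref{l:CommutatorCongruencePerfect} (keeping the word length of the submodule-generation step bounded) is misplaced: that input is \cite[Corollary 31]{SGV}, which is what the paper cites, while Lemma~\ref{l:CommutatorCongruencePerfect} only enters later in Section 4.4 when passing from level $2$ to level $N$. Also, your device for killing the coordinates at the bad primes $p\mid Q_s/Q_s'$ is in the right spirit but, as stated, clashes with the previous step: Lemma~\ref{l:SectionFarFromHomo} controls defects of sections, not commutators, so $w$ cannot simultaneously be a defect (to be nonzero modulo $V_p$ at good primes) and a commutator of elements trivial modulo $Q_s/Q_s'$ (to vanish at bad primes). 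The clean repair is to take the defect element $a$ first and then form $[b,a]$ for a single $b\in A$ whose $\pi_{Q_s}$-image is trivial at the bad primes and moves the class of $a$ in $\gfr(\f_p)/V_p$ at every good prime --- possible since $\pi_{Q_s}(A)=\pi_{Q_s}(\Gamma)$ and $\gfr(\f_p)/V_p$ has no nonzero invariant vectors; abelianness of the congruence kernel then gives the $\prod_{p\mid Q_s/Q_s'}\{0\}$ part while preserving nontriviality modulo $V_p$ at the good primes. (The paper is itself terse about this last point, but your argument needs it made explicit.)
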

\begin{proof}
Since $V_p\subseteq \gfr(\f_p)$, the map $\pi_p:\pi_{p^2}(\Gamma)\rightarrow \pi_p(\Gamma)$ factors through $\pi_{p^2}(\Gamma)/V_p$. Let us abuse the notation and still denote the induce homomorphism by $\pi_p:\pi_{p^2}(\Gamma)/V_p\rightarrow \pi_p(\Gamma)$. Let $\psi:\pi_{Q_s}(\Gamma)\rightarrow A\subseteq \Gamma$ be a section of $\pi_{Q_s}$, and $\psi_p:\pi_{Q_s}(\Gamma)\rightarrow \pi_{p^2}(\Gamma)/V_p$ be $\psi_p(g)=\pi_{p^2}(\psi(g))V_p$. As in the proof of~\cite[Proposition 3]{BV}, let us consider the following expectation with respect to the probability Haar measure:
\begin{align}
\label{e:AverageWeightedBadPrimes}\bbe\left(\sum_{p|Q_s,\h \psi_p(xy)=\psi_p(x)\psi_p(y)} \log p \right)=&
\sum_{p|Q_s} \log p \cdot (\pcal_{\pi_p(\Gamma)}\times \pcal_{\pi_p(\Gamma)})(\{(x,y)|\h \psi_p(xy)=\psi_p(x)\psi_p(y)\})\\ \notag
\le & \sum_{p|Q_s} p^{-\Theta_{\bbg}(1)} \log p\le \log (Q_s^{\vare}),
\end{align}
where the first inequality is given by Lemma~\ref{l:SectionFarFromHomo} and the second inequality holds for $Q_s^{\vare}\gg_{\bbg} 1$. Inequality~(\ref{e:AverageWeightedBadPrimes}) implies that 
\[
\bbe\left(\log\left(\prod_{p|Q_s \psi_p(x)\psi_p(y)\psi_p(xy)^{-1}\neq 1} p\right)\right)\ge \log(Q_s^{1-\vare}).
\]

So there are $x,y\in \pi_{Q_s}(\Gamma)$ and $Q_s'|Q_s$ such that $Q_s'\ge Q_s^{1-\vare}$ and for any prime $p|Q_s'$ we have $z_p:=\psi_p(x)\psi_p(y)\psi_p(xy)^{-1}\neq 1$. Hence there is $a\in \prod_3 A$ such that for any $p|Q_s'$ we have that $\pi_{p^2}(a)$ belongs to $\gfr(\f_p)\setminus V_p$ where $\gfr(\f_p)$ is identified with $\pi_{p^2}(\Gamma[p])$. By our assumptions on $\bbg$, $\pi_p(\Gamma)$ is generated by its $p$-elements and $\gfr(\f_p)$ is a completely reducible $\pi_p(\Gamma)$-module with no non-zero invariant vector (the latter holds as $\bbg$ is perfect and its unipotent radical is abelian). Therefore by \cite[Corollary 31]{SGV} $\sum_{O_{\bbg}(1)}\pi_p(\Gamma)\cdot z_p$ is the $\pi_p(\Gamma)$-subspace $M_p/V_p$ of $\gfr(\f_p)/V_p$ that is generated by $z_p$.

In the group $\pi_{Q_s^2}(\Gamma)/\prod_{p|Q_s} V_p$, for any $\gamma\in \Gamma$, we have
\be\label{e:ConjugationAdjoint}
 \pi_{Q_s^2}(\gamma)\pi_{Q_s^2}(a)\pi_{Q_s^2}(\gamma)^{-1}\left(\prod_{p|Q_s}V_p\right)=(\pi_p(\gamma)\cdot z_p)_{p|Q_s}\in \prod_{p|Q_s}\gfr(\f_p)/V_p.
\ee
Since $\pi_{Q_s}(A)=\pi_{Q_s}(\Gamma)$ and $a\in \prod_3 A$, by (\ref{e:ConjugationAdjoint}) we have that  $\pi_{Q_s'^2}(\prod_{O_{\bbg}(1)} A)\left(\prod_{p|Q_s}V_p\right)$ (as a subset of $\pi_{Q_s^2}(\Gamma)/\prod_{p|Q_s} V_p$) contains 
\[
{\textstyle\sum_{O_{\bbg}(1)}}\pi_{Q_s'}(A)\cdot (z_p)_{p|Q_s'}=\prod_{p|Q_s'} {\textstyle\sum_{O_{\bbg}(1)}} \pi_{p}(\Gamma)\cdot z_p=\prod_{p|Q_s'} M_p/V_p.
\] 
(Here $\pi_{{Q_s}^2}(\Gamma)/\prod_{p|Q_s} V_p$ is viewed as $(\pi_{{Q_s'}^2}(\Gamma)/\prod_{p|Q_s'} V_p)\oplus (\pi_{{(Q_s/Q_s')}^2}(\Gamma)/\prod_{p|Q_s/Q_s'} V_p)$.) And so 
\[
\prod_{p|Q_s'} M_p\subseteq\pi_{Q_s^2}\textstyle(\prod_{O_{\bbg}(1)} A);
\]
and the claim follows. 
\end{proof}
{\em (Going back to the proof of Theorem~\ref{t:BoundedGeneration} for $\ccal_N$:)} By Equation~(\ref{e:FirstLevel}) and a repeated use of Lemma~\ref{l:Enlarging} (for $O_{\bbg}(1)$-many times), we get that there is $q|Q_s$ such that $q\le Q_s^{\Theta_{\bbg}(\vare)}$ (if $Q_s^{\vare}\gg_{\bbg} 1$) and 
\be\label{e:LargeLevel2CongruenceSubgroup}
\pi_{{Q_s}^2}({\textstyle\prod_{O_{\bbg}(1)}} A)\supseteq \pi_{{Q_s}^2}(\Gamma[q^2])=\prod_{p|(Q_s/q)} \pi_{p^2}(\Gamma)\times \prod_{p|q} \{1\}.
\ee

Let us recall that for large enough $p$ (depending only on $\Omega$) we have that, for any positive integer $i$, there is a $\pi_p(\Gamma)$-module isomorphism $\lin{p^i}{p^{i+1}}:\Gamma[p^i]/\Gamma[p^{i+1}]\rightarrow \gfr(\f_p)$; and for any $g_i\in \Gamma[p^i]$ and $g_j\in \Gamma[p^j]$ we have $[g_i,g_j]:=g_ig_jg_i^{-1}g_j^{-1}\in \Gamma[p^{i+j}]$ and 
\be\label{e:CommutatorsGraded}
\lin{p^{i+j}}{p^{i+j+1}}([g_i,g_j]\Gamma[p^{i+j+1}])
=[\lin{p^i}{p^{i+1}}(g_i\Gamma[p^{i+1}]),\lin{p^j}{p^{j+1}}(g_j\Gamma[p^{j+1}])],
\ee
where the right hand side is the Lie bracket in $\gfr(\f_p)$. (These maps are called the {\em truncated or finite logarithmic maps}. We refer the interested reader to \cite[Section 2.9]{SGsemisimple} for a general and quick treatment of these maps and their basic properties.) Equation (\ref{e:CommutatorsGraded}) defines a graded Lie algebra structure on 
\[
\Gamma[p]/\Gamma[p^2]\oplus \Gamma[p^2]/\Gamma[p^3] \oplus \cdots \oplus \Gamma[p^{N-1}]/\Gamma[p^{N}]
\]
and shows that it is isomorphic to the graded Lie algebra $\gfr(\f_p)\otimes (t\f_p[t]/t^N\f_p[t])$ where $\f_p[t]$ is the ring of polynomials in a single variable $t$, $[x_i\otimes \bar t^i,x_j\otimes \bar t^j]:=[x_i,x_j]\otimes \bar t^{i+j}$ for any $x_i,x_j\in \gfr(\f_p)$, and $\bar t:=t+t^N\f_p[t]$. Since, for large enough $p$ (depending only on $\Omega$), $\gfr(\f_p)$ is a perfect Lie algebra, $\gfr(\f_p)\otimes (t\f_p[t]/t^N\f_p[t])$ is generated by the degree one elements, that means $\gfr(\f_p)\otimes \bar t$. So by (\ref{e:LargeLevel2CongruenceSubgroup}) and (\ref{e:CommutatorsGraded}) we get that 
\be\label{e:NthLevel}
 \pi_{{(Q_s/q)}^N}({\textstyle\prod_{O_{\bbg,N}(1)}} A)=\pi_{(Q_s/q)^N}(\Gamma)=\prod_{p|(Q_s/q)} \pi_{p^N}(\Gamma).
\ee 
Hence there is a function $f_1:\pi_{(Q_s/q)^N}(\Gamma)\rightarrow \pi_{q^N}(\Gamma[q])$ such that $G(f_1):=\{(g,f_1(g))|g\in \pi_{(Q_s/q)^N}(\Gamma)\}$ is contained in $\pi_{Q_s^N}(\prod_{O_{\bbg,N}(1)} A)$. Let $w(x_1,x_2):=x_1x_2x_1^{-1}x_2^{-1}$ and for any subset $X$ of a group $w(X):=\{w(x_1,x_2)|\h x_1,x_2\in X\}$. Then $w(G(f_1))\subseteq \pi_{(Q_s/q)^N}(\Gamma)\times \pi_{q^N}(\Gamma[q^2])$. And so by Lemma~\ref{l:CommutatorCongruencePerfect} we have that there is a function $f_2:\pi_{(Q_s/q)^N}(\Gamma)\rightarrow \pi_{q^N}(\Gamma[q^2])$ such that its graph $G(f_2)$ is contained in $\pi_{Q_s^N}(\prod_{O_{\bbg,N}(1)} A)$. Repeating this argument $\log N$ times we get that $\pi_{Q_s^N}(\Gamma[q^N])$ is contained in $\pi_{Q_s^N}(\prod_{O_{\bbg,N}(1)} A)$, which finishes the proof of Theorem~\ref{t:BoundedGeneration} for $\ccal_{N}$.

\section{Super-approximation: the $p$-adic case.}
To prove Theorem~\ref{t:main} for $\ccal:=\{p^m|\h p\in V_f(\bbq), p\nmid q_0, m\in \bbz^+\}$, by Section~\ref{ss:ReductionToBG}, it is enough to Prove Theorem \ref{t:BoundedGeneration} for $\ccal$. 

In this section, we work in the setting of Section~\ref{ss:InitialReductions}. In addition, we assume that $\bbu$ is abelian. So we assume that $\bbv$ is a vector $\bbq$-group, $\bbg_s$ is a connected, simply-connected, semisimple $\bbq$-group, there is a $\bbq$-homomorphism $\bbg\rightarrow\mathbb{GL}(\bbv)$ with no non-zero fixed vector, and $\bbg=\bbg_s\ltimes \bbv$ is the Zariski-closure of $\Gamma=\langle \Omega\rangle$. We also fix a $\bbq$-embedding $\bbg\subseteq (\GL_{N_0})_{\bbq}$.

By Lemma~\ref{l:ToProveBGWeCanAssumeAS} (for $\ccal':=\ccal$), we can and will assume that $\Pfr_Q(\delta,A,l)$ and $\Pfr_Q(\delta,\rho_1(A),l)$ hold where $\rho_1:\bbg\rightarrow \bbg_s$ is the quotient map.

 It is worth pointing out that Theorem~\ref{t:BoundedGeneration} for semisimple groups and powers of primes is proved in~\cite[Theorem 36]{SGsemisimple}.      
\subsection{Escape from proper subgroups.}\label{ss:EscapeModQ}
In this section, we explore the pro-$p$ structure of an open subgroup  of the $p$-adic closure $\Gamma_p$ of $\Gamma$. The main goal is to escape proper subgroups of $\pi_Q(\Gamma)$ where $Q=p^n$ is a power of a prime $p$.
\begin{prop}\label{p:EscapeModQ}
In the above setting, there is a positive number $\delta$ (depending on $\Omega$) such that for any $n\gg_{\Omega} 1$ and any proper subgroup $H$ of $\pi_{p^n}(\Gamma)$ we have
\[ 
\pcal_{\pi_{p^n}(\Omega)}^{(l)}(H)\le [\pi_{p^n}(\Gamma):H]^{-\delta},
\]
for $l\ge \frac{n}{\delta}\log p$.
\end{prop}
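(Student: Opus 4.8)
The plan is to prove escape from proper subgroups of $\pi_{p^n}(\Gamma)$ by an induction on the level $n$, reducing the statement at level $n$ to the analogous statement at a smaller level, using the pro-$p$ (in fact $p$-adic analytic) structure of the kernel $\pi_{p^n}(\Gamma[p^{n-1}])\simeq\gfr(\f_p)$ and the escape-from-subvarieties estimate of Proposition~\ref{p:EscapeSubgroups}. The base case is the semisimple quotient: by Lemma~\ref{l:AlmostCommute}, $\pi_{p^n}$ almost commutes with $\rho_1:\bbg\to\bbg_s$, so $\pcal_{\rho_1(\Omega)}^{(l)}$ controls $\pcal_{\pi_{p^n}(\Omega)}^{(l)}$ up to a bounded shift in the modulus; and \cite[Theorem 31]{SGsemisimple} (Theorem~\ref{t:BoundedGeneration} for semisimple groups, equivalently Proposition~\ref{p:EscapeModQ} in the semisimple case) gives the required escape inside $\pi_{p^n}(\bbg_s)$. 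So it suffices to handle proper subgroups $H$ whose image under $\rho_1$ is all of $\pi_{p^n}(\bbg_s)$, i.e. subgroups that are large in the semisimple direction but miss part of the unipotent/congruence direction.

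The main step will be the following dichotomy for such an $H$. Writing $H_j:=H\cap\pi_{p^n}(\Gamma[p^j])$, consider the smallest $j$ for which $H_j$ fails to be all of $\pi_{p^n}(\Gamma[p^j])$; the successive quotients $\pi_{p^n}(\Gamma[p^j])/\pi_{p^n}(\Gamma[p^{j+1}])\simeq\gfr(\f_p)$ carry the adjoint action of $\pi_p(\Gamma)$, which (since $\bbg$ is perfect with abelian unipotent radical) is a completely reducible module with no nonzero invariant vector — this is the input from Lemma~\ref{l:ProperAdInvariant} and the setup of Section~\ref{ss:ProofOfBoundedPowerOfSquareFrees}. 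Thus $H$ determines a proper $\pi_p(\Gamma)$-submodule $V\subsetneq\gfr(\f_p)$ at that level, and, lifting back, a proper $\bbq_p$-algebraic subgroup $\bbh\subsetneq\bbg\times_\bbq\bbq_p$ (the Zariski closure in $\GL_{N_0}$ of the relevant preimage, or rather the subgroup whose Lie-theoretic data matches $V$). Proposition~\ref{p:EscapeSubgroups} then gives $\pcal_{\Omega'}^{(l)}(\bbh(F))\le e^{-\delta_0 l}$ for $l\ge l_0$; combined with the fact that $[\pi_{p^n}(\Gamma):H]$ is at most $p^{\,O_{\dim\bbg}(1)\cdot(n-j)}$ — because each "missed" level contributes a bounded power of $p$ and $\gfr(\f_p)$ has bounded dimension — and with $l\ge\frac n\delta\log p$, one converts the exponential-in-$l$ bound into $[\pi_{p^n}(\Gamma):H]^{-\delta}$ for a suitable $\delta=\delta(\Omega)>0$. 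The $\log p$ in the hypothesis $l\ge\frac n\delta\log p$ is exactly what makes $e^{-\delta_0 l}\le p^{-\delta_0 l/\log p}\le [\pi_{p^n}(\Gamma):H]^{-\delta}$ work.

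There is a bookkeeping subtlety I expect to be the real obstacle: the index $[\pi_{p^n}(\Gamma):H]$ can be large either because $H$ misses a positive-codimension direction starting at a shallow level $j$ (few levels, but each with a genuine codimension drop) or because it misses a single line at many consecutive deep levels. To get a uniform estimate I would stratify according to $r:=\dim_{\f_p}\bigl(\gfr(\f_p)/V\bigr)$ and the level $j$, bound $[\pi_{p^n}(\Gamma):H]\ge p^{\,r}$ just from one level, and — crucially — observe that missing $V$ at level $j$ forces $H$ to be contained in a fixed proper $\bbq_p$-subgroup $\bbh_V$ of $\bbg$ whose $\pi_{p^n}$-preimage has index at least $p^{\,r(n-j)}$; applying Proposition~\ref{p:EscapeSubgroups} to $\bbh_V$ and summing the resulting geometric series over the possible $(j,V)$ (there are at most polynomially-in-$n$ many relevant $\pi_p(\Gamma)$-submodules $V$, by complete reducibility and bounded rank) absorbs the loss. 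A secondary technical point is passing from $\Omega$ to the ping-pong set $\Omega'$ of Proposition~\ref{p:EscapeSubgroups}: since $\Omega'\subseteq\prod_{r_0}\Omega$ for a bounded $r_0$, an escape bound for $\pcal_{\Omega'}^{(l)}$ upgrades to one for $\pcal_\Omega^{(l')}$ with $l'=r_0 l$ at the cost of shrinking $\delta$ by a bounded factor, exactly as in the lemma preceding Corollary~\ref{c:InitialReductions}. Assembling these pieces — base case from the semisimple result, inductive step from the module structure plus Proposition~\ref{p:EscapeSubgroups}, and the $(j,V)$-stratification to control the index — yields the claimed $\pcal_{\pi_{p^n}(\Omega)}^{(l)}(H)\le[\pi_{p^n}(\Gamma):H]^{-\delta}$ for all $n\gg_\Omega 1$ and $l\ge\frac n\delta\log p$.
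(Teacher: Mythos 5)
There is a genuine gap, and it sits exactly where you wave your hands: the passage from a proper $\pi_p(\Gamma)$-submodule $V\subsetneq\gfr(\f_p)$ detected at some congruence level to ``$H$ is contained in a fixed proper $\bbq_p$-subgroup $\bbh_V$ of $\bbg$.'' No proper algebraic subgroup can contain the preimage of $H$ in $\Gamma$: that preimage contains the principal congruence subgroup $\Gamma\cap\ker\pi_{p^n}$, which has finite index in $\Gamma$ and is therefore Zariski-dense in $\bbg$. Consequently Proposition~\ref{p:EscapeSubgroups} cannot be applied to the preimage of $H$ as a whole, and your chain ``$\pcal^{(l)}_{\Omega'}(\bbh(F))\le e^{-\delta_0 l}$, hence $\pcal^{(l)}(H)\le[\pi_{p^n}(\Gamma):H]^{-\delta}$ for $l\ge\frac n\delta\log p$'' does not go through: for $l$ that large the walk reaches many elements of the preimage lying outside any candidate $\bbh$. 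What is actually needed, and what the paper proves as Lemma~\ref{l:SmallLifts}, is that the elements of $\gcal(\bbz_S)\cap\wt H$ of \emph{small $S$-norm} (norm at most $[\gcal(\bbz_p):\wt H]^{\delta_1}$, which is what an $l$-step word can produce when $l\ll\delta_1\log[\pi_{p^n}(\Gamma):H]$) lie in a proper algebraic subgroup. That statement is not a formal consequence of having an invariant submodule at one level: its proof runs through the Frattini-type Lemma~\ref{l:Frattini} and the finite logarithm maps to extract from $\wt H$ a subspace that is invariant only modulo a controlled power of $p$, then the rigidity Lemmas~\ref{l:AlmostInvariantSubspaceSemisimple} and~\ref{l:AlmostInvariantSubspacePerfect} (approximate invariance forces genuine invariance), and finally an effective Nullstellensatz~\cite{BY} plus the generalized Bezout argument of~\cite{EMO} to convert the resulting polynomial conditions satisfied by all small lifts into containment in a proper subgroup. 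None of this is replaced by your $(j,V)$-stratification; also note that a $\pi_p(\Gamma)$-submodule of $\gfr(\f_p)$ need not even be the Lie algebra of an algebraic subgroup, so ``the subgroup whose Lie-theoretic data matches $V$'' is not well defined.

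A second, related omission: even once one has the escape estimate, it only holds in the range $1\ll l\ll\delta_1\log[\pi_{p^n}(\Gamma):H]$, whereas the proposition asserts a bound for all $l\ge\frac n\delta\log p$, which is typically far larger. The paper bridges the two regimes by the monotonicity of return probabilities (\cite[Remark, page 1060]{BG3}, i.e. \cite[Lemma 6]{SGsemisimple}): if $\pcal^{(l)}_{\pi_{p^n}(\Omega)}(H)$ were large for some big $l$, it would be large (after squaring) for all smaller $l'$, and one contradicts the escape bound at $l'\approx\delta_1\log[\pi_{p^n}(\Gamma):H]$. Your proposal has no substitute for this step; the remark that ``the $\log p$ in the hypothesis is exactly what makes $e^{-\delta_0 l}\le[\pi_{p^n}(\Gamma):H]^{-\delta}$ work'' conflates the two regimes. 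Finally, the induction on $n$ and the semisimple base case are not needed in the paper's argument and do not repair these issues, since the difficulty is already present at a single level $n$ for subgroups surjecting onto the semisimple quotient.
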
 
We start with a (well-known) lemma which gives us the Frattini subgroup of the congruence subgroups of $\gcal(\bbz_p)$, where (as before) $\gcal$ is the closure of $\bbg$ in $(\GL_{N_0})_{\bbz_S}$. This kind of result for semisimple groups and large $p$ goes back to Weisfeiler~\cite{Wei}.
\begin{lem}\label{l:Frattini}
Let $\gcal$ be as above, and $G:=\gcal(\bbz_p)$. Suppose either $p\gg_{\gcal} 1$ or $m_0\gg_{\gcal} 1$. If $\wt{H}$ is a closed subgroup of $G[p^{m_0}]$ and $\wt{H}G[p^{m_0+1}]=G[p^{m_0}]$, then $\wt{H}=G[p^{m_0}]$. And so the Frattini subgroup $\Phi(G[p^{m_0}])=G[p^{m_0+1}]$.
\end{lem}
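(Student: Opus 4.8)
The plan is to reduce the statement to a computation in the graded Lie algebra associated to the congruence filtration of $G=\gcal(\bbz_p)$, and to use the perfectness of $\gfr(\f_p)$ established via the earlier lemmas. First I would recall the standard fact (from the theory of $p$-adic analytic groups, e.g.\ via the $\log$/$\exp$ correspondence which is valid when $p\gg_{\gcal}1$ or $m_0\gg_{\gcal}1$) that for $m_0\ge 1$ the quotient $G[p^{m_0}]/G[p^{m_0+1}]$ is abelian, is a finite-dimensional $\f_p$-vector space, and is $\gcal(\f_p)$-equivariantly isomorphic to $\gfr(\f_p)=\Lie(\gcal)(\f_p)$ under the adjoint action. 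Moreover the commutator map $G[p^{m_0}]\times G[p^{m_0}]\to G[p^{2m_0}]\subseteq G[p^{m_0+1}]$ and the $p$-power map $G[p^{m_0}]\to G[p^{m_0+1}]$ are the sources of the Frattini quotient being exactly $G[p^{m_0}]/G[p^{m_0+1}]$; since $G[p^{m_0}]$ is a pro-$p$ group, its Frattini subgroup is $\overline{[G[p^{m_0}],G[p^{m_0}]]}\,G[p^{m_0}]^p$, and this is contained in $G[p^{m_0+1}]$ for the filtration reasons just noted.

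The crux is then the reverse containment, i.e.\ that $\Phi(G[p^{m_0}])\supseteq G[p^{m_0+1}]$, which is equivalent to the Nakayama-type statement in the lemma: if $\wt H$ is closed in $G[p^{m_0}]$ with $\wt H\,G[p^{m_0+1}]=G[p^{m_0}]$ then $\wt H=G[p^{m_0}]$. Here I would argue by induction on the level: it suffices to show $\wt H\supseteq G[p^{m}]$ implies $\wt H\supseteq G[p^{m+1}]$ for each $m\ge m_0$, because $\wt H$ is closed and $\bigcap_m G[p^m]=\{1\}$. Identifying $G[p^m]/G[p^{m+1}]$ with $\gfr(\f_p)$, the image of $\wt H$ in this quotient is a $\gcal(\f_p)$-submodule (since $\wt H$ is normalized appropriately — actually one only needs that $\wt H$ surjects onto $G[p^{m_0}]/G[p^{m_0+1}]\cong\gfr(\f_p)$, hence the $\gcal(\f_p)$-module generated by the image of $\wt H$ at level $m$ is all of $\gfr(\f_p)$ once we use $p$-power and commutator maps to move down the filtration). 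More precisely: taking $p$-th powers of elements of $\wt H$ realizing a spanning set of $\gfr(\f_p)$ at level $m_0$ gives, at level $m_0+1$, a subset whose $\f_p$-span is $\gfr(\f_p)$; and iterating the $p$-power map gives the same at every level $m\ge m_0$. Hence $\wt H\cap G[p^m]$ surjects onto $G[p^m]/G[p^{m+1}]$ for every $m$, so by the closedness of $\wt H$ we get $\wt H=G[p^{m_0}]$.

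The main obstacle I expect is the interface between the group-level maps ($p$-th power, commutator) and the additive structure of the graded pieces $\gfr(\f_p)$ — specifically making rigorous that the $p$-power map $G[p^m]/G[p^{m+1}]\to G[p^{m+1}]/G[p^{m+2}]$ is (for the relevant range of $p$ or $m_0$) an \emph{isomorphism} of $\f_p$-vector spaces, not merely additive. This is exactly where one invokes the $\log$/$\exp$ correspondence: for $u=\exp(p^m x)$ with $x\in\ufr$ (in the notation of Lemma~\ref{l:UnipotentModPSquare}, suitably adapted to $\gfr$), one has $u^p=\exp(p^{m+1}x)$, so the $p$-power map is identified with the identity on $\gfr(\f_p)$ under the obvious level identifications. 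Assembling these identifications carefully — and checking that the constants defining ``$p\gg_{\gcal}1$ or $m_0\gg_{\gcal}1$'' are exactly those guaranteeing the $\log$/$\exp$ bijection and the validity of the Baker--Campbell--Hausdorff truncation (so that the commutator and power maps have the stated graded behaviour) — is the only real content; once in place the induction is immediate and the Frattini identity $\Phi(G[p^{m_0}])=G[p^{m_0+1}]$ follows. I note that for semisimple $\gcal$ this is due to Weisfeiler~\cite{Wei}, and the general (perfect, with abelian or unipotent radical) case needs only the additional input that $\gfr(\f_p)$ is a perfect Lie ring and a completely reducible $\gcal(\f_p)$-module with no trivial summand, both of which are supplied by Lemma~\ref{l:ProperAdInvariant} and Lemma~\ref{l:CommutatorPerfect}.
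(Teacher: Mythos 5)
Your argument is essentially the paper's proof: both identify each graded quotient $G[p^{m}]/G[p^{m+1}]$ with $\gfr/p\gfr$ (the paper via the finite logarithmic maps of \cite[Lemma 29]{SGsemisimple}, you via $\log/\exp$), lift each class at level $m_0$ to an element $h\in\wt{H}$ and push it up the filtration by $p^k$-th powers using $\pi_p(((I+p^{m_0}x)^p-I)/p^{m_0+1})=\pi_p(x)$, so that $(\wt{H}\cap G[p^{m_0+k}])G[p^{m_0+k+1}]=G[p^{m_0+k}]$ for all $k$, and then conclude by closedness of $\wt{H}$. The only inaccuracy is your closing remark that the general case requires $\gfr(\f_p)$ to be a perfect Lie ring and a completely reducible module: neither is used in your own induction nor in the paper's proof, which needs only the bijectivity of the finite logarithmic maps for $p\gg_{\gcal}1$ or $m_0\gg_{\gcal}1$, so the appeal to Lemmas~\ref{l:ProperAdInvariant} and~\ref{l:CommutatorPerfect} is superfluous.
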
 
\begin{proof}
By induction on $k$ we prove  that $\wt{H}G[p^{m_0+k}]=G[p^{m_0}]$. The base of the induction is given by the assumption. To get the induction step, it is enough to prove that $(\wt{H}\cap G[p^{m_0+k}])G[p^{m_0+k+1}]=G[p^{m_0+k}]$. Let $\gfr:=\Lie(\gcal)(\bbz_p)$. By \cite[Lemma 34]{SGsemisimple}, we have $\lin{p^{m_0}}{p^{m_0+1}}:G[p^{m_0}]/G[p^{m_0+1}]\rightarrow \gfr/p\gfr$ is an isomorphism if either $p\gg_{\gcal} 1$ or $m_0\gg_{\gcal}1$ (see \cite[Section 2.9]{SGsemisimple} for the definition and properties of {\em finite logarithmic map} $\lin{p^{m_0}}{p^{m_0+1}}$). And so by the assumption, for any $x\in \gfr$ there is $h\in \wt{H}$ such that $\pi_{p^{m_0+1}}(h)=\pi_{p^{m_0+1}}(I+p^{m_0}x)$. On the other hand, if $m_0\ge 2$, then $\pi_p(((I+p^{m_0}x')^p-I)/p^{m_0+1})=\pi_p(x')$. And so $\linValue{p^{m_0+k}}{p^{m_0+k+1}}{h^{p^k}}=\pi_p(x)$. Since, by \cite[Lemma 34]{SGsemisimple}, $\lin{p^{m_0+k}}{p^{m_0+k+1}}:G[p^{m_0+k}]/G[p^{m_0+k+1}]\rightarrow \gfr/p\gfr$ is a bijection, we get the induction step. Since $\wt{H}$ is a closed subgroup, we have that $\wt{H}=G[p^{m_0}]$.
\end{proof}  

The following lemma which is a module theoretic version of ~\cite[Lemma 3.5]{SGChar} is proved next. It will be needed in the following sections, too.  

\begin{lem}\label{l:SimpleModulesOrder}
Let $R\subseteq M_{n_0}(\bbz_p)$ be a $\bbz_p$-subalgebra (not necessarily with the identity element). Suppose $\bbq_p^{n_0}$ is a simple $\bbq_p[R]$-module, where $\bbq_p[R]$ is the $\bbq_p$-span of $R$. Then 
\[
\sup_{v\in \bbz_p^{n_0}\setminus p\bbz_p^{n_0}} [\bbz_p^{n_0}:Rv]<\infty.
\]  
\end{lem}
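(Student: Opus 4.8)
The plan is to extract an explicit bound from the finitely many isomorphism classes of simple $\bbq_p[R]$-modules that can occur here — in fact there is only one, namely $\bbq_p^{n_0}$ itself — and then use a compactness/lattice argument to turn the irreducibility hypothesis into a uniform index bound. Since $\bbq_p^{n_0}$ is a simple $\bbq_p[R]$-module, for any nonzero $v\in\bbq_p^{n_0}$ we have $(\bbq_p[R])v=\bbq_p^{n_0}$. The point is to make this quantitative over all $v\in\bbz_p^{n_0}\setminus p\bbz_p^{n_0}$: I want to show $Rv$ is a $\bbz_p$-sublattice of finite index in $\bbz_p^{n_0}$, with index bounded independently of $v$.

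First I would fix generators $r_1,\dots,r_k$ of $R$ as a $\bbz_p$-module (possible since $R\subseteq M_{n_0}(\bbz_p)$ is contained in a finitely generated $\bbz_p$-module and $\bbz_p$ is Noetherian). For a fixed $v$, irreducibility gives that the $\bbq_p$-span of $\{r_{i_1}\cdots r_{i_j}v\}$ (words of length $\le n_0$, by Cayley–Hamilton-type considerations on the $\bbq_p$-algebra generated by the $r_i$) is all of $\bbq_p^{n_0}$; hence among the finitely many products $w(r)v$ of bounded length there are $n_0$ of them forming a $\bbq_p$-basis. The index $[\bbz_p^{n_0}:Rv]$ is then controlled by $|\det|_p^{-1}$ of the matrix formed by such a basis, and that determinant is a polynomial (with $\bbz_p$-coefficients, depending only on $R$) evaluated at the coordinates of $v$. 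The map $v\mapsto \min_{\text{bases}} v_p(\det)$ is then an upper-semicontinuous, everywhere-finite function on the compact set $\bbz_p^{n_0}\setminus p\bbz_p^{n_0}$ (finiteness at each point is exactly irreducibility, applied to that $v$), so it is bounded; this bound is the desired supremum.

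The cleanest way to organize the compactness step: suppose for contradiction that $[\bbz_p^{n_0}:Rv_j]\to\infty$ for some sequence $v_j\in\bbz_p^{n_0}\setminus p\bbz_p^{n_0}$. By compactness of $\bbz_p^{n_0}$, pass to a subsequence with $v_j\to v_\infty$, and $v_\infty\notin p\bbz_p^{n_0}$ since that set is open-and-closed, so in particular $v_\infty\ne 0$. By irreducibility $Rv_\infty$ spans $\bbq_p^{n_0}$ over $\bbq_p$, so we may pick $r^{(1)},\dots,r^{(n_0)}\in R$ with $r^{(1)}v_\infty,\dots,r^{(n_0)}v_\infty$ a $\bbq_p$-basis; the determinant $D(w):=\det(r^{(1)}w\mid\cdots\mid r^{(n_0)}w)$ is a continuous ($\bbz_p$-polynomial) function of $w$ with $D(v_\infty)\ne 0$, hence $v_p(D(v_j))=v_p(D(v_\infty))$ for $j$ large. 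But $[\bbz_p^{n_0}:Rv_j]\le [\bbz_p^{n_0}:\bbz_p\text{-span of }r^{(i)}v_j]=|D(v_j)|_p^{-1}=p^{v_p(D(v_\infty))}$ for those $j$, contradicting $[\bbz_p^{n_0}:Rv_j]\to\infty$.

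The main obstacle I anticipate is not any single hard estimate but making sure the "bounded-length words suffice" reduction is stated correctly — i.e. that the $\bbq_p$-span of $Rw$ already equals the span of products of the fixed generators of length $\le n_0$ acting on $w$, uniformly; this is where one uses that $\bbq_p[R]$ is a finite-dimensional $\bbq_p$-algebra (dimension $\le n_0^2$) so its action on any vector is determined by boundedly many generator-words. Once that is in place the argument is a routine semicontinuity/compactness packaging. An alternative that avoids sequences entirely: cover $\bbz_p^{n_0}\setminus p\bbz_p^{n_0}$ by the open sets $\{w: D_\alpha(w)\ne 0\}$ where $D_\alpha$ ranges over all determinants of $n_0$-tuples of bounded-length generator-words applied to $w$ (these cover by irreducibility), extract a finite subcover, and take the maximum of the finitely many valuation bounds $v_p(D_\alpha)$ — but one must then note $v_p(D_\alpha)$ is bounded on each piece, which again needs a mild compactness argument, so I would present the sequential version as the main proof.
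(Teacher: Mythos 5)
Your sequential argument is correct and follows essentially the same route as the paper: argue by contradiction with a sequence of unit vectors whose indices blow up, pass to a limit $v_\infty$ by compactness, and apply simplicity of the module at $v_\infty$ to get uniform control for nearby $v_j$. The only (minor) difference is the finishing step — you fix $r^{(1)},\dots,r^{(n_0)}\in R$ with $r^{(i)}v_\infty$ a basis and use continuity of the determinant to bound $[\bbz_p^{n_0}:Rv_j]$, whereas the paper notes $Rv_\infty\supseteq p^{k_0}\bbz_p^{n_0}$ and concludes via a Frattini/Nakayama argument using that $Rv_j$ is a closed submodule; both are valid.
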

\begin{proof}
Suppose to the contrary that there is a sequence of unit vectors $v_i$ such that $[\bbz_p^{n_0}:Rv_i]$ goes to infinity as $i$ goes to infinity. Since the set of unit vectors is a compact set, by passing to a subsequence we can assume that $\lim_{i\rightarrow \infty} v_i=v$ where $v$ is a unit vector. Since $\bbq_p^{n_0}$ is a simple $\bbq_p[R]$-module, the $\bbq_p$-span of $Rv$ is $\bbq_p^{n_0}$. And so $Rv$ is of finite-index in $\bbz_p^{n_0}$. Therefore $Rv$ contains $p^{k_0}\bbz_p^{n_0}$ for some positive integer $k_0$. If $i$ is large enough, $v-v_i\in p^{k_0+1}\bbz_p$. So $Rv_i+p^{k_0+1}\bbz_p^{n_0}=Rv+p^{k_0+1}\bbz_p^{n_0}\supseteq p^{k_0}\bbz_p^{n_0}$. Since $Rv_i$ is complete and the Frattini subgroup of $p^{k_0}\bbz_p^{n_0}$ is $p^{k_0+1}\bbz_p^{n_0}$, $Rv_i\supseteq p^{k_0}\bbz_p^{n_0}$ which is a contradiction. 
\end{proof}

The following lemma roughly shows that an almost invariant subspace of $\Lie(\bbg_s)(\bbq_p)$ is close to an invariant subspace.

\begin{lem}\label{l:AlmostInvariantSubspaceSemisimple}
Let $\bbg_s\subseteq (\GL_{N_0})_{\bbq}$ be (as before) a semisimple group. Let $G:=\bbg_s(\bbq_p)\cap \GL_{N_0}(\bbz_p)[p^{k_0}]$ for a fixed positive integer $k_0$. Suppose $m$ is large enough depending on the embedding of $\bbg_s$ in $(\GL_{N_0})_{\bbq}$   and $k_0$. Then the following holds: 

Suppose $W$ is a subspace of $\Lie(\bbg_s)(\bbq_p)$ such that $\pi_{p^{m}}(W\cap \gl_{N_0}(\bbz_p))$ is invariant under $G$. Then there is a  normal closed subgroup $\bbg_W$ of $\bbg_s$ and a positive integer $C$ which depends only on the embedding of $\bbg_s$ in $(\GL_{N_0})_{\bbq}$ and $k_0$ such that 
\be\label{e:CloseToInvariant}
p^{2C} \pi_{p^{m-C}}(\gfr_W)\subseteq 
p^C	   \pi_{p^{m-C}}(W\cap \gl_{N_0}(\bbz_p))\subseteq
\pi_{p^{m-C}}(\gfr_W ),
\ee	
where $\gfr_W:=\Lie(\bbg_W)(\bbq_p)\cap \gl_{N_0}(\bbz_p)$.
\end{lem}
\begin{proof}
Since $\bbg_s$ is semisimple, it is the almost product of $\bbq_p$-simple factors $\bbg_i$. And so $\Lie(\bbg)=\oplus_i \Lie(\bbg_i)(\bbq_p)$. Let $\gfr:=\Lie(\bbg)(\bbq_p)\cap \gl_{N_0}(\bbz_p)$ and $\gfr_i:=\Lie(\bbg_i)(\bbq_p)\cap \gl_{N_0}(\bbz_p)$. Then there is a positive number $c_1$ which depends only on the embedding of $\bbg_s$ in $(\GL_{N_0})_{\bbq}$ such that 
\be\label{e:ProjectionToSimpleFactors}
\bigoplus_i \gfr_i\subseteq \gfr \subseteq \bigoplus_i p^{-c_1}\gfr_i;
\ee
in particular, for any $x\in \Lie(\bbg)(\bbq_p)$, we have 
\be\label{e:ComparingNorms}
\|x\|_p\le \max_i \|x_i\|_p \le p^{c_1} \|x\|_p,
\ee
where $x=\sum_i x_i$ and $x_i\in\Lie(\bbg_i)(\bbq_p)$. 

Let $\Wfr:=W\cap \gl_{N_0}(\bbz_p)$. For $x\in \Wfr$, let $x_i\in \Lie(\bbg_i)(\bbq_p)$ be such that $x=\sum_i x_i$. For any $g_j\in \bbg_j(\bbq_p)\cap\GL_{N_0}(\bbz_p)[p^{k_0}]$, $\Ad(g_j)$ changes only the $j$-th component. So, using the assumption that $\pi_{p^m}(\Wfr)$ is invariant under $\Ad(g_j)$, we get that 
\be\label{e:OneComponent}
\Ad(g_j)x_j-x_j \in \Wfr+p^m\gl_{N_0}(\bbz_p).
\ee
Hence $\afr_j x_j\subseteq \Wfr+p^m\gl_{N_0}(\bbz_p)$ where $\afr_j$ is the $\bbz_p$-span of $\{\Ad(g_j)-I|\h g_j\in \bbg_j(\bbq_p)\cap\GL_{N_0}(\bbz_p)[p^{k_0}]\}$ (here we restrict to the action on $\Lie(\bbg_j)$, and consider $\afr_j$ as a subset of ${\rm End}_{\bbq_p}(\Lie\bbg_j(\bbq_p))$). Notice that $\afr_j$ is an ideal of the $\bbz_p$-span $R_j$ of $\Ad(\bbg_j(\bbq_p)\cap\GL_{N_0}(\bbz_p)[p^{k_0}])$. And $\Lie(\bbg_j)(\bbq_p)$ is a simple $\bbq_p[R_j]$-module, where $\bbq_p[R_j]$ is the $\bbq_p$-span of $R_j$. Hence $\bbq_p[R_j]$ is a simple $\bbq_p$-algebra as it has a simple faithful module. Therefore the $\bbq_p$-span $\bbq_p[\afr_j]$ of $\afr_j$ is $R_j$, and $\Lie(\bbg_j)(\bbq_p)$ is a simple $\bbq_p[\afr_j]$-module.  Hence, by Lemma 38, there is a positive integer $c_2$ which depends only on $k_0$ and the embedding of $\bbg_s$ in $(\GL_{N_0})_{\bbq}$ such that
\be\label{e:SingleFactor}
\bigoplus_j p^{c_2} \|x_j\|^{-1} \gfr_j\subseteq \bigoplus_j \afr_j x_j \subseteq \Wfr+p^m\gl_{N_0}(\bbz_p). 
\ee

Let $I_W:=\{j|\h \exists x\in W, \|x\|_p=1 \text{ and } \|x_j\|_p\ge 1, \text{ where } x=\sum_i x_i, \text{ and } x_i\in \Lie(\bbg_i)(\bbq_p)\},$ $\bbg_W$ be the product of the simple factors $\bbg_i$ for $i\in I_W$; and so $\gfr_W:=\left(\oplus_{i\in I_W}\Lie(\bbg_i)(\bbq_p)\right)\cap \gl_{N_0}(\bbz_p)$.

We can and will assume that $c_2\ge c_1$, and we claim that (\ref{e:CloseToInvariant}) holds for $C:=c_1+c_2$ and $\gfr_W$. We start with the {\em first inclusion}  in (\ref{e:CloseToInvariant}). 

For any $j\in I_W$, there is $x\in \Wfr$ such that its $j$-th simple component $x_j$ has norm at least 1. Therefore by (\ref{e:SingleFactor}) we have $p^{c_2}\gfr_j\subseteq \Wfr+p^m\gl_{N_0}(\bbz_p)$. And so by (\ref{e:ProjectionToSimpleFactors}) we have
\be\label{e:FirstInclusion}
p^{c_1+c_2} \gfr_W\subseteq \bigoplus_{j\in I_W} p^{c_2}\gfr_j \subseteq \Wfr+p^m\gl_{N_0}(\bbz_p).
\ee
To show the {\em second inclusion} in (\ref{e:CloseToInvariant}), we proceed by contradiction. So assume to the contrary that there is $x\in \Wfr$ such that $p^{C}\pi_{p^{m-C}}(x)\not\in \pi_{p^{m-C}}(\gfr_W)$. By (\ref{e:ProjectionToSimpleFactors}) there are $x_j\in \gfr_j$ such that $p^{c_1}x=\sum_j x_j$. By (\ref{e:FirstInclusion}), there are $x'\in \Wfr$ and $e\in \gfr$ such that 
\be\label{e:IWComponents}
p^{c_2}\sum_{i\in I_W}x_i=x'+p^me.
\ee
Hence 
\be\label{e:NewVector}
x'':=p^{c_1+c_2}x-x'=\sum_{j\not\in I_W}p^{c_2}x_j+p^m e=
\sum_{j\not\in I_W}(p^{c_2}x_j+p^m e_j)+\sum_{j\in I_W} p^m e_j,
\ee
where $e=\sum_i e_i$ and $e_i\in \Lie(\bbg_i)(\bbq_p)$, is contained in $\Wfr$.
Since $e\in \gfr$, we have $\|e\|_p\le 1$. Therefore, by (\ref{e:ComparingNorms}), we have 
\be\label{e:ErrorTerm}
\|p^me_i\|_p\le p^{-m+c_1}.
\ee
On the other hand, since $p^{C}\pi_{p^{m-C}}(x)\not\in \pi_{p^{m-C}}(\gfr_W)$ and $p^C x=p^{c_2}\sum_{i\in I_W} x_i+p^{c_2}\sum_{i\not\in I_W} x_i\in \gfr_W+p^{c_2}\sum_{i\not\in I_W} x_i$, we have  $\|p^{c_2}\sum_{j\not\in I_W} x_j\|_p>p^{-m+C}$, which implies that there is $j_0\not\in I_W$ such that
\be\label{e:OneLargeTerm}
\|p^{c_2}x_{j_0}\|_p> p^{-m+C}.
\ee
So by the inequalities given in (\ref{e:ErrorTerm}) and (\ref{e:OneLargeTerm}) we have
\be\label{e:NormLowerBound}
\|p^{c_2}x_{j_0}+p^me_{j_0}\|=\max\{\|p^{c_2}x_{j_0}\|_p,\|p^me_{j_0}\|_p\}>p^{-m+C}.
\ee
Therefore 
\begin{align*}
\|x''\|_p&\ge p^{-c_1} \|p^{c_2}x_{j_0}+p^me_{j_0}\|&\text{ (by (\ref{e:NewVector}) and (\ref{e:ComparingNorms})) }\\
&>p^{-c_1}p^{-m+C}=p^{-m+c_2}\ge p^{-m+c_1}& \text{ (by (\ref{e:NormLowerBound})) }\\
&\ge \|p^me_i\|_p.& \text{ (by (\ref{e:ErrorTerm})) }
\end{align*}
So after normalizing $x''$, we get a vector $\overline{x}\in W$ such that $\|\overline{x}\|_p=1$ and, for any $i\in I_W$, $\|\overline{x}_i\|_p<1$, where $\overline{x}=\sum_i\overline{x}_i$ and $\overline{x}_i\in \Lie(\bbg_i)(\bbq_p)$. This is contrary to the definition of $I_W$.

\end{proof}

\begin{lem}\label{l:AlmostInvariantSubspacePerfect}
As before let $\bbg=\bbg_s\ltimes \bbv\subseteq (\GL_{N_0})_{\bbq}$, where $\bbg_s$ is a semisimple connected simply-connected $\bbq$-group and $\bbv$ is a $\bbq$-vector group. Let $G:=\bbg(\bbq_p)\cap \GL_{N_0}(\bbz_p)[p^{k_0}]$ for a fixed positive integer $k_0$. Suppose $m$ is large enough depending on the embedding of $\bbg$ in $(\GL_{N_0})_{\bbq}$ and $k_0$. Then the following holds:

Suppose $W$ is a subspace of $\Lie(\bbg)(\bbq_p)$ such that the projection of $W$ onto $\Lie(\bbg_s)(\bbq_p)$ is onto and $\pi_{p^m}(W\cap \gl_{N_0}(\bbz_p))$ is invariant under $G$. Then $W=\Lie(\bbg)(\bbq_p)$.    
\end{lem}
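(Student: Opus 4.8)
The plan is to reduce the statement to the inclusion $\Lie(\bbv)(\bbq_p)\subseteq W$: once this is known, since the projection $\mathrm{pr}\colon\Lie(\bbg)(\bbq_p)\to\Lie(\bbg_s)(\bbq_p)$ along $\Lie(\bbv)(\bbq_p)$ (the differential of the quotient map $\bbg\to\bbg_s$) restricts to a surjection $W\twoheadrightarrow\Lie(\bbg_s)(\bbq_p)$ with kernel $\Lie(\bbv)(\bbq_p)$, one gets $W=\Lie(\bbg)(\bbq_p)$. Write $\gfr=\Lie(\bbg)(\bbq_p)=\sfr\oplus\mathfrak{v}$ with $\sfr=\Lie(\bbg_s)(\bbq_p)$ and $\mathfrak{v}=\Lie(\bbv)(\bbq_p)$; this is a $\bbq_p$-direct sum with $[\sfr,\sfr]\subseteq\sfr$, $[\sfr,\mathfrak{v}]\subseteq\mathfrak{v}$ and $[\mathfrak{v},\mathfrak{v}]=0$, the last because $\bbv$ is abelian, and, $\bbg$ being perfect, $\bbv$ has no nonzero $\bbg_s$-fixed vector, i.e. $[\sfr,\mathfrak{v}]=\mathfrak{v}$. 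Put $\mathcal{L}=\gfr\cap\gl_{N_0}(\bbz_p)$ and $\widehat{W}=W\cap\mathcal{L}$; here $\pi_{p^m}(\widehat{W})$ is $\Ad(G)$-invariant, and all implied constants below depend only on the embedding $\bbg\subseteq\mathbb{GL}_{N_0}$ and on $k_0$.

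First I would use that the unipotent radical is abelian to linearize the action. For $x\in\mathfrak{v}$ one has $\ad(x)^2=0$, so for $u$ in the congruence subgroup $V:=\bbv(\bbq_p)\cap\GL_{N_0}(\bbz_p)[p^{k_0}]\subseteq G$ one has the \emph{exact} identity $\Ad(u)=\mathrm{id}_{\gfr}+\ad(\log u)$, where $\ad(\log u)$ carries $\gfr$ into $\mathfrak{v}$, kills $\mathfrak{v}$, and satisfies $\ad(\log u)(y)=[\log u,\mathrm{pr}(y)]$. Hence $\Ad(G)$-invariance of $\pi_{p^m}(\widehat{W})$ gives, for every $w\in\widehat{W}$ and $u\in V$,
\[
[\log u,\mathrm{pr}(w)]=\Ad(u)(w)-w\in\big(\widehat{W}+p^{m}\mathcal{L}\big)\cap\mathfrak{v},
\]
using $\gfr\cap p^{m}\gl_{N_0}(\bbz_p)=p^{m}\mathcal{L}$. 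Since $\log$ maps $V$ onto a sublattice of $\mathfrak{v}\cap\gl_{N_0}(\bbz_p)$ of bounded index and $\mathrm{pr}(\widehat{W})$ spans $\mathrm{pr}(W)=\sfr$ over $\bbq_p$, the left-hand sides span $[\mathfrak{v},\sfr]=\mathfrak{v}$; thus $(\widehat{W}+p^{m}\mathcal{L})\cap\mathfrak{v}$ is a full-rank sublattice of $\mathcal{L}\cap\mathfrak{v}$.

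To upgrade this to the honest inclusion $\mathfrak{v}\subseteq W$ one needs $\widehat{W}$ to be non-degenerate with respect to $\gfr=\sfr\oplus\mathfrak{v}$, with degeneracy bounded by the constants above, and here the semisimple input enters: restricting the $\Ad(G)$-invariance to $K:=G\cap\bbg_s(\bbq_p)$ and using that $\mathrm{pr}$ is $\Ad(K)$-equivariant, one re-runs the argument of Lemma~\ref{l:AlmostInvariantSubspaceSemisimple} — whose only ingredient is the $\bbq_p[\Ad(K)]$-simplicity of the isotypic components, supplied by Lemma~\ref{l:SimpleModulesOrder} — on the $\Ad(K)$-equivariant projections of $\widehat{W}$; this forces $\widehat{W}$ to sit straight enough that the full-rank statement of the previous paragraph becomes $\mathcal{L}\cap\mathfrak{v}\subseteq\widehat{W}+p^{m-O(1)}\mathcal{L}$. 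Projecting onto a $\bbz_p$-complement of $\widehat{W}$ in $\mathcal{L}$, if $\mathfrak{v}$ were not contained in $W$ the image of $\mathcal{L}\cap\mathfrak{v}$ would be a bounded-index sublattice of a nonzero subspace, hence not contained in $p^{m-O(1)}$ times that complement once $m$ exceeds the relevant constants — a contradiction. Therefore $\mathfrak{v}\subseteq W$, and $W=\Lie(\bbg)(\bbq_p)$.

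The hard part will be precisely the bookkeeping of the third paragraph: one must check that the $\Ad(G)$-invariance of $\pi_{p^m}(\widehat{W})$, together with $\mathrm{pr}(W)=\sfr$, really does bound the ``denominators'' of $W$ in terms of the embedding and $k_0$, so that a single threshold ``$m$ large enough'' works uniformly in $W$. The two soft inputs — abelianness of $\bbv$, which linearizes $\Ad$ on unipotent elements and confines the relevant commutators to $\mathfrak{v}$, and perfectness, which yields $[\sfr,\mathfrak{v}]=\mathfrak{v}$ and hence the spanning in paragraph two — are what make the argument close, while the quantitative semisimple rigidity of Lemma~\ref{l:AlmostInvariantSubspaceSemisimple} and Lemma~\ref{l:SimpleModulesOrder} is what makes the passage from ``invariant mod $p^m$'' to ``genuine'' go through.
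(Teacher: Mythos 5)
Your first two paragraphs are essentially the paper's proof: the paper conjugates an integral element $(x,w)\in W\cap\gl_{N_0}(\bbz_p)$ by $p^{k_0}$-congruence elements $v$ of $\bbv(\bbq_p)$ and records $\Ad(v)(x,w)-(x,w)=(0,d\rho(x)(v))\in (W\cap\gl_{N_0}(\bbz_p))+p^m\gl_{N_0}(\bbz_p)$, which is exactly your identity $[\log u,\mathrm{pr}(w)]\in\widehat{W}+p^m\mathcal{L}$ (the paper phrases $\Ad(u)=\mathrm{id}+\ad(\log u)$ via dual numbers). The divergence, and the gap, is your third paragraph. First, what your second paragraph actually yields is only that the $\bbz_p$-span of the brackets $[\log u,\mathrm{pr}(w)]$, $u\in V$, $w\in\widehat{W}$, is a full-rank sublattice of $\mathfrak{v}\cap\gl_{N_0}(\bbz_p)$ whose index is governed by the index of $\mathrm{pr}(\widehat{W})$ inside $\sfr\cap\gl_{N_0}(\bbz_p)$; that index is not bounded in terms of the embedding and $k_0$ (it reflects how $p$-adically tilted $W$ is), so no single threshold for $m$ comes out of it. The paper instead brackets against all of $\Lie(\bbg_s)(\bbq_p)\cap\gl_{N_0}(\bbz_p)$ and invokes the uniform lower bound $d\rho(\sfr\cap\gl_{N_0}(\bbz_p))(\mathfrak{v}\cap\gl_{N_0}(\bbz_p))\supseteq p^{O(1)}(\mathfrak{v}\cap\gl_{N_0}(\bbz_p))$ supplied by complete reducibility with no trivial factors, obtaining $p^{k_0+O(1)}(\mathfrak{v}\cap\gl_{N_0}(\bbz_p))\subseteq \widehat{W}+p^m\gl_{N_0}(\bbz_p)$ with constants independent of $W$, and concludes from there.

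Second, the step you rely on to repair the non-uniformity --- re-running Lemma~\ref{l:AlmostInvariantSubspaceSemisimple} on ``$\Ad(K)$-equivariant projections of $\widehat{W}$'' to force $\widehat{W}$ to ``sit straight'' --- is not an argument, and its mechanism cannot deliver what you need. Lemma~\ref{l:AlmostInvariantSubspaceSemisimple}, via Lemma~\ref{l:SimpleModulesOrder}, lives entirely inside $\Lie(\bbg_s)(\bbq_p)$ and concludes that an almost-invariant subspace there is a sum of Lie algebras of simple factors; it says nothing about the position of $W$ relative to the splitting $\gfr=\sfr\oplus\mathfrak{v}$. Concretely, whenever $\mathrm{Hom}_{\bbg_s}(\sfr,\bbv)\neq 0$ (e.g.\ $\bbv$ containing a copy of a simple factor of the adjoint module, which the standing hypotheses allow), the graphs $\{(x,p^{-T}\iota(x)):x\in\sfr\}$ with $\iota$ equivariant are exactly $\Ad(K)$-invariant for every $T$, project onto $\sfr$, and are arbitrarily tilted; so no amount of $K$-equivariance bounds the tilt, and your ensuing claim that the projection of $\mathcal{L}\cap\mathfrak{v}$ to a $\bbz_p$-complement of $\widehat{W}$ is a bounded-index sublattice of a nonzero subspace is precisely what fails for such $W$ (the image can lie in $p^{T-O(1)}$ times that complement). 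Any honest straightening has to come from the unipotent directions together with the quantitative surjectivity of the bracket, which is where the paper puts the weight; as written, your third paragraph is the statement of the remaining difficulty rather than its proof.
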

\begin{proof}
Without loss of generality we can and will assume that $\bbv$ is non-zero. Suppose $\bbg_s$ acts on $\bbv$ via $\rho: \bbg_s\rightarrow \GL(\bbv)$; that means for any $g\in\bbg_s(R)$ and $v\in \bbv(R)$ we have $(g,0)(1,v)(g,0)^{-1}:=(1,\rho(g)(v))$ in $\bbg(R)$ for any $\bbq$-algebra $R$. To get a clear computation of the adjoint action, we use the {\em dual numbers} $\bbq_p[\vare]:=\bbq_p\oplus \bbq_p\vare$ where $\vare^2=0$ (this lemma is the only place where $\vare$ is {\em not} a real number, and it is an element of the dual numbers.). Recall that we can
 identify $\Lie(\bbg)(\bbq_p)$ with 
\[
\ker\left(\bbg_s(\bbq_p[\vare])\ltimes \bbv(\bbq_p[\vare]) \xrightarrow{\pi_{\vare}} \bbg_s(\bbq_p)\ltimes \bbv_p(\bbq_p)\right)
\] 
where $\pi_{\vare}$ is the group homomorphism induced by the ring homomorphism $\pi_{\vare}:\bbq_p[\vare]\rightarrow \bbq_p$. Under this identification the adjoint action is given by conjugation; that means, for $x\in \Lie(\bbg)(\bbq_p)$, we have $1+\vare x\in \bbg(\bbq_p[\vare])$; and for $g\in \bbg(\bbq_p)$, we have $1+\vare \Ad(g)(x)=g(1+\vare x)g^{-1}$. 

The identity element of $\bbg=\bbg_s\ltimes \bbv$ is $(1,0)$. And so $(x,w)\in \Lie(\bbg)(\bbq_p)$ if and only if $(1+\vare x,\vare w)\in \bbg(\bbq_p[\vare])$; and to compute $\Ad(v)(x,w)$ we have to compute
$(1,v)(1+\vare x,\vare w)(1,v)^{-1}$ in $\bbg(\bbq_p[\vare])$. We have
\begin{align*}
(1,v) (1+\vare x, \vare w) (1,-v)&= (1,v)(1+\vare x,0) (1,\vare w-v)\\ 
&=(1+\vare x, \rho(1-\vare x)(v)-v+\vare w)= (1+\vare x, \vare(d\rho(x)(v)+w)),
\end{align*}
which means $\Ad(1,v)(x,w)=(x,d\rho(x)(v)+w)$.
Notice that we are slightly abusing the notation and use the addition for the group operation of $\bbv$ though it is realized as a subgroup of $(\GL_{N_0})_{\bbq}$. By this computation, for any $(x,w)\in W$ and $v\in \bbv(\bbq_p)\cap \GL_{N_0}(\bbz_p)$, we have
\[
p^{k_0}d\rho(x)(v):=(0,p^{k_0}d\rho(x)(v))=\Ad(p^{k_0} v)(x,w)-(x,w)\in (W\cap \gl_{N_0}(\bbz_p))+p^m \gl_{N_0}(\bbz_p).
\]
Thus we have 
\[
p^{k_0}d\rho\left(\Lie(\bbg_s)(\bbq_p)\cap \gl_{N_0}(\bbz_p)\right)\left(\bbv(\bbq_p)\cap \gl_{N_0}(\bbz_p)\right) \subseteq (W\cap \gl_{N_0}(\bbz_p))+p^m \gl_{N_0}(\bbz_p).
\]
 Since $\bbv$ is a completely reducible $\bbg_s$ module with no trivial factors, we have 
 \[
 d\rho\left(\Lie(\bbg_s)(\bbq_p)\cap \gl_{N_0}(\bbz_p)\right)\left(\bbv(\bbq_p)\cap \gl_{N_0}(\bbz_p)\right) \supseteq p^{O(1)} (\bbv(\bbq_p)\cap \gl_{N_0}(\bbz_p)),
 \]
 where the implied constant depends on $\bbg$ and its embedding into $(\GL_{N_0})_{\bbq}$. Therefore $\bbv(\bbq_p)\subseteq W$; and the claim follows.
\end{proof}
The next lemma is crucial in proving the bounded generation claim and its proof is fairly involved. By the strong-approximation, we know that the $S$-arithmetic group $\Gamma_0:=\bbg(\bbq)\cap\GL_{N_0}(\bbz_S)$, when infinite, is dense in $G_p:=\bbg(\bbq_p)\cap\GL_{N_0}(\bbz_p)$ for any prime $p$ which is not in $S$ (here $\bbg$ is as before a simply-connected, connected, perfect $\bbq$-group.). And so for any open subgroup $\wt{H}$ of $G_p$, $\Gamma_0\cap \wt{H}$ is dense in $\wt{H}$. The next lemma shows that elements of $\Gamma_0\cap \wt{H}$ with the $S$-norm at most $[G_p:\wt{H}]^{O_{\Gamma_0}(1)}$ are, however, trapped within a proper algebraic subgroup of $\bbg$. 

Let $\nu_{\infty}$ be the Archimedean place of $\bbq$, and $\wt{S}:=\{\nu_{\infty}\}\cup S$. Notice that $\bbz_S$ can be diagonally embedded in $\prod_{\nu\in \wt{S}}\bbq_{\nu}$ as a discrete subgroup. Let $\|\xbf\|_{\nu_{\infty}}:=\sqrt{\sum_i x_i^2}$ for $\xbf=(x_1,\ldots,x_{N_0})\in\bbr^{N_0}$, and $\|\xbf\|_p:=\max\{|x_i|_p\}_i$ for $\xbf=(x_1,\ldots,x_{N_0})\in \bbq_p^{N_0}$. For any $(\xbf_{\nu})\in \prod_{\nu\in\wt{S}}\bbq_{\nu}^{N_0}$, let $\|(\xbf_{\nu})\|_{\wt{S}}:=\max_{\nu\in\wt{S}} \|\xbf_{\nu}\|_{\nu}$. For any $\lambda\in \GL_{N_0}(\bbz_S)$, we let $\|\lambda\|_S$ be the operator norm of $\lambda:\prod_{\wt{S}}\bbq_{\nu}^{N_0}\rightarrow \prod_{\wt{S}}\bbq_{\nu}^{N_0}$.
\begin{lem}\label{l:SmallLifts}
As before, let $\bbg=\bbg_s\ltimes \bbv\subseteq (\GL_{N_0})_{\bbq}$, where $\bbg_s$ is a semisimple, connected, simply-connected $\bbq$-group and $\bbv$ is a $\bbq$-vector group. Let $\gcal$ be the Zariski-closure of $\bbg$ in $(\GL_{N_0})_{\bbz}$; in particular $\gcal(\bbz_S)=\bbg(\bbq)\cap\GL_{N_0}(\bbz_S)$ for any finite set of primes $S$. Let $S$ be a finite set of primes such that $\gcal(\bbz_S)$ is infinite. 
Suppose either $p\gg_{\gcal} 1$ or $m_0\gg_{\gcal} 1$. Let $G:=\gcal(\bbz_p)$. Then there is a positive number $\delta$ such that for any prime $p\not\in S$ and an open subgroup $\wt{H}$ of $G[p^{m_0}]$ the set
\[
\lcal_{\delta}(\wt{H}):=\{\lambda\in \gcal(\bbz_S)\cap \wt{H}|\h \|\lambda\|_S\le [\gcal(\bbz_p):\wt{H}]^{\delta}\}
\]
is in a proper Zariski-closed subgroup of $\bbg$.
\end{lem}
\begin{proof}
Since $\GL_{N_0}(\bbz_S)$ is a discrete set with respect to the $S$-norm, we can and will assume that $[\gcal(\bbz_p):\wt{H}]$ is large enough. Otherwise choosing $\delta$ small enough, we have that $\lcal_{\delta}(\wt{H})$ is a subset of a finite subgroup and we are done.

Suppose $p$ or $m_0$ are large enough so that Lemma~\ref{l:Frattini} holds. Let $G:=\gcal(\bbz_p)$, $\gfr:=\Lie(\gcal)(\bbz_p)$, and 
\[
l(\wt{H}):=\min\{k\in \bbz^+|\h \wt{H}\supseteq G[p^k]\}.
\]
 Notice that, since $\wt{H}$ is an open subgroup, $l(\wt{H})$ is finite. By Lemma~\ref{l:Frattini}, for any $m_0\le k\le l(\wt{H})-1$, we have that $(\wt{H}\cap G[p^k]) G[p^{k+1}]/G[p^{k+1}]$ is a proper $\f_p$-subspace of $G[p^k]/G[p^{k+1}]\simeq \gfr/p\gfr$. Thus we have 
\be\label{e:LevelIndex}
p^{l(\wt{H})-m_0}\le [G[p^{m_0}]:\wt{H}] \le p^{d (l(\wt{H})-m_0)},
\ee
where $d=\dim_{\f_p}\gfr/p\gfr$. 

Our first goal is to find a {\em linear separation} of an $\Ad(\wt{H})$-orbit with a {\em good margin} (see Claim 1) when $l(\wt{H})$ is large enough depending only on $\gcal$. To achieve this goal, we start with finding  {\em primitive} $\bbz_p$-submodules of $\gfr$ and $\gfr_s:=\Lie(\bbg_s)(\bbq_p)\cap \gfr$ that are {\em almost} $\Ad(\wt{H})$-invariant (see (\ref{e:H-Invariance}) and (\ref{e:PrimitiveSemisimpleInvariant})).

Notice that, since $\bbv$ is a direct sum of non-trivial simple $\bbg_s$-modules, for any prime $p$, $\bbg(\bbq_p)=\bbg_s(\bbq_p)\ltimes \bbv(\bbq_p)$ is compactly generated. Hence, by a result of Kneser~\cite{Kne},
$\gcal(\bbz_S)$ is a finitely generated group. We fix a finite generating set $\Omega'$ of $\gcal(\bbz_S)$.

{\bf Claim 1.} For any constant $C_0>1$ (later it will depend on $\dim \bbg$), there is $c(\gcal)$ such that, if $l(\wt{H})\gg_{\gcal,C_0} 1$, then  
for some $w\in \gfr$ and $L\in \gfr^*:=\Hom_{\bbz_p}(\gfr,\bbz_p)$ the following holds
\begin{enumerate}
	\item for any $h\in \wt{H}$,  $|L(\Ad(h)(w))|_p\le [\gcal(\bbz_p):\wt{H}]^{-c}$;
	\item for some $\gamma\in \Omega'$, $|L(\Ad(\gamma)(w))|_p\ge [\gcal(\bbz_p):\wt{H}]^{-c/C_0}$.
\end{enumerate}
\begin{proof}[Proof of Claim 1]
Let 
\[
\overline{V}:=\lin{p^{l'}}{p^{l}}((\wt{H}\cap G[p^{l'}])G[p^l]/G[p^l])\subseteq \gfr/p^{l-l'}\gfr,
\]
 where $l:=l(\wt{H})$ and $l':=\lceil l/2\rceil+m_0$ (Recall that $\lin{p^{l'}}{p^{l}}(1+p^{l'}x):=\pi_{p^{l-l'}}(x)$ is a {\em finite logarithmic map}.). So there are a $\bbz_p$-basis $\{\ebf_1,\ldots,\ebf_d\}$ of $\gfr$ and positive integers $n_1\le \cdots 
 \le n_{d'}$ such that $\overline{V}=\pi_{p^{l-l'}}(\sum_{i=1}^{d'} p^{n_i} \bbz_p \ebf_i)$ (for some $d'\le d$). Since
 \[
 0=n_0\le n_1\le \cdots \le n_{d'} \le \lfloor l/2 \rfloor-m_0=: n_{d'+1},
 \]
 there is $i_0$ such that 
 \be\label{e:LargeJump}
 (1/2d) l-m_0-1< n_{i_0}-n_{i_0-1}\le (1/2) l-m_0.
 \ee
  Let $\Wfr_1:=\sum_{i=1}^{i_0-1} p^{n_i} \bbz_p \ebf_i\subset \gfr$ (if $i_0=1$, then $\Wfr_1=0$). So
 \be\label{e:H-Invariant-Initial}
 \Ad(\wt{H})(\Wfr_1)\subseteq \Wfr_1+p^{n_{i_0}}\gfr.
 \ee
 Let $\Wfr_2:=\sum_{i=1}^{i_0-1}\bbz_p \ebf_i$. Notice that $\Wfr_2=W_1\cap \gfr$ where $W_1=\sum_{i=1}^{i_0-1}\bbq_p \ebf_i$ is the $\bbq_p$-span of $\Wfr_1$. Since $\pi_{p^{n_{i_0}}}(\Wfr_1)$ is $\wt{H}$-invariant, for any $1\le i\le i_0-1$ and $h\in\wt{H}$ we have
 \[
 \Ad(h)(p^{n_i}\ebf_i)=\sum_{j=1}^{i_0-1}c_jp^{n_j}\ebf_j+\sum_{j=1}^{d}p^{n_{i_0}}c_{j}'\ebf_j,
 \]
 where $c_j,c_j'\in \bbz_p$. Therefore
 \[
 \Ad(h)(\ebf_i)=\sum_{j=1}^{i_0-1}(c_j p^{n_j-n_i}+p^{n_{i_0}-n_i}c_j')\ebf_j+\sum_{j=i_0}^{d}p^{n_{i_0}-n_i}c_j'\ebf_j.
 \]
 
 Since $\gfr$ is $\wt{H}$-invariant, we have $c_j p^{n_j-n_i}+p^{n_{i_0}-n_i}c_j'\in \bbz_p$. Since $n_j$ is increasing, we have $p^{n_{i_0}-n_i}\ge p^{n_{i_0}-n_{i_0-1}}$ for any $i<i_0$. Hence we have
 \be\label{e:H-Invariance}
 \Ad(\wt{H})(\Wfr_2)\subseteq \Wfr_2+p^{n_{i_0}-n_{i_0-1}}\gfr.
 \ee
 Next we would like to get some invariance in the semisimple part of $\gfr$. We notice that $\Lie(\bbg)(\bbq_p)=\Lie(\bbg_s)(\bbq_p)\oplus \bbv(\bbq_p)$ (we are identifying $\bbv$ with its Lie algebra.). For any $x\in \Lie(\bbg)(\bbq_p)$, we write its components in the above decomposition by $x_s$ and $x_v$; that means $x=x_s+x_v$ where $x_s\in \Lie(\bbg_s)(\bbq_p)$ and $\Lie(\bbv)(\bbq_p)$. There is a positive integer $d_1$ depending only on $\gcal$ such that 
 \be\label{e:sv-component}
 \|x\|_p \le \max\{\|x_s\|_p,\|x_v\|_p\}\le p^{d_1} \|x\|_p\hspace{1cm}\text{ and }\hspace{1cm} \gfr_s\oplus \Vfr \subseteq \gfr\subseteq p^{-d_1}(\gfr_s\oplus \Vfr),
 \ee
 where $\gfr_s:=\Lie(\bbg_s)(\bbq_p)\cap \gfr$ and $\Vfr:=\bbv(\bbq_p)\cap \gfr$.  Let $\Wfr_{2,s}:=\{x_s|\h x\in \Wfr_2\}$ be the projection to $\Lie(\bbg_s)(\bbq_p)$ of $\Wfr_2$. And so, by (\ref{e:H-Invariance}) and the fact that $\bbv$ is a normal subgroup of $\bbg$, we have
 \be\label{e:SemisimpleInvariance}
 \Ad(\wt{H})(\Wfr_{2,s})\subseteq \Wfr_{2,s}+\bbv(\bbq_p)+p^{n_{i_0}-n_{i_0-1}}\gfr.
 \ee
 By (\ref{e:sv-component}), $p^{d_1}\Wfr_{2,s}$ is a $\bbz_p$-submodule of $\gfr_s$. Hence $\gfr_s$ has a $\bbz_p$-basis $\fbf_1,\ldots, \fbf_{d_s}$ such that 
 \be\label{e:BasisSemisimpleProjection}
 \Wfr_{2,s}=\sum_{i=1}^{d_s'}p^{n_i'}\bbz_p\fbf_i
 \ee
 for some integers $-d_1=:n_0'\le n_1'\le \cdots\le n'_{d_s'}< n'_{d_s'+1}:=\infty$. Let $j_0$ be the smallest index such that 
 \be\label{e:Jump}
 n_{j_0+1}'-n_{j_0}'\ge \frac{n_{i_0}-n_{i_0-1}-d_1}{2d_s}. 
 \ee
 (Notice that, since $n_0':=0$ and $n_{d_s'+1}':=\infty$, there is such $j_0$.) In particular, we have
 \be\label{e:UpperBoundwhereJumpHappens}
 n_{j_0}'< \frac{n_{i_0}-n_{i_0-1}-d_1}{2}.  
 \ee
 
 Let $\Wfr_{3,s}:=\sum_{i=1}^{j_0}\bbz_p \fbf_i$. Notice that $\Wfr_{3,s}=W_{3,s}\cap \gfr_s$ where $W_{3,s}=\sum_{i=1}^{j_0}\bbq_p \fbf_i$. So by (\ref{e:SemisimpleInvariance}) and (\ref{e:sv-component}) for any $1\le i\le j_0$ and $h\in\wt{H}$, we have
 \[
 \Ad(h)(p^{n_i'}\fbf_i)=\sum_{j=1}^{d_s'} p^{n_j'}c_{j,s}\fbf_j+p^{n_{i_0}-n_{i_0-1}}\sum_{j=1}^{d_s}p^{-d_1}c_{j,s}'\fbf_j+v_i,
 \] 
 where $c_{j,s}, c_{j,s}'\in \bbz_p$ and $v_i\in \bbv(\bbq_p)$. Therefore
 \be\label{e:RestrictionActionComputation}
 (\Ad(h)(\fbf_i))_s=\sum_{j=1}^{d_s'}(p^{n_j'-n_i'}c_{j,s}+p^{n_{i_0}-n_{i_0-1}-d_1-n_i'}c_{j,s}')\fbf_j+\sum_{j=d_s'}^{d_s}p^{n_{i_0}-n_{i_0-1}-d_1-n_i'}c_{j,s}'\fbf_j.
 \ee
  Since, for any $x_s\in \gfr_s$ and $h\in \wt{H}$, $(\Ad(h)(x_s))_s\in \gfr_s$, we have $p^{n_j'-n_i'}c_{j,s}+p^{n_{i_0}-n_{i_0-1}-d_1-n_i'}c_{j,s}' \in \bbz_p$ and $p^{n_{i_0}-n_{i_0-1}-d_1-n_i'}c_{j,s}'\in \bbz_p$. By (\ref{e:UpperBoundwhereJumpHappens}) we have 
  \be\label{e:PartOfError}
  n_{i_0}-n_{i_0-1}-d_1-n_i'> \frac{n_{i_0}-n_{i_0-1}-d_1}{2}.
  \ee
  By (\ref{e:Jump}), for $j\ge j_0+1$ and $i\le j_0$, we have
  \be\label{e:AnotherPartOfError}
  n_j'-n_i'\ge \frac{n_{i_0}-n_{i_0-1}-d_1}{2d_s}.
  \ee
 By (\ref{e:RestrictionActionComputation}), (\ref{e:PartOfError}), and (\ref{e:AnotherPartOfError}), we get
 \be\label{e:PrimitiveSemisimpleInvariant}
 \Ad(\wt{H})(\Wfr_{3,s})\subseteq \Wfr_{3,s}+\bbv(\bbq_p)+p^{\lfloor(n_{i_0}-n_{i_0-1}-d_1)/(2d_s)\rfloor}\gfr_s.
 \ee
 Having (\ref{e:H-Invariance}) and (\ref{e:PrimitiveSemisimpleInvariant}), we are ready to find the desired {\em linear separation} of an $\Ad(\wt{H})$-orbit. We do this considering various cases.
 
 {\bf Case 1.} {\em Suppose
 \be\label{e:Notinvariant}
 \Ad(\gcal(\bbz_S))(\Wfr_{3,s})\not\subseteq \Wfr_{3,s}+\bbv(\bbq_p)+p^{\lfloor(n_{i_0}-n_{i_0-1}-d_1)/(4d_sC_0)\rfloor}\gfr_s.
 \ee
}

By (\ref{e:Notinvariant}), we have that there is $\lambda\in \Omega'$ and $1\le i\le j_0$ such that
\be\label{e:LargeTransverse}
(\Ad(\lambda)(\fbf_i))_s\not\in \Wfr_{3,s}+p^{\lfloor(n_{i_0}-n_{i_0-1}-d_1)/(4d_sC_0)\rfloor}\gfr_s.
\ee
For $1\le i\le j_0$, let $\fbf_i^\ast:\gfr \rightarrow \bbz_p$ be a $\bbz_p$-linear map such that 
\[
\fbf_i^{\ast}(\fbf_i)=p^{d_1},\h
\fbf_i^{\ast}(\fbf_j)=0 \text{ for $j\neq i$, and }\h  
\fbf_i^{\ast}(x)=\fbf_i^{\ast}(x_s).
\]
(Here $x_s$ is the $\Lie(\bbg_s)(\bbq_p)$-component of $x$; and notice that because of (\ref{e:sv-component}) the image of $\fbf_i^{\ast}$ is a subset of $\bbz_p$.) By (\ref{e:LargeTransverse}), we have that there is $j>j_0$ such that 
\be\label{e:LargeMarginCase1}
|\fbf_j^{\ast}(\Ad(\lambda)(\fbf_i))|_p> p^{-\lfloor(n_{i_0}-n_{i_0-1}-d_1)/(4d_sC_0)\rfloor}.
\ee
And, by (\ref{e:PrimitiveSemisimpleInvariant}) and $j>j_0\ge i$, for any $h\in \wt{H}$, we have 
\be\label{e:SeparationCase1}
|\fbf_j^{\ast}(\Ad(h)(\fbf_i)|_p\le p^{-\lfloor(n_{i_0}-n_{i_0-1}-d_1)/(2d_s)\rfloor}.
\ee
So by (\ref{e:LargeJump}), for $l(\wt{H})\gg_{\gcal,C_0} 1$, we get that (\ref{e:LargeMarginCase1}) and (\ref{e:SeparationCase1}) imply $w:=\fbf_i$ and $L:=\fbf_j^{\ast}$ satisfy the conditions of Claim 1.

{\bf Case 2.} {\em  Suppose $\Lie(\bbg_s)(\bbq_p)$ is the $\bbq_p$-span $W_{3,s}$ of $\Wfr_{3,s}$.}

Let us recall that $W_1$ is the $\bbq_p$-span of $\Wfr_1$. And $\Wfr_{3,s}$ is a subset of the projection of $W_1$ to $\Lie(\bbg_s)(\bbq_p)$. So by the assumption of Case 2, we have that the projection of $W_1$ onto $\Lie(\bbg_s)(\bbq_p)$ is onto. Since $W_1$ is a proper subspace of $\Lie(\bbg)(\bbq_p)$, by Lemma~\ref{l:AlmostInvariantSubspacePerfect} and the strong approximation there is $c_3\ll_{\gcal} 1$ such that $\pi_{p^{c_3}}(W_1\cap \gfr)$ is not $\gcal(\bbz_S)$-invariant. Recall that $\Wfr_2=W_1\cap \gfr=\sum_{i=1}^{i_0}\bbz_p\ebf_i$. So there is $\lambda\in \Omega'$ and $1\le i\le i_0$ such that
\be\label{e:LargeTransverseCase2}
\Ad(\lambda)(\ebf_i)\not\in \Wfr_2+p^{c_3}\gfr.
\ee
Let $\{\ebf_1^{\ast},\ldots,\ebf_{d}^{\ast}\}$ be the dual of $\{\ebf_1,\ldots,\ebf_d\}$. Then, by (\ref{e:LargeTransverseCase2}) we have that there is $j\ge i_0$ such that
\be\label{e:LargeMarginCase2}
|\ebf_j^{\ast}(\Ad(\lambda)(\ebf_i))|_p>p^{-c_3}.
\ee 
And by (\ref{e:H-Invariance}) and $j\ge i_0>i$, for any $h\in \wt{H}$, we have
\be\label{e:SeparationCase2}
|\ebf_j^{\ast}(\Ad(h)(\ebf_i))|_p\le p^{-(n_{i_0}-n_{i_0-1})}.
\ee
So by (\ref{e:LargeJump}), for $l(\wt{H})\gg_{\gcal,C_0} 1$, we get that (\ref{e:LargeMarginCase2}) and (\ref{e:SeparationCase2}) imply $w:=\ebf_i$ and $L:=\ebf_j^{\ast}$ satisfy the conditions of Claim 1.

{\bf Case 3.} {\em Suppose  
\be\label{e:invariant}
\Ad(\gcal(\bbz_S))(\Wfr_{3,s})\subseteq \Wfr_{3,s}+\bbv(\bbq_p)+p^{\lfloor(n_{i_0}-n_{i_0-1}-d_1)/(4d_sC_0)\rfloor}\gfr_s,
\ee
and $\Lie(\bbg_s)(\bbq_p)$ is not the $\bbq_p$-span $W_{3,s}$ of $\Wfr_{3,s}$.}

By the strong approximation, the projection $K_p$ of the closure of $\gcal(\bbz_S)$ to $\bbg_s(\bbq_p)$ contains $\bbg_s(\bbq_p)\cap \GL_{N_0}(\bbz_p)[p^{k_0}]$ where $k_0$ depends only on $\gcal$. So by Lemma~\ref{l:AlmostInvariantSubspaceSemisimple} and (\ref{e:invariant}), there are a positive integer $c_4$, which depends only on $\gcal$, and a normal subgroup $\bbg_{W_{3,s}}$ of $\bbg_s$ such that
\be\label{e:AlmostToExact}
p^{2c_4} \pi_{p^{m-c_4}}(\gfr_{W_{3,s}})\subseteq 
p^{c_4}	   \pi_{p^{m-c_4}}(\Wfr_{3,s})\subseteq
\pi_{p^{m-c_4}}(\gfr_{W_{3,s}}),
\ee	
where $\gfr_{W_{3,s}}:=\Lie(\bbg_{W_{3,s}})(\bbq_p)\cap \gfr$ and $m:=\lfloor(n_{i_0}-n_{i_0-1}-d_1)/(4d_sC_0)\rfloor$.
  
Since $W_{3,s}$ is a proper subspace of $\Lie(\bbg_s)(\bbq_p)$, by the first inclusion in (\ref{e:AlmostToExact}) we have that $\bbg_{W_{3,s}}$ is a proper normal subgroup of $\bbg_s$. So at least one simple factor $\bbg_{i_0'}$ is missing in $\Lie(\bbg_{W_{3,s}})(\bbq_p)$. Let $\gfr_{i_0'}:=\Lie(\bbg_{i_0'})(\bbq_p)\cap \gfr$. Then we have that 
$\gfr_{i_0'}\cap \gfr_{W_{3,s}}=\{0\}$ and $[\gfr_{i_0'},\gfr_{W_{3,s}}]=\{0\}$. Let  $x_0\in \gfr_{i_0'}$ be a unit vector. Next we show that $\Wfr_{3,s}+\bbz_p x_0+\bbv(\bbq_p)$ is {\em almost} $\Ad(\wt{H})$-invariant. 

Let $h\in \wt{H}$ and suppose $h\in G[p^k]\setminus G[p^{k+1}]$ for some $k< l':=\lceil l/2\rceil+m_0$. Then 
\[
\log h \in (p^k \gl_{N_0}(\bbz_p)\cap \gfr)\setminus p^{k+1} \gl_{N_0}(\bbz_p).
\]
% Since $\gcal$ is a flat $\bbz_p$-group subscheme of $(\mathcal{GL}_{N_0})_{\bbz_p}$, we have $p^k \underline{\gl}_{N_0}(\bbz_p)\cap \underline{\gfr}(\bbz_p)=p^k \underline{\gfr}(\bbz_p)$.
So 
\[
\log h^{p^{l'-k}}=p^{l'-k} \log h \in p^{l'} \gl_{N_0}(\bbz_p) \cap \log \wt{H}.  
\]
Hence $\pi_{p^{n_{i_0}}}(p^{-k} \log h)\in \pi_{p^{n_{i_0}}}(\Wfr_1)$. This implies that there is $y_h\in \Wfr_{2,s}$ such that 
\[
p^{-k}\log h\in y_h+p^{n_{i_0}}\gfr+\bbv(\bbq_p).
\]
 And so 
$
y_h=\sum_{i=1}^{d_s'} a_i p^{n_i'}\fbf_i,
$ 
for some $a_i\in \bbz_p$, which implies
\be\label{e:ComputationOfLog}
p^{-k}\log h\in \Wfr_{3,s}+p^{\min\{n_{i_0},n_{i_0'+1}'\}}\gfr+\bbv(\bbq_p).
\ee
By (\ref{e:Jump}), we have 
\be\label{e:LogsErrorTerm}
\min\{n_{i_0},n_{i_0'+1}'\}\ge \frac{n_{i_0}-n_{i_0-1}-d_1}{2d_s}-d_1\ge 2C_0m-d_1. 
\ee
 Hence by (\ref{e:ComputationOfLog}) and (\ref{e:LogsErrorTerm}), we have
\be\label{e:LogFinalForm}
p^{-k}\log h\in \Wfr_{3,s}+p^{2C_0m-d_1}\gfr+\bbv(\bbq_p).
\ee 
By (\ref{e:AlmostToExact}) and (\ref{e:LogFinalForm}), we have
\be\label{e:LogInLieIdeal}
p^{c_4-k}\log h\in \gfr_{W_{3,s}}+p^{\min\{2C_0m-d_1,m-c_4\}}\gfr+\bbv(\bbq_p).
\ee
By (\ref{e:LogInLieIdeal}) and the fact that $[\gfr_{W_{3,s}},x_0]=\{0\}$, we have
\be\label{e:SmallCommutator}
\ad(\log h)(x_0)\in p^{\min\{2C_0m-d_1,m-c_4\}-c_4}\gfr+\bbv(\bbq_p).
\ee
Thus by $m=\lfloor(n_{i_0}-n_{i_0-1}-d_1)/(4d_sC_0)\rfloor$, (\ref{e:LargeJump}), and (\ref{e:SmallCommutator}), for $l(\wt{H})\gg_{\gcal,C_0} 1$ we have
\be\label{e:ExtraInvariance}
\Ad(h)(x_0)=\exp(\ad(\log h))(x_0)\in x_0+p^{\min\{2C_0m-d_1,m-c_4\}-c_4}\gfr+ \bbv(\bbq_p).
\ee
So by (\ref{e:PrimitiveSemisimpleInvariant}) and (\ref{e:ExtraInvariance}), we have
\be\label{e:LargerPrimitiveInvariance}
\Ad(\wt{H})(\Wfr_{3,s}+\bbz_p x_0)\subseteq \Wfr_{3,s}+\bbz_p x_0+ p^{\min\{2C_0m-d_1,m-c_4\}-c_4}\gfr+\bbv(\bbq_p).
\ee
On the other hand, looking at the action of $\gcal(\bbz_S)$ on $\gfr_{i_0'}$, we can choose $x_0\in \gfr_{i_0'}$, $L_0\in \Hom_{\bbz_p}(\gfr_{i_0'},\bbz_p)$, and a positive integer $c_5$ which depends only on $\gcal$ such that for some $\lambda\in \Omega'$ we have
\be\label{e:BeforeSeparation}
|L_0((\Ad(\lambda)(x_0))_s)|_p\ge p^{-c_5},\text{ and }\h L_0(x_0)=0.
\ee
 Now let $\Lie(\bbg_s)(\bbq_p)=\oplus_i \Lie(\bbg_{s,i})(\bbq_p)$ be the decomposition of $\Lie(\bbg_s)(\bbq_p)$ to its simple factors; and for any $x\in \Lie(\bbg)(\bbq_p)$ let $x=x_v+\sum_i x_{s,i}$ be the such that $x_v\in \Lie(\bbv)(\bbq_p)$ and $x_{s,i}\in \Lie(\bbg_{s,i})(\bbq_p)$. Then for some positive integer $c_6$ which depends only on $\gcal$ we have
\[
p^{c_6}\|x\|_p\ge \max\{\|x_{s,i}\|_p\}_i.
\]
So there is a well-defined $\bbz_p$-linear map $L:\gfr\rightarrow \bbz_p, L(x):=L_0(p^{c_6}x_{s,i_0'})$. We notice that 
\be\label{e:LargeMarginCase3}
|L(\Ad(\lambda)(x_0))|_p\ge p^{-c_5},\text{ and }\h L(\Wfr_{3,s}+\bbz_p x_0)=0.
\ee
And so by (\ref{e:LargerPrimitiveInvariance}), for any $h\in \wt{H}$, we have
\be\label{e:SeparationCase3}
|L(\Ad(h)(x_0))|_p\le p^{-\min\{2C_0m-d_1,m-c_4\}-c_4}.
\ee
So by (\ref{e:LargeJump}), for $l(\wt{H})\gg_{\gcal,C_0} 1$, we get that (\ref{e:LargeMarginCase3}) and (\ref{e:SeparationCase3}) imply $w:=x_0$ and $L$ satisfy the conditions of Claim 1.
\end{proof}
Next we will show elements with {\em small height} in $\wt{H}$ are in a proper quadratic subvariety. Let $\ugfr:=\Lie \gcal$. So there are finitely many $\bbz_S$-linear maps $f_i$ (for $1\le i\le s_0$) viewed as regular functions on the affine scheme $(\gl_{N_0})_{\bbz_S}$ of $N_0$-by-$N_0$ matrices such that for any (unital commutative) $\bbz_S$-algebra $R$, we have
\be\label{e:LieAlgebraScheme}
\ugfr(R)=\{x\in \gl_{N_0}(R)|\h \forall i, f_i(x)=0\}.
\ee
Let us view $\gl_{N_0}(R)$ as $R^{N_0^2}$, and write its elements as vectors. This way we write $f_i(x_1,\ldots,x_{N_0^2}):=\sum_j a_{ij} x_j$, where $a_{ij}\in \bbz_S$. And consider the matrix $A:=[a_{ij}]\in M_{s_0\times N_0^2}(\bbz_S)$. Since $\bbz_S$ is a PID, there are $\gamma_1\in \GL_{s_0}(\bbz_S)$, $\gamma_2\in\GL_{N_0^2}(\bbz_S)$, and positive integers $r_1|r_2|\cdots|r_{N_0^2-d}$ such that 
\be\label{e:SmithNormalForm}
A=\gamma_1 \left[\begin{array}{cc}
0&0\\
0&\diag(r_1,\ldots,r_{N_0^2-d})
\end{array}
\right] \gamma_2,
\ee
where $\diag(r_1,\ldots,r_{N_0^2-d})$ is the diagonal matrix with diagonal entries $r_i$'s (this is known as the Smith normal form when the diagonal block is at the top left corner.). Let us view the rows of $\gamma_2$ as a $\bbz_S$-basis $\{\ebf_1^{\ast},\ldots,\ebf_{N_0^2}^{\ast}\}$ of the dual of $\bbz_S^{N_0^2}$. And let $\{\ebf_1,\ldots,\ebf_{N_0^2}\}$ be the dual $\bbz_S$-basis of $\bbz_S^{N_0^2}$. So by (\ref{e:LieAlgebraScheme}) and (\ref{e:SmithNormalForm}), for any $\bbz_S$-algebra $R$ with {\em no additive torsion} element, we have 
\be\label{e:NewPresentationOfLieAlgebra}
x\in \ugfr(R) \Longleftrightarrow \ebf_{d+1}^{\ast}(x)=\cdots=\ebf^{\ast}_{N_0^2}(x)=0 \Longleftrightarrow x\in \bigoplus_{i=1}^{d}R \ebf_i.
\ee
And so in this case we have
\be\label{e:Dual}
\ugfr^{\ast}(R):=\Hom_R(\ugfr(R),R)=\bigoplus_{i=1}^{d}R \ebf_i^{\ast}.
\ee

{\bf Claim 2.} Suppose $\{\ebf_1,\ldots,\ebf_{N_0^2}\}$ is a $\bbz_S$-basis of $\gl_{N_0}(\bbz_S)$ such that (\ref{e:NewPresentationOfLieAlgebra}) and (\ref{e:Dual}) hold for any $\bbz_S$-algebra $R$ with no additive torsion element. For $(w_1,\ldots,w_d)\in R^d$, $(l_1,\ldots,l_d)\in R^d$, and $g\in \gcal(R)$, let 
\[
\eta_g(w_1,\ldots,w_d;l_1,\ldots,l_d):=\sum_{i,j}\ebf_i^{\ast}(\Ad(g)(\ebf_j)) l_iw_j.
\]
 If $\delta$ is small enough, there is $(w_1,\ldots,w_d,l_1,\ldots,l_d)\in \overline{\bbq}^{2d}$ ($\overline{\bbq}$ is the algebraic closure of $\bbq$) such that
\begin{enumerate}
	\item For any $h\in \lcal_{\delta}(\wt{H})$ we have $\eta_h(w_1,\ldots,w_d;l_1,\ldots,l_d)=0$,
	\item For some $\gamma\in \Omega'$ we have $\eta_{\gamma}(w_1,\ldots,w_d;l_1,\ldots,l_d)\neq 0$. 
\end{enumerate}
\begin{proof}[Proof of Claim 2]
By Claim 1, there are $w\in\ugfr(\bbz_p)$ and $L\in \ugfr(\bbz_p)^{\ast}$ such that for any $h\in \wt{H}$ and some $\gamma\in\Omega'$ we have
\be\label{e:Claim1Conclusion}
|L(\Ad(\gamma)(w))|_p\ge [\gcal(\bbz_p):\wt{H}]^{-c/C_0},
\text{ and }
|L(\Ad(h)(w))|_p\le [\gcal(\bbz_p):\wt{H}]^{-c},
\ee 	
for some $c$ which depends only on $\gcal$ and some large positive number $C_0$ which will be specified later.

Suppose $L(\Ad(\lambda)(w))^{-1}w=\sum_{i=1}^d x_i \ebf_i$ and $L=\sum_{i=1}^{d}y_i\ebf_i^{\ast}$; and let $\xbf_0:=(x_1,\ldots,x_d)$ and $\ybf_0:=(y_1,\ldots,y_d)$. Since $\ugfr(\bbz_p)=\gfr=\Lie(\bbg)(\bbq_p)\cap \gl_{N_0}(\bbz_p)$, it is a primitive $\bbz_p$-submodule of $\gl_{N_0}(\bbz_p)$; and so 
\be\label{e:UpperPAdicBoundForX}
\|\xbf_0\|_p=|L(\Ad(h)(w))|_p^{-1}\|w\|_p\le [\gcal(\bbz_p):\wt{H}]^{c/C_0}.
\ee
And clearly $\|\ybf_0\|_p\le 1$. By (\ref{e:Claim1Conclusion}) and the way $\xbf_0$ and $\ybf_0$ are defined, we get that for any $h\in \wt{H}$ and some $\gamma\in\Omega'$ we have 
\be\label{e:ConclusionOfClaim1InNewCoordinates}
\eta_{\gamma}(\xbf_0,\ybf_0)=1,\text{ and }
|\eta_h(\xbf_0,\ybf_0)|\le [\gcal(\bbz_p):\wt{H}]^{-c(1-1/C_0)}.
\ee
	
To prove the claim, we proceed by contradiction and use an effective version of Nullstellensatz theorem~\cite{BY} (or \cite[Theorem IV]{MW}). 

Suppose the following has no solution over $\overline{\bbq}$:
\[
\forall\h h\in \lcal_{\delta}(\wt{H}),\h \eta_h(\xbf,\ybf)=0,\h \eta_{\gamma}(\xbf,\ybf)=1,
\]
where $\gamma\in \Omega'$ is the one given in (\ref{e:ConclusionOfClaim1InNewCoordinates}). Notice that the total degree of $\eta_g$ is two for any $g\in\gcal(\bbz_p)$; and the $S$-norm of the coefficients of $\eta_h$ for $h\in \lcal_{\delta}(\wt{H})$ is at most $[\gcal(\bbz_p):\wt{H}]^{\Theta_{\Omega}(\delta)}$. So by the effective Nullstellensatz, there are polynomials $q_{\gamma}, q_h\in \bbz_S[T_1,\ldots,T_{2d}]$ and $D_0\in \bbz_S$ such that 
\be\label{e:Nullstellensatz}
 q_{\gamma}(\xbf,\ybf)(\eta_{\gamma}(\xbf,\ybf)-1)+\sum_{h\in \lcal_{\delta}(\wt{H})} q_h(\xbf,\ybf) \eta_h(\xbf,\ybf)=D_0, \text{ and}
\ee
\be\label{e:EffectivePart}
\deg q_{\gamma}, \deg q_h \le N(d), \text{ and } \|D_0\|_S\le  [\gcal(\bbz_p):\wt{H}]^{\Theta_{\Omega}(\delta)}, 
\ee
where $h\in \lcal_{\delta}(\wt{H})$ and $N(d)$ is a positive integer which depends only on $d:=\dim \bbg$. So by (\ref{e:EffectivePart}), (\ref{e:UpperPAdicBoundForX}), and the fact that $\ybf_0\in \bbz_p^d$, we have 
\be\label{e:ValueAtSpecialPoints} 
|q_h(\xbf_0,\ybf_0)|_p\le [\gcal(\bbz_p):\wt{H}]^{N(d)c/C_0}.
\ee
Hence by (\ref{e:ValueAtSpecialPoints}) and (\ref{e:ConclusionOfClaim1InNewCoordinates}), for any $h\in \lcal_{\delta}(\wt{H})$ we have
\be\label{e:ValueAtSpecialPoints2}
|q_h(\xbf_0,\ybf_0)\eta_h(\xbf_0,\ybf_0)|_p\le [\gcal(\bbz_p):\wt{H}]^{-c\left(1-\frac{N(d)+1}{C_0}\right)}.
\ee
Now suppose $C_0=2N(d)+2$; so by (\ref{e:ConclusionOfClaim1InNewCoordinates}), (\ref{e:Nullstellensatz}), and (\ref{e:ValueAtSpecialPoints2}), we have
\be\label{e:PAdicNormOfLHS}
|D_0|_p=\left | q_{\gamma}(\xbf_0,\ybf_0)(\eta_{\gamma}(\xbf_0,\ybf_0)-1)+\sum_{h\in \lcal_{\delta}(\wt{H})} q_h(\xbf_0,\ybf_0) \eta_h(\xbf_0,\ybf_0)\right|_p\le [\gcal(\bbz_p):\wt{H}]^{-c/2}.
\ee
Hence by (\ref{e:EffectivePart}) and (\ref{e:PAdicNormOfLHS}) we get
\[
[\gcal(\bbz_p):\wt{H}]^{c/2}\le \|D_0\|_S\le  [\gcal(\bbz_p):\wt{H}]^{\Theta_{\Omega}(\delta)},
\]
which gives us a contradiction for $\delta\ll_{\Omega} 1$.
\end{proof}
({\em Going back to the proof of Lemma~\ref{l:SmallLifts}.}) For a given $(\wbf,\lbf)\in \overline{\bbq}^{d}\times  \overline{\bbq}^{d}$, let 
\[
V_{\wbf,\lbf}(\overline{\bbq}):=\{g\in \bbg(\overline{\bbq})|\h \eta_g(\wbf,\lbf)=0\}.
\]
Then $V_{\wbf,\lbf}$ gives us a closed subvariety of $\bbg$. And since it is coming from an intersection of a hyperplane in $\gl_{N_0}(\overline{\bbq})$ and $\bbg(\overline{\bbq})$, both its number of irreducible components and their degrees are bounded by a positive integer $N_1:=N_1(\bbg)$ which depends only on $\bbg$. 

Hence by \cite[Proposition 3.2]{EMO} and its proof, there is a positive integer $N_2:=N_2(\bbg)$ such that for any subset $A$ of $\bbg(\overline{\bbq})$ which generates a Zariski-dense subgroup of $\bbg(\overline{\bbq})$ we have 
\be\label{e:EscapingZariskiClosedSets}
\textstyle \prod_{N_2} A \not\subseteq V_{\wbf,\lbf}(\overline{\bbq}),
\ee
for any $(\wbf,\lbf)\in \overline{\bbq}^d\times \overline{\bbq}^d$. By Claim 2, if $\delta\ll_{\gcal} 1$, there is $(\wbf,\lbf)\in \overline{\bbq}^{d}\times  \overline{\bbq}^{d}$ such that $\lcal_{N_2\delta}(\wt{H})\subseteq V_{\wbf,\lbf}(\overline{\bbq})$. 
And so
\be\label{e:Proper}
\textstyle \prod_{N_2}\lcal_{\delta}(\wt{H})\subseteq \lcal_{N_2\delta}(\wt{H})\subseteq V_{\wbf,\lbf}(\overline{\bbq}).
\ee
Therefore by (\ref{e:EscapingZariskiClosedSets}) and (\ref{e:Proper}) we have that $\lcal_{\delta}(\wt{H})$ does not generate a Zariski-dense subgroup of $\bbg(\overline{\bbq})$; proving Lemma.
\end{proof}
\begin{proof}[Proof of Proposition~\ref{p:EscapeModQ}]
By Nori's strong approximation~\cite[Theorem 5.4]{Nor} we know that $[\gcal(\bbz_p):\overline{\Gamma}]\ll_{\Gamma} 1$, where $\overline{\Gamma}$ is the closure of $\Gamma$ in $\gcal(\bbz_p)$. For a proper subgroup $H$ of $\pi_{p^n}(\Gamma)$, let $\wt{H}:=\{h\in \gcal(\bbz_p)|\h \pi_{p^n}(h)\in H\}$. So by Lemma~\ref{l:SmallLifts} we have that $\lcal_{\delta_1}(\wt{H})$ is in a proper algebraic subgroup $\bbh$ of $\bbg$ for small enough $\delta_1$. So by Proposition~\ref{p:EscapeSubgroups} we have that
\be\label{e:EscapeModQ}
\pcal^{(l)}(\{\gamma\in \Gamma|\h \pi_{p^n}(\gamma)\in H\})\le e^{-\delta_0 l}
\ee
if $1\ll l\ll \delta_1\log [\pi_{p^n}(\Gamma):H]$ where $\delta_0$ is given by Proposition~\ref{p:EscapeSubgroups} and $\delta_1$ is given by Lemma~\ref{l:SmallLifts}.

If for some $l\ge \frac{n}{\delta}\log p$ we have $\pcal_{\pi_{p^n}(\Omega)}^{(l)}(H)\ge [\pi_{p^n}(\Gamma):H]^{-\delta}$, then  by~\cite[Remark, page 1060]{BG3} (see \cite[Lemma 8]{SGsemisimple}) for any $l'\le \frac{n}{\delta}\log p$ we have  $\pcal_{\pi_{p^n}(\Omega)}^{(2l')}(H)\ge [\pi_{p^n}(\Gamma):H]^{-2\delta}$. This contradicts (\ref{e:EscapeModQ}) if $\delta$ is small enough.
\end{proof}
\subsection{Getting a large ideal by adding/subtracting a congruence subgroup boundedly many times.}~\label{ss:BoundedGenerationOfModules}
The main goal of this section is to prove Proposition~\ref{p:BoundedGenerationOfModules}. At the end we also provide a few lemmas that are needed for using Proposition~\ref{p:BoundedGenerationOfModules} in the context of modules. The results of this section rely on properties of $p$-adic analytic maps that are proved in the appendix. 
\begin{prop}\label{p:BoundedGenerationOfModules}
	Let $\bbg_s\subseteq (\GL_{N_0})_{\bbq}$ be a semisimple $\bbq$-subgroup. Let $\rho:\bbg_s\rightarrow \mathbb{GL}(\bbv)$ be a $\bbq$-homomorphism, where $\bbv$ is a $\bbq$-vector group. Suppose no non-zero vector of $\bbv(\overline{\bbq})$ is $\rho(\bbg_s(\overline{\bbq}))$-invariant. 
	For a prime $p$, let $P[p^l]:=\{g\in \bbg_s(\bbq_p)\cap \GL_{N_0}(\bbz_p)|\h \pi_{p^l}(g)=1\}$.  Then for $l\gg_{\bbg_s,\rho} 1$ we have
	\[\textstyle
	\sum_{\Theta_{\bbg_s,\rho}(1)}\rho(P[p^l])-\sum_{\Theta_{\bbg_s,\rho}(1)}\rho(P[p^l])\supseteq p^{\Theta_{\bbg_s,\rho}(l)}\bbz_p[\rho(P[1])].
	\]
	(Notice that the implied constants are independent of $p$.)
\end{prop}
Let us point out that the same proof gives us also the local version of Proposition~\ref{p:BoundedGenerationOfModules}. 
\begin{prop'}\label{p:BoundedGenerationOfModulesLocal}
Let $\bbg_s\subseteq (\GL_{N_0})_{\bbq}$ be a semisimple $\bbq_p$-subgroup. Let $\rho:\bbg_s\rightarrow \mathbb{GL}(\bbv)$ be a $\bbq_p$-homomorphism, where $\bbv$ is a $\bbq_p$-vector group. Let $P[p^l]:=\{g\in \bbg_s(\bbq_p)\cap \GL_{N_0}(\bbz_p)|\h \pi_{p^l}(g)=1\}$. 
Suppose that $\bbv(\bbq_p)$ has no non-zero $\bbg_s(\bbq_p)$ fixed vector. Then for $l\gg_{\bbg_s,\rho} 1$ we have
\[\textstyle
\sum_{\Theta_{\bbg_s,\rho}(1)}\rho(P[p^l])-\sum_{\Theta_{\bbg_s,\rho}(1)}\rho(P[p^l])\supseteq p^{\Theta_{\bbg_s,\rho}(l)}\bbz_p[\rho(P))].
\]
\end{prop'}
\begin{lem}~\label{l:LinearIndependence}
Let $K$ be a field (only in this Lemma), and $G$ be a subgroup of $\GL_{N_0}(K)$. Suppose $V:=K^{N_0}$ is a completely reducible $G$-module, and $V$ has no non-zero $G$-fixed vector. Then there is no $a\in M_{N_0}(K)$ such that ${\rm Tr}(ag)=1$ for any $g\in G$.
\end{lem}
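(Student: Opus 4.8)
The plan is to argue by contradiction: suppose $a\in M_{N_0}(K)$ satisfies $\Tr(ag)=1$ for every $g\in G$, and extract from this a nonzero $G$-fixed vector in $V=K^{N_0}$. Let $A:=K[G]\subseteq M_{N_0}(K)$ be the $K$-linear span of $G$. Since $G$ is a group and $I_{N_0}\in G$, this is a finite-dimensional unital $K$-subalgebra of $M_{N_0}(K)=\operatorname{End}_K(V)$, so it acts faithfully on $V$; and since $V$ is a completely reducible $G$-module it is a completely reducible $A$-module. A finite-dimensional algebra admitting a faithful completely reducible module is semisimple — its Jacobson radical annihilates every simple constituent of $V$, hence annihilates $V$, hence is zero. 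Consequently $V$, being a faithful semisimple $A$-module, contains a copy of every simple $A$-module (the annihilator of $V$ is the product of the Wedderburn components of $A$ not occurring in $V$, and this is $0$). Thus it will suffice to produce a one-dimensional $A$-module on which every element of $G$ acts as the identity.

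To do this I would introduce the $K$-linear functional $\ell\colon A\to K$, $\ell(x):=\Tr(ax)$. By hypothesis $\ell(g)=1$ for all $g\in G$, and in particular $\ell(I_{N_0})=\Tr(a)=1$. For fixed $h\in G$ the map $x\mapsto\ell(xh)-\ell(x)$ is $K$-linear and vanishes on the spanning set $G$ of $A$ (since $gh\in G$), hence vanishes identically, so $\ell(xh)=\ell(x)$ for all $x\in A$, $h\in G$. Writing a general $y\in A$ as $y=\sum_i c_ig_i$ with $g_i\in G$ and $c_i\in K$, this yields $\ell(xy)=\sum_i c_i\,\ell(xg_i)=\big(\sum_i c_i\big)\ell(x)$ for every $x\in A$.

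The last formula lets me show that the augmentation $\epsilon\colon A\to K$, $\epsilon\big(\sum_i c_ig_i\big):=\sum_i c_i$, is well defined: if $\sum_i c_ig_i=0$ in $A$, then $0=\ell(x\cdot 0)=\big(\sum_i c_i\big)\ell(x)$ for all $x\in A$, and taking $x=I_{N_0}$ with $\ell(I_{N_0})=1\neq0$ forces $\sum_i c_i=0$. Then $\epsilon$ is $K$-linear and multiplicative on $G$ (as $\epsilon(gh)=1=\epsilon(g)\epsilon(h)$), hence an algebra homomorphism $A\to K$ with $\epsilon|_G\equiv 1$; the associated one-dimensional $A$-module $K_\epsilon$ is simple and $G$ acts on it trivially. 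By the semisimplicity–faithfulness discussion above, $K_\epsilon$ embeds into $V$, giving a nonzero $G$-fixed vector in $V$, contrary to hypothesis. The only genuinely delicate point is the well-definedness of $\epsilon$ — this is exactly where the normalization $\Tr(ag)=1$ (rather than, say, $0$) is used — together with the observation that faithfulness plus complete reducibility forces every simple $A$-module, in particular the trivial one just constructed, to appear inside $V$; the rest is formal.
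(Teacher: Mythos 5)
Your proof is correct and follows essentially the same route as the paper: both pass to $A=K[G]$, deduce semisimplicity from the existence of a faithful completely reducible module, and use the trace condition to produce the trivial one-dimensional representation, which faithfulness then forces to occur inside $V$, yielding a forbidden fixed vector. The only cosmetic difference is that you realize this via the augmentation character $\epsilon$ and the fact that every simple module of a semisimple algebra appears in a faithful module, while the paper works dually with the augmentation ideal $\langle g-1 \mid g\in G\rangle$ (shown proper because $\Tr(a\cdot)$ vanishes on it) and an idempotent decomposition.
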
 
\begin{proof}
Suppose to the contrary that there is such $a\in M_{N_0}(K)$. Let $A:=K[G]$ be the $K$-span of $G$. Since $A$ has a faithful semisimple finite dimensional $A$-module, $A$ is a semisimple $K$-algebra. Let 
\[
\afr:=\langle g-1|\h g\in G\rangle
\]
be the ideal generated by $G-1$ in $A$. Since ${\rm Tr}(ag)=1$ for any $g\in G$, we have that ${\rm Tr}(ax)=0$ for any $x\in \afr$. Thus $\afr$ is a proper ideal. On the other hand, since $A$ is semisimple, by the idempotent decomposition there are $\alpha_1,\alpha_2\in A$ such that 
\begin{enumerate}
	\item $\alpha_i^2=\alpha_i$,
	\item $\alpha_1+\alpha_2=1$,
	\item $\afr=A\alpha_1$.
\end{enumerate}
Since $\afr$ is a proper ideal, $\alpha_1\neq 1$. And so there is a non-zero vector $v\in V$ in the kernel of $\alpha_1$, which implies that $\afr v=A \alpha_1 v=0$. Therefore for any $g\in G$ we have $gv=v$ which contradicts the assumption that $V$ does not have a non-zero $G$-fixed point. 
\end{proof}
\begin{proof}[Proof of Proposition~\ref{p:BoundedGenerationOfModules}]
Since $\bbg_s$ is a semisimple group,  $\bbv(\overline{\bbq}_p)$ is a completely reducible $\overline{\bbq}_p[\rho(\bbg_s(\overline{\bbq}_p))]$-module, for any prime $p$. As $\bbg_s(\bbq_p)$ is Zariski-dense in $\bbg_s(\overline{\bbq}_p)$, we have that
$\overline{\bbq}_p[\rho(\bbg_s(\overline{\bbq}_p))]=\overline{\bbq}_p[\rho(\bbg_s(\bbq_p))]$. Therefore $\overline{\bbq}_p[\rho(\bbg_s(\bbq_p))]$ has a faithful completely reducible module, which implies that $\overline{\bbq}_p[\rho(\bbg_s(\bbq_p))]$ is a semisimple algebra. Hence the nilradical of $\bbq_p[\rho(\bbg_s(\bbq_p))]$ is zero, too. Thus $\bbq_p[\rho(\bbg_s(\bbq_p))]$ is a semisimple algebra and $\bbv(\bbq_p)$ is a completely reducible $\bbq_p[\rho(\bbg_s(\bbq_p))]$-module. 

Next we show that $\bbv(\bbq_p)$ does not have a non-zero $\bbg_s(\bbq_p)$-invariant vector. If not, $\bbv(\bbq_p)^{\bbg_s(\bbq)}:=\{v\in \bbv(\bbq_p)|\h \rho(\bbg_s(\bbq))(v)=v\}$ is non-zero.  For $v_0\in\bbv(\bbq_p)^{\bbg_s(\bbq)}$ we have $v_0=\sum_i a_i v_i$ where $a_i\in \bbq_p$ are $\bbq$-linearly independent and $v_i\in \bbv(\bbq)$. Since $\rho$ is defined over $\bbq$, for $g\in \bbg_s(\bbq)$ we have $\rho(g)(v_i)\in \bbv(\bbq)$ for any $i$. Hence $\rho(g)(v_0)=v_0$ implies that for any $i$ we have $\rho(g)(v_i)=v_i$. Since $\bbg_s(\bbq)$ is Zariski-dense in $\bbg_s$, we get that $\rho(\bbg_s(\overline{\bbq}))(v_i)$, which is a contradiction.

Hence Lemma~\ref{l:LinearIndependence} implies that the constant function $\one$ does not belong to the linear span of the analytic functions $\rho_{ij}$ where $\rho_{ij}$ are the entries of $\rho:\bbg_s(\bbq_p)\rightarrow \GL(\bbv(\bbq_p))$ with respect to a $\bbq_p$-basis of $\bbv(\bbq_p)$. So we get the desired result by Corollary~\ref{c:UnifOpenImage}. 
\end{proof}

Next we prove a corollary of Lemma~\ref{l:SimpleModulesOrder}.
\begin{cor}\label{c:NoTrivialFactor}
Let $R\subseteq M_{n_0}(\bbz_p)$ be a $\bbz_p$-subalgebra. Let
\[
0=W_{k_0+1}\subseteq W_{k_0} \subseteq \ldots \subseteq W_1=\bbq_p^{n_0}
\] 
be a composition series of $\bbq_p^{n_0}$ as an $\bbq_p[R]$-module, where $\bbq_p[R]$ is the $\bbq_p$-span of $R$. Suppose $v_i\in M_i:=W_i\cap \bbz_p^{n_0}$ and $\|\overline{v_i}\|:=\inf \{\|v_i+w\|_p|\h w\in W_{i+1}\}\ge q^{-1}$, where $q$ is a power of $p$. Then   
\[
\sum_{i=1}^{k_0} R v_i\supseteq q^{\Theta_{R,\{W_j\}}(1)} \bbz_p^{n_0}.
\]
\end{cor}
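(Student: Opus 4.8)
The plan is to reduce everything to a single index bound, namely $\big[\,\bbz_p^{n_0}:\sum_{i=1}^{k_0}Rv_i\,\big]\le q^{\Theta_{R,\{W_j\}}(1)}$. Indeed $\sum_iRv_i\subseteq\bbz_p^{n_0}$ is a sublattice, and a full-rank sublattice of index $N$ contains $p^{c}\bbz_p^{n_0}$ with $p^{c}\le N$, so such an index bound immediately yields $\sum_iRv_i\supseteq q^{\Theta_{R,\{W_j\}}(1)}\bbz_p^{n_0}$. To get the index bound I would telescope along the composition series $0=W_{k_0+1}\subseteq\cdots\subseteq W_1=\bbq_p^{n_0}$, peeling off one simple factor at a time so that Lemma~\ref{l:SimpleModulesOrder} applies at each stage.

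First I would set convenient integral coordinates. Put $M_i:=W_i\cap\bbz_p^{n_0}$; each $M_i$ spans $W_i$ over $\bbq_p$, hence is saturated in $\bbz_p^{n_0}$, so the successive quotients $M_i/M_{i+1}$ are free $\bbz_p$-modules and there is a $\bbz_p$-basis of $\bbz_p^{n_0}$ adapted to the flag $M_{k_0}\subseteq M_{k_0-1}\subseteq\cdots\subseteq M_1=\bbz_p^{n_0}$. In such a basis $R\subseteq M_{n_0}(\bbz_p)$ is block triangular along the flag; its $i$-th diagonal block $R_i\subseteq M_{d_i}(\bbz_p)$ (with $d_i:=\dim_{\bbq_p}W_i/W_{i+1}$) is exactly the action of $R$ on $M_i/M_{i+1}\cong\bbz_p^{d_i}$, and $W_i/W_{i+1}\cong\bbq_p^{d_i}$ is a simple $\bbq_p[R_i]$-module (it is simple over $\bbq_p[R]$, and submodules are unchanged on passing to the image algebra $\bbq_p[R_i]$). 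Note $R$ preserves every $M_i$, so $Rv_j\subseteq M_i$ whenever $j\ge i$.

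Then, with $P_i:=\sum_{j\ge i}Rv_j\subseteq M_i$, I would observe that $P_i$ has full rank in $W_i$ (descending the series: $\bbq_p[R]\overline{v_{k_0}}=W_{k_0}$ by simplicity, then $\bbq_p[R]v_{k_0-1}+W_{k_0}=W_{k_0-1}$, and so on), so all indices below are finite. Since $P_{i+1}\subseteq P_i\cap M_{i+1}$, a routine telescoping of $[M_1:P_1]=[M_1:P_1+M_2]\,[P_1+M_2:P_1]$, etc., gives
\[
\Big[\,\bbz_p^{n_0}:\sum_{i=1}^{k_0}Rv_i\,\Big]\ \le\ \prod_{i=1}^{k_0}\big[M_i:P_i+M_{i+1}\big].
\]
Because $P_{i+1}\subseteq M_{i+1}$, the image of $P_i$ in $M_i/M_{i+1}$ equals the image of $Rv_i$, i.e. $R_i\bar v_i$ where $\bar v_i$ is the class of $v_i$; hence $[M_i:P_i+M_{i+1}]=[\,\bbz_p^{d_i}:R_i\bar v_i\,]$. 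The hypothesis $\|\overline{v_i}\|\ge q^{-1}$ is exactly $\|\bar v_i\|_p\ge q^{-1}$ in these coordinates, so writing $\bar v_i=p^{\beta_i}v_i'$ with $v_i'$ primitive gives $p^{\beta_i}\le q$, and Lemma~\ref{l:SimpleModulesOrder} applied to the simple $\bbq_p[R_i]$-module $\bbq_p^{d_i}$ gives $[\,\bbz_p^{d_i}:R_iv_i'\,]\le C^{(i)}$ for a constant $C^{(i)}=C^{(i)}(R,\{W_j\})<\infty$. Therefore $[\,\bbz_p^{d_i}:R_i\bar v_i\,]\le p^{\beta_id_i}\,C^{(i)}\le q^{d_i}C^{(i)}$, and multiplying over $i$ yields $[\,\bbz_p^{n_0}:\sum_iRv_i\,]\le q^{\sum_id_i}\prod_iC^{(i)}=q^{n_0}\cdot\Theta_{R,\{W_j\}}(1)=q^{\Theta_{R,\{W_j\}}(1)}$, which completes the argument.

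I do not expect a genuine obstacle: the statement is essentially a bookkeeping enhancement of Lemma~\ref{l:SimpleModulesOrder}. The two points that need care are (a) realizing each composition factor integrally — choosing the flag-adapted $\bbz_p$-basis so that the relevant algebra lands in $M_{d_i}(\bbz_p)$ and Lemma~\ref{l:SimpleModulesOrder} legitimately applies to the factor rather than only to $\bbq_p^{n_0}$ — and (b) the scaling step reducing a general $\bar v_i$ with $\|\bar v_i\|_p\ge q^{-1}$ to a primitive vector, which costs only the harmless factor $q^{d_i}$. Checking that all implied constants depend on $R$ and $\{W_j\}$ alone (not on $q$, $p$, or the $v_i$) is then immediate.
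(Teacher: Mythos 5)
Your argument is correct and is in substance the same as the paper's proof: the paper likewise applies Lemma~\ref{l:SimpleModulesOrder} to each composition factor to obtain $q^{\Theta_{R,W_i}(1)}M_i\subseteq Rv_i+M_{i+1}$ and then combines the levels by induction along the flag $\{M_i\}$. Your only deviation is bookkeeping: you telescope the indices $[M_i:P_i+M_{i+1}]$ and convert the resulting index bound into a containment $p^{c}\bbz_p^{n_0}\subseteq\sum_i Rv_i$ at the very end, instead of inducting on containments directly, which changes nothing essential.
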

\begin{proof}
For any $1\le i\le k_0$, we get a ring homomorphism from $R$ to ${\rm End}_{\bbq_p}(W_i/W_{i+1})$. Let $R_i$ be its image. Notice that $M_i/M_{i+1}$ can be embedded into $W_i/W_{i+1}$ and for any $v\in M_i$ we have $R_i(v+M_{i+1})=Rv+M_{i+1}$. So by Lemma~\ref{l:SimpleModulesOrder} for the ring $R_i$ we have that,
\[
q^{\Theta_{R,W_i}(1)} M_i \subseteq Rv_i+M_{i+1} \subseteq M_i.
\]
By induction on $i$ one can easily see that, for any $1\le i\le k_0$, we have
\[
q^{\Theta_{R,\{W_j\}}(1)} \bbz_p^{n_0}=q^{\Theta_{R,\{W_j\}}(1)} M_1\subseteq \sum_{j=1}^i Rv_j+M_{i+1}.
\]
\end{proof}

\begin{lem}\label{l:OneFactorLargePrime}
Let $R\subseteq M_{n_0}(\bbz_S)$ be a $\bbz_S$-subalgebra. Assume the $\bbq$-span $\bbq[R]$ of $R$ is a semisimple algebra. Suppose $p$ is a large prime number (depending on $R$), and $W$ is a composition factor of $\bbq_p^{n_0}$; that means there are two $\bbq_p[R]$-submodules $W_1\subseteq W_2$ such that $W= W_2/W_1$ is a simple $\bbq_p[R]$-module. Then for any $v\in W_{\bbz_p}:=(W_2\cap \bbz_p^{n_0})+W_1/W_1$ we have
$
\bbz_p[R]v=W_{\bbz_p}
$
if $v\not\in p W_{\bbz_p}$.
\end{lem}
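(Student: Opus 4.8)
The statement is essentially an integral, "large-prime-uniform" version of Lemma~\ref{l:SimpleModulesOrder}, phrased for a single composition factor $W = W_2/W_1$ rather than for the ambient module. The plan is to reduce to Lemma~\ref{l:SimpleModulesOrder} applied to $W_{\bbz_p}$ as an $R$-module, and then to show that the implied constant in that lemma, which a priori depends on $p$ (through the $\bbz_p$-algebra $\bbz_p[R]$ acting on the lattice $W_{\bbz_p}$), is in fact bounded independently of $p$ once $p$ is large.

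First I would set up the reduction-mod-$p$ picture. Since $\bbq[R]$ is semisimple, for all but finitely many $p$ the algebra $\f_p \otimes_{\bbz_S} R$ inside $M_{n_0}(\f_p)$ is still semisimple (the discriminant of a suitable $\bbz_S$-order is a unit mod $p$ for large $p$), and the composition series $W_1 \subseteq W_2$ of $\bbq_p^{n_0}$ can be chosen to come from a fixed composition series over $\bbq$, base-changed to $\bbq_p$; here one uses that for large $p$ the simple factors of $\bbq_p[R]$ are obtained from those of $\bbq[R]$ by extension of scalars, so $W = W_2/W_1$ is the base change of a fixed simple $\bbq[R]$-module and remains simple over $\bbq_p$. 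Then $\bbq_p^{n_0}$ is a simple $\bbq_p[R]$-module when localized at $W$, i.e. $W$ itself is a simple $\bbq_p[R']$-module for $R' := $ image of $R$ acting on $W$, and $W_{\bbz_p}$ is a full $\bbz_p$-lattice in $W$ stable under $R$. Now Lemma~\ref{l:SimpleModulesOrder}, applied with $W$ in place of $\bbq_p^{n_0}$ and $W_{\bbz_p}$ in place of $\bbz_p^{n_0}$, gives $\sup_{v \notin p W_{\bbz_p}} [W_{\bbz_p} : Rv] < \infty$; the content of the present lemma is that this supremum is actually $1$, i.e. $\bbz_p[R]v = W_{\bbz_p}$ whenever $v \notin p W_{\bbz_p}$.

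To upgrade the finite-index bound to an equality for large $p$, I would argue as follows. The Jacobson radical of $\f_p[R]$ (acting on $\f_p \otimes W_{\bbz_p}$) is zero for large $p$ by the semisimplicity above, and $\f_p \otimes W = W_{\bbz_p}/pW_{\bbz_p}$ is a simple $\f_p[R]$-module. Hence for any $\bar v \neq 0$ in $W_{\bbz_p}/pW_{\bbz_p}$ we have $\f_p[R]\bar v = W_{\bbz_p}/pW_{\bbz_p}$, i.e. $Rv + pW_{\bbz_p} = W_{\bbz_p}$. Since $W_{\bbz_p}$ is $p$-adically complete and $Rv$ is a complete $\bbz_p$-submodule (it is a finitely generated $\bbz_p$-module, being contained in the lattice $W_{\bbz_p}$), the Frattini/Nakayama argument — exactly as used in the last two sentences of the proof of Lemma~\ref{l:SimpleModulesOrder} — gives $\bbz_p[R]v \supseteq Rv = W_{\bbz_p}$, and the reverse inclusion is trivial. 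This finishes the proof.

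The main obstacle is the uniformity in $p$: one must make sure that the exceptional set of primes (where $\f_p[R]$ fails to be semisimple, or where the composition factors of $\bbq_p[R]$ differ from base changes of those of $\bbq[R]$) is finite and depends only on $R$, not on $W$ or $v$. This is handled by a standard spreading-out argument over $\Spec \bbz_S$ — choose a $\bbz_S$-order and a Wedderburn-type idempotent decomposition of $\bbq[R]$, which is defined over $\bbz[1/M]$ for some integer $M$, and then require $p \nmid M$. Everything else is the Nakayama argument already present in the paper. As in Lemma~\ref{l:SimpleModulesOrder}, no implied constant even appears in the final statement once $p$ is large, which is why the conclusion is a clean equality.
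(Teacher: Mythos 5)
The Nakayama step at the end of your argument is fine: once one knows that $Rv+pW_{\bbz_p}=W_{\bbz_p}$ for every $v\notin pW_{\bbz_p}$, the equality $\bbz_p[R]v=W_{\bbz_p}$ follows because $W_{\bbz_p}$ is a finitely generated $\bbz_p$-module. But that input --- equivalently, the simplicity of $W_{\bbz_p}/pW_{\bbz_p}$ over the image of $R$ --- is the entire content of the lemma, and your proposal asserts it rather than proves it. Semisimplicity of $\f_p\otimes_{\bbz_S}R$ for large $p$ only gives that $W_{\bbz_p}/pW_{\bbz_p}$ is a \emph{semisimple} module (it is a subquotient of $\f_p^{n_0}$, since the lattices $W_i\cap\bbz_p^{n_0}$ are saturated); it does not give simplicity, and without simplicity the statement ``every vector outside $pW_{\bbz_p}$ generates'' fails (in an isotypic module such as $S\oplus S$ a nonzero vector may generate only a diagonal copy of $S$). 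Moreover, the auxiliary claim you invoke to get simplicity is false: the simple factors of $\bbq_p[R]$ are in general \emph{not} base changes of simple factors of $\bbq[R]$ --- if $\bbq[R]$ is a number field $K$, then $\bbq_p[R]=K\otimes_{\bbq}\bbq_p$ splits into the completions $K_{\pfr}$, $\pfr\mid p$, and division-algebra parts also split at almost all $p$ --- and in any case the composition series $W_1\subseteq W_2$ is part of the hypothesis, not something you may choose. (The appeal to Lemma~\ref{l:SimpleModulesOrder} is also a red herring here: its bound comes from a compactness argument at a fixed $p$ and contributes nothing uniform in $p$.)

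What is missing is exactly the structural input the paper uses: for $p$ large depending only on $R$, one has $\bbq_p[R]\simeq\bigoplus_i M_{n_i}(K_i)$ with $K_i/\bbq_p$ unramified, and $\bbz_p[R]$ is identified with the maximal order $\bigoplus_i M_{n_i}(\ocal_{K_i})$, compatibly with the lattice $\bbz_p^{n_0}$ (this is the paper's appeal to Wedderburn and Artin--Brauer--Noether, together with the fact that only finitely many primes ramify in the centers and division-algebra parts and that a $\bbz_S$-order is maximal at almost all primes). Then any full lattice in a simple factor, in particular $W_{\bbz_p}$, is isomorphic to the column lattice $\ocal_{K_i}^{n_i}$, whose reduction mod $p$ is the simple $M_{n_i}(\ocal_{K_i}/p\ocal_{K_i})$-module; note that unramifiedness is essential, since if $p$ ramified in $K_i$ a vector outside $pW_{\bbz_p}$ could lie in $\pfr_iW_{\bbz_p}$ and would not generate. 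With this input your mod-$p$-plus-Nakayama argument (which is then essentially the paper's last sentence) goes through; as written, however, the proposal has a genuine gap at its central step.
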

\begin{proof}
By Wedderburn theorem and Artin-Brauer-Noether theorem, for large enough $p$ we have that 
\begin{enumerate}
	\item $\bbq_p[R]\simeq \bigoplus_i M_{n_i}(K_i)$ where $K_i$ are finite extensions of $\bbq_p$,
	\item under the above identification $\bbz_p[R]$ gets identified with $\bigoplus_i M_{n_i}(\ocal_{K_i})$,
	\item under the above identification $\bbz_p^{n_0}$ naturally gets identified with $\bigoplus_i \ocal_{K_i}^{n_i}$.   
\end{enumerate} 
Hence $W_{\bbz_p}$ is isomorphic to $\ocal_{K_i}^{n_i}$ where $\bbz_p[R]$ acts via $M_{n_i}(\ocal_{K_i})$. It is clear that for any unit vector $v\in \ocal_{K_i}^{n_i}$ we have $M_{n_i}(\ocal_{K_i})v=\ocal_{K_i}^{n_i}$.
\end{proof}

\begin{cor}\label{c:ManyFactorsLargePrime}
Let $R\subseteq M_{n_0}(\bbz_S)$ be a $\bbz_S$-subalgebra. Assume the $\bbq$-span $\bbq[R]$ of $R$ is a semisimple algebra. Let
\[
0=W_{k_0+1}\subseteq W_{k_0} \subseteq \ldots \subseteq W_1=\bbq_p^{n_0}
\] 
be a composition series of $\bbq_p^{n_0}$ as an $\bbq_p[R]$-module, where $\bbq_p[R]$ is the $\bbq_p$-span of $R$. Suppose $v_i\in M_i:=W_i\cap \bbz_p^{n_0}$ and $\|\overline{v_i}\|:=\inf \{\|v_i+w\|_p|\h w\in W_{i+1}\}=1$. Then, for large enough $p$, we have
\[
\bbz_p^{n_0}=\sum_{i=1}^{k_0}Rv_i.
\]
\end{cor}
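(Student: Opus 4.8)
The plan is to deduce Corollary~\ref{c:ManyFactorsLargePrime} from Corollary~\ref{c:NoTrivialFactor} and Lemma~\ref{l:OneFactorLargePrime} in essentially the same inductive fashion as the proof of Corollary~\ref{c:NoTrivialFactor}, but now exploiting that for large $p$ each composition factor is handled with no loss of $p$-power. First I would fix the composition series $0=W_{k_0+1}\subseteq W_{k_0}\subseteq\cdots\subseteq W_1=\bbq_p^{n_0}$ and set $M_i:=W_i\cap\bbz_p^{n_0}$; note $M_{k_0+1}=0$ and $M_1=\bbz_p^{n_0}$. For each $i$, the quotient $W_i/W_{i+1}$ is a simple $\bbq_p[R]$-module, and the image $\overline{M_i}$ of $M_i$ in $W_i/W_{i+1}$ is an $R$-stable $\bbz_p$-lattice, i.e.\ the "$W_{\bbz_p}$" of Lemma~\ref{l:OneFactorLargePrime}. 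The hypothesis $\|\overline{v_i}\|=1$ says precisely that the image $\bar v_i$ of $v_i$ in that quotient lattice is not in $p$ times the lattice; hence by Lemma~\ref{l:OneFactorLargePrime}, for $p\gg_R 1$ we get $\bbz_p[R]\bar v_i = \overline{M_i}$, equivalently
\[
R v_i + M_{i+1} \supseteq M_i .
\]

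Next I would run a downward (or upward) induction on $i$ to show $\sum_{j\ge i} R v_j + M_{i-1}\cdot 0 \ \text{—}$ more cleanly, an upward induction establishing $M_1 = \sum_{j=1}^{k_0} R v_j$. Concretely, I claim $M_i\subseteq \sum_{j=i}^{k_0} R v_j$ for every $i$, by descending induction on $i$: the base case $i=k_0+1$ is trivial since $M_{k_0+1}=0$, and the base case $i=k_0$ follows since $M_{k_0}$ is itself simple-lattice so $Rv_{k_0}=M_{k_0}$ by Lemma~\ref{l:OneFactorLargePrime}. For the inductive step, from $Rv_i+M_{i+1}\supseteq M_i$ and the inductive hypothesis $M_{i+1}\subseteq\sum_{j=i+1}^{k_0}Rv_j$ we obtain $M_i\subseteq Rv_i + \sum_{j=i+1}^{k_0}Rv_j = \sum_{j=i}^{k_0}Rv_j$. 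Taking $i=1$ gives $\bbz_p^{n_0}=M_1\subseteq\sum_{i=1}^{k_0}Rv_i\subseteq\bbz_p^{n_0}$, hence equality. One small point to check: the "large $p$" in Lemma~\ref{l:OneFactorLargePrime} depends only on $R$ (via the Wedderburn and Artin--Brauer--Noether data of $\bbq[R]$), not on the particular composition factor, since there are only finitely many isomorphism types of composition factors of $\bbq_p^{n_0}$ over $\bbq_p[R]$; so a single threshold $p_0(R)$ works uniformly over all $i$.

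I do not expect a genuine obstacle here; the content is entirely in Lemma~\ref{l:OneFactorLargePrime} (where the semisimplicity of $\bbq[R]$ and largeness of $p$ make $\bbz_p[R]$ a product of matrix algebras over unramified-at-worst orders, so a unit vector generates), and the present statement is just the bookkeeping that stacks these factor-by-factor statements with no accumulated $p$-power loss. The only care needed is the uniformity of the prime bound, which I addressed above, and making sure the filtration hypothesis $\|\overline{v_i}\|=1$ is read correctly as "$\bar v_i\notin p\,\overline{M_i}$" so that Lemma~\ref{l:OneFactorLargePrime} applies verbatim to each graded piece.
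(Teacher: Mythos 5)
Your proposal is correct and follows exactly the route the paper intends: the paper's proof is the one-line remark that the statement is a direct corollary of Lemma~\ref{l:OneFactorLargePrime}, and your argument simply makes explicit the factor-by-factor application of that lemma (reading $\|\overline{v_i}\|=1$ as $\bar v_i\notin p\,\overline{M_i}$) followed by the same descending induction used for Corollary~\ref{c:NoTrivialFactor}, with the prime bound uniform since it depends only on $R$. Nothing further is needed.
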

\begin{proof}
It is a direct corollary of Lemma~\ref{l:OneFactorLargePrime}.
\end{proof}

\subsection{Getting a $p$-adically large vector in a submodule in boundedly many steps.}\label{ss:p-adicLargeBasis}
The main goal of this short section is proving a key lemma (Lemma~\ref{l:LargePAdic}). Using this Lemma, we will be able to get a $\bbq_p$-basis of $\bbv(\bbq_p)$ consisting of {\em large} vectors in {\em boundedly many steps}.  

\begin{lem}\label{l:LargePAdic}
Let $\Omega$, $\Gamma$ and $\bbg$ be as above. In particular, $\bbg=\bbg_s\ltimes \bbv$ where $\bbg_s$ is a semisimple $\bbq_p$-group and $\bbv$ is a $\bbq_p$-vector group; and for some non-negative integer $k_p\ll_{\Omega} 1$ (which is zero for large enough $p$) $Q_p[p^{k_p}]=K_p\ltimes V_p$ where $Q_p$ is the closure of $\Gamma$ in $\bbg(\bbq_p)$, $Q_p[p^{k_p}]:=Q_p\cap \GL_{N_0}(\bbz_p)[p^{k_p}]$, $K_p:=\bbg_s(\bbq_p)\cap \GL_{N_0}(\bbz_p)[p^{k_p}]$, and $V_p:=\bbv(\bbq_p)\cap \GL_{N_0}(\bbz_p)[p^{k_p}]$ (see Section~\ref{ss:InitialReductions}).

Let $\bbv'\subseteq \bbv$ be a non-zero $\bbq_p$-subgroup which is $\bbg_s$-invariant. For any $0<\vare\ll_{\Omega,\bbv'} 1$, there are $\delta>0$ and a positive integer $C$ such that the following holds: 

Suppose $p$ is a prime number, $Q=p^n$, and $n\vare\gg_{\Omega,\bbv'} 1$.

If $\Pfr_{Q}(\delta,A,l)$ holds and $\pi_{Q}(\Gamma[q_2])\subseteq \pi_Q(\prod_C A)\pi_Q(V_p\cap \bbv'(\bbq_p))$ where $q_2=p^{n_2}$ and $n_2\le \vare^2 n$, then 
\be\label{e:LargePAdic}
\textstyle\{v\in \bbv'(\bbq_p)|\h \|v\|_p> Q^{\vare},\h \pi_Q(v)\in \pi_Q(\prod_{4C} A)\}
\ee
is non-empty.
\end{lem}
\begin{proof}
Since $\bbv'$ is $\bbg_s$-invariant and it commutes with $\bbv$, $\bbv'$ is a normal subgroup of $\bbg$. On the other hand, $\GL_{N_0}(\bbz_p)[p^{k_p}]$ is a normal subgroup of $\GL_{N_0}(\bbz_p)$ and $Q_p\subseteq \GL_{N_0}(\bbz_p)$; and so $V_p\cap \bbv'(\bbq_p)=\bbv'(\bbq_p)\cap \GL_{N_0}(\bbz_p)[p^{k_p}]$ is a normal subgroup of $Q_p$. Let $\eta_{q_1}:\pi_{q_1}(Q_p)\rightarrow \pi_{q_1}(Q_p)/\pi_{q_1}(V_p\cap \bbv'(\bbq_p))$ be the projection map where $q_1=p^{n_1}$ and $n_1=\lfloor\vare n\rfloor$. So by the assumption there is a section 
\[
s:\eta_{q_1}(\pi_Q(Q_p[q_2]))\rightarrow \pi_{q_1}(Q_p)
\]
such that the image of $s$ is a subset of $\pi_{q_1}(\prod_C A)$ (by section we mean $\eta_{q_1}(s(x))=x$). Notice that 
\[
\eta_{q_1}(s(x_1^{-1}x_2)^{-1}s(x_1)s(x_2^{-1}))=\eta_{q_1}(s(x_1^{-1}x_2))^{-1}\eta_{q_1}(s(x_1))\eta_{q_1}(s(x_2^{-1}))=(x_1^{-1}x_2)^{-1}(x_1)(x_2^{-1})=1.
\]
Hence $s(x_1^{-1}x_2)^{-1}s(x_1)s(x_2^{-1})\in \pi_{q_1}(\prod_{3C}A)\cap \pi_{q_1}(V_p\cap \bbv'(\bbq_p)).$ Now if we assume to the contrary that the set in (\ref{e:LargePAdic}) is empty, then we get that $\pi_{q_1}(\prod_{3C}A)\cap \pi_{q_1}(V_p\cap \bbv'(\bbq_p))=\{1\}$. And so $s$ is a group homomorphism. So $H:={\rm Im}(s)$ is a subgroup of $\pi_{q_1}(Q_p)$, and $[\pi_{q_1}(Q_p):H]\ge |\pi_{q_1}(V_p\cap \bbv'(\bbq_p))|\ge p^{\Theta_{\bbv'}(n\vare)}$. By Proposition~\ref{p:EscapeModQ} we have that
\be\label{e:EscapeQ}
\pcal^{(2l)}_{\pi_{q_1}(\Omega)}(H)\le [\pi_{q_1}(\Gamma):H]^{-\Theta_{\Omega}(1)}\le p^{-\Theta_{\Omega,\bbv'}(n\vare)}.
\ee
On the other hand, if $x\in \pi_{q_1}(\prod_{2C} A)\cap \pi_{q_1}(\Gamma[q_2])$, then 
\[\textstyle
x^{-1}s(\eta_{q_1}(x)))\in \pi_{q_1}(\prod_{4C} A)\cap \pi_{q_1}(V_p\cap\bbv'(\bbq_p)).
\]
So by the contrary assumption we have that $x\in H$, which means
\be\label{e:InTheSubgroupH}
\textstyle
\pi_{q_1}(\prod_{2C} A)\cap \pi_{q_1}(\Gamma[q_2])\subseteq H.
\ee
By the assumption we have $\pcal^{(l)}_{\Omega}(A)>Q^{-\delta}$ (where $\delta$ is small positive number to be determined later). So 
\[
\pcal^{(l)}_{\Omega}(A\cap \pi_{q_2}^{-1}(x'))> Q^{-\delta} |\pi_{q_2}(\Gamma)|^{-1}= p^{-\Theta_{\Omega}(n\vare^2)}=p^{-n(\delta+\Theta_{\Omega}(\vare^2))},
\]
 for some $x'\in\pi_{q_2}(\Gamma)$. Hence $\pcal^{(2l)}(A\cdot A\cap \Gamma[q_2])> p^{-n\Theta_{\Omega}(\vare^2)}$ for small enough $\delta$. Therefore by (\ref{e:InTheSubgroupH}) and (\ref{e:EscapeQ}) we have
\[
-n\Theta_{\Omega}(\vare^2)<-n\Theta_{\Omega,\bbv'}(\vare),
\]  
which is a contradiction if $0<\vare\ll_{\Omega,\bbv'} 1$.
\end{proof}

\subsection{Proof of super-approximation: the $p$-adic case.}\label{ss:AbelianU}
Let us recall that $\bbg=\bbg_s\ltimes \bbv$ where $\bbg_s$ is a semisimple $\bbq_p$-group and $\bbv$ is a $\bbq_p$-vector group; let $\rho:\bbg_s\rightarrow \mathbb{GL}(\bbv)$ be the $\bbq_p$-representation which gives us the action of $\bbg_s$ on $\bbv$. Let us recall that, since $\bbg$ is perfect, no non-zero vector of $\bbv(\bbq_p)$ is $\rho(\bbg_s(\bbq_p))$-invariant. Let $\bbv_i$ be $\bbq_p$-subgroups of $\bbv$ such that 
\begin{enumerate}
	\item $\bbv_i(\bbq_p)$ are irreducible $\bbg_s(\bbq_p)$-modules. 
	\item $\bbv=\oplus_{i=1}^{k_0} \bbv_i$. 
\end{enumerate}
Let $P[p^k]:=\{g\in\bbg_s(\bbq_p)|\h \|g-1\|_p\le p^{-k}\}$ for any non-negative integer $k$. 

Inductively we do the following simultaneously:
\begin{enumerate}
	\item construct a permutation $\sigma:\{1,\ldots,k_0\}\rightarrow \{1,\ldots,k_0\}$,
	\item find vectors $v_j\in W_j:=\bigoplus_{i\in\{1,\ldots, k_0\}\setminus \{\sigma(1),\ldots,\sigma(j-1)\}}\bbv_{\sigma(i)}(\bbq_p)$,
\end{enumerate}
such that the following holds:
\begin{enumerate}
	\item for small enough $\delta$, large enough $C$ (depending on $\vare$, $\Omega$, and $\bbv_i$) and any $1\le j\le k_0$ we have
\be\label{e:BG}\textstyle
\pi_Q\left(\Gamma[p^{n (\vare^{\Theta_{\bbg}(1)})}]\right)\subseteq \pi_Q(\prod_{\Theta(C)} A)\pi_Q(V_p\cap W_j);
\ee
	\item $v_j \in W_j$;
	\item $\overline{v}_1,\ldots,\overline{v}_j$ generate $W_1/W_{j+1}$ as a $\bbg_s(\bbq_p)$-module, where $\overline{v}_i$ is the projection of $v_i$.
	\item $\log_p\|\overline{v}_j\|_p\gg n \vare^{\Theta_{\bbg}(1)}$;
	\item $\sum_{i=1}^j\left(\sum_{\Theta_{\bbg}(1)}\rho(P[p^{n (\vare^{\Theta_{\bbg}(1)})}])(v_i)-\sum_{\Theta_{\bbg}(1)}\rho(P[p^{n (\vare^{\Theta_{\bbg}(1)})}])(v_i)\right)+W_{j+1}\supseteq p^{n (\vare^{\Theta_{\bbg}(1)})} V_p$. 
\end{enumerate}
By \cite[Theorem 36]{SGsemisimple}, if $\delta$ is small enough and $C$ is large enough, (\ref{e:BG}) holds for $j=1$. Now we suppose $(\ref{e:BG})$ holds for $1\le j\le j_0$ and we have already found $v_1,\ldots,v_{j_0-1}$ as desired. We will define $\sigma(j_0)$ and find $v_{j_0}$ so that the above properties hold. 

Since (\ref{e:BG}) holds for $j=j_0$, by Lemma~\ref{l:LargePAdic} there is $v_{j_0}\in W_{j_0}$ such that
\be\label{e:LargeVectors}\textstyle
\log_p\|v_{j_0}\|_p\gg n\vare^{\Theta_{\bbg}(1)},\h{\rm and}\h \pi_Q(v_{j_0})\in \pi_Q(\prod_{\Theta(C)}A).
\ee
Since $\log_p\|v_{j_0}\|_p\gg n\vare^{\Theta_{\bbg}(1)}$, the projection $\overline{v}_{j_0}$ of $v_{j_0}$ to $\bbv_{\sigma(j_0)}$ for some 
\[
\sigma(j_0)\in \{1,\ldots, k_0\}\setminus \{\sigma(1),\ldots,\sigma(j_0-1)\}
\]
 has length at least $\Theta_{\bbv_i}(p^{\Theta(n\vare^{\Theta_{\bbg}(1)})})$. 
 Now by Proposition~\ref{p:BoundedGenerationOfModules}, and Corollaries~\ref{c:NoTrivialFactor} and~\ref{c:ManyFactorsLargePrime} one gets all the mentioned properties except the first one. 
 
By \cite[Theorem 36]{SGsemisimple} we have that for small enough $\delta$ and large enough $C$
\be\label{e:SSpart}
\textstyle
\pi_Q(P[p^{n\vare^{\Theta(1)}}]) \subseteq \pi_Q(\prod_C A) \pi_Q(V_p).
\ee 
To simplify our notation let us drop the constant power of $\vare$ in the rest of the argument. Hence by (\ref{e:LargeVectors}) we have
\be\label{e:UnipotentPart}
\textstyle
\pi_Q(p^{n\vare}V_p)\subseteq \pi_Q(\prod_{\Theta(C)} A) \pi_Q(V_p \cap W_{j}).
\ee
For any $g_1,g_2\in P[p^{n\vare}]$ and $v_1,v_2\in V_p$, we have 
\[
[(g_1,v_1),(g_2,v_2)]\in P[p^{2n\vare}]\ltimes (p^{n\vare} V_p). 
\]
Hence by (\ref{e:SSpart}) and (\ref{e:UnipotentPart}) we have
\be\label{e:Commutators}
\textstyle
\pi_Q(\{[g_1,g_2]|\h g_1,g_2\in P[p^{n\vare}]\})\subseteq \pi_Q(\prod_{\Theta(C)} A)\pi_Q(V_p\cap W_j).
\ee
Let $X:=\{[g_1,g_2]|\h g_1,g_2\in P[p^{n\vare}]\}$. Using properties of the finite logarithmic maps (see~\cite[Lemma 34]{SGsemisimple}), we have 
\[
\textstyle
(\prod_{\dim \bbg_s} X \cap P[p^{kn\vare+\Theta(1)}]) P[p^{2kn\vare}]=P[p^{kn\vare+\Theta(1)}],
\]
for any integer $k\ge 2$. Therefore $\pi_Q(\prod_{\Theta(-\log \vare)} X)\supseteq\pi_Q(P[p^{n\Theta(\vare)}])$. Hence
\be\label{e:CongruenceSSpart}
\textstyle
\pi_Q(P[p^{n\Theta(\vare)}])\subseteq \pi_Q(\prod_{\Theta(C)} A) \pi_Q(V_p\cap W_j).
\ee
And so by (\ref{e:UnipotentPart}) and (\ref{e:CongruenceSSpart}) we have
\[
\textstyle
\pi_Q(\Gamma[p^{n\Theta(\vare)}]) \subseteq \pi_Q(\prod_{\Theta(C)} A) \pi_Q(V_p\cap W_j),
\]
which finishes the proof.

\section{Appendix A: quantitative open image for $p$-adic analytic maps.}
The main goal of this appendix is to prove that one can get a {\em large open set} by adding the image of an analytic function in controlled number of times (see Proposition~\ref{p:QuanOpenImage}).   
\begin{prop}\label{p:QuanOpenImage}
Let $K$ be a characteristic zero non-Archimedean local field. Let $\ocal:=\ocal_K$ be its ring of integers, and $\pfr$ be a uniformizing element. Let $U\subseteq K^{n_0}$ be a neighborhood of the origin, and $F:=(f_1,\ldots,f_{d_0}):U\rightarrow K^{d_0}$ be an analytic function. Suppose the constant function $\one$ is not in the $K$-span of $f_i$. Then for any $l\gg_F 1$ we have
\[\textstyle
\sum_{\Theta_F(1)} F(\pfr^l \ocal^{n_0})-\sum_{\Theta_F(1)} F(\pfr^l \ocal^{n_0})\supseteq (K\text{-span of}\h F(U))\cap \pfr^{\Theta_F(l)}\ocal^{d_0}.
\] 
\end{prop}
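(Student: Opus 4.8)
The plan is to reduce the statement to a linear one by Taylor expanding $F$ to first order on a small enough ball, and then to invoke the hypothesis that $\one$ is not in the $K$-span of the $f_i$ in order to produce a genuinely $d_0$-dimensional (after restricting to the span $W$ of $F(U)$) image. First I would choose $l_0$ large enough (depending on $F$, i.e. on the radius of convergence and the coefficients of the $f_i$) so that on $B:=\pfr^l\ocal^{n_0}$ with $l\ge l_0$ each $f_i$ agrees with its affine part: $f_i(x)=f_i(0)+L_i(x)+E_i(x)$ where $L_i$ is linear and the error satisfies $\|E_i(x)\|\le \pfr^{l+1}$-type bounds, in fact $\|E_i(x)\| \le \|x\|^2/(\text{const})$, so that $E_i(x)\in \pfr^{2l-O(1)}\ocal$. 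Writing $F(x)-F(y) = L(x-y) + (E(x)-E(y))$ and noting $L(x-y)$ ranges over $L(\pfr^l\ocal^{n_0})$ while the error term lies in a much deeper congruence subgroup, I reduce to: (a) understanding the $\ocal$-module $L(\pfr^l\ocal^{n_0})$ inside $W\cap K^{d_0}$, and (b) a bootstrapping/telescoping argument to absorb the error terms.

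For step (a): the key linear-algebra input is that $W:=K\text{-span of }F(U)$ equals the $K$-span of $\{F(0)\}\cup\{L_i : i\}$-images appropriately — more precisely one checks that the affine subspace $F(0)+\mathrm{Im}(L)$ spans $W$ as a $K$-vector space, which is where the non-degeneracy hypothesis enters: if $\one\notin K\text{-span}(f_i)$ then the functions $x\mapsto f_i(x)$ together with the constant $1$ are such that the ``linearized'' differences do not all collapse, and after taking $F(x)-F(y)$ the constant $F(0)$ cancels, leaving $L(x-y)$, whose image must span a space of the right dimension — indeed one shows $\dim_K \mathrm{Im}(L) = \dim_K W$ (not $\dim_K W - 1$), because a drop in dimension would force a linear relation $\sum c_i f_i = $ const, i.e. $\one\in K\text{-span}(f_i)$. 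Then $L(\pfr^l\ocal^{n_0})$ is a full-rank $\ocal$-lattice in $\mathrm{Im}(L)=W$, hence contains $\pfr^{\Theta_F(l)}(\ocal^{d_0}\cap W)$ by bounding elementary divisors of $L$ in terms of $F$; the factor $\pfr^{\Theta_F(l)}$ accounts for $\pfr^l$ scaling plus the bounded denominators of $L$.

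For step (b), the bootstrapping: having $\sum_2 F(B)-\sum_2 F(B)\supseteq \pfr^{c_1 l}(\ocal^{d_0}\cap W)$ modulo the error $\pfr^{2l-O(1)}$ corrections, I would argue that since the error terms themselves lie in $\pfr^{2l-O(1)}\ocal^{d_0}$, which for $l$ large is $\subseteq \pfr^{c_1 l'}(\ocal^{d_0}\cap W)$ for a suitable smaller ball — so a finite iteration (a $p$-adic Newton / successive-approximation scheme, as in standard proofs that analytic maps with surjective differential are open, but made quantitative) closes everything up after $\Theta_F(1)$ many additions and subtractions: at each stage the ``defect'' is pushed into a deeper congruence level, and since we only need to reach level $\Theta_F(l)$, a bounded number of steps suffices. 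I expect the main obstacle to be making the error-absorption genuinely uniform and bounded in the number of terms: one must verify that the quantitative open-mapping iteration terminates after $O_F(1)$ steps rather than $O(l)$ steps, which requires that each correction step gains a definite fraction of the precision (a fixed-ratio contraction), and that all implied constants depend only on $F$ (its coefficient denominators and radius of convergence) and not on $l$. This is exactly the place where one invokes the quadratic decay of the Taylor error together with the fixed elementary-divisor bound from step (a).
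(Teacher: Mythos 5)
There is a genuine gap, and it sits at the heart of your step (a). You claim that $\dim_K \mathrm{Im}(L)=\dim_K W$, arguing that a rank drop of the linearization would force $\sum_i c_i f_i=\mathrm{const}$ and hence $\one\in K\text{-span}(f_i)$. This inference is false: a rank drop of $L$ only gives a relation $\sum_i c_iL_i=0$ among the \emph{linear parts} of the $f_i$ at the base point, i.e.\ $\sum_i c_if_i$ has vanishing first-order term there, which says nothing about it being constant. Concretely, take $n_0=1$, $d_0=3$, $F(x)=(x,x^2,x^3)$ on a neighborhood of $0$: here $\one$ is not in the span of $x,x^2,x^3$ and $W=K^3$, yet $dF$ has rank $1$ at every point, so $\mathrm{Im}(L)$ is a line. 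More fundamentally, whenever $n_0<\dim_K W$ no argument based on the differential of $F$ at a single point (or on $F(B)-F(B)$, a $2n_0$-parameter family) can produce an open subset of $W$; the quadratic Taylor error cannot be "absorbed" to fix this, because the obstruction is dimensional, not a matter of precision. Your step (b) also conflates two different counts: the Newton/Hensel iteration refines a single preimage point and may run infinitely often (convergence by completeness); what must be bounded is the number of \emph{summands}, and your scheme gives no mechanism for that once (a) fails.

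The paper's proof addresses exactly this difficulty by increasing the number of summands before linearizing: it first reduces to a curve, restricting along $r(t)=\xbf_0+\pfr(t,t^s,\ldots,t^{s^{n_0-1}})$ so that $\one,f_1\circ r,\ldots,f_d\circ r$ stay independent (Lemma~\ref{l:ManifoldToCurve}), and then studies $\wt{F}(x_1,\ldots,x_{m_0})=\sum_{i=1}^{m_0}F(x_i)$ for $m_0$ large depending on $F$. A divided-difference (Vandermonde-type) computation gives the key nondegeneracy bound $N(d\wt{F}(\xbf))\gg\prod_{i<j}|x_i-x_j|$ (Lemma~\ref{l:NondegenerateCurve}); choosing the $x_i$ in $\pfr^l\ocal$ pairwise separated by $|\pfr^{2l}|$ makes $d\wt{F}$ quantitatively surjective onto $W$, and only then does the quantitative open-mapping/Hensel argument (Lemmas~\ref{l:Hensel}, \ref{l:OpenFunction}) apply, yielding Corollary~\ref{c:NondegenerateManifoldCase} and then the proposition via the linear embedding $L$ onto the span. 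It is this passage to $m_0=\Theta_F(1)$ summands, justified by the hypothesis $\one\notin K\text{-span}(f_i)$ through the higher derivatives (not the first derivative), that your proposal is missing; without it the statement you try to prove in step (a) is simply false.
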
 

In fact, we prove the following refinement of Proposition~\ref{p:QuanOpenImage}. In Proposition~\ref{p:QuanOpenImage'}, we pinpoint how the implied constants depend on the given analytic functions (with a few extra technical assumptions). This type of control helps us to prove  Corollary~\ref{c:UnifOpenImage}. Corollary~\ref{c:UnifOpenImage} is the only result in the appendix that is needed in the main part of the article. In Corollary~\ref{c:UnifOpenImage}, we deal with polynomial maps that are defined over a number field, and prove that their images in all the non-Archimedean completions are {\em uniformly large}.  

\begin{prop'}\label{p:QuanOpenImage'}
Let $K$ be a characteristic zero non-Archimedean local field. Let $\ocal:=\ocal_K$ be its ring of integers, and $\pfr$ be a uniformizing element. Let $F:=(f_1,\ldots,f_{d_0}):\ocal^{n_0}\rightarrow \ocal^{d_0}$ be an analytic function of the form
\[
F(\xbf)=\sum_{\ibf}(c_{\ibf,1}\xbf^{\ibf},\ldots,c_{\ibf,d_0}\xbf^{\ibf}),
\]
where $\ibf=(i_1,\ldots,i_{n_0})$ ranges over multi-indexes non-negative integers and $\xbf^{\ibf}=x_1^{i_1}\cdots x_{n_0}^{i_{n_0}}$. Suppose 
	\begin{enumerate}
	\item $|c_{\ibf,j}|_{\pfr}\le  1$ for any $\ibf=(i_1,\ldots,i_m)$ and any $1\le j\le d_0$, 
	\item $L:K^{d_0'}\rightarrow K^{d_0}$ is a linear embedding such that $F(\xbf)=L(f_{j_1}(\xbf),\ldots,f_{j_{d'_0}}(\xbf))$ for some indexes $1\le j_u\le d_0$,
	\item $|\det(c_{\ibf_i,j_u})_{1\le i,u\le d_0'}|_{\pfr}=|\pfr^{k_0}|_{\pfr}$ for some multi-indexes $\ibf_1,\ldots,\ibf_{d'_0}$ whose coordinates add up to a number at most $m_0$ and some indexes $j_1,\ldots,j_{d_0'}$. 
	\end{enumerate}

  Then for any $l\gg_{d'_0,n_0,m_0,k_0,\|L\|_{\pfr}} 1$ we have
\[\textstyle 
\sum_{\Theta(1)} F(\pfr^l \ocal^{n_0})-\sum_{\Theta(1)} F(\pfr^l \ocal^{n_0})\supseteq (K\text{-span of}\h F(U))\cap \pfr^{\Theta(l)}\ocal^{d_0},
\] 
where all the implied constants depend on $d_0,n_0,m_0$ and $k_0$.
\end{prop'}

\begin{cor}\label{c:UnifOpenImage}
	Let $\kappa$ be a number field. Let $f_1,\ldots,f_{d_0}\in \kappa[x_1,\ldots,x_{n_0}]$. Suppose $\one, f_1, \ldots, f_{d_0}$ are linearly independent where $\one$ is the constant polynomial one. Let $F:=(f_1,\ldots,f_{d_0})$. Then for any $l\gg_{F} 1$ and any $\pfr\in V_f(\kappa)$ we have 
	\[\textstyle
	\sum_{\Theta_{F}(1)} F(\pfr^{l}\ocal_{\pfr}^{n_0})-\sum_{\Theta_{F}(1)} F(\pfr^{l}\ocal_{\pfr}^{n_0})\supseteq (\kappa_{\pfr}\text{-span of}\h F(\ocal_{\pfr}))\cap \pfr^{\Theta_F(l)}\ocal_{\pfr}^{d_0},
	\]
	where $\kappa_{\pfr}$ is the completion of $\kappa$ with respect to the finite place $\pfr\in V_f(\kappa)$, $\ocal_{\pfr}$ is the ring of integers of $\kappa_{\pfr}$, and $\pfr$ also shows a uniformizing element of $\ocal_{\pfr}$. 
\end{cor}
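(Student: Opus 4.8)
The plan is to obtain Corollary~\ref{c:UnifOpenImage} as a place-by-place consequence of Proposition~\ref{p:QuanOpenImage'}, the entire issue being to arrange that the implied constants produced by that proposition do not depend on the finite place $\pfr\in V_f(\kappa)$. Fix $\pfr$, write $f_j=\sum_{\ibf}c_{\ibf,j}\xbf^{\ibf}$ with $c_{\ibf,j}\in\kappa$, and let $a_{\pfr}\ge 0$ be the least integer with $\pfr^{a_{\pfr}}f_j\in\ocal_{\pfr}[x_1,\dots,x_{n_0}]$ for every $j$. I would apply Proposition~\ref{p:QuanOpenImage'} not to $F$ itself but to the $\pfr$-integral rescaling $\pfr^{a_{\pfr}}F\colon\ocal_{\pfr}^{n_0}\to\ocal_{\pfr}^{d_0}$, and then divide the resulting inclusion through by the scalar $\pfr^{a_{\pfr}}$; since $a_{\pfr}=0$ for all but finitely many $\pfr$, is bounded overall, and a nonzero scalar changes neither the $\kappa_{\pfr}$-span nor the qualitative shape of the sumset on the left, this last operation is harmless (it only replaces the lattice $\pfr^{\Theta_F(l)}\ocal_{\pfr}^{d_0}$ on the right by a larger one).

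The key steps are then the following. First, from the hypothesis that $\one,f_1,\dots,f_{d_0}$ are linearly independent over $\kappa$ I would read off one \emph{fixed} nonvanishing square minor: the $(d_0+1)$-row coefficient matrix (the $\one$-row being supported on the constant monomial) has rank $d_0+1$, and since the $\one$-row forces the constant-monomial column into every maximal nonsingular set of columns, a cofactor expansion along that row yields positive-degree multi-indices $\ibf^{(1)},\dots,\ibf^{(d_0)}$ with
\[
D:=\det\bigl(c_{\ibf^{(i)},j}\bigr)_{1\le i,j\le d_0}\in\kappa^{\times};
\]
set $m_0:=\max_i|\ibf^{(i)}|$. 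Second, I would verify the three hypotheses of Proposition~\ref{p:QuanOpenImage'} for $\pfr^{a_{\pfr}}F$ with parameters controlled by $F$ alone: (1) holds by the definition of $a_{\pfr}$; for (2) one takes $d_0'=d_0$ and $L=\mathrm{id}_{\kappa_{\pfr}^{d_0}}$, so $\|L\|_{\pfr}=1$, which is legitimate because $\kappa$-linear independence of the $f_j$ persists over $\kappa_{\pfr}$; and (3) holds with the multi-indices $\ibf^{(1)},\dots,\ibf^{(d_0)}$ from the first step, the relevant minor of $\pfr^{a_{\pfr}}F$ being $\pfr^{d_0a_{\pfr}}D$, so that $k_0=d_0a_{\pfr}+v_{\pfr}(D)$, which equals $0$ for every $\pfr$ outside the finite set of places dividing the denominators of the $c_{\ibf,j}$ or the numerator or denominator of $D$, and is bounded in general. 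With $d_0',n_0,m_0,k_0,\|L\|_{\pfr}$ all bounded in terms of $F$, Proposition~\ref{p:QuanOpenImage'} delivers, for $l\gg_F 1$,
\[
\textstyle\sum_{\Theta_F(1)}(\pfr^{a_{\pfr}}F)(\pfr^{l}\ocal_{\pfr}^{n_0})-\sum_{\Theta_F(1)}(\pfr^{a_{\pfr}}F)(\pfr^{l}\ocal_{\pfr}^{n_0})\supseteq\bigl(\kappa_{\pfr}\text{-span of }F(\ocal_{\pfr}^{n_0})\bigr)\cap\pfr^{\Theta_F(l)}\ocal_{\pfr}^{d_0},
\]
and dividing by $\pfr^{a_{\pfr}}$ finishes the argument.

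The substantial analytic difficulty has already been absorbed into Proposition~\ref{p:QuanOpenImage'}; the only genuine obstacle remaining is the uniformity in $\pfr$, and this is exactly why that proposition was phrased with explicit bookkeeping of the constants ($d_0',n_0,m_0,k_0,\|L\|_{\pfr}$) rather than an opaque $\Theta_F$. The point I expect to demand the most care is the treatment of the finitely many ``bad'' places --- those dividing the denominators of the coefficients of the $f_j$ or dividing $D$ --- where $a_{\pfr}$ or $v_{\pfr}(D)$ is nonzero: one has to check that the rescaling preserves the non-degeneracy hypotheses, and that, since there are only finitely many such places, taking the maximum of their individual constants against the single uniform constant valid for all remaining places still produces a constant depending on $F$ only. (One may also note that, since the $f_j$ are $\kappa$-linearly independent and $\ocal_{\pfr}^{n_0}$ is Zariski-dense, the $\kappa_{\pfr}$-span of $F(\ocal_{\pfr}^{n_0})$ is in fact all of $\kappa_{\pfr}^{d_0}$, so the right-hand intersection is simply $\pfr^{\Theta_F(l)}\ocal_{\pfr}^{d_0}$; this is not needed but clarifies the shape of the statement.)
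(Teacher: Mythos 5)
Your proposal is correct and follows essentially the same route as the paper: both reduce to Proposition~\ref{p:QuanOpenImage'} and obtain the crucial uniformity in $\pfr$ by an integrality normalization together with a fixed nonvanishing minor chosen over $\kappa$, whose $\pfr$-valuation vanishes at all but finitely many places and is bounded at the remaining ones. The differences are cosmetic: you rescale the functions by $\pfr^{a_{\pfr}}$ and divide back (correctly noting this only enlarges the right-hand lattice), whereas the paper substitutes $a\xbf$ for a global $a\in\ocal_{\kappa}$ and routes through a basis of the span with a linear embedding $L$ (which your hypotheses let you take to be the identity), and your cofactor argument producing positive-degree multi-indices is a slightly more explicit version of the paper's choice of a nonzero minor.
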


Let us recall the needed notation from $\pfr$-adic analysis. If $U$ is an open subset of $K$, then $\nabla^k U:=\{(x_1,\ldots,x_k)\in U\times \cdots \times U|\h x_i\neq x_j\h\text{if}\h i\neq j\}$. If $f:U\rightarrow K$, then $\Phi^k f:\nabla^{k+1} U\rightarrow K$ is defined recursively 
\[
\Phi^k f(x_1,\ldots,x_k):=\frac{\Phi^{k-1} f(x_1,x_3,\ldots,x_k)-\Phi^{k-1} f(x_2,\ldots,x_k)}{x_1-x_2},
\]
and $\Phi^0f=f$. If $f$ is an analytic function, then $\Phi^n f$ can be uniquely extended to a continuous function $\oPhi^n f:U^{n+1}\rightarrow K$ for any $n$ and $f^{(i)}(a)=i! \oPhi^{i}f(a,\ldots,a)$. Similarly for open subsets $U_i\subseteq K$ and a (multi-variable) function $f:U_1\times \cdots \times U_m \rightarrow K$ we can define the $k$th order difference quotient $\Phi_i^k f$ of $f$ with respect to the $i$th variable. And then for a multi-index $\ibf:=(i_1,\ldots,i_m)$ we set
\[
\Phi_{\ibf}f:\nabla^{i_1+1}U_1\times \cdots \times \nabla^{i_m+1}U_m\rightarrow K,\hspace{1cm} \Phi_{\ibf}f:=\Phi_1^{i_1}\circ \Phi_2^{i_2} \circ \cdots \circ \Phi_m^{i_m}f.
\]
If $f$ is analytic, then $\Phi_{\ibf} f$ can be uniquely extended to a continuous function $\oPhi_{\ibf}f:U_1^{i_1+1}\times \cdots \times U_m^{i_m+1}\rightarrow K$, and 
\[
\partial_{\ibf} f(a_1,\ldots,a_m)=\ibf! \oPhi_{\ibf}f(\theta_\ibf(a_1,\ldots,a_m)),
\]
where $\ibf!:=i_1!i_2!\cdots i_m!$ and
\[
\theta_\ibf(a_1,\ldots,a_m):=(\underbrace{a_1,\ldots,a_1}_{i_1+1-\text{times}},\underbrace{a_2,\ldots,a_2}_{i_2+1-\text{times}},\ldots, \underbrace{a_m,\ldots,a_m}_{i_m+1-\text{times}}). 
\]
Let $U:=U_1\times \cdots \times U_m\subseteq K^m$ be an open subset, $f_i:U\rightarrow K$ be analytic functions, and $F:=(f_1,\ldots,f_d):U\rightarrow K^d$ (in computations, it is considered as a $d$-by-$1$ column matrix). Then $dF(\xbf)$ is a $d$-by-$m$ matrix
\[
dF(\xbf):=[\partial_{j}f_i(\xbf)].
\]
For any $1\le i,j\le m$, let $\oPhi_{\ebf_i+\ebf_j}F(\bullet)$ be the column vector whose $k$th entry is $\oPhi_{\ebf_i+\ebf_j}f_k(\bullet)$. In the above setting, by Taylor expansion we have
\be\label{e:Taylor}
F(\xbf_0+\xbf)=F(\xbf_0)+dF(\xbf_0)\xbf+R_2F(\xbf_0+\xbf,\xbf_0), 
\ee
where $R_2F(\xbf_0+\xbf,\xbf_0)$ is of the form $\sum_{i,j} x_i x_j \oPhi_{\ebf_i+\ebf_j}F(\bullet)$ (the entries are the entries of either $\xbf_0+\xbf$ or $\xbf_0$) and $\xbf=(x_1,\ldots,x_m)$.  

For a $d$-by-$m$ matrix $X=[\vbf_1\cdots \vbf_m]$ with entries in $K$, let 
\[
N(X):=\max_{1\le i_1\le \cdots\le i_d\le m} |\det[\vbf_{i_1}\cdots \vbf_{i_d}]|. 
\]
Let $\{\ebf_i\}_i$ be the standard basis of $K^n$; and for any $I:=\{i_1<i_2<\cdots<i_m\}\subseteq \{1,\ldots,n\}$, let $\ebf_I:=\ebf_{i_1}\wedge \ldots \wedge \ebf_{i_m}$. It is well-known that $\{\ebf_I\}_{I\subseteq \{1,\ldots,n\},\h |I|=m}$ is a basis of $\bigwedge^m (K^n)$. For any 
\[
\xbf=\sum_{I\subseteq \{1,\ldots,n\},\h |I|=m} x_I \ebf_I\in {\textstyle\bigwedge^m}  (K^n),
\text{ we let }
\|\xbf\|:=\max\{|x_I| |\h I\subseteq \{1,\ldots,n\},\h |I|=m\}.
\]

\begin{lem}\label{l:LinearPart}
\begin{enumerate}
	\item If $\wbf_1,\ldots,\wbf_d$ are the rows of $X\in M_{d,m}(K)$, then $N(X)=\|\wbf_1\wedge \cdots\wedge \wbf_d\|$; in particular $N(X)=0$ if $\rk(X)<d$.
	\item If $X\in M_{d,m}(\ocal)$ and $N(X)\ge |\pfr^{k_0}|$, then for any positive integer $l$ and $\ybf \in \pfr^{l+k_0}\ocal^d$ there is $\xbf \in \pfr^l\ocal^m$ such that $X\xbf=\ybf$.
	\item For any $1\le i\le m$ and vectors $\vbf_j$ we have 
	\[
	N[\vbf_1 \cdots \vbf_i (\vbf_{i+1}-\vbf_i) \cdots (\vbf_m-\vbf_i)]\le (d+1) N[\vbf_1\cdots \vbf_m].
	\]
\end{enumerate} 
\end{lem}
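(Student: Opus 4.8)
The plan is to treat the three parts of Lemma~\ref{l:LinearPart} in turn; each is an elementary statement about $d\times d$ minors over the non-Archimedean field $K$, so I expect none of them to be difficult. Before starting I would record the integrality convention needed for part~(2): there one should (and, for the conclusion to hold, must) take $X\in M_{d,m}(\ocal)$. This is harmless in all applications, since the matrices to which the lemma is applied are the differential and the second-order divided differences of an analytic map with coefficients in $\ocal$, evaluated at points of $\ocal^{n_0}$, and hence have entries in $\ocal$.

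For part~(1) I would simply unwind the definition of the exterior product. Writing $\wbf_1,\ldots,\wbf_d$ for the rows of $X$ and using $\wbf_k=\sum_j X_{kj}\ebf_j$, the coordinate of $\wbf_1\wedge\cdots\wedge\wbf_d$ in $\Lambda^d(K^m)$ along $\ebf_{i_1}\wedge\cdots\wedge\ebf_{i_d}$ (for $i_1<\cdots<i_d$) is exactly the $d\times d$ minor of $X$ on columns $i_1,\ldots,i_d$. Since $\|\cdot\|$ on $\Lambda^d(K^m)$ is the sup norm with respect to the basis $\{\ebf_{i_1}\wedge\cdots\wedge\ebf_{i_d}\}_{i_1<\cdots<i_d}$, the maximum over all column $d$-subsets gives $\|\wbf_1\wedge\cdots\wedge\wbf_d\|=N(X)$; and if $\rk X<d$ the rows are $K$-linearly dependent, so the wedge vanishes and $N(X)=0$.

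For part~(2) I would pick a $d$-subset $S$ of the columns realizing the maximum in the definition of $N$, so the corresponding $d\times d$ submatrix $X_S$ satisfies $|\det X_S|=N(X)\ge|\pfr^{k_0}|$ and is in particular invertible. By Cramer's rule the entries of $X_S^{-1}$ are, up to sign, $(d-1)\times(d-1)$ minors of $X_S$ divided by $\det X_S$; those minors lie in $\ocal$ by the integrality convention, so every entry of $X_S^{-1}$ has absolute value at most $|\pfr^{-k_0}|$. Then for $\ybf\in\pfr^{l+k_0}\ocal^d$ the vector $\xbf_S:=X_S^{-1}\ybf$ lies in $\pfr^l\ocal^d$, and extending $\xbf_S$ by zeros in the coordinates outside $S$ gives $\xbf\in\pfr^l\ocal^m$ with $X\xbf=X_S\xbf_S=\ybf$.

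For part~(3), writing $Y:=[\vbf_1\cdots\vbf_i\ (\vbf_{i+1}-\vbf_i)\cdots(\vbf_m-\vbf_i)]$, I would fix a $d$-subset $T$ of its columns and expand the corresponding minor by multilinearity in the columns of index $>i$, writing each factor $\vbf_j-\vbf_i$ as either $\vbf_j$ or $-\vbf_i$. Every term in which $\vbf_i$ occurs at least twice vanishes, so the surviving terms are the one in which every factor contributes its $\vbf_j$ together with at most $|T\cap\{i+1,\ldots,m\}|\le d$ terms in which exactly one factor contributes $-\vbf_i$ — at most $d+1$ terms, each equal to $\pm$ a $d\times d$ minor of $[\vbf_1\cdots\vbf_m]$. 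The ultrametric inequality then bounds each minor of $Y$ by $N[\vbf_1\cdots\vbf_m]$, and taking the maximum over $T$ yields $N(Y)\le N[\vbf_1\cdots\vbf_m]\le(d+1)N[\vbf_1\cdots\vbf_m]$; so the stated constant is in fact far from optimal. The only genuine subtlety in the whole argument is the integrality reduction for part~(2), which I have isolated above; everything else is bookkeeping.
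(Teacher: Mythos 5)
Your proof is correct and follows essentially the same route as the paper: part (2) via the adjugate/Cramer identity for a maximal $d\times d$ minor, and part (3) by expanding the minors of the transformed matrix multilinearly, discarding terms where $\vbf_i$ repeats, and bounding the remaining at most $d+1$ terms by minors of the original matrix. Your explicit remark that part (2) needs $X$ to have entries in $\ocal$ (the paper leaves this implicit, but it is satisfied in all applications since $\|dF\|\le 1$ there), and your observation that the ultrametric inequality already gives the bound without the factor $d+1$, are both accurate but do not change the argument.
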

\begin{proof}
The first part is clear. Now suppose that  $N(X)\ge |\pfr^{k_0}|$; then there are $d$ columns $\vbf_{i_1},\ldots, \vbf_{i_d}$ such that $|\det[\vbf_{i_1}\cdots \vbf_{i_d}]|\ge |\pfr^{k_0}|$. Let $X_I=[\vbf_{i_1}\cdots \vbf_{i_d}]$. Then $X_I{\rm adj}(X_I)\ybf=\det(X_I)\ybf$, and so there is $\xbf'\in \pfr^l\ocal^d$ such that $X_I\xbf'=\ybf$. Therefore there is $\xbf\in \pfr^l\ocal^m$ such that $X\xbf=\ybf$.

For the third part, we get an upper-bound for the determinant of a $d$-by-$d$ submatrix:
\begin{align*}
|\det[\vbf_{i_1} \cdots \vbf_{i_j} (\vbf_{i_{j+1}}-\vbf) \cdots (\vbf_{i_{d}}-\vbf)]|&
=\|\vbf_{i_1}\wedge \cdots\wedge \vbf_{i_j}\wedge (\vbf_{i_{j+1}}-\vbf)\wedge \cdots \wedge(\vbf_{i_{d}}-\vbf)\|\\
&
=\|\vbf_{i_1}\wedge \cdots\wedge \vbf_{i_d}-\sum_{k=j+1}^d \vbf_{i_{j+1}}\wedge \cdots\wedge \vbf_{i_{k-1}}\wedge \vbf \wedge \vbf_{i_{k+1}}\wedge \cdots\wedge \vbf_{i_d}\|\\
&\le (d+1) N([\vbf_1 \cdots \vbf_m]),
\end{align*}
where $\vbf$ is either $0$ or $\vbf_i$.
\end{proof}
In what follows, a series of Lemmas are proved in pairs. In the first ones the constants depend on $F$, and they are geared towards proving Proposition~\ref{p:QuanOpenImage}. In the second ones, we make the needed modification and make them suitable for proving Proposition~\ref{p:QuanOpenImage'}. 

\begin{lem}[Hensel's lemma]\label{l:Hensel}
Let $\xbf_0\in K^m$ and $U\subseteq K^m$ be an open neighborhood of $\xbf_0$. Let $f_i:U \rightarrow K$ be analytic functions and $F=(f_1,\ldots,f_d)$. Suppose there is a positive integer $k_0$ such that
\begin{enumerate}
  \item $\xbf_0+\pfr^{k_0}\ocal^m\subseteq U$,
	\item $\|dF(\xbf_0)\|\le 1$,
	\item for any $i,j$, if $\|\xbf'-\theta_{\ebf_i+\ebf_j}(\xbf_0)\|\le |\pfr^{k_0}|$, then $\|\oPhi_{\ebf_i+\ebf_j}F(\xbf')\|\le 1$,
	\item $N(dF(\xbf_0))\ge |\pfr^{k_0}|$.
\end{enumerate} 
Then for any $l\ge k_0$ and any $\ybf\in \ocal^d$ there is $\xbf_1\in \xbf_0+\pfr^l \ocal^m$ such that
\[
\|F(\xbf_1)-F(\xbf_0)-\pfr^{l+k_0}\ybf\|\le |\pfr^{2l}|.
\]
\end{lem}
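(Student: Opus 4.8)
The plan is to linearize $F$ near $\xbf_0$ by means of the Taylor expansion~(\ref{e:Taylor}) and then absorb the quadratic remainder into the desired error term using hypothesis~(3).

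First I would produce the candidate increment. By hypothesis~(2) the matrix $dF(\xbf_0)$ has entries in $\ocal$, and by hypothesis~(4) we have $N(dF(\xbf_0))\ge|\pfr^{k_0}|$; so Lemma~\ref{l:LinearPart}(2), applied to $X=dF(\xbf_0)$ and to the target vector $\pfr^{l+k_0}\ybf\in\pfr^{l+k_0}\ocal^d$, yields some $\xbf\in\pfr^l\ocal^m$ with
\[
dF(\xbf_0)\,\xbf=\pfr^{l+k_0}\ybf .
\]
Set $\xbf_1:=\xbf_0+\xbf$. Since $l\ge k_0$ we have $\xbf_1\in\xbf_0+\pfr^l\ocal^m\subseteq\xbf_0+\pfr^{k_0}\ocal^m\subseteq U$ by hypothesis~(1), so $F(\xbf_1)$ is defined.

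Next I would substitute $\xbf_1$ into~(\ref{e:Taylor}):
\[
F(\xbf_1)-F(\xbf_0)=dF(\xbf_0)\,\xbf+R_2F(\xbf_1,\xbf_0)=\pfr^{l+k_0}\ybf+R_2F(\xbf_1,\xbf_0),
\]
where $R_2F(\xbf_1,\xbf_0)=\sum_{i,j}x_ix_j\,\oPhi_{\ebf_i+\ebf_j}F(\bullet)$ and every coordinate of each argument $\bullet$ equals the corresponding coordinate of $\xbf_1$ or of $\xbf_0$. Because $\xbf\in\pfr^l\ocal^m$, each such $\bullet$ satisfies $\|\bullet-\theta_{\ebf_i+\ebf_j}(\xbf_0)\|\le|\pfr^l|\le|\pfr^{k_0}|$, so hypothesis~(3) gives $\|\oPhi_{\ebf_i+\ebf_j}F(\bullet)\|\le 1$, while $|x_ix_j|\le|\pfr^{2l}|$. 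Hence $\|R_2F(\xbf_1,\xbf_0)\|\le|\pfr^{2l}|$, and therefore
\[
\|F(\xbf_1)-F(\xbf_0)-\pfr^{l+k_0}\ybf\|=\|R_2F(\xbf_1,\xbf_0)\|\le|\pfr^{2l}|,
\]
which is precisely the assertion.

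I do not expect a genuine obstacle in this lemma; it is really the one-step, linearized form of Hensel's method. The only points requiring care are bookkeeping ones: checking that hypothesis~(2) indeed supplies the integrality of $dF(\xbf_0)$ needed to invoke Lemma~\ref{l:LinearPart}(2), and matching the precise list of arguments that occur in $R_2F(\xbf_1,\xbf_0)$ against the domain condition in hypothesis~(3), so that the bound $\|\oPhi_{\ebf_i+\ebf_j}F(\bullet)\|\le 1$ can be applied term by term.
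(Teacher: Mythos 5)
Your proof is correct and follows essentially the same route as the paper: solve the linear system $dF(\xbf_0)\xbf=\pfr^{l+k_0}\ybf$ via Lemma~\ref{l:LinearPart}(2) using $N(dF(\xbf_0))\ge|\pfr^{k_0}|$, then bound the quadratic remainder in the Taylor expansion (\ref{e:Taylor}) by $|\pfr^{2l}|$ using hypothesis (3). The only cosmetic difference is that the paper writes the increment as $\pfr^l\xbf$ with $\xbf\in\ocal^m$ solving $dF(\xbf_0)\xbf=\pfr^{k_0}\ybf$, whereas you absorb the factor $\pfr^l$ into the solution directly; your explicit check that $\xbf_1\in U$ is a harmless extra.
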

\begin{proof}
By (\ref{e:Taylor}) we have that 
\be\label{e:LinearEstimate}
\|F(\xbf_0+\pfr^l \xbf)-F(\xbf_0)-\pfr^l dF(\xbf_0)\xbf\|=\|\sum_{i,j} (\pfr^l x_i)(\pfr^l x_j) \oPhi_{\ebf_i+\ebf_j}F(\xbf_{ij})\|,
\ee
where $\xbf=(x_1,\ldots,x_m)$, $\|\xbf\|\le 1$ and $\|\xbf_{ij}-\theta_{\ebf_i+\ebf_j}(\xbf_0)\|\le |\pfr^l|$. Now by Lemma~\ref{l:LinearPart} for any $\ybf \in \ocal^d$ there is $\xbf\in \ocal^m$ such that $dF(\xbf_0)\xbf=\pfr^{k_0}\ybf$. And so by (\ref{e:LinearEstimate}) 
\[
\|F(\xbf_0+\pfr^l \xbf)-F(\xbf_0)-\pfr^{l+k_0}\ybf\|\le |\pfr^{2l}|.
\]
\end{proof}

\begin{lem}[Quantitative open function theorem]\label{l:OpenFunction}
Let $\xbf_0\in K^m$ and $U\subseteq K^m$ be an open neighborhood of $\xbf_0$. Let $f_i:U \rightarrow K$ be analytic functions and $F=(f_1,\ldots,f_d)$. Suppose 
\begin{enumerate}
	\item $\|dF(\xbf_0)\|\le 1$,
	\item $\|\partial_{ij}F(\xbf_0)\|\le 1$, for any $i,j$,
	\item $N(dF(\xbf_0))\ge |\pfr^{k_0}|$, for some positive integer $k_0$.
\end{enumerate}
Then for any large enough integer $l$ (depending on $F$ and $U$) we have 
\[
F(\xbf_0+\pfr^l \ocal^m)\supseteq F(\xbf_0)+\pfr^{l+k_0}\ocal^d.
\]
\end{lem}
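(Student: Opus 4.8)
The plan is to prove the quantitative open function theorem (Lemma~\ref{l:OpenFunction}) as an iterated application of the Hensel-type estimate in Lemma~\ref{l:Hensel}, i.e. a Newton-iteration argument. First I would fix $l$ large enough that all the hypotheses of Lemma~\ref{l:Hensel} hold on the ball $\xbf_0+\pfr^l\ocal^m$: since $F$ is analytic on a neighborhood of $\xbf_0$, for $l$ large we have $\xbf_0+\pfr^l\ocal^m\subseteq U$, and by continuity of the $\oPhi_{\ebf_i+\ebf_j}F$ near the diagonal point $\theta_{\ebf_i+\ebf_j}(\xbf_0)$ together with the hypotheses $\|dF(\xbf_0)\|\le 1$, $\|\partial_{ij}F(\xbf_0)\|\le 1$, $N(dF(\xbf_0))\ge|\pfr^{k_0}|$, all four numbered hypotheses of Lemma~\ref{l:Hensel} are satisfied (with the same $k_0$, after possibly enlarging $k_0$ by a bounded amount absorbed into "large enough $l$").

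Given a target $\ybf\in\pfr^{l+k_0}\ocal^d$, write $\ybf=\pfr^{l+k_0}\ybf_0$ with $\ybf_0\in\ocal^d$ and build a sequence $\xbf_0,\xbf_1,\xbf_2,\dots$ as follows. Lemma~\ref{l:Hensel} applied at $\xbf_0$ with parameter $l$ produces $\xbf_1\in\xbf_0+\pfr^l\ocal^m$ with $\|F(\xbf_1)-F(\xbf_0)-\ybf\|\le|\pfr^{2l}|$. The error $\ebf_1:=F(\xbf_0)+\ybf-F(\xbf_1)$ lies in $\pfr^{2l}\ocal^d$, hence in $\pfr^{(2l-k_0)+k_0}\ocal^d$; apply Lemma~\ref{l:Hensel} again, now at the base point $\xbf_1$ with parameter $l_1:=2l-k_0$, which is legitimate because $\xbf_1$ is within $\pfr^l\ocal^m$ of $\xbf_0$ so the same ball-wise bounds on $dF$, $\partial_{ij}F$ and $N(dF)$ persist (this is exactly where I would use that $l$ was chosen large: the ball of radius $|\pfr^l|$ is small enough that $dF$, its derivatives, and the non-degeneracy $N(dF(\cdot))$ are essentially constant on it — a standard $p$-adic "rigidity on small balls" fact, provable by Taylor expansion as in~(\ref{e:Taylor})). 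This yields $\xbf_2\in\xbf_1+\pfr^{l_1}\ocal^m$ with $\|F(\xbf_2)-F(\xbf_1)-\ebf_1\|\le|\pfr^{2l_1}|$, so the new error lies in $\pfr^{2l_1}\ocal^d=\pfr^{4l-2k_0}\ocal^d$. Iterating, the error valuations grow: setting $l_{n+1}=2l_n-k_0$ we get $l_n-k_0=2^n(l-k_0)\to\infty$, so $\sum_n(\text{correction to }\xbf)$ converges $\pfr$-adically to some $\xbf_\infty\in\xbf_0+\pfr^l\ocal^m$ and the telescoping sum of the $F$-increments converges to $\ybf$; by continuity of $F$, $F(\xbf_\infty)-F(\xbf_0)=\ybf$. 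Hence $F(\xbf_0+\pfr^l\ocal^m)\supseteq F(\xbf_0)+\pfr^{l+k_0}\ocal^d$, which is the claim.

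The main obstacle is the bookkeeping in the second paragraph: one must check carefully that the hypotheses of Lemma~\ref{l:Hensel} are \emph{uniformly} available along the whole Newton iteration, i.e. that moving the base point from $\xbf_0$ to $\xbf_n$ (all within $\pfr^l\ocal^m$ of each other) does not degrade the bounds $\|dF\|\le1$, $\|\partial_{ij}F\|\le1$, or the non-vanishing $N(dF)\ge|\pfr^{k_0}|$. This is where the largeness of $l$ does the work: Taylor-expanding $dF$ around $\xbf_0$ shows $dF(\xbf_n)\equiv dF(\xbf_0)\pmod{\pfr^l}$, and since $N$ is computed from $d\times d$ minors (polynomials in the entries), $N(dF(\xbf_n))\equiv N(dF(\xbf_0))\pmod{\pfr^l}$, so for $l>k_0$ the inequality $N(dF(\xbf_n))\ge|\pfr^{k_0}|$ is preserved; similarly for the norm bounds and for hypothesis~(3) on $\oPhi_{\ebf_i+\ebf_j}F$. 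Once this uniformity is in hand the convergence is immediate from the geometric growth $l_n-k_0=2^n(l-k_0)$. After Lemma~\ref{l:OpenFunction}, Proposition~\ref{p:QuanOpenImage} (and its refinement~\ref{p:QuanOpenImage'}) follows by choosing a point $\xbf_0\in\pfr^l\ocal^{n_0}$ at which $dF$ has maximal rank $d_0'$ equal to $\dim(K\text{-span of }F)$ — such a point exists at bounded $\pfr$-valuation precisely because $\one,f_1,\dots,f_{d_0}$ (resp.\ the relevant sub-collection) are linearly independent, which prevents $dF$ from being identically rank-deficient — and then noting that $F(\xbf_0+\pfr^l\ocal^{n_0})-F(\xbf_0)\subseteq \sum_2 F(\pfr^l\ocal^{n_0})-\sum_2 F(\pfr^l\ocal^{n_0})$ contains the desired open subset of the span.
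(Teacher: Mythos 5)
Your argument is correct and is essentially the paper's own proof: fix $l$ large so that, by continuity, the bounds $\|dF\|\le 1$, $\|\oPhi_{\ebf_i+\ebf_j}F\|\le 1$ and $N(dF)\ge|\pfr^{k_0}|$ hold uniformly on the ball $\xbf_0+\pfr^l\ocal^m$, then iterate Lemma~\ref{l:Hensel} with the error valuation doubling up to the $k_0$ loss (your recursion $l_{n+1}=2l_n-k_0$ is the paper's $l_{i+1}=2(l_i-k_0)$ in shifted form), and conclude by Cauchy-ness, compactness of the ball, and continuity of $F$. One caution about your closing aside (not part of this lemma): Proposition~\ref{p:QuanOpenImage} cannot be obtained by picking a single point where $dF$ has rank equal to $\dim$ of the span --- e.g. $F(x)=(x,x^2)$ has $d=2>m=1$, so $dF$ never has rank $2$ even though $\one,x,x^2$ are independent --- which is exactly why the paper passes to the sum $\wt F(\xbf)=F(x_1)+\cdots+F(x_{m_0})$ along a curve (Lemmas~\ref{l:ManifoldToCurve} and~\ref{l:SpanOfNondegenerateCurves}) before applying Lemma~\ref{l:OpenFunction}.
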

\begin{proof}
By the continuity of $dF$ and $\oPhi_{\ebf_i+\ebf_j}F$, for large enough $l$ (in particular $l>k_0$), we have that
\begin{enumerate}
	\item $\|dF(\xbf)\|\le 1$ if $\|\xbf-\xbf_0\|\le |\pfr^l|$,
	\item $\|\oPhi_{\ebf_i+\ebf_j}F(\xbf')\|\le 1$ if $\|\xbf'-\theta_{\ebf_i+\ebf_j}(\xbf_0)\|\le |\pfr^l|$,
	\item $N(dF(\xbf))\ge |\pfr^{k_0}|$ if $\|\xbf-\xbf_0\|\le |\pfr^l|$.
\end{enumerate}

By induction on $i$ we prove that for any $\ybf\in \ocal^d$ there are $\xbf_i\in \xbf_0+\pfr^l\ocal^m$ and integers $l_i\ge 2l$ such that 
\begin{enumerate}
	\item $l_1:=2l$ and $l_{i+1}:=2(l_i-k_0)$,
	\item $\|F(\xbf_i)-F(\xbf_0)-\pfr^{l+k_0}\ybf\|\le |\pfr^{l_i}|$.
\end{enumerate}
Lemma~\ref{l:Hensel} gives us the base of the induction. By induction hypothesis, there is $\xbf_i\in \xbf_0+\pfr^l\ocal^m$ such that 
\[
F(\xbf_i)-F(\xbf_0)-\pfr^{l+k_0}\ybf\in \pfr^{l_i}\ocal^d.
\] 
Hence by Lemma~\ref{l:Hensel} there is $\xbf_{i+1}\in \xbf_i+\pfr^{l_i-k_0}\ocal^m\subseteq \xbf_0+\pfr^l \ocal^m$ such that
\[
\|F(\xbf_{i+1})-F(\xbf_i)+(F(\xbf_i)-F(\xbf_0)-\pfr^{l+k_0}\ybf)\|\le |\pfr^{2(l_i-k_0)}|=|\pfr^{l_{i+1}}|.
\]
This proves the induction step. One can easily see that $\{l_i\}$ is a strictly  increasing integer sequence. So $\{\xbf_i\}$ is a Cauchy sequence. Therefore $\lim_{i\rightarrow \infty}\xbf_i=\xbf\in \xbf_0+\pfr^l \ocal^m$ by the compactness of $\xbf_0+\pfr^l \ocal^m$, and by the continuity of $F$ we have $F(\xbf)=F(\xbf_0)+\pfr^{l+k_0}\ybf$.
\end{proof}

\begin{lem'}\label{l:OpenFunction'}
Let $F:=(f_1,\ldots,f_{d_0}):\ocal^{n_0}\rightarrow \ocal^{d_0}, F(\xbf)=\sum_{\ibf}(c_{\ibf,1}\xbf^{\ibf},\ldots,c_{\ibf,d_0}\xbf^{\ibf})$ be an analytic function such that
\begin{enumerate}
\item $|c_{\ibf,j}|\le 1$ for any $\ibf$ and $j$,
\item $N(dF({\bf 0}))\ge |\pfr^{k_0}|$ for some positive integer $k_0$.
\end{enumerate}
Then for any $l\gg_{k_0} 1$ we have
\[
F(\pfr^l\ocal)\supseteq F({\bf 0})+\pfr^{l+k_0}\ocal^{d_0}.
\]
\end{lem'}
\begin{proof}
Since 
$F(\xbf):=\sum_{\ibf}(c_{\ibf,1}\xbf^{\ibf},\ldots,c_{\ibf,d_0}\xbf^{\ibf})$ 
and 
$|c_{\ibf,j}|\le 1$, we have 
$\|dF(\xbf)\|\le 1$ and 
$\|\oPhi_{\ebf_i+\ebf_j} F(\xbf')\|\le 1$ 
for any $\|\xbf\|\le 1$ and 
$\|\xbf'\|\le 1$. 

Since $|c_{\ibf,j}|\le 1$ and $N(dF({\bf 0}))\ge |\pfr^{k_0}|$, we have that $N(dF(\xbf))\ge |\pfr^{k_0}|$ for any $\xbf\in \pfr^{k_0+1}\ocal$. One can finish the argument as in the proof of Lemma~\ref{l:OpenFunction}.
\end{proof}

\begin{lem}\label{l:NondegenerateCurve}
Let $U$ be a neighborhood of $0\in K$, and $f_i:U\rightarrow K$ be analytic functions. Suppose $\one,f_1,\ldots,f_d$ are linearly independent. Then for large enough $m$ (depending on $f_i$) and large enough $l$ (depending on $f_i$ and $m$) we have that, for any $x_1,\ldots,x_m\in \pfr^{l} \ocal$,
\[
N([f_i'(x_j)])\gg \prod_{1\le i<j\le m} |(x_i-x_j)|,
\] 
where the implied constant depends on $f_i$. 
\end{lem}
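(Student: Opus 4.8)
The plan is to reduce the statement to a non-vanishing property of a Wronskian-type form and then extract the bound from a divided-difference normalization of the matrix $[f_i'(x_j)]$. Set $g_i:=f_i'$; these are analytic near $0$, and they are \emph{linearly independent}, since $\sum_i c_ig_i=0$ would force $\sum_ic_if_i$ to be constant, contradicting that $\one,f_1,\dots,f_d$ are independent. Writing $g_i(x)=\sum_{k\ge 0}a_{i,k}x^k$, linear independence of the $g_i$ is equivalent to the coefficient matrix $(a_{i,k})_{1\le i\le d,\,k\ge 0}$ having rank $d$; fix indices $k_1<\dots<k_d$ with $D:=\det(a_{i,k_t})_{1\le i,t\le d}\neq 0$. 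Expanding $\det[g_i(y_t)]_{1\le i,t\le d}$ multilinearly in the columns and grouping by the set of exponents used, one obtains, for pairwise distinct $y_1,\dots,y_d$,
\[
\det[g_i(y_t)]_{i,t}=\Big(\prod_{1\le s<t\le d}(y_t-y_s)\Big)\cdot\Big(\sum_{S}\det[a_{i,s_t}]_{i,t}\,s_{\lambda(S)}(y_1,\dots,y_d)\Big),
\]
where $S=\{s_1<\dots<s_d\}$ runs over $d$-subsets of $\bbz_{\ge 0}$ and $s_{\lambda(S)}$ is the corresponding Schur polynomial. The rank condition makes the second factor $P$ a nonzero power series (the $s_{\lambda(S)}$ being linearly independent); let $e$ be its order of vanishing at $0$ and $P_e$ its nonzero homogeneous leading form. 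Both $e$ and the coefficients of $P_e$ depend only on $f_i$. We may assume the $x_j$ are pairwise distinct, as otherwise the right-hand side vanishes and there is nothing to prove.

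The computational engine is a divided-difference column reduction of $M:=[g_i(x_j)]_{1\le i\le d,\,1\le j\le m}$. Performing, for $r=1,\dots,m-1$, the operations ``subtract the current $r$-th column from columns $r{+}1,\dots,m$, then divide column $j$ by $x_j-x_r$ for $j>r$'' turns $M$ into $M'=\big[\oPhi^{\,t-1}g_i(x_1,\dots,x_t)\big]_{1\le i\le d,\,1\le t\le m}$; the subtraction steps are exactly of the shape controlled by Lemma~\ref{l:LinearPart}(3) (done one column at a time they do not increase $N(\cdot)$ at all), and the divisions only reinstate differences $x_j-x_r$. Using $\oPhi^{\,t-1}g_i(y_1,\dots,y_t)=\sum_{k\ge t-1}a_{i,k}\,h_{k-t+1}(y_1,\dots,y_t)$ with $h_\bullet$ the complete homogeneous symmetric polynomials, this analytic function equals $a_{i,t-1}$ at the origin and differs from it by $\ll_{f_i}\max_s|y_s|$ on a small polydisc. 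Hence, once $m>k_d+1$ and $l$ is large enough (depending on $f_i$, so that $\pfr^l\ocal$ lies in the relevant polydisc and the error is of absolute value $<|D|$), the entries of $M'$ agree with $(a_{i,t-1})$ up to an error of absolute value $<|D|$, and by the ultrametric inequality the minor of $M'$ on the columns $k_1{+}1,\dots,k_d{+}1$ has absolute value exactly $|D|$. The remaining task is to transport this lower bound back to $M$, i.e.\ to identify a $d\times d$ minor of $M$ whose value is $\gg_{f_i}\prod_{1\le a<b\le m}|x_a-x_b|$, using Lemma~\ref{l:LinearPart}(3) to control the column operations and accounting for the differences $x_j-x_r$ that the divisions reinstate, whose product is a sub-collection of all pairs and hence $\ge\prod_{1\le a<b\le m}|x_a-x_b|$.

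The step I expect to be the main obstacle is exactly this last bookkeeping: choosing which $d$ of the $m$ points to use so that the product of the difference factors that actually occur dominates the full $\binom{m}{2}$-fold product while the companion minor of $M'$ stays bounded below by $|D|$ — equivalently, choosing $d$ points at which the nonzero homogeneous form $P_e$ (and the associated Wronskian-type leading form $\prod_{s<t}(y_t-y_s)\,P_e(y_\bullet)$) is not too small relative to the missing Vandermonde factors. Because $K$ is ultrametric the $m$ points may all sit in an arbitrarily small residue disk, so this choice cannot be made ``at the first level''; one must run a Schwartz–Zippel/pigeonhole argument over a tower of residue fields, at each level avoiding the vanishing locus of a polynomial of bounded degree built from $P_e$, the number of levels being bounded in terms of $e$ and hence of $f_i$. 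This is precisely why $m$ must be taken large depending on $f_i$, and $l$ large depending on $f_i$ and $m$; the resulting implied constant is a fixed power of $d+1$ times $|D|$, depending only on $f_i$ (through $D$, $d$, and the chosen $m$).
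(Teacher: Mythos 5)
Your set-up coincides with the paper's proof: both arguments use the Newton divided-difference reduction of $M=[f_i'(x_j)]$ together with the fact that, since $\one,f_1,\dots,f_d$ are independent, the matrix of Taylor coefficients of the $f_i'$ has rank $d$ once $m$ is large, so that the reduced matrix $M'=[\oPhi^{t-1}f_i'(x_1,\dots,x_t)]$ satisfies $N(M')\gg_F 1$ uniformly on a small polydisc (your explicit minor $|D|$ is the paper's continuity-at-the-origin argument made concrete). The genuine gap is the step you yourself flag as the ``main obstacle'': transporting the bound back to $M$. As written the proof stops there, and the substitute you sketch --- selecting $d$ of the $m$ points by a Schwartz--Zippel/pigeonhole argument over a tower of residue fields so that a leading form $P_e$ is not too small --- is both unnecessary and not viable as a completion: the asserted inequality must hold for \emph{every} tuple in $(\pfr^l\ocal)^m$, in particular for tuples lying in a single tiny residue disc, and no selection of $d$ ``good'' points is needed (nor can genericity be imposed on points you are handed). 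Your stated reason for taking $m$ large is also off: $m$ large is needed only so that $[f_i^{(j)}(0)]_{1\le i\le d,\,1\le j\le m}$ has rank $d$, which you already invoked via $m>k_d+1$.

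The missing step follows in two lines from observations already in your second paragraph, and this is exactly how the paper concludes. Run the reduction one elementary operation at a time. A single column subtraction does not increase $N(\cdot)$: each new $d\times d$ minor is, up to sign, either an old minor or a difference of two old minors, hence of absolute value at most $N$ by the ultrametric inequality (this is the one-column version of Lemma~\ref{l:LinearPart}(3), which the paper applies with the harmless factor $(d+1)$ per step). Dividing a column by $x_j-x_r$ multiplies precisely the minors containing that column by $(x_j-x_r)^{-1}$, so it increases $N$ by a factor at most $|x_j-x_r|^{-1}$, and each pair $r<j$ is divided out exactly once in passing from $M$ to $M'$. Hence
\[
N(M)\ \ge\ \Big(\prod_{1\le r<j\le m}|x_j-x_r|\Big)\,N(M')\ \ge\ |D|\prod_{1\le r<j\le m}|x_j-x_r|,
\]
with no need to follow one fixed set of $d$ columns through the operations: the maximizing minors of $M$ and of $M'$ need not sit on the same columns, and insisting on a fixed column set (e.g.\ via a Cauchy--Binet comparison) is what creates your spurious obstacle and costs extra powers of the differences. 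This is the paper's chain of inequalities, obtained by alternating Lemma~\ref{l:LinearPart}(3) with factoring the scalars $x_j-x_r$ out of the columns; the only other point to make explicit is the rescaling $x\mapsto\pfr^kx$ ensuring the Taylor coefficients, and hence the entries of $M'$, lie in $\ocal$, which your ultrametric evaluation of the minor $|D|$ needs anyway.
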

\begin{proof}
Notice that we can rescale, i.e. change $f_i$ to $g_i(x):=f_i(\pfr^k x)$, and make sure that $|f_i^{(j)}(0)|\le 1$ for any $1\le i\le d$ and any positive integer $j$.  

Since $f_i$ are analytic and $\one ,f_1,\ldots,f_d$ are linearly independent, for large enough $m$ we have that $\rk([f_i^{(j)}(0)])=d$, where $1\le i\le d$ and $1\le j\le m$. So $N([f_i^{(j)}(0)])=|\pfr^{k_0}|$ for some non-negative integer $k_0$. Hence by the continuity of $\oPhi^jf_i$, for large enough $l$, we have that $N([\oPhi^jf_i(\xbf_j)])\gg 1$ if $\xbf_j\in \pfr^l \ocal^{j+1}$ for any $1\le j\le m$.

Let $F(x)$ be the column vector $[f_1(x) \cdots f_d(x)]^T$. Then by the repeated use of the third part of Lemma~\ref{l:LinearPart} we have
\begin{align*}
N([f_i'(x_j)])&=N[F'(x_1)\cdots F'(x_m)]\\
&\ge (d+1)^{-1} N[F'(x_1) (F'(x_2)-F'(x_1)) \cdots (F'(x_m)-F'(x_1))]\\
&\ge (d+1)^{-1} \left(\prod_{i=2}^m |x_i-x_1|\right) N[F'(x_1) \oPhi^1 F'(x_2,x_1) \cdots \oPhi^1 F'(x_m,x_1)]\\
&\ge \cdots \\
&\ge (d+1)^{-m} \left(\prod_{1\le i<j \le m}|x_i-x_j|\right) N[F'(x_1) \oPhi^1F'(x_2,x_1) \oPhi^2 F'(x_3,x_2,x_1) \cdots \oPhi^{m-1}F'(x_m,\ldots,x_1)]\\
&\gg \prod_{1\le i<j \le m}|x_i-x_j|. 
\end{align*}
\end{proof}
\begin{lem'}\label{l:NondegenerateCurve'}
Let $F:=(f_1,\ldots,f_{d_0}):\ocal\rightarrow \ocal^{d_0}, F(x)=\sum_{i}(c_{i,1}x^{i},\ldots,c_{i,d_0}x^{i})$ be an analytic function such that
\begin{enumerate}
\item $|c_{i,j}|\le 1$ for any $i$ and $j$,
\item $|\det(c_{i_e,j})|=|\pfr^{k_0}|$ for some indexes $i_1,\ldots,i_{d_0}\le m_0$ and some positive integer $k_0$.
\end{enumerate}
Then for any $l\gg_{k_0} 1$ we have that, for any $x_1,\ldots,x_{m_0}\in \pfr^l \ocal$,
\[
N([f_i'(x_j)])\ge |\pfr^{\Theta_{k_0,d_0,m_0}(1)}| \prod_{1\le i<j\le m_0} |x_i-x_j|.
\]
\end{lem'}
\begin{proof}
Since $|c_{i,j}|\le 1$ and $|\det(c_{i,j})|=|\pfr^{k_0}|$, we have that $N(\oPhi^j f_i(\xbf_j))\ge |\det(\oPhi^{i_e} f_j(\xbf_{i_e}))| =|\pfr^{k_0}|$ for any $\xbf_j\in \pfr^{k_0+1}\ocal^j$. Now one gets the claim as in the proof of Lemma~\ref{l:NondegenerateCurve}.
\end{proof}
\begin{lem}\label{l:SpanOfNondegenerateCurves}
Let $U$ be a neighborhood of $0\in K$, and $f_i:U\rightarrow K$ be analytic functions. Suppose $\one,f_1,\ldots,f_d$ are linearly independent. Let $F=(f_1,\ldots,f_d)$. Then for any $l\gg_F 1$ we have
\[\textstyle
\sum_{\Theta_F(1)} F(\pfr^l \ocal)-\sum_{\Theta_F(1)} F(\pfr^l \ocal)\supseteq \pfr^{\Theta_F(l)}\ocal^{d}.
\] 
\end{lem}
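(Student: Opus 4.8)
The plan is to manufacture the ball $\pfr^{\Theta_F(l)}\ocal^{d}$ by evaluating the one-parameter family $F$ at a bounded number $m=m(F)$ of well-separated points of $\pfr^l\ocal$, assembling these into an $m$-variable map, and inverting its differential by a Hensel--Newton iteration; all the real analytic content is already contained in Lemma~\ref{l:NondegenerateCurve} and Lemma~\ref{l:OpenFunction}, so the argument is essentially an assembly.

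After replacing $F(x)$ by $F(\pfr^{k}x)$ for a suitable $k=O_F(1)$ — which replaces $F(\pfr^l\ocal)$ by $F(\pfr^{k+l}\ocal)$ and hence only shifts the scale by a constant — we may assume all derivatives of the $f_i$ have norm at most $1$ on $\ocal$. Fix $m$ large enough for Lemma~\ref{l:NondegenerateCurve} to apply, enlarged so that $m\ge3$, and let $B=O_F(1)$ be such that $|\ocal/\pfr^{B}\ocal|\ge m$. Choose $x_1,\dots,x_m\in\pfr^l\ocal$ pairwise distinct modulo $\pfr^{l+B}$, so $|\pfr^{l+B}|\le|x_i-x_j|\le|\pfr^l|$ for $i\ne j$. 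Put $\xbf_0=(x_1,\dots,x_m)$ and define $G:(\pfr^l\ocal)^m\to\ocal^{d}$ by $G(y_1,\dots,y_m)=\sum_{i=1}^m F(y_i)$; then $dG(\ybf)=[\,F'(y_1)\ \cdots\ F'(y_m)\,]$, the mixed second derivatives of $G$ vanish off the diagonal, and $\|dG(\ybf)\|\le1$, $\|\oPhi_{\ebf_i+\ebf_j}G(\ybf')\|\le1$ hold on $\ocal^m$. By Lemma~\ref{l:NondegenerateCurve}, for $l\gg_F 1$ we get $N(dG(\xbf_0))\ge|\pfr^{k_0}|$ for a suitable exponent $k_0=\Theta_F(l)$; since $m\ge3$ and $x_i-x_j\in\pfr^l\ocal$ for all $i\ne j$, one also gets $k_0\ge 2l$.

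Next I would apply Lemma~\ref{l:OpenFunction} to $G$ based at $\xbf_0$, but taking the perturbation scale to be $s:=k_0+1$ rather than $l$. This is legitimate: $s>l$, so $\xbf_0+\pfr^{s}\ocal^m\subseteq(\pfr^l\ocal)^m$ stays in the domain; for $\|\ybf-\xbf_0\|\le|\pfr^{s}|$ and $i\ne j$ we have $|y_i-y_j|=|x_i-x_j|$ because $|\pfr^{s}|=|\pfr^{k_0+1}|<|\pfr^{l+B}|\le|x_i-x_j|$ (here $k_0\ge2l$ enters), so the nondegeneracy bound $N(dG(\ybf))\ge|\pfr^{k_0}|$ propagates across the whole ball; and $s>k_0$, so the Newton iteration in the proof of Lemma~\ref{l:OpenFunction} (which rests on Lemma~\ref{l:Hensel}) converges. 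It produces $G(\xbf_0+\pfr^{s}\ocal^m)\supseteq G(\xbf_0)+\pfr^{s+k_0}\ocal^{d}=G(\xbf_0)+\pfr^{2k_0+1}\ocal^{d}$.

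Finally, since $\xbf_0+\pfr^{s}\ocal^m\subseteq(\pfr^l\ocal)^m$ and $G(\xbf_0)=\sum_iF(x_i)\in\sum_m F(\pfr^l\ocal)$, every $z\in\pfr^{2k_0+1}\ocal^{d}$ can be written $z=(G(\xbf_0)+z)-G(\xbf_0)\in\sum_m F(\pfr^l\ocal)-\sum_m F(\pfr^l\ocal)$, which is the claim with $C:=m=O_F(1)$ and $\Theta_F(l):=2k_0+1$. The one delicate point is the bookkeeping of valuations: the cokernel of $dG(\xbf_0)$ has size comparable to $|\ocal/\pfr^{k_0}\ocal|$ with $k_0$ itself of order $\Theta_F(l)$, which forces the perturbation onto the much finer scale $\pfr^{k_0+1}$, and one must check that the estimate for $N(dG)$ survives perturbations of that size — both handled by the separation $|x_i-x_j|\ge|\pfr^{l+B}|$ together with $k_0\ge2l$.
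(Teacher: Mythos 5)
Your proof is correct and takes essentially the same route as the paper: after rescaling, evaluate $F$ at $m=O_F(1)$ separated points of $\pfr^l\ocal$, bound $N$ of the differential of the sum map from below by Lemma~\ref{l:NondegenerateCurve}, and conclude with the Hensel/open-function machinery of Lemmas~\ref{l:Hensel} and~\ref{l:OpenFunction}. Your only deviation is bookkeeping: you run the Newton iteration at the finer scale $\pfr^{k_0+1}$ and verify that the nondegeneracy persists there using the separation $|x_i-x_j|\ge |\pfr^{l+B}|$, which in fact tidies up the point the paper glosses over when it invokes Lemma~\ref{l:OpenFunction} at scale $\pfr^l$ with $k_0=\Theta(l)$.
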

\begin{proof}
By Lemma~\ref{l:NondegenerateCurve} for large enough $m_0$ (depending on $F$) and large enough $l$ (depending on $F$ and $m_0$) we have 
\[
N(d\wt{F}(\xbf))\gg \prod_{1\le i<j\le m_0} |x_i-x_j|, 
\]
where $\wt{F}(\xbf)=F(x_1)+\cdots+F(x_{m_0})$ and $\xbf=(x_1,\ldots,x_{m_0})$ has norm at most $|\pfr^l|$. Let $\xbf_0=(x_1,\ldots,x_{m_0})$ be such that 
$|x_i|\le |\pfr^l|$ for any $i$ and $|x_i-x_j|\ge |\pfr^{2l}|$ for any $i\neq j$. So $N(d\wt{F}(\xbf_0))\gg \pfr^{\Theta_{F, m_0}(l)}.$
By rescaling, if needed, we can assume that $|f_i^{(j)}(0)|\le 1$ for any $1\le i\le d$ and any positive integer $j$. And so $\|d\wt{F}(\xbf_0)\|\le 1$ and $\|\partial_{ij}\wt{F}(\xbf_0)\|\le 1$. Therefore by Lemma~\ref{l:OpenFunction} we have
\[
\wt{F}(\pfr^l\ocal^{m_0})=\wt{F}(\xbf_0+\pfr^l\ocal^{m_0})\supseteq \wt{F}(\xbf_0)+\pfr^{\Theta_{m_0,F}(l)}\ocal^d,
\]
and so 
\[\textstyle
\sum_{m_0}F(\pfr^l \ocal)-\sum_{m_0}F(\pfr^l \ocal)=\wt{F}(\pfr^l\ocal^{m_0})-\wt{F}(\pfr^l\ocal^{m_0})\supseteq \pfr^{\Theta_{m_0,F}(l)}\ocal^d.
\]
\end{proof}
\begin{lem'}~\label{l:SpanOfNondegenerateCurves'}
Let $F:=(f_1,\ldots,f_{d_0}):\ocal\rightarrow \ocal^{d_0}, F(\xbf)=\sum_{i}(c_{i,1}x^{i},\ldots,c_{i,d_0}x^{i})$ be an analytic function such that
\begin{enumerate}
\item $|c_{i,j}|\le 1$ for any $i$ and $j$,
\item $|\det(c_{i_e,j})|=|\pfr^{k_0}|$ for some indexes $i_1,\ldots,i_{d_0}\le m_0$ and some positive integer $k_0$.
\end{enumerate}
Then for any $l\gg_{k_0,m_0} 1$ we have
\[
\textstyle
\sum_{m_0} F(\pfr^l\ocal)-\sum_{m_0} F(\pfr^l\ocal)\supseteq \pfr^{\Theta_{m_0,d_0,k_0}(l)}\ocal^{d_0}.
\]
\end{lem'}
\begin{proof}
As in the proof of Lemma~\ref{l:SpanOfNondegenerateCurves}, let $\widetilde{F}(\xbf)=\sum_{i=1}^{m_0}F(x_i)$. Hence by Lemma~\ref{l:NondegenerateCurve'} for $l\gg_{k_0} 1$ we have
\[
N(d\widetilde{F}(\xbf))\ge |\pfr^{\Theta_{k_0,d_0,m_0}(1)}|\prod_{1\le i<j\le m_0}|x_i-x_j|,
\]
for any $\xbf\in \pfr^l\ocal^{m_0}$. Let $\xbf_0\in \pfr^l \ocal^{m_0}$ be such that $|x_i-x_j|\ge |\pfr^{2l}|$ for any $i\neq j$. Notice that, since $l\gg_{m_0} 1$, there is such $\xbf_0$. Hence 
\[
N(d\widetilde{F}(\xbf_0))\ge |\pfr^{\Theta_{m_0,d_0,k_0}(l)}|.
\]
Let $F_{\xbf_0}(\xbf):=\widetilde{F}(\xbf+\xbf_0)$. Hence $F_{\xbf_0}$ has a Taylor series expansion and its coefficients have norm at most one. Moreover $N(dF_{\xbf_0}({\bf 0}))\ge |\pfr^{\Theta_{m_0,d_0,k_0}(l)}|$. Therefore by Lemma~\ref{l:OpenFunction'} we have
\[
F_{\xbf_0}(\pfr^l\ocal)\supseteq F_{\xbf_0}({\bf 0})+\pfr^{\Theta_{m_0,d_0,k_0}(l)}\ocal^{d_0}.
\]
One can finish the proof as above.
\end{proof}
\begin{lem}~\label{l:ManifoldToCurve}
Let $U\subseteq K^{n_0}$ be a non-empty open subset. Let $f_i:U\rightarrow K$ be analytic functions such that $\one,f_1,\ldots,f_d$ are linearly independent. Then there is a polynomial curve $r:K\rightarrow K^{n_0}$ such that $\one, f_1\circ r, \ldots, f_{d}\circ r$ are linearly independent, and defined on a neighborhood of $0\in K$.  
\end{lem}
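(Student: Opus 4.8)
The plan is to pass from linear independence to a spanning statement and then to build $r$ by polynomial interpolation through finitely many carefully chosen points of $U$.

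Put $F:=(\one,f_1,\ldots,f_d)\colon U\to K^{d+1}$. A relation $c_0\one+\sum_{i=1}^d c_if_i\equiv0$ on $U$ is precisely the statement that $F(U)$ lies in the hyperplane $c^{\perp}$; hence $\one,f_1,\ldots,f_d$ are linearly independent over $K$ if and only if the $K$-span of $F(U)$ is all of $K^{d+1}$. Likewise, for a polynomial curve $r$ mapping a neighbourhood $N$ of $0$ into $U$, the functions $\one,f_1\circ r,\ldots,f_d\circ r$ are linearly independent on $N$ as soon as $\{F(r(t)):t\in N\}$ spans $K^{d+1}$. So it suffices to produce a polynomial $r\colon K\to K^{n_0}$ together with parameters $t_0,\ldots,t_d$ lying in a ball about $0$ that $r$ maps into $U$, such that $F(r(t_0)),\ldots,F(r(t_d))$ are linearly independent.

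First I would record a local form of the hypothesis: fixing any $a\in U$, each $f_i$ is given by a convergent power series on a polydisc $B_0:=a+\pfr^{k_0}\ocal^{n_0}\subseteq U$, and $\one,f_1,\ldots,f_d$ remain independent on $B_0$; moreover, by the identity principle for such power series (rescaling a subpolydisc onto $\ocal^{n_0}$ turns a linear relation into the vanishing of all Taylor coefficients), $F$ restricted to \emph{any} subpolydisc of $B_0$ still spans $K^{d+1}$. Hence I may choose points $y_0,y_1,\ldots,y_d$ all inside a subpolydisc $y_0+\pfr^{M}\ocal^{n_0}\subseteq B_0$, with $M$ as large as I please, such that $F(y_0),\ldots,F(y_d)$ are linearly independent. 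Then I fix distinct parameters $t_0=0,t_1,\ldots,t_d\in\pfr\ocal$ (say $t_j=\pfr^j$) and let $r$ be the coordinatewise Lagrange interpolation polynomial with $r(t_j)=y_j$, i.e.\ $r(t)=y_0+\sum_{j=1}^d \ell_j(t)(y_j-y_0)$ where $\ell_j\in K[t]$ satisfies $\ell_j(t_k)=\delta_{jk}$ and $\ell_j(0)=0$. The $\ell_j$ depend only on the fixed $t_k$, so on the ball $\{|t|\le|t_d|\}$ their values are bounded by a constant depending only on $d$; taking $M$ large enough to absorb it, $r$ sends $\{|t|\le|t_d|\}$ into $B_0\subseteq U$, so each $f_i\circ r$ is analytic on this ball, which is a neighbourhood of $0$ containing every $t_j$.

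To conclude, a relation $c_0+\sum_{i=1}^d c_i(f_i\circ r)\equiv0$ on $\{|t|\le|t_d|\}$, evaluated at $t=t_j$, gives $\langle c,F(y_j)\rangle=0$ for $j=0,\ldots,d$, and since $F(y_0),\ldots,F(y_d)$ span $K^{d+1}$ this forces $c=0$; thus $\one,f_1\circ r,\ldots,f_d\circ r$ are linearly independent and $r$ is the required polynomial curve. I expect the only delicate step to be the passage from ``$F(U)$ spans $K^{d+1}$'' to ``$F$ spans on an arbitrarily small polydisc'', i.e.\ that linear independence of the $f_i$ is inherited by subpolydiscs; this is exactly where genuine analyticity (power series plus the identity principle) is needed, as opposed to mere local constancy. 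Everything else is elementary linear algebra and interpolation. (Alternatively one can avoid interpolation by composing with a Veronese-type curve $t\mapsto(t,t^{N},\ldots,t^{N^{n_0-1}})$ for large $N$ and reading off the rank from finitely many Taylor coefficients, but the interpolation argument above seems cleaner.)
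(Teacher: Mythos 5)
Your argument is essentially correct, but it takes a genuinely different route from the paper; amusingly, the paper's proof is exactly the ``Veronese-type'' alternative you dismiss in your last parenthesis: it sets $r(t)=\xbf_0+\pfr\,(t,t^{s},\ldots,t^{s^{n_0-1}})$ for $s$ large and shows, by the $s$-adic--digit bookkeeping of the Taylor coefficients of $f_j\circ r$, that the coefficient matrix of the composites has rank $d$. You instead work with \emph{values} rather than Taylor coefficients: you find $d+1$ points $y_0,\ldots,y_d$ in a small polydisc at which the vectors $(\one,f_1,\ldots,f_d)(y_j)$ are independent (such points exist, since otherwise the values lie in a hyperplane, i.e.\ a nontrivial relation holds on the subpolydisc and, by the identity principle, on the whole polydisc), and you run a Lagrange interpolation curve through them. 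Note that your step ``independence on $U$ gives independence on the polydisc $B_0$ around a chosen point'' is the same tacit use of a power-series/connectedness assumption that the paper itself makes when it asserts full rank of the Taylor matrix at an arbitrary $\xbf_0\in U$, so no objection there. As for what each approach buys: your soft argument proves the lemma as stated, which is all that Proposition~\ref{p:QuanOpenImage} needs, but it gives no control on how small $\det[F(y_j)]$ is, so it does not yield the quantitative companion Lemma~\ref{l:ManifoldToCurve'} (an explicit minor of size $\ge|\pfr^{\Theta_{m_0,d_0,k_0}(1)}|$ in bounded degree), which is what the paper needs for Proposition~\ref{p:QuanOpenImage'} and the place-uniform Corollary~\ref{c:UnifOpenImage}; the monomial-curve computation carries those constants through uniformly.

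One small slip to repair: with nodes $t_j=\pfr^{j}$ the ball $\{|t|\le|t_d|\}$ is the \emph{smallest} of the relevant balls and contains only $t_0=0$ and $t_d$, not all the nodes. You should bound the Lagrange basis polynomials on $\{|t|\le|\pfr|\}$ (still a constant depending only on $d$, since the nodes are fixed), enlarge $M$ accordingly so that $r(\pfr\ocal)\subseteq B_0$, and test linear relations on the neighbourhood $\pfr\ocal$, which does contain every $t_j$; with that adjustment the evaluation argument closes as you intend.
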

\begin{proof}
Let $\xbf_0$ be a point in $U$. After rescaling, if needed, we can assume that $|\partial_{\ibf} f_j(\xbf_0)/\ibf!|\le 1$ for any $\ibf$ which is not zero. Since $\one, f_1,\ldots, f_d$ are linearly independent, for large enough $m$ we have that 
\[
\rk[\partial_{\ibf} f_j(\xbf_0)]_{1\le \|\ibf\|_1\le m,\h 1\le j\le d}=d.
\]  
Thus 
\be\label{e:Nondegenerate}
N([\partial_{\ibf} f_j(\xbf_0)/\ibf!])=|\pfr^{k_0}|,
\ee
where $k_0$ is a non-negative integer. Now let
\[
r(t):=\xbf_0+\pfr (t,t^s,t^{s^2},\ldots, t^{s^{n_0-1}})
\]
where $s$ is sufficiently large (to be determined later). By the Taylor expansion of $f_j$ we have
\[
f_j(r(t))=\sum_{\ibf} a^{(j)}_{\ibf} \pfr^{\|\ibf\|_1} t^{\sum_{k=1}^{n_0} i_ks^{k-1}},
\] 
where $\ibf=(i_1,\ldots,i_{n_0})$ and $a^{(j)}_{\ibf}=\partial_{\ibf}f_j/\ibf!$. And so
\be\label{e:TaylorExpansion}
f_j(r(t))=\sum_{n=0}^{\infty} \left(\sum_{\{\ibf|\sum_k i_ks^{k-1}=n\}} a^{(j)}_{\ibf}\pfr^{\|\ibf\|_1}\right) t^n. 
\ee
We make the following two observations:
\begin{enumerate}
	\item $\sum_{\{\ibf|\sum_k i_ks^{k-1}=n\}} a^{(j)}_{\ibf}\pfr^{\|\ibf\|_1}\equiv \sum_{\{\ibf|\|\ibf\|_{\infty}<s,\h\sum_k i_ks^{k-1}=n\}} a^{(j)}_{\ibf}\pfr^{\|\ibf\|_1} \pmod {\pfr^s}$,
	\item If $\|\ibf\|_{\infty}, \|\ibf'\|_{\infty}<s$ and $\sum_k i_ks^{k-1}=\sum_k i'_ks^{k-1}$, then $\ibf=\ibf'$.
\end{enumerate}
In particular we have 
\[
\sum_{\{\ibf|\sum_k i_ks^{k-1}=n\}} a^{(j)}_{\ibf}\pfr^{\|\ibf\|_1}\equiv 
\begin{cases}
0&\text{if }n\ge s^{n_0}\\
a^{(j)}_{\ibf(n)} \pfr^{\|\ibf(n)\|_1}&0\le n\le s^{n_0}-1
\end{cases}
\pmod{\pfr^s}
\]
where $\ibf(n):=(i_1,\ldots,i_{n_0})$ gives us the $s$-adic digits of $n$; that means $n=\sum_k i_ks^{k-1}$. Therefore we have
\be\label{e:Estimate1}
N\left(\left[\sum_{\{\ibf|\sum_k i_ks^{k-1}=n\}} a^{(j)}_{\ibf}\pfr^{\|\ibf\|_1}\right]_{1\le n\le s^{n_0}-1,\h 1\le j\le d}\right) \equiv N([a^{(j)}_{\ibf(n)} \pfr^{\|\ibf(n)\|_1}]) \pmod{\pfr^s}.
\ee
Now suppose $s>dm+k_0$; in particular, if $\|\ibf\|_1\le m$, then $\ibf=\ibf(n)$ for some $n<s^{n_0}$. And so we have
\be\label{e:Estimate2}
N([a^{(j)}_{\ibf(n)} \pfr^{\|\ibf(n)\|_1}])\ge |\pfr^{md+k_0}|. 
\ee
Hence by (\ref{e:Estimate1}), (\ref{e:Estimate2}) and $s>dm+k_0$ we have
\[
N\left(\left[\sum_{\{\ibf|\sum_k i_ks^{k-1}=n\}} a^{(j)}_{\ibf}\pfr^{\|\ibf\|_1}\right]_{1\le n\le s^{n_0}-1,\h 1\le j\le d}\right) \not\equiv 0 \pmod{\pfr^s},
\]
which implies that 
\[
\rk\left(\left[\sum_{\{\ibf|\sum_k i_ks^{k-1}=n\}} a^{(j)}_{\ibf}\pfr^{\|\ibf\|_1}\right]_{1\le n\le s^{n_0}-1,\h 1\le j\le d}\right)=d,
\]
and $\one, f_1\circ r, \ldots, f_d\circ r$ are linearly independent.  
\end{proof}

\begin{lem'}\label{l:ManifoldToCurve'}
Let $F:=(f_1,\ldots,f_{d_0}):\ocal^{n_0}\rightarrow \ocal^{d_0}$ be an analytic function of the form
\[
F(\xbf)=\sum_{\ibf}(c_{\ibf,1}\xbf^{\ibf},\ldots,c_{\ibf,d_0}\xbf^{\ibf}),
\]
where $\ibf=(i_1,\ldots,i_{n_0})$ ranges over multi-indexes non-negative integers and $\xbf^{\ibf}=x_1^{i_1}\cdots x_{n_0}^{i_{n_0}}$. Suppose 
	\begin{enumerate}
	\item $|c_{\ibf,j}|\le  1$ for any $\ibf=(i_1,\ldots,i_m)$ and any $1\le j\le d_0$,
	\item $|\det(c_{\ibf_i,j})|=|\pfr^{k_0}|$ for some multi-indexes $\ibf_1,\ldots,\ibf_{d_0}$ whose coordinates add up to a number at most $m_0$ and some positive integer $k_0$. 
	\end{enumerate}
Then there is a polynomial curve $r:\ocal\rightarrow \ocal^{n_0}$ such that $|\det(c'_{i_e,j})|\ge|\pfr^{\Theta_{m_0,d_0,k_0}(1)}|$ for some indexes $i_e\ll_{m_0,n_0,d_0,k_0} 1$ where
$
f_j\circ r(x)=\sum_i c'_{i,j}x^i.
$
\end{lem'}
\begin{proof}
A close examination of proof of Lemma~\ref{l:ManifoldToCurve} yields that (\ref{e:Estimate1}), (\ref{e:Estimate2}) imply 
\[
N([c'_{i,j}]_{1\le i\le (d_0m_0+k_0+1)^{n_0}-1, 1\le j\le d})\ge |\pfr^{m_0d_0+k_0}|,
\]
which gives us the claim.
\end{proof}
\begin{cor}\label{c:NondegenerateManifoldCase}
Let $U\subseteq K^{m_0}$ be a non-empty open set. Let $f_j:U\rightarrow K$ be analytic functions, and $F:=(f_1,\ldots,f_d)$. Suppose $\one,f_1,\ldots, f_d$ are linearly independent. Then for any $l\gg_F 1$ we have
\[\textstyle
\sum_{\Theta_F(1)} F(\pfr^l \ocal)-\sum_{\Theta_F(1)} F(\pfr^l \ocal)\supseteq \pfr^{\Theta_F(l)}\ocal^{d}.
\]  
\end{cor}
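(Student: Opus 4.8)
The plan is to reduce the multivariable statement, Corollary~\ref{c:NondegenerateManifoldCase}, to the single-variable case already handled in Lemma~\ref{l:SpanOfNondegenerateCurves}. First I would apply Lemma~\ref{l:ManifoldToCurve} to the analytic functions $f_1,\dots,f_d$ on $U\subseteq K^{m_0}$: since $\one,f_1,\dots,f_d$ are linearly independent, the lemma produces a polynomial curve $r:K\rightarrow K^{m_0}$, defined on a neighborhood of $0$, such that $\one, f_1\circ r,\dots,f_d\circ r$ are linearly independent analytic functions of a single variable. Composing with $r$ transports everything to the one-dimensional setting.

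Next I would invoke Lemma~\ref{l:SpanOfNondegenerateCurves} for the single-variable analytic map $F\circ r=(f_1\circ r,\dots,f_d\circ r)$. That lemma gives, for all $l\gg_{F\circ r} 1$,
\[\textstyle
\sum_{\Theta(1)} (F\circ r)(\pfr^l\ocal)-\sum_{\Theta(1)} (F\circ r)(\pfr^l\ocal)\supseteq \pfr^{\Theta(l)}\ocal^{d}.
\]
The only point to check is that, after shrinking $l$ by a bounded amount (depending on $r$, hence on $F$), we have $r(\pfr^l\ocal)\subseteq U$ and $r(\pfr^l\ocal)\subseteq \pfr^{l'}\ocal^{m_0}$ for a comparable $l'$ once we recenter $r$ so that $r(0)$ is the chosen base point; since $r$ is polynomial with bounded (after the rescaling in the proof of Lemma~\ref{l:ManifoldToCurve}) coefficients, this is immediate, and $r(\pfr^l\ocal)\subseteq \pfr^l\ocal^{m_0}$ up to a fixed shift. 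Because $(F\circ r)(\pfr^l\ocal)\subseteq F(\pfr^{l-O(1)}\ocal^{m_0})$ (again up to recentering), the containment above yields
\[\textstyle
\sum_{\Theta_F(1)} F(\pfr^l \ocal)-\sum_{\Theta_F(1)} F(\pfr^l \ocal)\supseteq \pfr^{\Theta_F(l)}\ocal^{d}
\]
for all $l\gg_F 1$, with all implied constants depending only on $F$ (through the bounded data of $r$: its degree, the integer $k_0$ in~(\ref{e:Nondegenerate}), and the rescaling exponent).

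The main obstacle, such as it is, is purely bookkeeping: matching the various ``large $l$'' thresholds and the rescaling exponents so that the final constants genuinely depend only on $F$ and not on auxiliary choices, and making sure the recentering of the curve $r$ (so that its base point becomes the origin, as required by Lemma~\ref{l:SpanOfNondegenerateCurves} and Lemma~\ref{l:OpenFunction}) does not destroy the linear independence of $\one, f_1\circ r,\dots, f_d\circ r$ — but linear independence is unaffected by translating the domain, and affected only harmlessly by the affine change $\xbf_0+\pfr(\cdots)$ already built into $r$. Hence the corollary follows by combining Lemma~\ref{l:ManifoldToCurve} with Lemma~\ref{l:SpanOfNondegenerateCurves}, exactly as Proposition~\ref{p:QuanOpenImage} will be deduced from Lemma~\ref{l:SpanOfNondegenerateCurves'} and Lemma~\ref{l:ManifoldToCurve'} in the quantitative version.
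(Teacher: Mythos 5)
Your proof is correct and follows essentially the same route as the paper: reduce to one variable via the polynomial curve of Lemma~\ref{l:ManifoldToCurve} and then apply the single-variable result (Lemma~\ref{l:SpanOfNondegenerateCurves}; the paper's proof cites Lemma~\ref{l:NondegenerateCurve}, evidently a slip for that lemma). Your extra bookkeeping about recentering the curve and absorbing its bounded rescaling into the implied constants is exactly the (unstated) content behind the paper's two-line argument.
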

\begin{proof}
By Lemma~\ref{l:ManifoldToCurve} there is a polynomial curve $r$ such that $\one, f_1\circ r, \ldots, f_d\circ r$ are linearly independent and defined on a neighborhood of $0\in K$. Hence by Lemma~\ref{l:NondegenerateCurve} we are done. 
\end{proof}
\begin{cor'}\label{c:NondegenerateManifoldCase'}
Let $F:=(f_1,\ldots,f_{d_0}):\ocal^{n_0}\rightarrow \ocal^{d_0}$ be an analytic function of the form
\[
F(\xbf)=\sum_{\ibf}(c_{\ibf,1}\xbf^{\ibf},\ldots,c_{\ibf,d_0}\xbf^{\ibf}),
\]
where $\ibf=(i_1,\ldots,i_{n_0})$ ranges over multi-indexes non-negative integers. Suppose 
	\begin{enumerate}
	\item $|c_{\ibf,j}|\le  1$ for any $\ibf=(i_1,\ldots,i_m)$ and any $1\le j\le d_0$,
	\item $|\det(c_{\ibf_i,j})|=|\pfr^{k_0}|$ for some multi-indexes $\ibf_1,\ldots,\ibf_{d_0}$ whose coordinates add up to a number at most $m_0$ and some positive integer $k_0$. 
	\end{enumerate}
	
Then for $l\gg_{m_0,n_0,d_0,k_0} 1$ we have 
\[\textstyle
\sum_{\Theta(1)}F(\pfr^l\ocal)-\sum_{\Theta(1)}F(\pfr^l\ocal)\supseteq 
\pfr^{\Theta(l)}\ocal^{d_0},
\]
where all the implied constants depend on $m_0,n_0,d_0,k_0$.
\end{cor'}
\begin{proof}
This is direct corollary of Lemma~\ref{l:SpanOfNondegenerateCurves'} and Lemma~\ref{l:ManifoldToCurve'}.
\end{proof}

\begin{proof}[Proof of Proposition~\ref{p:QuanOpenImage}]
Suppose $\{f_{i_1},\ldots,f_{i_d}\}$ is a basis of $\sum_i Kf_i$. So $\one, f_{i_1},\ldots,f_{i_d}$ are linearly independent. And moreover there is an injective linear map $L:K^{d}\rightarrow K^{d_0}$ such that $F=L\circ \overline{F}$, where $\overline{F}(\xbf)=(f_{i_1}(\xbf),\ldots,f_{i_d}(\xbf))$.
By Corollary~\ref{c:NondegenerateManifoldCase} we have that 
\[\textstyle
\sum_{\Theta_{\overline{F}}(1)} {\overline{F}}(\pfr^l \ocal)-\sum_{\Theta_{\overline{F}}(1)} {\overline{F}}(\pfr^l \ocal)\supseteq \pfr^{\Theta_{\overline{F}}(l)}\ocal^{d}.
\] 
for any $l\gg_{\overline{F}} 1$. Applying $L$ to the both sides and using the linearity of $L$ we have
\[\textstyle
\sum_{\Theta_F(1)} F(\pfr^l \ocal)-\sum_{\Theta_F(1)} F(\pfr^l \ocal)\supseteq \pfr^{\Theta_F(l)}L(\ocal^{d})\supseteq \pfr^{\Theta_F(l)} \ocal^{d_0} \cap {\rm Im}(L).
\]
\end{proof}
\begin{proof}[Proof of Proposition~\ref{p:QuanOpenImage'}]
By a similar argument as in the proof of Proposition~\ref{p:QuanOpenImage}, Corollary~\ref{c:NondegenerateManifoldCase'} implies the desired result. 
\end{proof}
\begin{proof}[Proof of Corollary \ref{c:UnifOpenImage}]
Let $\ocal_{\kappa}$ be the ring of integers of $\kappa$. There is $a\in \ocal_{\kappa}$ such that for any $i$
\[
\overline{f}_i(\xbf):=f_i(a\xbf)\in \ocal_{\kappa}[x_1,\ldots,x_{n_0}].
\]
Suppose $\overline{f}_{j_1},\ldots,\overline{f}_{j_{d_0}'}$ is a basis of $\sum_{i=1}^{d_0}\kappa \overline{f}_i$. So there is a linear embedding $L:\kappa^{d_0'}\rightarrow \kappa^{d_0}$ such that 
\[
(\overline{f}_1(\xbf),\ldots,\overline{f}_{d_0}(\xbf))=L(\overline{f}_{j_1},\ldots,\overline{f}_{j_{d_0'}}).
\] 
So the operator norm of the induced linear embedding $L_{\pfr}:\kappa_{\pfr}^{d_0'}\rightarrow \kappa_{\pfr}^{d_0}$ are uniformly bounded from infinity and zero. 

Since $\overline{f}_{j_1},\ldots,\overline{f}_{j_{d_0'}}$ are linearly independent, there are multi-indexes $\ibf_1,\ldots,\ibf_{d_0'}$ such that 
\[
0\neq \det(c_{\ibf_i,j_u})\in \ocal_{\kappa},
\]
where $\overline{f}_j(\xbf)=\sum_{\ibf} c_{\ibf,j} \xbf^{\ibf}$. So $|\det(c_{\ibf_i,j_u})|_{\pfr}=1$ for almost all $\pfr\in V_f(\kappa)$. 

By the above discussion, one can easily finish the proof using Proposition~\ref{p:QuanOpenImage'}. 
\end{proof}

\end{document}